\newcommand*{\rom}[1]{\expandafter\@slowromancap\romannumeral #1@}
\newcounter{relctr} 
\everydisplay\expandafter{\the\everydisplay\setcounter{relctr}{0}} 
\newtheorem{remark}{Remark}
\newtheorem{theorem}{Theorem}
\newtheorem{lemma}{Lemma}
\newtheorem{proposition}{Proposition}
\newcommand\E{\mathbb{E}}
\newcommand{\betaR}{\hat{\beta}^{R}}
\newcommand{\betaTE}{\Tilde{\beta}^{E}}
\newcommand{\betahy}{\hat{\beta}^{E}}
\newcommand{\calN}{\mathcal{N}}
\newcommand{\R}{\mathbb{R}}
\newcommand{\Var}{{\rm Var}}
\newcounter{cnstcnt}
\newcounter{deltacnt}
\newcommand\simiid{\overset{i.i.d}{\sim}}
\newcommand\supp{\operatorname{supp}}
\DeclareMathOperator*{\argmin}{arg\,min}
\DeclareMathOperator*{\diag}{diag}
\def\square{\ifmmode\sqr\else{$\sqr$}\fi}
\def\sqr{\vcenter{
         \hrule height.1mm
         \hbox{\vrule width.1mm height2.2mm\kern2.18mm
\vrule width.1mm}
         \hrule height.1mm}}
\begin{document}
\title{Signal-to-noise ratio aware minimax analysis of sparse linear regression}

\author[1]{Shubhangi Ghosh}
\author[1]{Yilin Guo}
\author[2]{Haolei Weng}
\author[1]{Arian Maleki}
\affil[1]{Columbia University}
\affil[2]{Michigan State University}

\date{\vspace{-5ex}}
\maketitle





\begin{abstract}

We consider parameter estimation under sparse linear regression -- an extensively studied problem in high-dimensional statistics and compressed sensing. While the minimax framework has been one of the most fundamental approaches for studying statistical optimality in this problem, we identify two important issues that the existing minimax analyses face: (i) The signal-to-noise ratio appears to have no effect on the minimax optimality, while it shows a major impact in numerical simulations. (ii) Estimators such as best subset selection and Lasso are shown to be minimax optimal, yet they exhibit significantly different performances in simulations. In this paper, we tackle the two issues by employing a minimax framework that accounts for variations in the signal-to-noise ratio (SNR), termed the SNR-aware minimax framework. We adopt a delicate higher-order asymptotic analysis technique to obtain the SNR-aware minimax risk. Our theoretical findings determine three distinct SNR regimes: low-SNR, medium-SNR, and high-SNR, wherein minimax optimal estimators exhibit markedly different behaviors. The new theory not only offers much better elaborations for empirical results, but also brings new insights to the estimation of sparse signals in noisy data. 
    
\end{abstract}

\section{Introduction}
\label{intro:label}

\subsection{Limitations of the minimax framework}
For i.i.d.~samples $\{(y_i,x_i)\}_{i=1}^n$, we consider the linear regression model under isotropic Gaussian design,
\begin{equation}\label{model::gaussian-model}
    y_{i} = x_{i}^{T}\beta + \sigma z_{i}, \qquad i=1, \ldots, n,
\end{equation}
where $y_{i}\in\R $ denotes the response, $x_{i}\sim \mathcal{N}(0, \frac{1}{n}I_p)$ represents the covariate vector, $\beta\in \R^{p}$ is the unknown signal vector to be estimated, and $z_i \sim \mathcal{N}(0, 1)$ is the standard normal error independent of $x_i$. As in the rich literature of sparse linear regression \cite{hastie2009elements, buhlmann2011statistics, hastie2015statistical, wainwright2019high, fan2020statistical}, we are interested in studying this problem in the high-dimensional setting where $p$ is comparable with or much larger than $n$, and $\beta$ belongs to 
\begin{equation} \label{param::sparse}
\Theta(k):= \Big\{ \beta \in \R^{p}: \|\beta\|_{0} \leq k \Big \},
\end{equation}
where $\|\beta\|_{0}$ denotes the number of non-zero components of $\beta$. The minimax framework aims to calculate the minimax risk defined as
\begin{equation}\label{eq::sparse-minimax}
    R(\Theta(k), \sigma) := \inf_{\hat{\beta}} \sup_{ \beta \in \Theta(k)} \E_{\beta} \|\hat{\beta} - \beta\|_2^{2}.
\end{equation}
Here, $\mathbb{E}_{\beta}(\cdot)$ is the expectation taken under the model defined in \eqref{model::gaussian-model} with the true parameter vector $\beta$ being fixed, and $\|\cdot\|_2$ denotes the Euclidean norm. Obtaining the exact minimax risk is mathematically challenging and has remained open. Hence, researchers have explored approaches that aim to approximate the minimax risk. One major approach focuses on the rate-optimal minimaxity with the goal to derive the order of $R(\Theta(k), \sigma)$. A representative result implied by the work \cite{verzelen2012minimax} states that as $k/p \rightarrow 0$ and $(k\log(p/k))/n  \rightarrow 0$,
\begin{align}
\label{rate:optimal}
R(\Theta(k), \sigma) \sim  \sigma^2 k \log (p/k),
\end{align}
where the notation ``$a_n\sim b_n$" means $a_n/b_n$ remains bounded away from zero and infinity. Furthermore, it has been shown in the literature \cite{bickel2009simultaneous, raskutti2011minimax, verzelen2012minimax, bellec2018slope} that many estimators, such as best subset selection \cite{hocking1967selection, beale1967discarding}, Dantzig selector \cite{candes2007dantzig} and Lasso \cite{tibshirani1996regression}, achieve this minimax optimal rate, meaning that their supremum risks (under optimal tuning) divided by $\sigma^2 k \log (p/k)$ remain bounded\footnote{In some of these results, the risk is stated with high probability and the rate is $k\log p$ instead of $k
\log(p/k)$.}. 

\begin{figure}[t!]
    \centering
    \begin{subfigure}[b]{0.495\linewidth}
        \centering
        \includegraphics[width=\linewidth]{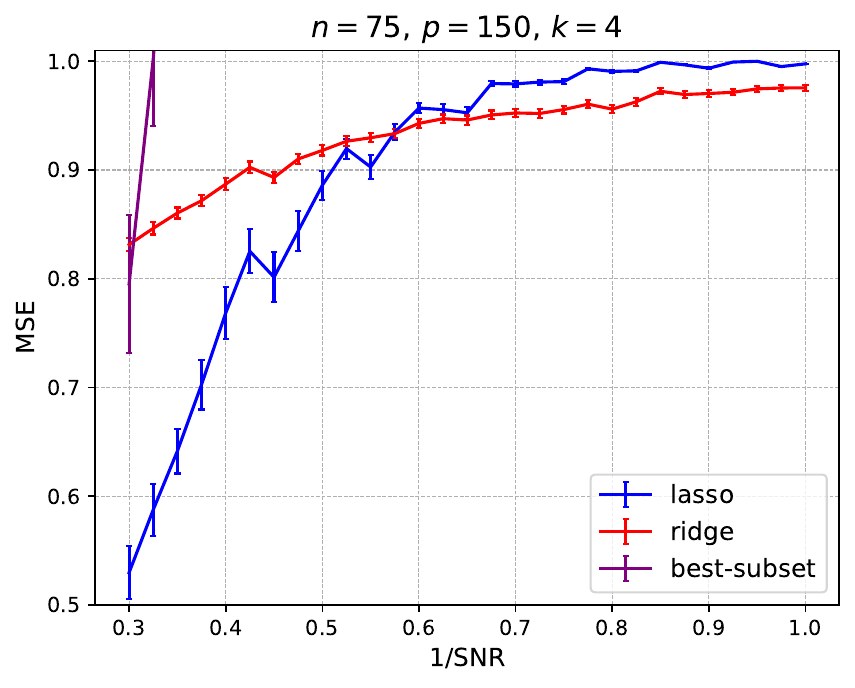}
    \end{subfigure}
    \hfill
    \begin{subfigure}[b]{0.495\linewidth}
        \centering
        \includegraphics[width=\linewidth]{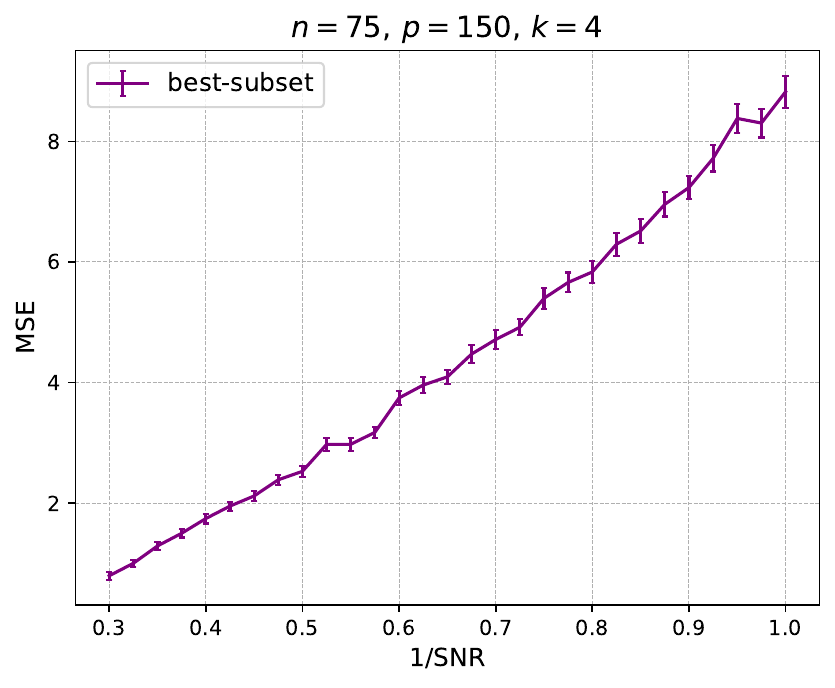}
    \end{subfigure}
    \caption{Mean-squared error comparison at different SNR values. Data is generated according to~\eqref{model::gaussian-model}. MSE is defined as the average of $\|\hat{\beta}-\beta\|_2^2/\|\beta\|_2^2$ over 50 experiments. We set $\beta$ as, for a randomly sampled index set \(S\) of cardinality \(k\), \(\beta_i = \tau \,\forall i\in S; \, \beta_i=0\) otherwise. SNR \(\coloneqq \tau/\sigma\). More simulation results are provided in Section \ref{simulation:sec}. To facilitate a clearer comparison between ridge, LASSO, and subset selection, we limit the MSE range to \([0, 1]\) in the left panel. However, the right panel displays the MSE of best subset selection across the full range of \(1/{\rm SNR}\). }
    \label{fig:bss}
\end{figure}


Despite the minimax rate optimality of the aforementioned estimators, extensive simulation results reported in \cite{hastie2020best, wang2020bridge} have confirmed that when the signal-to-noise ratio (SNR) is low, the best subset selection and Lasso are both consistently outperformed by the ridge regression estimator. See also Figure \ref{fig:bss} for a quick demonstration. Therefore, the rate-optimal minimax results can lead to misleading guidelines for practitioners. 

There could be two explanations for the mismatch between the result of rate-optimal minimaxity and the simulation studies:

\begin{itemize}
    \item Explanation 1: As is clear, the rate-optimal minimax results do not evaluate the minimax risk exactly. It ignores the constant in the minimax risk approximation and only captures the rate behavior in view of $k$ and $p$ for mathematical simplicity. If we can calculate the sharp constant in the minimax risk and supremum risk of estimators, the differences between constants may explain the discrepancies between the simulation studies and the rate-optimal minimax results.   

    \item Explanation 2: It could be that since the minimax framework only focuses on the spots of the parameter space that are hard for the estimation problem, it cannot capture the intricacies that happen with the variations of the signal-to-noise ratio. Hence, the framework needs to be amended to provide more informative results.  
\end{itemize}

 To pinpoint the correct explanation for the discrepancy between the minimax and simulation results, we first cite  a result that obtains a more accurate approximation of the minimax risk.

\begin{theorem}[\cite{guo2024minimaxlr}]\label{thm::first-order-sparse-minimax}
    Assume model \eqref{model::gaussian-model} and parameter space \eqref{param::sparse}. As $k/p \rightarrow 0$ and $\big(k \log p\big)/n \rightarrow 0$, the minimax risk defined in \eqref{eq::sparse-minimax} satisfies\footnote{The high-probability version of this result appeared earlier in \cite{su2016slope}.}
    \begin{equation}
    \label{non:snr:first}
        R(\Theta(k), \sigma) = 2\sigma^{2}k\log(p/k) \big(1+o(1) \big).
    \end{equation}
This minimax risk is (asymptotically) achieved by switching between the best subset selection and Lasso based on a certain switching rule.
\end{theorem}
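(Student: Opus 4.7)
The proof naturally splits into (i) an asymptotic upper bound, exhibiting an estimator whose supremum risk over $\Theta(k)$ is at most $2\sigma^{2}k\log(p/k)(1+o(1))$, and (ii) a matching lower bound $\inf_{\hat\beta}\sup_{\beta\in\Theta(k)}\E_{\beta}\|\hat\beta-\beta\|_2^{2} \geq 2\sigma^{2}k\log(p/k)(1-o(1))$. The two directions rely on quite different techniques, and I would attack them separately.

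For the lower bound, the cleanest route is a Bayesian argument with a calibrated least-favorable prior. Let $\pi$ make $\beta_1,\dots,\beta_p$ i.i.d.\ with $P(\beta_i=\tau)=k/p$ and $P(\beta_i=0)=1-k/p$, where $\tau=\sigma\sqrt{2\log(p/k)}(1+o(1))$ is tuned right at the detection threshold. Since $\pi$ concentrates on vectors with approximately $k$ nonzeros, truncating the prior to $\Theta(k)$ changes the Bayes risk only by a $(1+o(1))$ factor, so the truncated Bayes risk lower-bounds $R(\Theta(k),\sigma)$. Under the isotropic Gaussian design and the regime $(k\log p)/n\to 0$, projecting $y$ onto each column direction reduces the $p$-dimensional Bayes problem, up to negligible cross-terms, to $p$ nearly independent copies of the univariate ``sparse normal means'' problem with prior mass $k/p$ on $\tau$. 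A sharp asymptotic computation of this one-dimensional Bayes risk at the critical $\tau$ yields $2\sigma^{2}\log(p/k)$ per coordinate, hence $2\sigma^{2}k\log(p/k)$ in total.

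For the upper bound, the switching mechanism is needed because neither best subset selection nor Lasso is sharply minimax across all configurations of $\beta$. With penalty $\lambda^{2}=2\sigma^{2}\log(p/k)(1+o(1))$, $\betaM$ is close to the oracle risk $\sigma^{2}k$ when the nonzero entries of $\beta$ lie either well above or well below $\sigma\sqrt{2\log(p/k)}$, but inflates the constant when those entries concentrate near the threshold. Lasso $\betaL$ with the corresponding regularization satisfies a sharp oracle inequality of Bellec--Lecué--Tsybakov type at threshold-level signals, but is suboptimal when $\beta$ is ``easy.'' The switching rule compares a residual-based or held-out fit statistic of $\betaM$ and $\betaL$ and keeps the winner; showing that the data-driven selection error contributes at most $o(\sigma^{2}k\log(p/k))$ is a standard concentration argument for the selection statistic. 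Combined with the matching lower bound, this delivers the asymptotic equality \eqref{non:snr:first}.

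\textbf{Main obstacle.} The entire difficulty concentrates in pinning down the sharp constant $2$. Rate-optimal versions of both bounds are classical, but the leading constant demands tight Gaussian tail analyses around the threshold $\sigma\sqrt{2\log(p/k)}$. In the lower bound this surfaces as an exact asymptotic Bayes-risk calculation for the one-dimensional needle-in-haystack problem and a careful control of the residual coupling between coordinates induced by the random design. In the upper bound it surfaces as a constant-sharp analysis of Lasso under non-orthogonal design, which typically requires tools such as the convex Gaussian min-max theorem or AMP state evolution, well beyond the restricted-eigenvalue machinery that suffices for rate optimality. I would anticipate this constant-chasing for Lasso, along with showing that the adaptive switch does not inflate the leading term, to be the single hardest step of the proof.
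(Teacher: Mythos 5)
First, a point of reference: the paper does not prove this theorem --- it is imported verbatim from \cite{guo2024minimaxlr} --- so there is no in-paper proof to match against. The closest thing the paper contains is the lower-bound machinery in Section \ref{ssec:lb:firstorder}: the independent block prior $\pi_{IB}(\lambda;p,k)$ with one spike of height $\lambda=\sqrt{(2-\delta)\log(p/k)}$ per block of size $p/k$, whose Bayes risk is shown (Proposition \ref{prop::first-order-block-prior} and Lemma \ref{lem::first_order-lower_bound-spike_prior}) to be $k\lambda^2(1+o(1))$; letting $\delta\to 0$ gives exactly the lower bound $2\sigma^2k\log(p/k)$. Your lower bound takes a genuinely different route: an i.i.d.\ two-point prior plus (implicitly) coordinate-wise conditioning. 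This is viable and in some ways lighter --- conditioning on $\beta_{-j}$ reduces each coordinate to a two-point regression problem whose Bayes risk is elementary, whereas the block prior forces one to control the posterior of a single spike among $p/k$ columns, which in the paper costs several pages of noncentral chi-squared tail analysis. But two details you wave away are load-bearing. (i) The calibration of $\tau$: at $\tau=\sigma\sqrt{2\log(p/k)}$ exactly, the posterior log-odds of the spike given that it is present is $O_p(\tau)$ centered at zero, so the per-spike Bayes risk of the decoupled problem is only $\tau^2/2$ and you lose a factor of $2$ in the constant; you must take $\tau=\sigma\sqrt{2(1-\delta)\log(p/k)}$ strictly below threshold and send $\delta\to 0$ afterwards. (ii) The i.i.d.\ prior is not supported on $\Theta(k)$, and for a sharp constant the truncation step needs an expected sparsity slightly below $k$ plus a Binomial concentration argument; the block prior exists precisely to avoid this.

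The genuine gap is the upper bound, which in your proposal is a description of what must be true rather than an argument. A ``sharp oracle inequality of Bellec--Lecu\'e--Tsybakov type'' controls the rate with explicit but non-optimal constants and in high probability; it does not deliver the constant $2$ in the supremum of the expected risk over $\Theta(k)$, which is the entire content of the cited result. The tools you name for constant-chasing (CGMT, AMP state evolution) are native to the proportional regime $n\asymp p$ with $k\asymp p$, not to the regime $k/p\to 0$, $(k\log p)/n\to 0$ assumed here, so it is not clear they even apply. Finally, the claim that a data-driven switching rule costs only $o(\sigma^2 k\log(p/k))$ ``by standard concentration'' is optimistic: the two candidate risks differ only in the second-order term in some configurations, and certifying that adaptive selection does not inflate the leading constant is itself a nontrivial part of any such proof. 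As written, the upper bound half of the theorem remains unproved in your proposal.
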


 Compared to the rate-optimal minimax result in \eqref{rate:optimal}, Theorem \ref{thm::first-order-sparse-minimax} has the advantage of characterizing the (asymptotically) exact constant of the minimax risk. However, it still suffers from the same issue as the rate-optimal minimaxity. According to Theorem \ref{thm::first-order-sparse-minimax}, the estimation procedure using the best subset selection and Lasso is minimax optimal irrespective of the signal-to-noise ratio.  This implies that Explanation 1 does not offer the right reason.

\subsection{Our approach: SNR-aware minimaxity}

In light of Explanation 2, as will be clarified later in our paper, there are two main issues causing the discrepancy between the theoretical and simulation results: 
\begin{enumerate}
\item Since the parameter space \eqref{param::sparse} does not impose any constraint on the signal strength, the minimax analysis only focuses on a particular signal-to-noise ratio that makes the estimation problem the hardest. As a result, the factor of SNR affecting practical results is masked by the minimax framework.

\item The approximations we obtain for the minimax risk in rate-optimal minimaxity, and even in Theorem \ref{thm::first-order-sparse-minimax} are not accurate enough for distinguishing performances of different estimators.

\end{enumerate}

Our paper makes two main contributions, each addressing one of the issues outlined above. To address the first issue, we will incorporate the notion of SNR into the minimax analysis, and develop an SNR-aware minimaxity. To address the second issue, our paper considers and analyzes a more accurate higher-order approximation of the minimax risk. These two contributions will create a more insightful minimax framework that not only can offer results consistent with the existing simulation studies but also provides new insights to the estimation of sparse signals. The remainder of the paper is organized as follows. Section \ref{sec::snr-aware-minimaxity} discusses in detail the proposed SNR-aware minimax framework. Section \ref{simulation:sec} presents some simulations to support our theoretical findings. Section \ref{discussion:sec} discusses related works and gives some concluding remarks. All the proofs are relegated to the supplement.

\subsection{Notations}
We collect the notations used throughout the paper for convenience. For a scalar $a\in \mathbb{R}$, $a_+$ and ${\rm sign}(a)$ denote the positive part of $a$ and its sign respectively. For an integer $n$, let $[n]=\{1,2,\ldots, n\}$. We use $\mathbbm{1}_{\mathcal{A}}$ to represent the indicator function of the set $\mathcal{A}$. For a given vector $v = (v_{1}, \ldots, v_{p})\in \mathbb{R}^{p}$, $\norm{v}_{0} = \# \{i: v_{i}\neq 0\}$, $\norm{v}_{\infty} = \max_{i}|v_{i}|$, $\norm{v}_{q} = \left(\sum_{i=1}^{p} |v_{i}|^{q}\right)^{1/q}$ for $q \in (0,\infty)$. The inner product of two vectors $a,b$ is written as $\langle a, b\rangle$. We use $\{e_j\}_{j=1}^p$ to denote the natural basis in $\mathbb{R}^p$. For a matrix $X\in \mathbb{R}^{n\times p}$, $X_j$ represents its $j$th column and $X_S\in \mathbb{R}^{n\times |S|}$ is the submatrix consisting of columns indexed by $S\subseteq [p]$. The $p\times p$ identity matrix is denoted by $I_p$. For two real numbers $a$ and $b$, $a \vee b$ and $a \wedge b$ represent $\max(a, b)$ and $\min(a, b)$, respectively. For two non-zero real sequences $\{a_n\}_{n=1}^{\infty}$ and $\{b_n\}_{n=1}^{\infty}$, we use $a_n = o(b_n)$ (or $a_n \ll b_n$) to represent $|a_n/b_n| \rightarrow 0$ as $n \rightarrow \infty$, and $a_n=\omega(b_n)$ (or $a_n \gg b_n$) if and only if $b_n=o(a_n)$; $a_n = O(b_n)$ means $\sup_n|a_n/b_n| < \infty$ and $a_n=\Omega_n(b_n)$ if and only if $b_n=O(a_n)$. The notation $x\overset{d}{=}y$ means that the random variables $x$ and $y$ have the same distribution. For a random vector $x$, the notation $\|x\|_{\psi_2}$ denotes its sub-Gaussian norm. Finally, we reserve the notations $\phi(y)$ and $\Phi(y)$ for the standard normal density and its cumulative distribution function respectively.



\section{SNR-aware minimaxity}
\label{sec::snr-aware-minimaxity}

\subsection{Definition of the SNR-aware framework}
\label{snr:minimax:def}
As discussed in the previous section, one of the main reasons that the existing minimax framework produces misguidance for practitioners, is that the signal strength is not controlled and hence is set to a level that makes the estimation problem the hardest. As a result, the classical minimax approach, in an indirect way, becomes blind to the changes in the signal-to-noise ratio. To develop the SNR-aware minimax framework, we start by inserting a notion of signal-to-noise ratio in the minimax setting. To this end, we consider the following SNR-aware parameter space:
\begin{equation}\label{param::snr_aware}
    \Theta(k,\tau) := \Big\{ \beta \in \R^{p}: \|\beta\|_{0}\leq k, ~ \|\beta\|_{2}^{2} \leq k \tau^{2} \Big\}.
\end{equation}
The new parameter $\tau$, viewed as a constraint on the averaged magnitude of non-zero signal components, is a measure of signal strength. Compared to the vanilla sparse parameter space in \eqref{param::sparse}, $\Theta(k,\tau)$ can monitor the changes in SNR. Hence, the minimax framework we will develop with this parameter space can reveal the impact of the SNR on the sparse linear regression problem.

Given this new parameter space, the corresponding minimax risk is defined as 
\begin{equation}\label{eq::snr_minimax}
    R(\Theta(k,\tau), \sigma) := \inf_{\hat{\beta}} \sup_{\beta \in \Theta(k,\tau)} \E_{\beta} \|\hat{\beta} - \beta\|_2^{2}.
\end{equation}
As mentioned in the last section, it has remained open to characterize the exact minimax risk for $R(\Theta(k), \sigma)$. Given that it is even more challenging to obtain exact $R(\Theta(k,\tau), \sigma)$, instead we pursue asymptotically accurate approximations for this quantity. Following a standard way of modeling sparse signals \cite{donoho1992maximum, su2016slope, guo2022signaltonoise}, we consider the sparsity parameter defined as
\[
\epsilon := \frac{k}{p},
\]
and assume that $\epsilon \rightarrow 0$ as $n, p \rightarrow \infty$. Given that we have introduced the notion of signal strength $\tau$ to our framework, we expect the SNR level, defined as 
\[
\mu := \frac{\tau}{\sigma},
\]
to play an important role. Our goal is to derive accurate asymptotic approximations of $R(\Theta(k,\tau), \sigma)$ and discover optimal estimators, under different levels of SNR (i.e. different values of $\mu$). This leads us to consider the following regimes: as $n,p \rightarrow \infty$, and $\epsilon\rightarrow 0$ 
\begin{itemize}[label={}]
\item \textbf{Regime (\rom{1})}  Low signal-to-noise ratio: $\mu\rightarrow 0$; 
\item \textbf{Regime (\rom{2})} Moderate signal-to-noise ratio: $\mu\rightarrow \infty$, $\mu=o(\sqrt{\log \epsilon^{-1}})$;
\item \textbf{Regime (\rom{3})} High signal-to-noise ratio: $\mu=\omega(\sqrt{\log \epsilon^{-1}})$.
\end{itemize}
We should emphasize that all the quantities $\{k, p, \sigma, \tau\}$ are allowed to scale with $n$ in the above asymptotic regimes. We have suppressed the dependency on $n$ for notational simplicity. As will be shown in Section \ref{second:order:sec}, each regime exhibits unique minimaxity, and distinct minimax optimal estimators emerge in different regimes. Such fine-grained characterization requires delicate higher-order asymptotic analyses. To shed light on this point, in the next section, we first obtain the usual first-order asymptotic approximation and reveal its limitations in the SNR-aware minimax framework.

\subsection{First-order asymptotic approximation}

In this section, we present our first approximation for the minimax risk $R(\Theta(k,\tau), \sigma)$, in the three asymptotic regimes specified in Section \ref{snr:minimax:def}. After discussing the weaknesses of this approximation, in the next section, we will show how the approximation can be improved.

\begin{theorem}\label{thm::first-order-snr-minimax}
    Assume model \eqref{model::gaussian-model} and parameter space \eqref{param::snr_aware}. Recall the SNR level $\mu=\frac{\tau}{\sigma}$ introduced in Section \ref{snr:minimax:def}. As $k/p \rightarrow 0$, $(k \log (p))/n\rightarrow 0$, the following hold:
    \begin{itemize}
        \item Regime $(\rom{1})$: $\mu \rightarrow 0$, 
        \begin{equation*}
            R(\Theta(k,\tau), \sigma) = k\tau^{2} \Big( 1+o(1)\Big).
        \end{equation*}
        \item Regime $(\rom{2})$: $\mu\rightarrow \infty$ and $\mu = o\big(\sqrt{\log (p/k)} \big)$,
        \begin{equation*}
            R(\Theta(k,\tau), \sigma) = k\tau^{2} \Big( 1+o(1) \Big).
        \end{equation*}
        \item Regime $(\rom{3})$: $\mu = \omega\big(\sqrt{\log (p/k)} \big)$,
        \begin{equation*}
            R(\Theta(k,\tau), \sigma) = 2\sigma^2k\log(p/k) \Big(1+o(1)\Big).
        \end{equation*}
    \end{itemize}
\end{theorem}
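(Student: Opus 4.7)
The plan is to prove the three asymptotic formulas by matching upper and lower bounds. The overall picture is that $R(\Theta(k,\tau),\sigma)\sim\min\{k\tau^{2},\,2\sigma^{2}k\log(p/k)\}$, and the three regimes correspond to which side of this minimum binds: $\mu^{2}\ll 2\log(p/k)$ in Regimes (I)/(II) forces the first term, while $\mu^{2}\gg 2\log(p/k)$ in Regime (III) forces the second.

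For the \textbf{upper bound}, two estimators suffice. The zero estimator $\hat{\beta}\equiv 0$ delivers $\sup_{\beta\in\Theta(k,\tau)}\|\beta\|_{2}^{2}\le k\tau^{2}$, which handles Regimes (I) and (II). For Regime (III), the inclusion $\Theta(k,\tau)\subseteq\Theta(k)$ combined with Theorem~\ref{thm::first-order-sparse-minimax} immediately gives $R(\Theta(k,\tau),\sigma)\le 2\sigma^{2}k\log(p/k)(1+o(1))$. The lower bound in Regime (III) proceeds by recycling the least favorable prior from the proof of Theorem~\ref{thm::first-order-sparse-minimax}: its non-zero entries have magnitude on the order of $\sigma\sqrt{2\log(p/k)}=o(\tau)$, so after a mild truncation this prior already lies inside $\Theta(k,\tau)$, and the existing $2\sigma^{2}k\log(p/k)(1-o(1))$ lower bound transfers verbatim.

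For the \textbf{lower bounds in Regimes (I) and (II)}, the task is to exhibit a prior $\pi$ on $\Theta(k,\tau)$ whose Bayes risk is at least $k\tau^{2}(1-o(1))$. The natural candidate places a uniform random support $S$ of size $k$ with i.i.d.\ $\pm\tau$ entries on $S$, so that $\|\beta\|_{2}^{2}=k\tau^{2}$ almost surely. The Bayes risk decomposes as $\sum_{j}\E[\Var(\beta_{j}\mid y,X)]$, and I would lower bound this by $k\tau^{2}(1-o(1))$. In Regime (I) a direct mutual-information bound works: Gaussian entropy maximization gives $I(\beta;y\mid X)\le\tfrac{n}{2}\log(1+\mu^{2}k/n)=O(\mu^{2}k)$, and Pinsker's inequality together with the boundedness of $\beta_{j}\in\{0,\pm\tau\}$ forces the per-coordinate MMSE to sit within $(1-o(1))$ of its prior value $\tau^{2}k/p$, after which summing over $j$ closes the argument.

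The \textbf{main obstacle} is Regime (II), where $\mu\to\infty$ breaks the Pinsker route: an idealized two-point test between $\beta_{j}=+\tau$ and $\beta_{j}=-\tau$ conditional on all other coordinates already carries KL divergence $\approx 2\mu^{2}\to\infty$. The cure is to exploit ignorance of the support: upon marginalizing over the uniform support, the signal column at index $j$ is camouflaged by the contributions of the remaining $k-1$ unknown spikes, and as long as $\mu\ll\sqrt{2\log(p/k)}$ the support stays essentially unidentifiable. Making this precise will likely require a conditional second-moment (planted-model) computation comparing the posterior weight on the true support $S$ with that on random imposter supports, or equivalently an adaptation of the AMP/state-evolution analysis underlying Theorem~\ref{thm::first-order-sparse-minimax} and the earlier work of \cite{su2016slope} to the SNR-constrained prior. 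This step will carry the bulk of the technical work.
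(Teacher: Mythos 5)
Your upper bounds and the Regime (III) reduction coincide with the paper's: the zero estimator gives $k\tau^{2}$ in Regimes (I)--(II), and $\Theta(k,\tau)\subseteq\Theta(k)$ plus Theorem~\ref{thm::first-order-sparse-minimax} gives Regime (III)'s upper bound; for the Regime (III) lower bound the paper likewise plants spikes of magnitude $\lambda=\sqrt{(2-\delta)\log(p/k)}\le\mu$, which lie in $\Theta(k,\tau)$ exactly (no truncation needed) since $\mu=\omega(\sqrt{\log(p/k)})$. Your Regime (I) lower bound via the channel capacity bound $I(\beta;y\mid X)\le\tfrac{n}{2}\log(1+k\mu^{2}/n)\le k\mu^{2}/2$ and a per-coordinate Pinsker argument is a genuinely different route from the paper's; it plausibly closes since the deficit $k\tau^{2}-\mathrm{mmse}\lesssim\tau^{2}\cdot k\mu^{2}=o(k\tau^{2})$ when $\mu\to0$, though you would need an independent (blockwise or i.i.d.-Bernoulli) prior to get the subadditivity $\sum_{j}I(\beta_{j};y\mid X)\le I(\beta;y\mid X)$ and then handle the support-size truncation. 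The paper instead proves all three lower bounds with one proposition.

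The genuine gap is Regime (II). You correctly diagnose that the Pinsker route dies when $\mu\to\infty$ and that one must show the support remains unidentifiable for $\mu=o(\sqrt{\log(p/k)})$, but you stop at naming the tool ("a conditional second-moment computation" or "an adaptation of AMP/state evolution") without executing it. This is precisely where the paper's technical work lives: Proposition~\ref{prop::first-order-block-prior} reduces, via the independent block prior and a conditioning argument, to a single-spike problem $\tilde y=X^{(1)}\beta^{(1)}+z$ on a block of size $m=p/k$, and Lemma~\ref{lem::first_order-lower_bound-spike_prior} shows that whenever $\lambda^{2}\le(2-\delta)\log m$ the posterior probability $p_{1}$ of the true spike location tends to zero in probability, whence the Bayes risk is $\lambda^{2}(1+o(1))$. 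Proving $p_{1}\overset{p}{\to}0$ requires writing $p_{1}=(1+\mathcal{A}_{n,m}\mathcal{B}_{n,m})^{-1}$ and establishing $\mathcal{A}_{n,m}\overset{p}{\to}1$ (a truncated weak law of large numbers, because the variance of the untruncated sum is too large for Chebyshev) and $\mathcal{B}_{n,m}\overset{p}{\to}\infty$, with Cram\'{e}r--Chernoff bounds on noncentral chi-squared tails tuned to the constant $2-\delta$; the random design makes this substantially harder than the Gaussian sequence analogue. Until you supply this (or an equivalent second-moment computation with the sharp constant $2$ in the threshold $\lambda^{2}\le(2-\delta)\log(p/k)$), the Regime (II) lower bound --- and hence the claimed equality $R=k\tau^{2}(1+o(1))$ there --- is not established.
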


The proof of Theorem \ref{thm::first-order-snr-minimax} can be found in Section \ref{sec::first-order-snr-minimax}.

\begin{remark}
 An intriguing aspect of Theorem \ref{thm::first-order-snr-minimax} is that it shows the difference of the minimax risk between Regimes I-II and Regime III, thus capturing the SNR effect to some extent. More specifically, the minimax result \eqref{non:snr:first} of Theorem \ref{thm::first-order-sparse-minimax} closely resembles that of Regime III in Theorem \ref{thm::first-order-snr-minimax}. This implies that if we do not consider any constraint on the signal strength, the minimax framework will set the SNR to a value that falls within Regime III. That is why Theorem \ref{thm::first-order-sparse-minimax} is agnostic to SNR. On the other hand, the minimax risk in fact undergoes a phase transition as SNR decreases from Regime III to Regimes I-II. Hence, a minimax analysis that is not considering the SNR will only offer a partial perspective, potentially leading to incorrect conclusions.  
\end{remark}

\begin{remark}
Another interesting feature of Theorem \ref{thm::first-order-snr-minimax} is that the minimax risk it provides in Regimes I-II enables us to evaluate the performance of estimators in moderate and low SNRs. Consider the well-known best subset selection estimator:
\begin{align}\label{eq:MLE_1}
\hat{\beta}^{BS} \in \argmin_{b: \|b\|_0 \leq k} \sum_{i=1}^n(y_i-x_i^Tb)^2. 
\end{align}
We iterate that the existing minimax framework has showed $\hat{\beta}^{BS}$ is minimax rate-optimal \cite{raskutti2011minimax}, while simulation studies \cite{hastie2020best} demonstrated its suboptimal performance when SNR is not high. The following proposition characterizes the suboptimality of $\hat{\beta}_{\rm BSS}$ in moderate-SNR and low-SNR regimes, thus offering a better explanation for the related empirical results in \cite{hastie2020best}.
\end{remark}

\begin{proposition}
\label{sub:bss}
    Under the same conditions of Theorem \ref{thm::first-order-snr-minimax}, in Regimes I-II where $\tau/\sigma=o(\sqrt{\log(p/k)})$, the best subset selection $\hat{\beta}^{BS}$ satisfies
    \[
    \frac{\sup_{\beta\in \Theta(k,\tau)}\mathbb{E}_{\beta} \|\hat{\beta}^{BS}-\beta \|_2^2 }{R(\Theta(k, \tau),\sigma)} \rightarrow \infty. 
    \]
\end{proposition}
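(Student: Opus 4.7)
The plan is to exhibit a single parameter value in $\Theta(k,\tau)$ for which the risk of $\hat{\beta}^{BS}$ already dominates $R(\Theta(k,\tau),\sigma)$. The cleanest choice is the null signal $\beta = 0$, which belongs to $\Theta(k,\tau)$ for every $\tau > 0$. Since Theorem~\ref{thm::first-order-snr-minimax} gives $R(\Theta(k,\tau),\sigma) = k\tau^2(1+o(1))$ throughout Regimes I--II, the proposition reduces to proving a null-risk lower bound of the form
$$
\E_0 \|\hat{\beta}^{BS}\|_2^2 \;\geq\; c\, \sigma^2 k \log(p/k)\, (1+o(1))
$$
for some absolute constant $c>0$. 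Plugging this into the ratio in the statement produces a lower bound of order $(c/\mu^2)\log(p/k)$, which diverges under the Regime I--II hypothesis $\mu = o(\sqrt{\log(p/k)})$.

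To prove the null-risk bound, I would first recast $\hat{\beta}^{BS}$ in projection form. Under $\beta=0$, $y=\sigma z$ with $z\sim\Normal(0,I_n)$ independent of $X$, and the inner OLS step in \eqref{eq:MLE_1} gives $\hat{\beta}^{BS}_{S^*}=(X_{S^*}^T X_{S^*})^{-1}X_{S^*}^T y$ with $S^*\in\argmax_{|S|=k} y^T P_S y$, where $P_S$ denotes orthogonal projection onto the column span of $X_S$. The assumption $(k\log p)/n\to 0$ supplies a uniform restricted-isometry-type bound $\sup_{|S|=k}\|X_S^T X_S-I_k\|_{\mathrm{op}}=o_P(1)$, via standard Gaussian singular-value tails combined with a union bound over $\binom{p}{k}$ subsets. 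Under this bound one has $\|\hat{\beta}^{BS}\|_2^2=(1+o_P(1))\,y^T P_{S^*} y$ and, simultaneously over every $|S|=k$, $y^T P_S y\geq (1-o_P(1))\,\|X_S^T y\|_2^2$. Taking any specific candidate set, say the top-$k$ subset $T^*$ of $\{|X_j^T y|\}_{j=1}^p$, and using $y^T P_{S^*} y\geq y^T P_{T^*} y$, yields
$$
\|\hat{\beta}^{BS}\|_2^2 \;\geq\; (1-o_P(1))\sum_{j\in T^*}(X_j^T y)^2.
$$

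The main step, and the main obstacle, is then to show that this sum is of order $\sigma^2 k\log(p/k)$ in expectation. Conditional on $z$, the $p$ random variables $\{X_j^T y\}_{j=1}^p=\{\sigma X_j^T z\}_{j=1}^p$ are i.i.d.\ $\Normal(0,\sigma^2\|z\|_2^2/n)$, with $\|z\|_2^2/n=1+o_P(1)$, so conditionally they are i.i.d.\ approximately $\Normal(0,\sigma^2)$. Classical extreme-order statistics for $p$ i.i.d.\ Gaussians then give that the sum of their top-$k$ squared values is at least $2\sigma^2 k\log(p/k)(1+o_P(1))$. Taking expectations over $z$ via the tower property, and truncating to control the $o_P(1)$ terms inherited from uniform RIP, from $\|z\|_2^2/n-1$, and from the order-statistic asymptotics, delivers $\E_0\|\hat{\beta}^{BS}\|_2^2\geq 2\sigma^2 k\log(p/k)(1+o(1))$. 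Combined with the first paragraph this yields a ratio of order $\log(p/k)/\mu^2\to\infty$ in both Regimes I and II, completing the plan. The delicate bookkeeping is the conversion of the in-probability lower bound to an in-expectation one while preserving the $\log(p/k)$ factor; a crude deterministic polynomial-in-$p$ envelope on $\|\hat{\beta}^{BS}\|_2^2$ suffices to make the truncation argument go through.
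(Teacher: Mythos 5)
Your proposal follows essentially the same route as the paper: evaluate the supremum risk at $\beta=0$, show that the best $k$-subset fit to pure noise captures energy of order $\sigma^2 k\log(p/k)$ (your top-$k$ order statistics of the i.i.d.\ Gaussians $X_j^Tz$ is exactly what the paper computes via the Gaussian width of the $k$-sparse unit sphere), and divide by the first-order minimax risk $k\tau^2$ to get a ratio of order $\log(p/k)/\mu^2\to\infty$. One caveat on your final truncation step: there is no deterministic polynomial-in-$p$ envelope on $\|\hat{\beta}^{BS}\|_2^2$ under $\beta=0$, since $(X_{S^*}^TX_{S^*})^{-1}$ can be arbitrarily ill-conditioned off the restricted-isometry event; the correct fix (and the one the paper uses, via Cauchy--Schwarz against the event $\{\bar{\theta}_k(X)\le 3/2\}$) is to apply the moment/truncation argument to the nonnegative surrogate $\sum_{j\in T^*}(X_j^Ty)^2$, which does admit polynomial moment bounds, after discarding the complement of the good event using only $\|\hat{\beta}^{BS}\|_2^2\ge 0$.
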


The proof of Proposition \ref{sub:bss} is presented in Section \ref{proof:sub:bss}.

While Theorem \ref{thm::first-order-snr-minimax} reveals some interesting aspects of the SNR impact on the minimax risk, the first-order asymptotic approximations it obtains do not seem to always offer accurate information. In particular, as the signal-to-noise ratio significantly increases from Regime \rom{1} to Regime \rom{2}, the first-order analysis falls short of capturing any difference. In both cases, the approximated minimax risk remains $k \tau^2$ which is the supremum risk of the naive zero estimator. As will be shown in the following section, this issue arises because the current approximation for the minimax risk is not sufficiently accurate, and our objective is to present a second-order analysis to reach accurate enough approximations.

\subsection{Second-order asymptotic approximation}
\label{second:order:sec}

As we discussed in the previous section, the first-order asymptotic approximation of the SNR-aware minimax risk is not informative enough to distinguish between Regime \rom{1} (low SNR) and Regime \rom{2} (moderate SNR). Moreover, it leads to the peculiar conclusion that the naive zero estimator is optimal in both regimes. To resolve this issue, in this section, we aim to unpack the $o(1)$ term of the first-order asymptotic approximation to derive the second-order approximation of the minimax risk, in Regimes \rom{1} and \rom{2}. These higher-order expansions, by offering more accurate approximations, not only enable us to see the impact of the SNR in the first two regimes, but also provide new and more accurate information about the optimal estimators. In this way, the second-order asymptotics makes the SNR-aware minimax framework fully effective.

\subsubsection{Second-order approximation in Regime \rom{1}}
\label{subsec::ridge}

Our first result presents a more accurate approximation of the SNR-aware minimax risk in Regime I. Define the ridge regression estimator \cite{hoerl1970ridge}:
\begin{equation}
\label{ridge:def:1970}
    \betaR(\lambda) \in \argmin_{b\in \R^{p}} \sum_{i=1}^n(y_i-x_i^Tb)^2 + \lambda \|b\|_{2}^{2},
\end{equation}
where $\lambda \geq 0$ is the tuning parameter.

\begin{theorem}\label{thm::second-order-low-snr-minimax}
    Assume model \eqref{model::gaussian-model} and parameter space \eqref{param::snr_aware}. Suppose $k/p \rightarrow 0$ and $k/n \rightarrow 0$. In Regime $\rom{1}$ where $ \tau/\sigma \rightarrow 0$, the minimax risk defined in \eqref{eq::snr_minimax} satisfies
    \begin{equation*}
        R(\Theta(k,\tau),\sigma) = k \tau^{2} \Big( 1 - \frac{k\tau^{2}}{p\sigma^2} \big(1+o(1)\big) \Big).
    \end{equation*}
    In addition, the ridge estimator $\hat{\beta}^{R}(\lambda)$ with tuning $\lambda=p\sigma^2/(k\tau^{2})$ is asymptotically minimax optimal up to the second order, i.e.
\[
 \sup_{\beta \in \Theta(k,\tau) } \mathbb{E}_{\beta} \|\hat{\beta}^{R} (\lambda) - \beta\|_2^{2} = k \tau^{2} \Big( 1 - \frac{k\tau^{2}}{p\sigma^2} \big(1+o(1)\big) \Big).
\]
\end{theorem}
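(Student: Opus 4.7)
The plan is to prove the theorem in two parts: an upper bound by direct analysis of the ridge estimator $\betaR(\lambda)$ at $\lambda = p\sigma^2/(k\tau^2)$, and a matching minimax lower bound via the Bayes risk method with a carefully chosen prior supported on $\Theta(k,\tau)$.

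For the upper bound, I would first exploit the rotational invariance of the isotropic Gaussian design $x_i \sim \mathcal{N}(0, I_p/n)$, which implies that $\E_\beta \|\betaR(\lambda) - \beta\|_2^2$ depends on $\beta$ only through $\|\beta\|_2$; the supremum over $\Theta(k,\tau)$ is therefore attained at any $k$-sparse $\beta$ with $\|\beta\|_2^2 = k\tau^2$ (e.g.\ $\beta = \tau\sqrt{k}\, e_1$). Setting $A = (X^T X + \lambda I)^{-1}$, the standard bias-variance decomposition gives
$$\E_z\|\betaR(\lambda) - \beta\|_2^2 = \lambda^2 \beta^T A^2 \beta + \sigma^2 \mathrm{tr}(A X^T X A).$$
Since $\lambda \to \infty$ in Regime I (as $\mu=\tau/\sigma \to 0$ with $k/p \to 0$), I would Taylor-expand $\lambda^2 A^2 = (I + X^T X/\lambda)^{-2} = I - 2 X^T X/\lambda + 3(X^T X)^2/\lambda^2 - \cdots$ and similarly $A X^T X A = X^T X/\lambda^2 + O(\lambda^{-3})$, then take expectation over $X$ using $\E[X^T X] = I_p$ together with trace-moment estimates for the Wishart-type matrix $X^T X$. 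The bias contributes $\|\beta\|_2^2 - 2\|\beta\|_2^2/\lambda$ and the variance contributes $\sigma^2 p/\lambda^2$; substituting $\|\beta\|_2^2 = k\tau^2$ and $\lambda = p\sigma^2/(k\tau^2)$ produces the claimed leading part $k\tau^2 - (k\tau^2)^2/(p\sigma^2)$, with the residual terms shown to be $o((k\tau^2)^2/(p\sigma^2))$ using $\mu \to 0$ and $k/n \to 0$.

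For the lower bound, I would use a prior $\pi$ supported on $\Theta(k,\tau)$ defined as follows: draw $S \subset [p]$ uniformly among $k$-subsets, set $\beta_i = \tau \xi_i$ for $i \in S$ with iid Rademacher $\xi_i$, and $\beta_i = 0$ otherwise. Then $\pi$ is supported in $\Theta(k,\tau)$ and has marginal covariance $(k\tau^2/p) I_p$, which matches the covariance of the Gaussian prior whose posterior mean is exactly the targeted ridge estimator. Since $R(\Theta(k,\tau),\sigma)$ dominates the Bayes risk of $\pi$, it suffices to show
$$\E_\pi\, \E_\beta \bigl\| \E[\beta \mid y, X] - \beta \bigr\|_2^2 \geq k\tau^2\bigl(1 - k\tau^2/(p\sigma^2)\bigr)(1 - o(1)).$$
In Regime I the log-likelihood $-\|y - X\beta\|_2^2/(2\sigma^2)$ is, on the support of $\pi$, a perturbation of order $\tau/\sigma$ of a function affine in $\beta$, so expanding $\E[\beta \mid y, X]$ in powers of $\tau/\sigma$ shows that to leading order the posterior mean coincides with the same ridge estimator. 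The resulting Bayes risk expansion then matches the ridge upper bound, because at the relevant order the dependence on $\pi$ is only through its covariance $(k\tau^2/p) I_p$.

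The main obstacle is the second-order precision of the lower bound: the Bayes-optimal estimator for the discrete prior $\pi$ is nonlinear and a priori has strictly smaller risk than any linear estimator, so one must certify that the nonlinear improvement is of smaller order than the targeted gain $(k\tau^2)^2/(p\sigma^2)$ over the trivial zero estimator. Concretely this requires expanding $\E[\beta \mid y, X]$ beyond the leading term in $\tau/\sigma$ and showing that the corrections—which depend on higher moments of $\pi$ beyond the covariance (for instance the fourth moment, which diverges from the Gaussian value) and on fluctuations of $X^T X$ around its mean—contribute only $o((k\tau^2)^2/(p\sigma^2))$ to the Bayes risk. Equivalently, the posterior must be ``almost Gaussian'' to leading two orders in $\tau/\sigma$, so that the nonlinearity of the Bayes-optimal estimator only appears at the third order of the expansion; verifying this is where I expect the bulk of the technical work to reside.
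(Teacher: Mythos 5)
Your upper bound is essentially the paper's argument: the same bias--variance decomposition $\E\|\hat\beta^R-\beta\|_2^2=\lambda^2\E\|(X^TX+\lambda I)^{-1}\beta\|_2^2+\E\|(X^TX+\lambda I)^{-1}X^Tz\|_2^2$, the same expansion of $(I+X^TX/\lambda)^{-2}$, and the same Wishart moment $\E(X^TX)^2=\frac{n+p+1}{n}I_p$. One small caution: a truncated Taylor series is not automatically an upper bound; the paper instead uses the one-sided deterministic inequality $(1+x)^{-2}\le 1-2x+3x^2$ for $x\ge 0$ applied to the eigenvalues, which is the clean way to make your expansion rigorous. That part of your plan is fine.

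The lower bound, however, has a genuine gap, and it is precisely the step you yourself flag as ``where the bulk of the technical work resides.'' Matching the prior's covariance to $(k\tau^2/p)I_p$ only guarantees that the \emph{linear} Bayes estimator for $\pi$ has risk $\approx k\tau^2(1-k\tau^2/(p\sigma^2))$; the actual Bayes risk of a discrete prior can be strictly and non-negligibly smaller, and the whole content of the second-order statement is to rule out an improvement of order $(k\tau^2)^2/(p\sigma^2)$ coming from the nonlinearity of the posterior mean. Your proposal does not supply that argument, and the heuristic ``the posterior is almost Gaussian to two orders in $\tau/\sigma$'' is not a proof. Moreover, your choice of prior (uniform $k$-subset with Rademacher signs) makes this verification harder than necessary: the posterior does not factorize across coordinates, so there is no reduction to a low-dimensional computation. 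The paper instead uses the \emph{symmetric independent block} prior $\pi_{\pm IB}(\mu;p,k)$ (one $\pm\mu$ spike per block of size $m=p/k$, independently across blocks), so that by the ``further conditioning reduces MSE'' argument the Bayes risk lower-bounds as $k$ times the Bayes risk of a single symmetric spike prior in dimension $m$. Even then, establishing $B(\pi_{\pm S}(\mu;m))\ge \mu^2-\frac{\mu^4}{m}(1+o(1))$ requires writing the posterior probabilities explicitly and proving sharp two-sided moment bounds for them under the random design (the $\E_{\mu e_1}\mathcal{P}_1\le \frac{\mu^2}{m}(1+o(1))$ and $(m-1)\E_{\mu e_2}\mathcal{P}_1^2\ge \frac{\mu^2}{m}(1+o(1))$ estimates, which occupy several lemmas). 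Without either the block structure or these explicit posterior computations, your lower bound does not reach the claimed second-order precision.
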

The proof of this theorem is presented in Section \ref{proof:reg1:ridge}. The condition $k/n\rightarrow 0$ is very mild in the high-dimensional sparse regression (even weaker than the standard condition $(k\log p)/n\rightarrow 0$). This theorem provides theoretical insights not covered by Theorem \ref{thm::first-order-snr-minimax}, as we clarify below.

\begin{remark}
The second-order approximation of the minimax risk, which is more accurate than the first-order approximation, offers much more refined information. First, we start seeing the impact of the SNR ($\mu=\tau/\sigma)$ in the minimax risk. The higher the SNR, the larger the absolute value of the second-order term, and the lower the minimax risk will be. Second, we start seeing the difference between different estimators. We see that the ridge estimator outperforms the zero estimator and is in fact the optimal estimator. 
\end{remark}

\begin{remark}
The optimality of the ridge estimator established in Theorem \ref{thm::second-order-low-snr-minimax} provides a sound theoretical justification, for the superior performance of the ridge estimator (compared with estimators such as best subset selection and Lasso) in low-SNR scenarios as reported elsewhere based on simulation studies \cite{hastie2020best, wang2020bridge}.
\end{remark}

\subsubsection{Second-order approximation in Regime \rom{2}}
\label{subsec::enet}
Define the following elastic-net regularized estimator \cite{zou2005regularization}:
\begin{align}
\label{elastic:reg:est}
\hat{\beta}^E(\lambda,\gamma)\in \argmin_{b\in \mathbb{R}^p}\Big\|\sum_{i=1}^ny_ix_i-b\Big \|_2^2+\lambda \|b\|_1+\gamma \|b\|_2^2,
\end{align}
where $\|\cdot\|_1$ is the $\ell_1$-norm, and $\lambda,\gamma\geq 0$ are the tuning parameters.

\begin{theorem}\label{thm::second-order-med-snr-minimax}
    Assume model \eqref{model::gaussian-model} and parameter space \eqref{param::snr_aware}. Consider Regime $\rom{2}$ where $k/p \rightarrow 0, \tau/\sigma \rightarrow \infty$ and $\tau/\sigma = o(\sqrt{\log (p/k)})$. 
    \begin{itemize}
    \item[(i)] If $(\log(p/k))/n \rightarrow 0$ and $\tau^4/(n\sigma^4)\rightarrow 0$, the minimax risk defined in \eqref{eq::snr_minimax} satisfies
    \begin{equation}
    \label{mmlower:reg:II}
        R(\Theta(k,\tau),\sigma) \geq k \tau^{2} \Big( 1 -\frac{1+o(1)}{2}\cdot \frac{k}{p} \cdot e^{\frac{\tau^2}{\sigma^2}} \Big).
    \end{equation}
    \item[(ii)] If $(k(\log(p/k))^2)/n \rightarrow 0$, and there exists a constant $\alpha>0$ such that $\left(\frac{p}{k} \right)^{\alpha} \leq n$, then the estimator $\hat{\beta}^E(\lambda,\gamma)$ with tuning $\lambda=4\tau,\gamma=\frac{p\sigma^2}{2k\tau^2}e^{-\frac{3\tau^2}{2\sigma^2}}-1$ satisfies
    \begin{equation}
    \label{mmupper:reg:II}
   \sup_{\beta\in \Theta(k,\tau)}\mathbb{E}_{\beta} \|\hat{\beta}^E(\lambda,\gamma)-\beta \|_2^2 \leq k \tau^{2} \Big( 1 - \frac{2+o(1)}{\sqrt{2\pi}}\cdot \frac{k}{p} \cdot \frac{\sigma}{\tau}e^{\frac{\tau^2}{\sigma^2}} \Big).
    \end{equation}
    \end{itemize}
\end{theorem}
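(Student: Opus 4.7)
Both parts rest on reducing the random-design linear regression problem to an (approximate) Gaussian sequence model. Since $x_i \stackrel{iid}{\sim} N(0, I_p/n)$, the natural sufficient statistic obeys
\[
\sum_{i=1}^n y_i x_i \;=\; X^T y \;=\; \beta \,+\, (X^TX - I_p)\beta \,+\, \sigma X^T z,
\]
with the two perturbation terms uniformly small in $j \in [p]$ by Wishart and Gaussian concentration under the assumed scaling. Writing $T = \tau/\sigma$ and $\epsilon = k/p$, the decisive feature of Regime $\rom{2}$ is that $\epsilon e^{\alpha T^2} \to 0$ for every fixed $\alpha > 0$; this both separates the target second-order correction $\epsilon e^{T^2}$ from higher-order terms like $\epsilon^2 e^{3T^2}$ and keeps the sequence-model approximation tight enough.

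\textbf{Upper bound in (ii).} The elastic-net objective in \eqref{elastic:reg:est} is separable in $b$, with closed-form coordinate-wise minimizer
\[
\hat\beta_j^E \;=\; C \cdot \mathrm{ST}\bigl((X^Ty)_j,\, 2\tau\bigr), \qquad C \;=\; \tfrac{1}{1+\gamma} \;=\; \tfrac{2k\tau^2}{p\sigma^2} e^{3T^2/2},
\]
i.e.\ a soft-threshold at level $\lambda/2 = 2\tau$ followed by shrinkage by $C$. Writing $(X^Ty)_j = \beta_j + V_j$ with $V_j \approx N(0, \sigma^2)$ uniformly in $j$ (the union bound over $p$ coordinates uses $(p/k)^\alpha \leq n$), I would evaluate the supremum risk at the worst-case $\beta$ with $k$ entries at $\pm\tau$. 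Zero coordinates contribute only via the tail event $\{|V_j| > 2\tau\}$: by Mills' ratio this gives $\sim 2C^2 \sigma^2 \phi(2T)/(2T)^3$ per coordinate, totaling $+\tfrac{2k\sigma^2 T \epsilon}{\sqrt{2\pi}} e^{T^2}$. For a nonzero coordinate, expand $\mathbb{E}[(\hat\beta_j^E - \tau)^2] = \tau^2 - 2\tau \mathbb{E}[\hat\beta_j^E] + \mathbb{E}[(\hat\beta_j^E)^2]$: the leading Gaussian-tail parts of $\tau^2$ and $2\tau \mathbb{E}[\hat\beta_j^E]$ cancel by the specific choice of shrinkage, leaving the correction $-2\tau \mathbb{E}[\hat\beta_j^E] \sim -\tfrac{4\sigma^2 T \epsilon}{\sqrt{2\pi}} e^{T^2}$ per nonzero coordinate (the quadratic term is smaller by a factor $\epsilon e^{3T^2/2} = o(1)$). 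Summing the $k$ nonzero and $\sim p$ zero contributions yields $k\tau^2 - \tfrac{2 + o(1)}{\sqrt{2\pi}} k \sigma^2 T \epsilon\, e^{T^2}$, which equals \eqref{mmupper:reg:II} after the identity $\sigma^2 T = \tau^2 \sigma / \tau$.

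\textbf{Lower bound in (i).} I would lower-bound the minimax risk by the Bayes risk under the iid prior $\pi = \pi_0^{\otimes p}$ with $\pi_0 = \tfrac{\epsilon}{2}\delta_\tau + \tfrac{\epsilon}{2}\delta_{-\tau} + (1-\epsilon)\delta_0$, conditioned on $\beta \in \Theta(k,\tau)$ (this event has probability bounded away from $0$ by the CLT for $\|\beta\|_0 \sim \mathrm{Bin}(p,\epsilon)$, so truncation changes the Bayes risk by only a vanishing fraction). Under the sequence-model reduction, the Bayes problem decouples into $p$ one-dimensional problems with posterior mean
\[
\hat\beta_B(y) \;=\; \tau \cdot \frac{\epsilon e^{-T^2/2} \sinh(Ty/\sigma)}{(1-\epsilon) + \epsilon e^{-T^2/2} \cosh(Ty/\sigma)},
\]
and per-coordinate Bayes risk $r_0 = \tau^2 \epsilon - \mathbb{E}_y[\hat\beta_B(y)^2]$. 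Bounding the denominator below by $(1-\epsilon)^2$ and using $\sinh^2(x) = (\cosh(2x) - 1)/2$ reduces everything to Gaussian moment generating functions, giving $\mathbb{E}_y[\hat\beta_B(y)^2] \leq \tau^2 \epsilon^2 e^{T^2}(1 + o(1))/2$. Multiplying $p r_0$ yields $R(\Theta(k,\tau), \sigma) \geq k\tau^2 \bigl(1 - (1+o(1))(k/p) e^{T^2}/2\bigr)$, matching \eqref{mmlower:reg:II}.

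\textbf{Main obstacle.} The hardest step in both parts is the quantitative justification of the sequence-model reduction, so that $o(1)$ perturbation errors do not swamp the exponentially small target $\epsilon e^{T^2}$. For the upper bound, one must uniformly control $((X^TX - I)\beta)_j$, whose typical scale is $\sqrt{k/n}\,\tau$, against the tail event $|V_j| > 2\tau$ that drives the second-order correction; this is precisely where the strengthened conditions $(k(\log(p/k))^2)/n \to 0$ and $(p/k)^\alpha \leq n$ are needed. For the lower bound, the regression posterior does not factorize exactly across coordinates because of the quadratic coupling $\beta^T(X^TX - I)\beta/(2\sigma^2)$ in the log-likelihood; one must therefore either (a) develop a perturbation expansion of the regression posterior around the sequence-model posterior at a level of precision matching $\epsilon e^{T^2}$, or (b) construct a coordinate-specific surrogate observation (e.g.\ projecting $X^Ty$ along $e_j$) whose Bayes risk is tractable and preserves the constant $1/2$.
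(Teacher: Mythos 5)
Your heuristic accounting recovers the correct constants in both bounds, and you correctly locate the difficulty in justifying the sequence-model reduction; but that reduction is precisely the content of the proof, and as proposed it has two concrete failure points in the lower bound. First, your prior: with $\pi_0^{\otimes p}$ and $\|\beta\|_0\sim\mathrm{Bin}(p,k/p)$, the event $\{\beta\in\Theta(k,\tau)\}$ has probability $\approx 1/2$, not $1-o(1)$. The target is a \emph{second-order} term of relative size $\frac{k}{2p}e^{\tau^2/\sigma^2}=o(1)$, so relating $B(\pi|_{\Theta})$ to $B(\pi)$ via $\E_{\pi|_\Theta}[L]=\big(\E_\pi[L]-\E_\pi[L\mathbbm{1}_{\Theta^c}]\big)/\mathbb{P}(\Theta)$ introduces constant-factor slop that swamps the correction; "truncation changes the Bayes risk by a vanishing fraction" is exactly what needs proof and is not true at the required precision without extra work. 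Second, the decoupling "into $p$ one-dimensional problems" is false for the regression likelihood, which couples coordinates through $\beta^TX^TX\beta$; your posterior-mean formula is for the sequence model only. The paper sidesteps both issues with the symmetric independent block prior $\pi_{\pm IB}(\mu;p,k)$: it is supported exactly in $\Theta(k,\tau)$, and an information (conditioning) argument reduces the Bayes risk to $k$ independent single-spike regression subproblems, each of which is then analyzed \emph{directly under the random design} — via the change of variables $v\propto \frac{n}{\mu}X_1-z$ making $(y,v,X_2,\dots,X_m)$ independent, exchangeability/truncation arguments, and noncentral chi-squared moment generating functions. That computation (Lemmas~\ref{lem::regime2-E-p_m}--\ref{lem::regime2-E-p_m^2}) is the bulk of the proof and is entirely deferred in your option "(a)/(b)".

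The upper bound is much closer to the paper's route, but the key device you are missing is that the paper makes the noise \emph{exactly} a Gaussian scale mixture rather than "approximately $N(0,\sigma^2)$": writing $(X^Ty)_i=\beta_i+w_i+\Delta_i$ with $w_i=\sum_{j\neq i}X_i^TX_j\beta_j+X_i^T\tilde X_i\beta_i+X_i^Tz$ for an independent copy $\tilde X$, one gets $w_i\mid X_i\sim N\big(0,(\|\beta\|_2^2+n)\|X_i\|_2^2/n\big)$ exactly, and $\E\|\Delta\|_2^2$ is negligible. This matters because the correction is driven by tails of size $e^{-2\tau^2/\sigma^2}$, which are sensitive to $O(\tau^2\cdot\eta)$ relative perturbations of the noise variance; the paper controls the random variance through Lemma~\ref{concen:chi:exp} (bounds on $\E(Q^{c_1}e^{-c_2/Q})$ for $Q\sim\chi^2_n$) under $k\mu^4/n\to0$, $\mu^6/n\to0$. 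You also assert without proof that the worst case is $k$ entries at $\pm\tau$; this requires the variational result (Lemma~\ref{1d:risk:prop}, a scale-mixture generalization of Lemma~\ref{lem::elastic-net-risk}) showing the supremum at fixed $\|\beta\|_2$ is attained at equal nonzero entries, followed by an optimization over the common magnitude $\delta\in[0,\mu]$. In short: correct roadmap and constants, but the lower-bound prior/decoupling would fail as stated, and the quantitative control that makes both bounds sharp to second order is absent.
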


The proof of this theorem can be found in Section \ref{proof:thm:regime2}. 

\begin{remark}
The condition required in the lower bound of Theorem \ref{thm::second-order-med-snr-minimax} can be easily satisfied if we make the weak assumption $(\log(p/k))^2/n\rightarrow 0$, because Regime II considers $\tau^2/\sigma^2 \ll \log(p/k)$. The condition $(k(\log(p/k))^2)/n \rightarrow 0$ in the upper bound is also mild, since it is comparable to the standard condition $(k\log(p/k))/n \rightarrow 0$ in the sparse linear regression literature. The other condition $\left(\frac{p}{k} \right)^{\alpha} \leq n$ is stronger, as it rules out the possibility of exponential growing of $p$ with $n$. That said, it allows $p$ to grow with $n$ at any polynomial rate. 
\end{remark}

\begin{remark}
In Theorem \ref{thm::second-order-med-snr-minimax}, the upper and lower bounds do not exactly match. However, the gap is only up to an order of $\tau/\sigma$ in the second-order term, which is very small given the presence of the exponentially large term $e^{\tau^2/\sigma^2}$. Hence, the elastic-net regularized estimator $\hat{\beta}^E(\lambda,\gamma)$ can be concluded to be nearly minimax optimal in Regime II. As in Regime I, being much more informative than the first-order approximation, the second-order approximation reveals the impact of SNR on the risk. As SNR increases from Regime I to Regime II, the second-order term in the minimax risk becomes smaller implying that the zero estimator can be outperformed by a larger margin. 
\end{remark}

\begin{remark}
Compared to the optimal estimator $\hat{\beta}^R(\lambda)$ in Regime I, the (nearly) minimax optimal estimator $\hat{\beta}^E(\lambda,\gamma)$ in Regime II employs $\ell_1$-regularization in addition to the $\ell_2$-regularization. To shed light on the importance of $\ell_1$-regularization in Regime II, the following proposition reveals the suboptimality of $\hat{\beta}^R(\lambda)$ in this regime. 
\end{remark}

\begin{proposition}\label{prop::ridge-suboptimality-second-regime}

    Assume model \eqref{model::gaussian-model} and parameter space \eqref{param::snr_aware}. In Regime $\rom{2}$ where $k/p \rightarrow 0, \tau/\sigma \rightarrow \infty$ and $\tau/\sigma = o(\sqrt{\log (p/k)})$, if $(k\log(p/k))/n \rightarrow 0$, the ridge estimator \(\hat{\beta}^R(\lambda)\) defined in \eqref{ridge:def:1970} is suboptimal in the minimax sense, because
    \begin{equation*}
        \inf_{\lambda>0}\sup_{\beta \in \Theta(k,\tau)}\E\|\hat{\beta}^R(\lambda) - \beta\|_2^2 \ge k \tau^{2} \Big( 1 - \frac{k\tau^{2}}{p\sigma^2} \big(1+o(1)\big) \Big).
    \end{equation*}
\end{proposition}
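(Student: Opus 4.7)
The plan is to exploit the orthogonal invariance of the isotropic Gaussian design to reduce the sup over $\Theta(k,\tau)$ to the ridge risk at a single explicit signal, then minimize in $\lambda$ in closed form and conclude via a one-line Jensen bound. The conceptual point is that, under this design, the ridge risk depends on $\beta$ only through $\|\beta\|_2$, so the sparsity constraint buys ridge nothing and the problem collapses to a one-dimensional calculus problem.

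\textbf{Step 1 (isotropy reduction).} With $H := X^T X$, the usual bias--variance decomposition gives
\[
\mathbb{E}\bigl[\|\hat\beta^R(\lambda)-\beta\|_2^2 \mid X\bigr] = \lambda^2\beta^T(H+\lambda I)^{-2}\beta + \sigma^2\,\mathrm{tr}\bigl(H(H+\lambda I)^{-2}\bigr).
\]
Since $X\overset{d}{=}XQ$ for every orthogonal $Q$ (the entries of $X$ are i.i.d.\ $\mathcal{N}(0,1/n)$), the law of $(H+\lambda I)^{-2}$ is invariant under $M\mapsto Q^T M Q$, which forces
\[
\mathbb{E}\bigl[\beta^T(H+\lambda I)^{-2}\beta\bigr] = \frac{\|\beta\|_2^2}{p}\,\mathbb{E}\,\mathrm{tr}\bigl((H+\lambda I)^{-2}\bigr)
\]
for every fixed $\beta$. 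Taking $\beta^\star = \tau\,\mathbbm{1}_{\{1,\dots,k\}}\in\Theta(k,\tau)$, which attains $\|\beta^\star\|_2^2 = k\tau^2$, yields
\[
\sup_{\beta\in\Theta(k,\tau)} \mathbb{E}\|\hat\beta^R(\lambda)-\beta\|_2^2 \;\geq\; R(\lambda) := \frac{\lambda^2 k\tau^2}{p}\,\mathbb{E}\,\mathrm{tr}\bigl((H+\lambda I)^{-2}\bigr) + \sigma^2\,\mathbb{E}\,\mathrm{tr}\bigl(H(H+\lambda I)^{-2}\bigr).
\]

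\textbf{Step 2 (optimal tuning and Jensen).} The identity $H(H+\lambda I)^{-3} = (H+\lambda I)^{-2} - \lambda(H+\lambda I)^{-3}$ collapses $R'(\lambda)$ into $2(\lambda k\tau^2/p - \sigma^2)\,\mathbb{E}\,\mathrm{tr}(H(H+\lambda I)^{-3})$, whose trace factor is strictly positive; the unique minimizer is $\lambda^\star = p\sigma^2/(k\tau^2)$, and the relation $\lambda^\star k\tau^2/p = \sigma^2$ telescopes the two terms of $R(\lambda^\star)$ to $\sigma^2\,\mathbb{E}\,\mathrm{tr}((H+\lambda^\star I)^{-1})$. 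The map $A \mapsto \mathrm{tr}((A+\lambda^\star I)^{-1})$ is convex on positive semidefinite matrices (trace of the operator-convex map $B\mapsto B^{-1}$), and $\mathbb{E}[H] = I_p$ under the normalization $x_i \sim \mathcal{N}(0, I_p/n)$, so Jensen's inequality yields
\[
\inf_{\lambda>0} R(\lambda) \;\geq\; \sigma^2\,\mathrm{tr}\bigl((I_p+\lambda^\star I)^{-1}\bigr) = \frac{\sigma^2 p}{1+\lambda^\star} = \frac{k\tau^2}{1+k\tau^2/(p\sigma^2)}.
\]
In Regime II, $k\tau^2/(p\sigma^2) = (k/p)\mu^2 \leq (k/p)\log(p/k) \to 0$ (since $r\log(1/r)\to 0$ as $r = k/p\to 0$), so a geometric expansion yields $k\tau^2\bigl(1 - (k\tau^2/(p\sigma^2))(1+o(1))\bigr)$, matching the claim.

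\textbf{Main obstacle.} None of the three ingredients is deep in isolation; the conceptual heart is Step~1, where isotropy of the design forces the ridge risk to depend on $\beta$ only through $\|\beta\|_2^2$, thereby robbing ridge of any benefit from the sparsity constraint. The one quantitative point meriting care is the tightness of the Jensen bound, which is asymptotically sharp only because $\lambda^\star = p\sigma^2/(k\tau^2)\to\infty$ in Regime II (a consequence of $\mu^2 \ll \log(p/k)$ and $k/p\to 0$), so that the fluctuations of the eigenvalues of $H$ around those of $\mathbb{E}[H] = I_p$ are negligible at the second-order scale $k\tau^2/(p\sigma^2)$ we are tracking.
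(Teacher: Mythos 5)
Your proposal is correct, and it takes a genuinely different and noticeably cleaner route than the paper's proof. Both arguments start by replacing the supremum over $\Theta(k,\tau)$ with a single signal of norm $\|\beta^\star\|_2^2=k\tau^2$, but from there they diverge. The paper keeps the exact risk decomposition only as a starting point and then builds two separate lower bounds: one ($f(\lambda)$) from the rank deficiency of $X^TX$ when $p>n$ plus a concentration bound on the top Wishart eigenvalue (Corollary 5.35 of Vershynin), and one ($g(\lambda)$) from the scalar inequality $1/(1+x)^2\geq 1-2x$ together with fourth-moment identities for $X^TX$ and $XX^T$; it then needs a case analysis to show the optimal $\lambda_n^\ast$ satisfies $\lambda_n^\ast/(1+\sqrt{p/n})^2\to\infty$ before evaluating $g$ there. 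You instead observe that by orthogonal invariance $\mathbb{E}(H+\lambda I)^{-2}$ is a multiple of the identity, so the expected risk at $\beta^\star$ is the exact function $R(\lambda)$ of $\lambda$ alone; the derivative identity shows $R$ is minimized at the closed-form $\lambda^\star=p\sigma^2/(k\tau^2)$ regardless of the spectrum of $H$, the two terms telescope to $\sigma^2\,\mathbb{E}\,\mathrm{tr}((H+\lambda^\star I)^{-1})$, and a single Jensen step (convexity of $B\mapsto\mathrm{tr}((B+\lambda^\star I)^{-1})$ plus $\mathbb{E}H=I_p$) gives the non-asymptotic bound $k\tau^2/(1+k\tau^2/(p\sigma^2))$, which expands to the claimed second-order form since $(k/p)\mu^2\to 0$ in Regime II. What your route buys: no eigenvalue concentration, no case analysis on $\lambda$, a bound that holds for all $n,p,k$ without the condition $(k\log(p/k))/n\to 0$, and the conceptually sharper observation that under isotropic design the ridge risk depends on $\beta$ only through $\|\beta\|_2$, so sparsity cannot help it. The only points worth making explicit in a polished write-up are the routine justification for differentiating $R$ under the expectation (dominated convergence, using the deterministic bounds $\mathrm{tr}((H+\lambda I)^{-3})\leq p\lambda^{-3}$ on compact $\lambda$-intervals) and the strict positivity of $\mathbb{E}\,\mathrm{tr}(H(H+\lambda I)^{-3})$, which holds since $H\neq 0$ almost surely.
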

The proof of this proposition can be found in Section \ref{ssec:proof:ridge:suboptimal}.   

\begin{remark}
Proposition \ref{prop::ridge-suboptimality-second-regime} shows that the supremum risk of (optimally tuned) ridge estimator has a much larger second-order term than the minimax risk $R(\Theta(k,\tau),\sigma)$, hence it becomes suboptimal in Regime II. This proposition together with Theorems \ref{thm::second-order-low-snr-minimax} and \ref{thm::second-order-med-snr-minimax} provide insights into the estimation of sparse signals when the SNR level is not high: (1) In low-SNR scenarios where the variance dominates the estimation error, $\ell_2$-regularization is (minimax) optimal even though it does not produce sparse estimators; (2) In moderate-SNR scenarios in which the sparse signal strength becomes stronger, adding the sparsity-promoting $\ell_1$-regularization helps attain optimality.  
\end{remark}

\section{Simulations} \label{simulation:sec}

Sections \ref{intro:label} and \ref{sec::snr-aware-minimaxity} discussed the inadequacy of classical minimax results for characterizing the role of signal-to-noise ratio (SNR), and then developed the SNR-aware minimax framework. The simulation results in this section will offer further support to the theoretical results above. 

For our simulation studies, we generate the signal $\beta\in \mathbb{R}^p$ in the following way: for a randomly sampled index set \(S\) of cardinality \(k\), \(\beta_i = \tau \,\forall i\in S; \, \beta_i=0\) otherwise. The design matrix \(X \in \mathbb{R}^{n \times p}\) is generated such that each row $x_i$ is sampled independently from $\mathcal{N}(0, \frac{1}{n}I_p)$. Then \(y \in \mathbb{R}^n\) is generated as \(y_{i} = x_{i}^{T}\beta + \sigma z_{i}, \, i=1, \ldots, n\), where \(z_i\)'s are the standard normal errors independent of \(X\). The signal-to-noise ratio (SNR) is defined as SNR $:=\frac{\tau}{\sigma}$. We fix $\tau$ and change the value of $\sigma$ to vary the SNR level. The (scaled) MSE is defined as the average of \(\|\hat{\beta}-\beta\|_2^2/\|\beta\|_2^2\) over 150 repeated experiments.

We consider the following four estimators in our experiments.
\begin{enumerate}
\item The best subset selection estimator in \eqref{eq:MLE_1}:
\begin{align*}    
\hat{\beta}^{BS} \in \argmin_{b: \|b\|_0 \leq k} \sum_{i=1}^n(y_i-x_i^Tb)^2. 
\end{align*}
Computing the best subset selection estimator is an NP-hard optimization problem, and certifying the optimality of the solution is also of exponential time complexity. We use the branch-and-bound style algorithm proposed by~\cite{hastie2020best} for our implementation. We have only considered experiments where the algorithm provides a certification of \emph{optimality} of the computed estimator. The performance of best subset selection estimator in our plots is denoted in purple with the caption `best-subset'.
    \item The Lasso estimator:
\begin{equation*}
    \beta^L(\lambda) \in \argmin_{b\in \R^{p}} \sum_{i=1}^n(y_i-x_i^Tb)^2 + \lambda \|b\|_{1},
\end{equation*}
where \(\lambda \ge 0\) is a tuning parameter. It is implemented by the \texttt{glmnet} package in R. The performance of the Lasso estimator in our plots is denoted in blue with the caption `lasso'. 
    \item The elastic-net regularized estimator in \eqref{elastic:reg:est}:
\begin{align*}
\hat{\beta}^E(\lambda,\gamma)\in \argmin_{b\in \mathbb{R}^p}\Big\|\sum_{i=1}^ny_ix_i-b\Big \|_2^2+\lambda \|b\|_1+\gamma \|b\|_2^2,
\end{align*}
    where $\lambda$ and $\gamma$ are the tuning parameters. The implementation is according to the closed-form expression provided in \eqref{els:reform}. The performance of the elastic-net regularized estimator in our plots is denoted in green with the caption `enet'. 
    \item The ridge regression estimator in \eqref{ridge:def:1970}:
\begin{equation*}
    \betaR(\lambda) \in \argmin_{b\in \R^{p}} \sum_{i=1}^n(y_i-x_i^Tb)^2 + \lambda \|b\|_{2}^{2},
\end{equation*}
where $\lambda \geq 0$ is the tuning parameter. The implementation is performed by computing the closed form $\hat{\beta}^R(\lambda) = ( X^TX + \lambda I_p )^{-1}X^Ty$. The performance of the ridge estimator in our plots is denoted in red with the caption `ridge'. 
\end{enumerate}

All the hyperparameters, e.g. $\lambda, \gamma$, are optimally tuned for a fair comparison. The experiments are run on 28 cores of Intel Xeon Gold 6226 2.9 Ghz CPUs, each CPU with 16 cores and memory 192 GB.\footnote{For code and implementation details, please see \href{https://github.com/shubhangighosh/SNR-aware-minimaxity-for-linear-regression}{https://github.com/shubhangighosh/SNR-aware-minimaxity-for-linear-regression}.}

We summarize our simulation results in the following:

\begin{itemize}
\item We set $n=500$ and \(p=500, 1000\) for different undersampling fractions. Two possible sparsity levels are considered for each setting. For \(p=500\), we consider the sparsity levels \(k=12,35\). For \(p=1000\), we consider the sparsity levels, \(k=25,68\). The best subset selection estimator is excluded in the comparison due to its computational infeasibility under the considered scales. Referring to Figure \ref{fig:snr-aware-minimax-full}, in each of the plots, we observe three regimes based on the SNR. In the high-SNR regime, the Lasso dominates the other estimators. There exists a medium-SNR regime, akin to the theoretical results in Section \ref{subsec::enet}, where our elastic-net regularized estimator dominates. Then, in the low-SNR regime, as consistent with the optimality result in Section \ref{subsec::ridge}, the ridge estimator performs the best.

\item To make the best subset selection estimator computationally feasible, we also consider smaller-scale experiments where $n=75, p=75, 150$. As shown in Figure \ref{fig:snr-aware-minimax-full-bss}, the best subset selection estimator outperforms the other estimators for very high SNRs. But as the SNR deceases, its performance quickly deteriorates. This is aligned with our theoretical characterization in Proposition \ref{sub:bss}. Moreover, in this small-scale setup, we are still able to observe the medium-SNR regime where the elastic-net regularized estimator performs best, and the low-SNR regime where the ridge estimator dominates.

\end{itemize}

\begin{figure}[t!]
    \centering
    \begin{subfigure}[b]{0.495\linewidth}
        \centering
        \includegraphics[width=\linewidth]{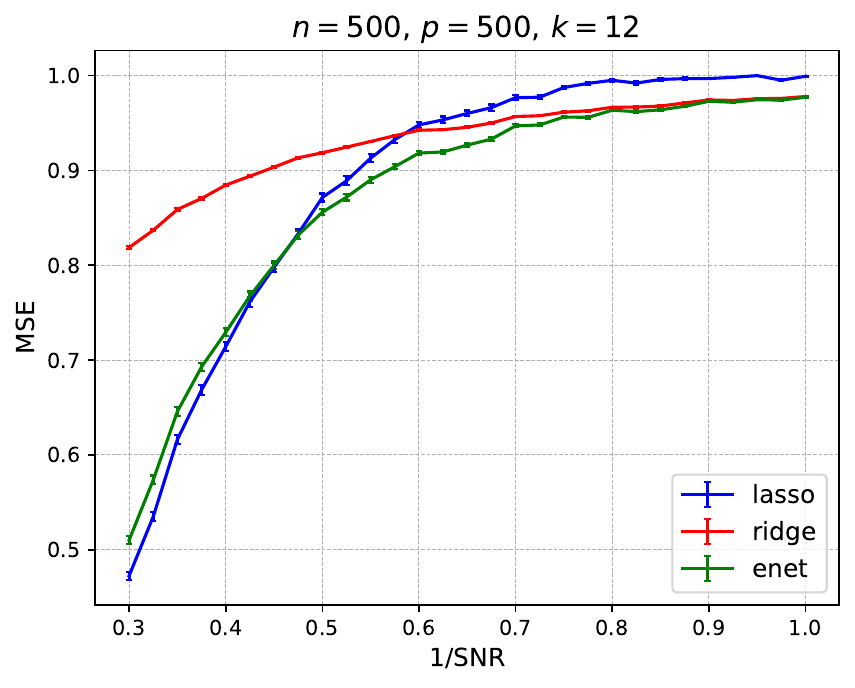}
    \end{subfigure}
    \hfill
    \begin{subfigure}[b]{0.495\linewidth}
        \centering
        \includegraphics[width=\linewidth]{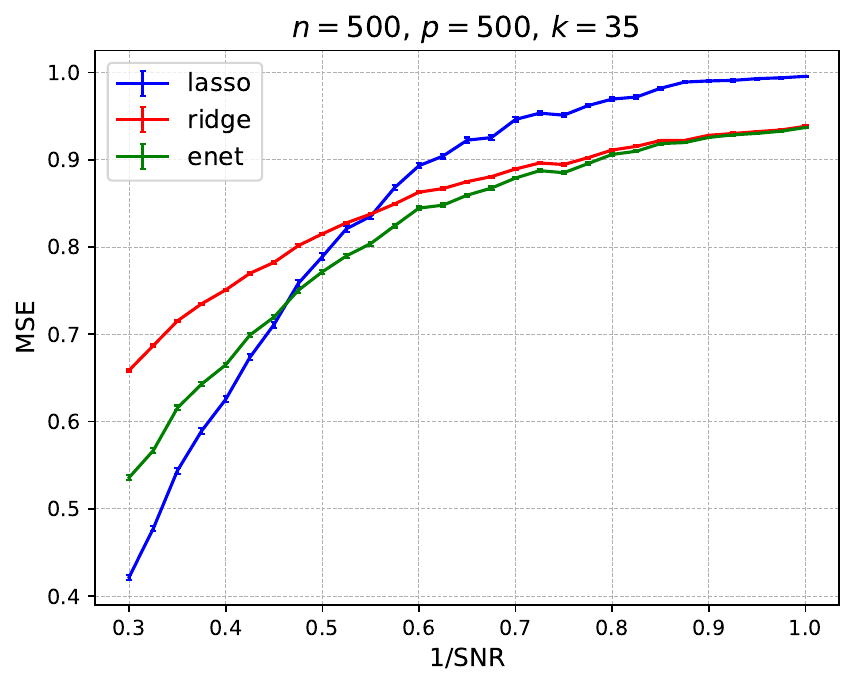}
    \end{subfigure}
    \vskip\baselineskip
    \begin{subfigure}[b]{0.495\linewidth}
        \centering
        \includegraphics[width=\linewidth]{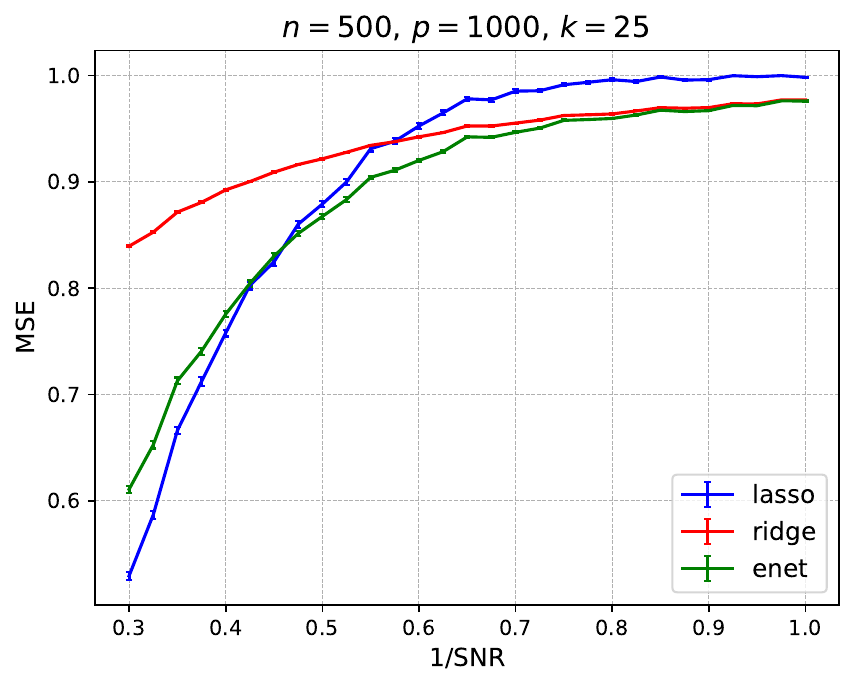}
    \end{subfigure}
    \hfill
    \begin{subfigure}[b]{0.495\linewidth}
        \centering
        \includegraphics[width=\linewidth]{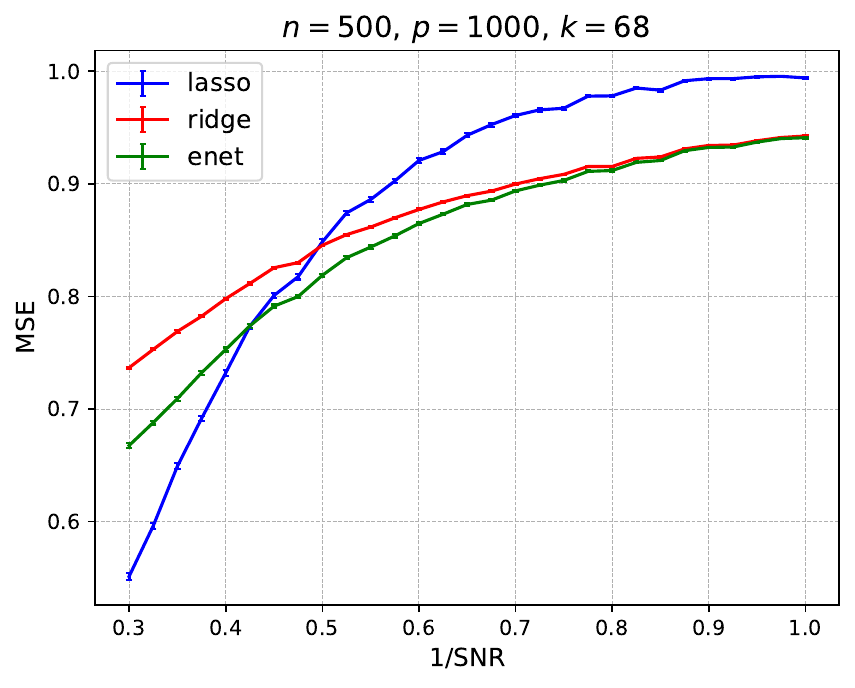}
    \end{subfigure}
    \captionsetup{justification=centering}
    \caption{Mean-squared error comparison at different SNR values. }
    \label{fig:snr-aware-minimax-full}
\end{figure}

\begin{figure}[t!]
    \centering
    \begin{subfigure}[b]{0.495\linewidth}
        \centering
        \includegraphics[width=\linewidth]{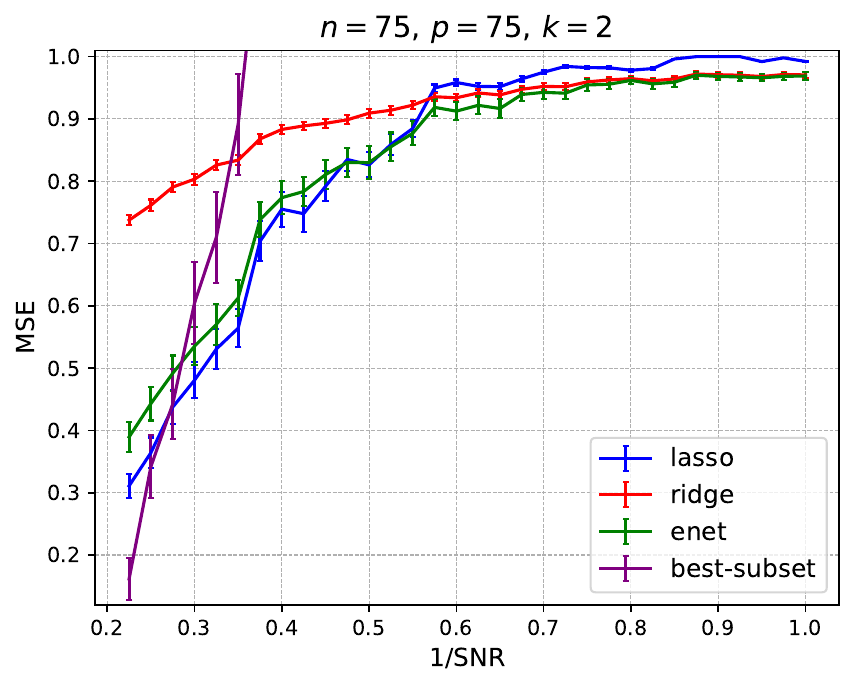}
    \end{subfigure}
    \hfill
    \begin{subfigure}[b]{0.495\linewidth}
        \centering
        \includegraphics[width=\linewidth]{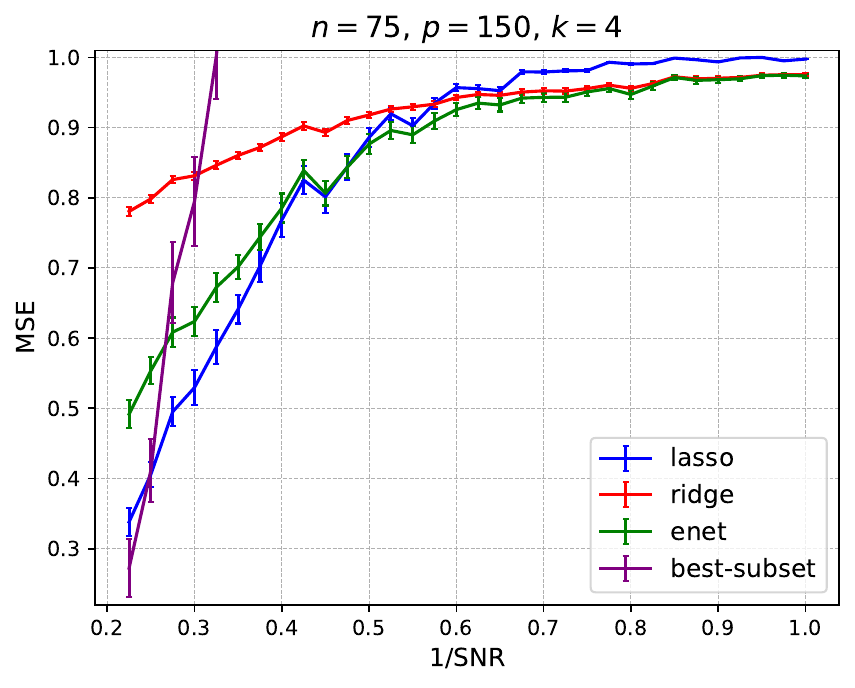}
    \end{subfigure}
    \vskip\baselineskip
    \begin{subfigure}[b]{0.495\linewidth}
        \centering
        \includegraphics[width=\linewidth]{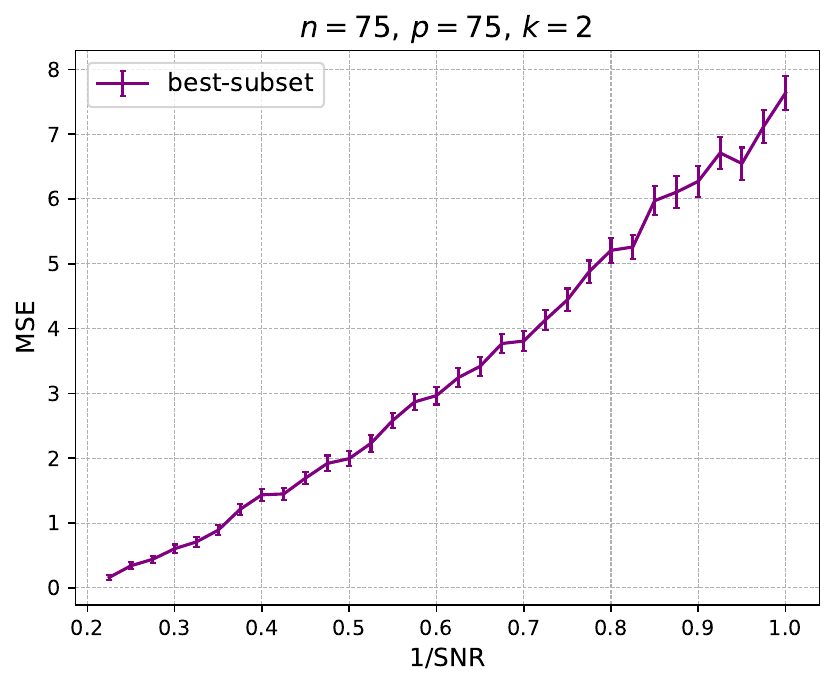}
    \end{subfigure}
    \hfill
    \begin{subfigure}[b]{0.495\linewidth}
        \centering
        \includegraphics[width=\linewidth]{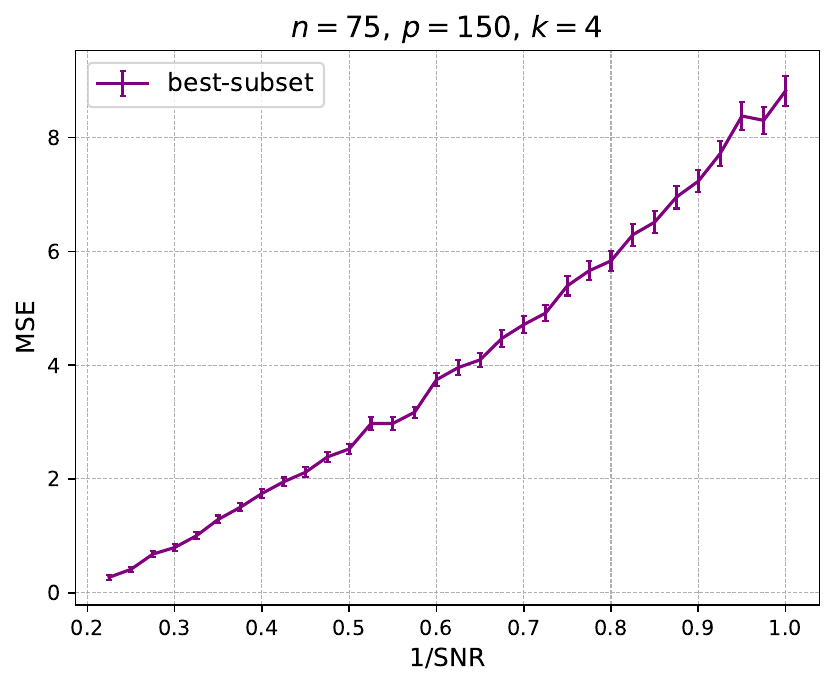}
    \end{subfigure}
    \captionsetup{justification=centering}
    \caption{Mean-squared error comparison at different SNR values. }
    \label{fig:snr-aware-minimax-full-bss}
\end{figure}
  

\section{Discussions} \label{discussion:sec}

\subsection{Related works}

There are some recent works on the significant role of signal-to-noise ratio (SNR) in the context of sparse linear regression. The extensive simulations conducted in \cite{hastie2020best} revealed, among other interesting results, that best subset selection generally performs better than Lasso in very high SNR regimes, while Lasso is better in low SNR regimes. By leveraging additional convex regularization, \cite{hazimeh2020fast, mazumder2023subset} developed new variants of best subset selection that perform consistently well in various SNR regimes. \cite{le2017does, weng2018overcoming, wang2020bridge} adopted the linear asymptotic framework where both the sparsity $k$ and sample size $n$ scale linearly with the dimension $p$, to establish constant-sharp theoretical characterizations of bridge regression (i.e. $\ell_q$-regularization) under varying SNR regimes. Interestingly, their theoretical findings discovered that among the family of $\ell_q$-regularization with $q \in [0,2]$, as SNR decreases from high to low levels, the optimal value of $q$ for parameter estimation and variable selection will move from 0 towards 2. \cite{wang2022does} employed a similar theoretical framework to compare the sorted-$\ell_1$ penalized estimator (SLOPE) \cite{bogdan2015slope} with bridge regression, and found that neither SLOPE nor bridge regression uniformly dominate the other across different SNR levels. All the aforementioned works studied the impact of SNR on several or a family of popular estimators, hence their comparison conclusions only apply to a restricted set of estimators. In contrast, our work focused on minimax analysis that led to stronger optimality-type conclusions.

As discussed in Section \ref{intro:label}, existing minimax results under sparse linear regression \cite{buhlmann2011statistics, hastie2015statistical, wainwright2019high, fan2020statistical} fall short of capturing the SNR effect. From this perspective, the most relevant work to the current paper is \cite{guo2022signaltonoise}. The authors of \cite{guo2022signaltonoise} adopted a similar SNR-aware minimax scheme for sparse Gaussian sequence model, and discovered three SNR regimes associated with different minimaxity. However, the theoretical analysis in our work is much more challenging than that in \cite{guo2022signaltonoise} for the following reasons: (1) Different from our work, all the estimators considered in \cite{guo2022signaltonoise} admit coordinate-separable and closed-form expressions, which largely facilitates the derivation of minimax upper bounds. (2) Due to the presence of random design matrix, the Bayes risks of independent block prior are much harder to compute to obtain sharp minimax lower bounds in our model. (3) The suboptimality results, regarding best subset selection in Proposition \ref{sub:bss} and ridge estimator in Proposition \ref{prop::ridge-suboptimality-second-regime}, require more delicate analysis since the estimators either don't have a closed form or don't enjoy coordinate-wise separability. Given that our results are (mostly) constant-sharp up to second order, it takes substantial efforts to overcome the aforementioned challenges.

\subsection{Conclusion}
In this paper, we revisit the classical problem of high-dimensional sparse linear regression. We present a significantly more informative minimax analysis than existing approaches, achieved through the implementation of two key ideas: (1) signal-to-noise ratio aware minimax framework, and (2) second-order approximation of the minimax risk. The former enables us to evaluate the impact of signal-to-noise ratio on minimax optimality, while the latter aids in obtaining highly accurate approximations of the minimax risk. The theoretical findings of the paper have offered insights inaccessible through classical minimax analysis. In particular, in scenarios of low or moderate signal-to-noise ratio, $\ell_2$-regularization plays a key role in optimal minimax estimation of sparse signals. This is intriguing, considering $\ell_2$-regularization does not promote sparsity.    

\printbibliography

@article{ghosh2021exponential,
  title={Exponential Tail Bounds for Chisquared Random Variables},
  author={Ghosh, Malay},
  journal={Journal of Statistical Theory and Practice},
  volume={15},
  number={2},
  pages={1--6},
  year={2021},
  publisher={Springer}
}

@book{vershynin2018high,
  title={High-dimensional probability: An introduction with applications in data science},
  author={Vershynin, Roman},
  volume={47},
  year={2018},
  publisher={Cambridge university press}
}

@INPROCEEDINGS{6283602,

  author={Jalali, Shirin and Maleki, Arian and Baraniuk, Richard},

  booktitle={2012 IEEE International Symposium on Information Theory Proceedings}, 

  title={Minimum complexity pursuit: Stability analysis}, 

  year={2012},

  volume={},

  number={},

  pages={1857-1861},

  doi={10.1109/ISIT.2012.6283602}}

@book{wainwright2019high,
  title={High-dimensional statistics: A non-asymptotic viewpoint},
  author={Wainwright, Martin J},
  volume={48},
  year={2019},
  publisher={Cambridge university press}
}

@article{vershynin2010introduction,
  title={Introduction to the non-asymptotic analysis of random matrices},
  author={Vershynin, Roman},
  journal={arXiv preprint arXiv:1011.3027},
  year={2010}
}

@book{johnstone19,
  title={Gaussian estimation: Sequence and wavelet models},
  author={Iain M. Johnstone},
  year={2019},
}

@article{hastie2020best,
   title={Best subset, forward stepwise or lasso? Analysis and recommendations based on extensive comparisons},
   author={Hastie, Trevor and Tibshirani, Robert and Tibshirani, Ryan},
   journal={Statistical Science},
   volume={35},
   number={4},
   pages={579--592},
   year={2020},
   publisher={Institute of Mathematical Statistics}
 }

@article{wang2020bridge,
   title={Which bridge estimator is the best for variable selection?},
   author={Wang, Shuaiwen and Weng, Haolei and Maleki, Arian},
   journal={The Annals of Statistics},
   volume={48},
   number={5},
   pages={2791--2823},
   year={2020},
   publisher={Institute of Mathematical Statistics}
 }

@article{donoho1992maximum,
  title={Maximum entropy and the nearly black object},
  author={Donoho, David L and Johnstone, Iain M and Hoch, Jeffrey C and Stern, Alan S},
  journal={Journal of the Royal Statistical Society: Series B (Methodological)},
  volume={54},
  number={1},
  pages={41--67},
  year={1992},
  publisher={Wiley Online Library}
}

@article{raskutti2011minimax,
   title={Minimax rates of estimation for high-dimensional linear regression over $\ell_q$-balls},
   author={Raskutti, Garvesh and Wainwright, Martin J and Yu, Bin},
   journal={IEEE transactions on information theory},
   volume={57},
   number={10},
   pages={6976--6994},
   year={2011},
   publisher={IEEE}
 }

@article{bellec2018slope,
  title={Slope meets lasso: improved oracle bounds and optimality},
  author={Bellec, Pierre C and Lecu{\'e}, Guillaume and Tsybakov, Alexandre B},
  journal={The Annals of Statistics},
  volume={46},
  number={6B},
  pages={3603--3642},
  year={2018},
  publisher={JSTOR}
}

@article{su2016slope,
  title={SLOPE is adaptive to unknown sparsity and asymptotically minimax},
  author={Su, Weijie and Candes, Emmanuel},
  year={2016}
}

@article{bogdan2015slope,
  title={SLOPE—adaptive variable selection via convex optimization},
  author={Bogdan, Ma{\l}gorzata and Van Den Berg, Ewout and Sabatti, Chiara and Su, Weijie and Cand{\`e}s, Emmanuel J},
  journal={The annals of applied statistics},
  volume={9},
  number={3},
  pages={1103},
  year={2015},
  publisher={NIH Public Access}
}

@incollection{donoho1997universal,
  title={Universal near minimaxity of wavelet shrinkage},
  author={Donoho, David L and Johnstone, Iain M and Kerkyacharian, G and Picard, Dominique},
  booktitle={Festschrift for Lucien Le Cam},
  pages={183--218},
  year={1997},
  publisher={Springer}
}

@article{verzelen2012minimax,
  title={Minimax risks for sparse regressions: Ultra-high dimensional phenomenons},
  author={Verzelen, Nicolas},
  year={2012}
}

@article{hoerl1970ridge,
  title={Ridge regression: Biased estimation for nonorthogonal problems},
  author={Hoerl, Arthur E and Kennard, Robert W},
  journal={Technometrics},
  volume={12},
  number={1},
  pages={55--67},
  year={1970},
  publisher={Taylor \& Francis}
}

@article{tibshirani1996regression,
  title={Regression shrinkage and selection via the lasso},
  author={Tibshirani, Robert},
  journal={Journal of the Royal Statistical Society Series B: Statistical Methodology},
  volume={58},
  number={1},
  pages={267--288},
  year={1996},
  publisher={Oxford University Press}
}

@article{candes2007dantzig,
  title={The Dantzig selector: Statistical estimation when p is much larger than n},
  author={Candes, Emmanuel and Tao, Terence},
  year={2007}
}

@book{hastie2009elements,
  title={The elements of statistical learning: data mining, inference, and prediction},
  author={Hastie, Trevor and Tibshirani, Robert and Friedman, Jerome H and Friedman, Jerome H},
  volume={2},
  year={2009},
  publisher={Springer}
}

@book{hastie2015statistical,
  title={Statistical learning with sparsity: the lasso and generalizations},
  author={Hastie, Trevor and Tibshirani, Robert and Wainwright, Martin},
  year={2015},
  publisher={CRC press}
}

@book{buhlmann2011statistics,
  title={Statistics for high-dimensional data: methods, theory and applications},
  author={B{\"u}hlmann, Peter and Van De Geer, Sara},
  year={2011},
  publisher={Springer Science \& Business Media}
}

@book{fan2020statistical,
  title={Statistical foundations of data science},
  author={Fan, Jianqing and Li, Runze and Zhang, Cun-Hui and Zou, Hui},
  year={2020},
  publisher={CRC press}
}

@article{beale1967discarding,
  title={The discarding of variables in multivariate analysis},
  author={Beale, Evelyn Martin Lansdowne and Kendall, Maurice George and Mann, DW},
  journal={Biometrika},
  volume={54},
  number={3-4},
  pages={357--366},
  year={1967},
  publisher={Oxford University Press}
}

@article{hocking1967selection,
  title={Selection of the best subset in regression analysis},
  author={Hocking, Ronald R and Leslie, RN},
  journal={Technometrics},
  volume={9},
  number={4},
  pages={531--540},
  year={1967},
  publisher={Taylor \& Francis}
}

@article{bickel2009simultaneous,
  title={Simultaneous analysis of LASSO and Dantzig selector},
  author={Bickel, P. J. and Ritov, Y. and Tsybakov, A.},
  journal={The Annals of Statistics},
  volume={37},
  number={4},
  pages={1705--1732},
  year={2009}
}

@article{guo2022signaltonoise,
  title = {Signal-to-noise ratio aware minimaxity and higher-order asymptotics},
  author = {Guo, Yilin and Weng, Haolei and Maleki, Arian},
  journal = {IEEE Transactions on Information Theory},
  year = {2023},
  eprint = {2211.05954},
  archiveprefix = {arXiv},
  primaryclass = {math.ST},
}

@misc{guo2024minimaxlr,
  title = {A note on the minimax risk of sparse linear regression},
  author = {Guo, Yilin and Ghosh, Shubhangi and Weng, Haolei and Maleki, Arian},
  year = {2024},
  eprint = {2405.05344},
  archiveprefix = {arXiv},
  primaryclass = {stat.ME},
}

@article{zou2005regularization,
  title={Regularization and variable selection via the elastic net},
  author={Zou, Hui and Hastie, Trevor},
  journal={Journal of the Royal Statistical Society Series B: Statistical Methodology},
  volume={67},
  number={2},
  pages={301--320},
  year={2005},
  publisher={Oxford University Press}
}

@book{durrett2019probability,
  title={Probability: theory and examples},
  author={Durrett, Rick},
  volume={49},
  year={2019},
  publisher={Cambridge university press}
}

@book{bai2010spectral,
  title={Spectral analysis of large dimensional random matrices},
  author={Bai, Zhidong and Silverstein, Jack W},
  volume={20},
  year={2010},
  publisher={Springer}
}

@article{mazumder2023subset,
  title={Subset selection with shrinkage: Sparse linear modeling when the SNR is low},
  author={Mazumder, Rahul and Radchenko, Peter and Dedieu, Antoine},
  journal={Operations Research},
  volume={71},
  number={1},
  pages={129--147},
  year={2023},
  publisher={INFORMS}
}

@article{hazimeh2020fast,
  title={Fast best subset selection: Coordinate descent and local combinatorial optimization algorithms},
  author={Hazimeh, Hussein and Mazumder, Rahul},
  journal={Operations Research},
  volume={68},
  number={5},
  pages={1517--1537},
  year={2020},
  publisher={INFORMS}
}

@article{le2017does,
  title={Does $\ell_p$-minimization outperform $\ell_1$-minimization?},
  author={Le, Zheng and Arian, Maleki and Haolei, Weng and Xiaodong, Wang and Teng, Long},
  journal={IEEE Trans. Inf. Theory},
  volume={63},
  number={11},
  pages={6896--6935},
  year={2017}
}

@article{weng2018overcoming,
  title={OVERCOMING THE LIMITATIONS OF PHASE TRANSITION BY HIGHER ORDER ANALYSIS OF REGULARIZATION TECHNIQUES},
  author={Weng, Haolei and Maleki, Arian and Zheng, Le},
  journal={The Annals of Statistics},
  volume={46},
  number={6A},
  pages={3099--3129},
  year={2018},
  publisher={JSTOR}
}

@article{wang2022does,
  title={Does SLOPE outperform bridge regression?},
  author={Wang, Shuaiwen and Weng, Haolei and Maleki, Arian},
  journal={Information and Inference: A Journal of the IMA},
  volume={11},
  number={1},
  pages={1--54},
  year={2022},
  publisher={Oxford University Press}
}

\appendix


\begin{center}
{\LARGE\bf Proofs of technical results}
\end{center}

\section{Preliminaries}
\subsection{Scale invariance} \label{sec:scaling}

The minimax risk defined in \eqref{eq::snr_minimax} of the main text has the following scale invariance property,
\begin{align*}
    R(\Theta(k,\tau),\sigma) = \sigma^{2} \cdot R(\Theta(k,\mu), 1),
\end{align*}
where we recall that $\mu=\tau/\sigma$. This can be easily verified by rescaling the linear regression model to have unit variance.

\subsection{Preliminary probability results}

\begin{lemma}[Exercise 8.1 in \cite{johnstone19}]\label{lem::gaussian-tail-mills-ratio}
Define
\begin{equation*}
    \Tilde{\Phi}_{l}(\lambda) := \lambda^{-1} \phi(\lambda) \sum_{k=0}^{l}\frac{(-1)^{k}}{k!} \frac{\Gamma(2k+1)}{2^{k}\lambda^{2k}},
\end{equation*}
where $\Gamma(\cdot)$ is the gamma function. Then, for each $k\geq 0$ and all $\lambda>0$:
\begin{equation*}
    \Tilde{\Phi}_{2k+1}(\lambda) \leq 1-\Phi(\lambda) \leq \Tilde{\Phi}_{2k}(\lambda).
\end{equation*}
\end{lemma}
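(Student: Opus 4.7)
The plan is to establish, by induction on $m\ge 0$, the exact identity
$$1 - \Phi(\lambda) \;=\; \tilde{\Phi}_m(\lambda) \,+\, (-1)^{m+1}(2m+1)!!\int_{\lambda}^{\infty}\frac{\phi(t)}{t^{2m+2}}\,dt,$$
from which the lemma follows immediately: the remainder integral is strictly positive for every $\lambda>0$, so its sign is governed by $(-1)^{m+1}$. Choosing $m=2k$ gives the upper bound $1-\Phi(\lambda)\le \tilde{\Phi}_{2k}(\lambda)$, and choosing $m=2k+1$ gives the lower bound $1-\Phi(\lambda)\ge \tilde{\Phi}_{2k+1}(\lambda)$.

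The workhorse is integration by parts combined with the differential relation $\phi'(t)=-t\phi(t)$, which turns $t\phi(t)\,dt$ into the exact differential $-d\phi(t)$. Define $I_m:=\int_\lambda^\infty t^{-2m}\phi(t)\,dt$, so that $I_0=1-\Phi(\lambda)$. Writing $t^{-2m}\phi(t)=-t^{-(2m+1)}\,t\phi(t)\cdot(-1)$ is not quite right; instead I would apply integration by parts directly with $u=t^{-(2m+1)}$ and $dv=t\phi(t)\,dt=-d\phi(t)$. The boundary term at infinity vanishes because $\phi(t)$ decays faster than any polynomial, and the boundary term at $\lambda$ produces $\phi(\lambda)/\lambda^{2m+1}$, leaving the one-step recurrence
$$I_m \;=\; \frac{\phi(\lambda)}{\lambda^{2m+1}} \,-\, (2m+1)\,I_{m+1}, \qquad m \ge 0.$$
Unrolling this recurrence $m+1$ times yields an alternating sum whose coefficients accumulate as the odd double factorials $1,1,3,15,\ldots,(2m+1)!!$, and the identity $(2k-1)!!=\Gamma(2k+1)/(2^k k!)$ matches the resulting partial sum precisely with $\tilde{\Phi}_m(\lambda)$ as defined in the statement. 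A short induction on $m$ formalizes the unrolling.

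There is no substantive obstacle here, as this is the classical Laplace-type expansion of the Mills ratio. The entire argument reduces to the recurrence above combined with careful bookkeeping of alternating signs and double factorials. The only point meriting a moment of care is the coefficient identification $(2k)!/(2^k k!)=(2k-1)!!$, which reconciles the $\Gamma$-form of $\tilde{\Phi}_l$ given in the statement with the double-factorial form produced naturally by the iterated integration by parts; this follows at once from the factorization $(2k)!=(2k)!!\,(2k-1)!!=2^k k!\,(2k-1)!!$.
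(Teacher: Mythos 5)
Your proof is correct. The identity
\begin{equation*}
1-\Phi(\lambda)=\tilde{\Phi}_m(\lambda)+(-1)^{m+1}(2m+1)!!\int_{\lambda}^{\infty}\frac{\phi(t)}{t^{2m+2}}\,dt
\end{equation*}
follows exactly as you describe from the recurrence $I_m=\phi(\lambda)\lambda^{-(2m+1)}-(2m+1)I_{m+1}$ obtained by integrating by parts against $-d\phi(t)=t\phi(t)\,dt$, the coefficient identification $\Gamma(2k+1)/(2^k k!)=(2k)!/(2^k k!)=(2k-1)!!$ is right, and the sign of the positive remainder integral gives the two-sided bound as claimed. The paper offers no proof of its own — it cites the statement as Exercise 8.1 of Johnstone — and your argument is the classical repeated integration-by-parts expansion of the Mills ratio that this exercise is intended to elicit, so there is nothing to compare against.
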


\begin{lemma}\label{lem:incomp_quadexp_Gaussian}
Let $Z \sim \calN(0,1)$. Let $\beta> -\frac{1}{2}$ and $\alpha, \gamma\in \mathbb{R}$ be three fixed numbers. We then have
\[
\E \left({\rm e}^{\alpha Z -\beta Z^2 } \mathbbm{1}_{Z \leq \gamma} \right) =\frac{1}{\sqrt{2 \beta+1}} {\rm e}^{\frac{\alpha^2}{2 (2 \beta+1)}} \mathbb{P}\left(Z \leq \sqrt{2\beta+1} \Big(\gamma -\frac{\alpha}{2 \beta+1}\Big)\right). 
\]
\end{lemma}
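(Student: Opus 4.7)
The plan is to prove the identity by direct computation, expanding the expectation as a Gaussian integral and completing the square. No probabilistic machinery is needed beyond standard manipulations of Gaussian densities.

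First, write the expectation explicitly against the standard Gaussian density:
\[
\mathbb{E}\left(e^{\alpha Z - \beta Z^2}\mathbbm{1}_{Z \leq \gamma}\right) = \frac{1}{\sqrt{2\pi}} \int_{-\infty}^{\gamma} \exp\!\left(\alpha z - \tfrac{2\beta + 1}{2} z^2\right) dz.
\]
The hypothesis $\beta > -1/2$ ensures $a := 2\beta + 1 > 0$, which makes the integrand decay at $\pm\infty$ and guarantees that the expectation is finite.

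Next, I would complete the square in the exponent. Writing
\[
\alpha z - \frac{a}{2}z^2 = -\frac{a}{2}\left(z - \frac{\alpha}{a}\right)^2 + \frac{\alpha^2}{2a},
\]
the integral factorizes into the constant prefactor $e^{\alpha^2/(2a)}$ times a Gaussian integral centered at $\alpha/a$ with inverse variance $a$. Then I would apply the change of variables $u = \sqrt{a}(z - \alpha/a)$, which rescales the Gaussian to a standard normal and pulls out a factor $1/\sqrt{a}$ from the Jacobian, while transforming the upper limit $z = \gamma$ into $u = \sqrt{a}(\gamma - \alpha/a)$. The remaining integral is exactly $\sqrt{2\pi}\,\Phi\big(\sqrt{a}(\gamma - \alpha/a)\big)$, which cancels the $\sqrt{2\pi}$ in the denominator. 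Substituting back $a = 2\beta+1$ and rewriting $\Phi(\cdot)$ as $\mathbb{P}(Z \leq \cdot)$ yields the claimed formula.

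There is no real obstacle here — the argument is a textbook completion-of-the-square computation, and the only substantive point is checking that $\beta > -1/2$ is precisely the condition needed to make the Gaussian integral convergent (and to give a positive square root $\sqrt{2\beta+1}$ inside $\Phi$).
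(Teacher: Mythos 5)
Your proposal is correct and is essentially the paper's own proof: both write the expectation as a Gaussian integral, complete the square to extract the factor $e^{\alpha^2/(2(2\beta+1))}$, and then change variables to reduce the remaining integral to a standard normal CDF evaluated at $\sqrt{2\beta+1}\,(\gamma - \alpha/(2\beta+1))$. The paper performs the shift and rescaling as two separate substitutions while you combine them into one, but this is an immaterial difference.
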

\begin{proof}
We have
\begin{align*}
\E &\left({\rm e}^{\alpha Z -\beta Z^2 } \mathbbm{1}_{Z \leq \gamma} \right) = \frac{1}{\sqrt{2 \pi}} \int {\rm e}^{\alpha z - \beta z^2 - \frac{z^2}{2}} \mathbbm{1}_{z \leq \gamma} dz \nonumber \\
&= \frac{{\rm e}^{\frac{\alpha^2}{2 (2\beta+1)}}}{\sqrt{2 \pi}} \int {\rm e}^{-\frac{1}{2} (2 \beta+1) \Big(z - \frac{\alpha}{ 2 \beta+1}\Big)^2} \mathbbm{1}_{z \leq \gamma} dz \nonumber \\
&\overset{(a)}{=} \frac{{\rm e}^{\frac{\alpha^2}{2 (2\beta+1)}}}{\sqrt{2 \pi}} \int {\rm e}^{-\frac{1}{2} (2 \beta+1) x^2} \mathbbm{1}_{x \leq \gamma - \frac{\alpha}{2 \beta+1}} dx \nonumber \\
&\overset{(b)}{=} \frac{{\rm e}^{\frac{\alpha^2}{2 (2\beta+1)}}}{\sqrt{2 \pi} \sqrt{2 \beta +1}} \int {\rm e}^{-\frac{t^2}{2}} \mathbbm{1}_{t \leq \sqrt{2 \beta+1} (\gamma - \frac{\alpha}{2 \beta +1 })}dt \nonumber\\
&=\frac{1}{\sqrt{2 \beta+1}} {\rm e}^{\frac{\alpha^2}{2 (2 \beta+1)}} \mathbb{P}\left(Z \leq \sqrt{2\beta+1} \Big(\gamma - \frac{\alpha}{2 \beta+1}\Big)\right),
\end{align*}
where to obtain (a) we have chnaged the variable of integration to $x = z - \frac{\alpha}{2 \beta +1}$ and to obtain (b) we have chnaged the variable of integration to $t = \sqrt{2 \beta+1}x$. 
\end{proof}

The following lemma states a simple concentration for $\ell_2$-norm of standard multivariate Gaussian. The proof follows from the concentration of the Lipschitz function of Gaussians (Theorem 2.26 in \cite{wainwright2019high}) and that the $\ell_{2}$-norm is $1$-Lipschitz function.
\begin{lemma}\label{lem::chi-concentration}
    Let $z \sim \calN(0,I_n)$, then for every $t\geq 0$,
    \begin{equation*}
        \mathbb{P}\Big(\|z\|_2\leq(1+t)\sqrt{n} \Big) \geq 1-e^{-\frac{nt^{2}}{2}}.
    \end{equation*}
\end{lemma}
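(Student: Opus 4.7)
The plan is to follow the hint in the statement and apply Gaussian concentration for Lipschitz functions. Let $f : \mathbb{R}^n \to \mathbb{R}$ be defined by $f(z) = \|z\|_2$. By the reverse triangle inequality, $|f(z) - f(z')| \leq \|z - z'\|_2$, so $f$ is $1$-Lipschitz with respect to the Euclidean norm. Theorem 2.26 of \cite{wainwright2019high} then yields the one-sided concentration bound
\[
\mathbb{P}\bigl(f(z) - \mathbb{E} f(z) \geq s\bigr) \leq e^{-s^2/2}
\]
for every $s \geq 0$.

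Next, I would bound the mean $\mathbb{E}\|z\|_2$ in terms of $\sqrt{n}$. Since $z \sim \mathcal{N}(0, I_n)$, we have $\mathbb{E}\|z\|_2^2 = n$, and by Jensen's inequality applied to the concave map $u \mapsto \sqrt{u}$,
\[
\mathbb{E}\|z\|_2 \leq \sqrt{\mathbb{E}\|z\|_2^2} = \sqrt{n}.
\]
Combining this with the Lipschitz concentration bound gives, for every $s \geq 0$,
\[
\mathbb{P}\bigl(\|z\|_2 \geq \sqrt{n} + s\bigr) \leq \mathbb{P}\bigl(\|z\|_2 - \mathbb{E}\|z\|_2 \geq s\bigr) \leq e^{-s^2/2}.
\]

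Finally, I would specialize to $s = t\sqrt{n}$, so that $\sqrt{n} + s = (1+t)\sqrt{n}$ and the exponent becomes $-nt^2/2$, yielding
\[
\mathbb{P}\bigl(\|z\|_2 \leq (1+t)\sqrt{n}\bigr) \geq 1 - e^{-nt^2/2},
\]
as claimed. The argument is routine and I do not anticipate any real obstacle: the only mildly nontrivial step is recognizing that the naive bound $\mathbb{E}\|z\|_2 \leq \sqrt{n}$ via Jensen is enough (one need not invoke the sharper $\mathbb{E}\|z\|_2 = \sqrt{2}\,\Gamma((n+1)/2)/\Gamma(n/2)$), since we only require an upper-tail bound.
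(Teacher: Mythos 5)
Your proof is correct and follows exactly the route the paper indicates: Gaussian concentration for $1$-Lipschitz functions (Theorem 2.26 of \cite{wainwright2019high}) combined with the Jensen bound $\E\|z\|_2 \leq \sqrt{n}$ and the choice $s = t\sqrt{n}$. You have simply written out explicitly the steps the paper leaves implicit, so there is nothing to add.
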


\begin{lemma}[Lemma 2 of \cite{6283602}] \label{lem::chi-square-concentration}
    Fix $\tau>0$, and let $Z_{i}\sim \calN(0,1)$, $i=1,\ldots, d$. Then,
    \begin{equation*}
        \mathbb{P}\Big( \sum_{i=1}^{d} Z_{i}^{2} < d(1-\tau) \Big) \leq e^{\frac{d}{2} \big(\tau + \log(1-\tau)\big)},
    \end{equation*}
    and 
    \begin{equation*}
        \mathbb{P} \Big( \sum_{i=1}^{d} Z_{i}^{2} > d(1+\tau) \Big) \leq e^{-\frac{d}{2} \big( \tau - \log(1+\tau) \big)}.
    \end{equation*}
\end{lemma}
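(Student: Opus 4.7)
The plan is to prove both inequalities by the standard Chernoff/Cram\'er method, exploiting the closed-form moment generating function of the chi-square distribution $\chi^2_d = \sum_{i=1}^d Z_i^2$, namely $\mathbb{E}[e^{s\chi^2_d}] = (1-2s)^{-d/2}$ valid for all $s < 1/2$. I would treat the two tails separately, since they require different sign choices for the tilting parameter and the lower tail is trivial once $\tau \geq 1$ (as $\chi^2_d \geq 0$); so I may assume $\tau \in (0,1)$ for the lower-tail bound.

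For the upper tail, I would fix $s \in (0, 1/2)$ and apply Markov's inequality to $e^{s\chi^2_d}$:
\begin{equation*}
\mathbb{P}\bigl(\chi^2_d > d(1+\tau)\bigr) \leq e^{-sd(1+\tau)} \mathbb{E}[e^{s\chi^2_d}] = \exp\!\left(-sd(1+\tau) - \tfrac{d}{2}\log(1-2s)\right).
\end{equation*}
Differentiating the exponent with respect to $s$ and setting the derivative to zero gives the optimal tilt $s^* = \tau/(2(1+\tau))$, which lies in $(0,1/2)$ for all $\tau > 0$. Substituting $s^*$ back yields $1-2s^* = 1/(1+\tau)$, and the exponent simplifies to $-\tfrac{d}{2}(\tau - \log(1+\tau))$, giving the claimed bound.

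For the lower tail, I would analogously apply Markov's inequality to $e^{-s\chi^2_d}$ for $s > 0$:
\begin{equation*}
\mathbb{P}\bigl(\chi^2_d < d(1-\tau)\bigr) \leq e^{sd(1-\tau)} \mathbb{E}[e^{-s\chi^2_d}] = \exp\!\left(sd(1-\tau) - \tfrac{d}{2}\log(1+2s)\right),
\end{equation*}
valid for all $s > 0$ (no upper restriction in this direction). Optimizing over $s > 0$ yields $s^* = \tau/(2(1-\tau))$ when $\tau < 1$, at which $1+2s^* = 1/(1-\tau)$, and the exponent reduces to $\tfrac{d}{2}(\tau + \log(1-\tau))$. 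For $\tau \geq 1$, the probability on the left is zero and the bound is vacuous, so the claim holds in that case as well.

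There is no real obstacle here: the chi-square MGF is explicit, the optimization in $s$ is a single-variable convex problem with a closed-form stationary point, and the algebra after plugging the optimizer back is elementary. The only bookkeeping point is ensuring the tilting parameter lies in the admissible range, which I would verify at each step. Independence of the $Z_i$'s is the only probabilistic input, used once to factor $\mathbb{E}[e^{s\chi^2_d}]$ into a product of one-dimensional Gaussian moment generating functions.
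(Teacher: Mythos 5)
Your proof is correct: the paper does not prove this lemma itself (it is imported by citation as Lemma 2 of the referenced work), and your Chernoff/Cram\'er argument with the exact chi-square moment generating function, optimal tilts $s^*=\tau/(2(1+\tau))$ and $s^*=\tau/(2(1-\tau))$, is precisely the standard derivation of these bounds. Your handling of the admissible range of the tilting parameter and of the degenerate case $\tau\geq 1$ for the lower tail is also fine, so nothing further is needed.
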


\begin{lemma}[Lemma 1.2.1 in \cite{vershynin2018high}]\label{lem:meanFromCDF}
Let $X$ denote a non-negative random variable. Then, we have
\[
\E(X) = \int_{0}^\infty \mathbb{P}(X>t) dt.
\]
\end{lemma}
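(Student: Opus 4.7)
The plan is to establish the identity via the layer-cake representation together with Tonelli's theorem. The starting observation is the pointwise identity, valid for every non-negative real number $x$, that $x = \int_0^\infty \mathbbm{1}_{\{t < x\}}\, dt$, since the integrand is $1$ on $[0,x)$ and $0$ on $[x,\infty)$. Applying this to the random variable $X(\omega)$ pointwise yields $X = \int_0^\infty \mathbbm{1}_{\{t < X\}}\, dt$ almost surely.

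Next, I would take expectation on both sides and exchange the order of integration. Since the integrand $(t,\omega) \mapsto \mathbbm{1}_{\{t < X(\omega)\}}$ is non-negative and jointly measurable on $[0,\infty) \times \Omega$ with respect to the product of Lebesgue measure and the underlying probability measure, Tonelli's theorem applies and gives
\begin{equation*}
\E(X) = \E\!\left(\int_0^\infty \mathbbm{1}_{\{t < X\}}\, dt\right) = \int_0^\infty \E\!\left(\mathbbm{1}_{\{t < X\}}\right) dt = \int_0^\infty \mathbb{P}(X > t)\, dt,
\end{equation*}
where the last equality uses the definition of expectation for an indicator.

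The statement holds in the extended sense: both sides are elements of $[0,\infty]$, and the identity covers the case $\E(X) = \infty$ as well, with no integrability hypothesis needed precisely because Tonelli (rather than Fubini) is being invoked on a non-negative integrand. The only subtlety worth mentioning is joint measurability of $(t,\omega) \mapsto \mathbbm{1}_{\{t < X(\omega)\}}$, which follows because the set $\{(t,\omega) : t < X(\omega)\}$ is the preimage of $(0,\infty)$ under the measurable map $(t,\omega) \mapsto X(\omega) - t$. No step here is a genuine obstacle; the entire content of the lemma is the pointwise layer-cake identity combined with Tonelli, and there is nothing delicate to navigate beyond recording that the integrand is non-negative so that the exchange of integrals is unconditionally justified.
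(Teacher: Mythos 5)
Your proof is correct and is essentially the same argument as the one in the cited source (Vershynin, Lemma 1.2.1): the pointwise layer-cake identity $X=\int_0^\infty \mathbbm{1}_{\{t<X\}}\,dt$ followed by Fubini--Tonelli on the non-negative integrand. The paper itself simply cites this lemma without proof, so there is nothing further to compare.
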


\begin{lemma}[Theorems 3 \& 4 in \cite{ghosh2021exponential}] \label{lem::concentration-non-central-chisquare}
    Suppose $X$ follows a noncentral chi-squared distribution with degrees of freedom $p$ and the noncentrality parameter $\lambda$, i.e., $X\sim \chi_{p}^{2}(\lambda)$. Then,
    \begin{enumerate}[label=(\roman*)]
        \item for $c>0$, $\mathbb{P}(X>p+\lambda+c) \leq \exp \left[ - \frac{pc^{2}}{4(p+2\lambda)(p+2\lambda+c)} \right]$,
        \item for $0<c<p+\lambda$, $\mathbb{P}(X<p+\lambda-c) \leq \exp\left[ 
        - \frac{pc^{2}}{4(p+2\lambda)^{2}} \right]$.
    \end{enumerate}
\end{lemma}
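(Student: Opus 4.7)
The plan is to derive both tail bounds from the moment generating function (MGF) of the noncentral chi-squared distribution together with a standard Chernoff argument. For $X \sim \chi_p^2(\lambda)$, the MGF is
\[
M_X(t) = \E\bigl[e^{tX}\bigr] = (1-2t)^{-p/2} \exp\!\left(\frac{\lambda t}{1-2t}\right), \qquad t < \tfrac12,
\]
which follows from the representation $X \stackrel{d}{=} \|Z + a\|_2^2$ with $Z \sim \mathcal{N}(0,I_p)$ and $\|a\|_2^2 = \lambda$ via a direct Gaussian integral (or, alternatively, by conditioning on the Poisson mixture representation of $X$ as a central chi-squared with random degrees of freedom).

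For part (i), I would apply the Chernoff inequality $\mathbb{P}(X > p+\lambda+c) \leq e^{-t(p+\lambda+c)} M_X(t)$ for $t \in (0, \tfrac12)$ and control the MGF exponent using the elementary estimate $-\log(1-x) \leq x + \tfrac{x^2}{2(1-x)}$ valid for $x \in (0,1)$, which yields $-\tfrac{p}{2}\log(1-2t) \leq pt + \tfrac{pt^2}{1-2t}$. After rewriting $\tfrac{\lambda t}{1-2t} = \lambda t + \tfrac{2\lambda t^2}{1-2t}$, the four linear terms $-tp$, $-t\lambda$, $pt$, $\lambda t$ cancel pairwise, and the Chernoff exponent collapses to $-tc + \tfrac{(p+2\lambda)\,t^2}{1-2t}$. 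Selecting $t = \tfrac{c}{2(p+2\lambda+c)}$, which automatically lies in $(0,\tfrac12)$, makes $1-2t = \tfrac{p+2\lambda}{p+2\lambda+c}$ and produces the closed-form bound $\exp\!\bigl(-\tfrac{c^2}{4(p+2\lambda+c)}\bigr)$. Since $p \leq p+2\lambda$, this is no larger than $\exp\!\bigl(-\tfrac{pc^2}{4(p+2\lambda)(p+2\lambda+c)}\bigr)$, which proves (i).

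For part (ii), I would run the symmetric argument using $\mathbb{P}(X < p+\lambda-c) \leq e^{t(p+\lambda-c)}M_X(-t)$ for $t > 0$, the cleaner inequality $-\log(1+x) \leq -x + \tfrac{x^2}{2}$ (valid for all $x \geq 0$), and the crude upper bound $\tfrac{2\lambda t^2}{1+2t} \leq 2\lambda t^2$. These reduce the Chernoff exponent to $-tc + (p+2\lambda)t^2$, which is minimized at $t^{\star} = \tfrac{c}{2(p+2\lambda)}$ with value $-\tfrac{c^2}{4(p+2\lambda)}$. Inserting the trivial slack $p/(p+2\lambda) \leq 1$ at the end delivers the stated lower-tail bound.

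The argument is entirely routine once the MGF is in hand; the only real subtlety I anticipate is choosing the Chernoff parameter $t$ to generate the precise denominator structure $(p+2\lambda)(p+2\lambda+c)$ and $(p+2\lambda)^2$ demanded by the statement rather than the exact Chernoff minimizer. The near-optimal choices above are picked specifically for that purpose and also conveniently keep $t$ inside the admissible range for the MGF. The resulting Chernoff bounds are in fact slightly sharper than those stated in the lemma; the extra factor $p/(p+2\lambda) \leq 1$ in the numerators amounts to a harmless relaxation that matches the formulation used in \cite{ghosh2021exponential}.
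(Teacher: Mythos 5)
Your Chernoff/MGF argument is correct: the MGF formula, the elementary bounds $-\log(1-x)\leq x+\tfrac{x^2}{2(1-x)}$ and $-\log(1+x)\leq -x+\tfrac{x^2}{2}$, the cancellation of the linear terms, and the parameter choices $t=\tfrac{c}{2(p+2\lambda+c)}$ and $t=\tfrac{c}{2(p+2\lambda)}$ all check out, giving exponents $-\tfrac{c^2}{4(p+2\lambda+c)}$ and $-\tfrac{c^2}{4(p+2\lambda)}$ which dominate the stated bounds after inserting $p/(p+2\lambda)\leq 1$. Note, however, that the paper does not prove this lemma at all; it is imported verbatim by citation from \cite{ghosh2021exponential}, so there is no in-paper argument to compare against. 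Your derivation is a clean self-contained substitute for that citation, and in fact slightly sharper than the quoted form; the only cosmetic looseness is the final relaxation by the factor $p/(p+2\lambda)$, which you correctly identify as harmless and needed only to match the statement as quoted. The side condition $0<c<p+\lambda$ in part (ii) plays no role in your argument, which is fine since the bound is trivial when $p+\lambda-c\leq 0$.
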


\begin{lemma}[Corollary 5.35 in \cite{vershynin2010introduction}]\label{lem::eval-conc}
    Let the elements of an \(m \times n ~(m  < n)\) matrix \(A\) be drawn independently from \(\calN(0,1)\). Then for any \(t>0\), 
    \begin{equation*}
        \mathbb{P}(\sqrt{n} -\sqrt{m} - t \le \sigma_{\min}(A) \le \sigma_{\max} (A) \le \sqrt{n} + \sqrt{m} + t) \ge 1 - 2e^{-\frac{t^2}{2}},
    \end{equation*}
    where $\sigma_{\max}(A),\sigma_{\min}(A)$ denote the maximum and minimum singular value of $A$ respectively.
\end{lemma}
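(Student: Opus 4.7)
The plan is to reduce the claim to two classical ingredients: a computation of the expectation of the extreme singular values via Gaussian comparison (Gordon's theorem), and concentration of Lipschitz functions of independent Gaussians. Write $A = (a_{ij})_{i \le m, j \le n}$, viewed as a vector in $\mathbb{R}^{mn}$, and observe the variational representations
\[
\sigma_{\max}(A) \;=\; \sup_{u \in S^{m-1},\, v \in S^{n-1}} u^{T} A v, \qquad \sigma_{\min}(A) \;=\; \inf_{u \in S^{m-1}} \;\sup_{v \in S^{n-1}} u^{T} A v.
\]
In both cases the map $A \mapsto \sigma(A)$ is $1$-Lipschitz with respect to the Frobenius/Euclidean norm on $\mathbb{R}^{mn}$, since $|\sigma(A)-\sigma(B)| \le \|A-B\|_{\rm op} \le \|A-B\|_F$.

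The first step is to control $\mathbb{E}[\sigma_{\max}(A)]$ and $\mathbb{E}[\sigma_{\min}(A)]$. View $X_{u,v}:= u^{T} A v$ as a centered Gaussian process on $S^{m-1}\times S^{n-1}$, and introduce the auxiliary process $Y_{u,v}:= g^{T} u + h^{T} v$ with $g \sim \mathcal{N}(0,I_m)$, $h \sim \mathcal{N}(0,I_n)$ independent. A direct computation of increments shows $\mathbb{E}(X_{u,v}-X_{u',v'})^2 \le \mathbb{E}(Y_{u,v}-Y_{u',v'})^2$ with equality when $u=u'$. Gordon's inequality (a min-max refinement of Slepian's inequality) then gives
\[
\mathbb{E}[\sigma_{\max}(A)] \le \mathbb{E}\|g\| + \mathbb{E}\|h\| \le \sqrt{m}+\sqrt{n}, \qquad \mathbb{E}[\sigma_{\min}(A)] \ge \sqrt{n}-\sqrt{m}.
\]

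The second step invokes the Borell--Tsirelson--Ibragimov--Sudakov Gaussian concentration inequality: if $F:\mathbb{R}^{mn}\to \mathbb{R}$ is $1$-Lipschitz and $A$ has i.i.d.\ $\mathcal{N}(0,1)$ entries, then $\mathbb{P}(F(A)-\mathbb{E}F(A) \ge t)\le e^{-t^2/2}$ and the same for the lower tail. Applying this to $F(A)=\sigma_{\max}(A)$ yields $\sigma_{\max}(A)\le \sqrt{m}+\sqrt{n}+t$ with probability at least $1-e^{-t^2/2}$; applying it to $F(A)=-\sigma_{\min}(A)$ yields $\sigma_{\min}(A)\ge \sqrt{n}-\sqrt{m}-t$ with probability at least $1-e^{-t^2/2}$. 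A union bound over these two events delivers the stated inequality.

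The only nontrivial obstacle is establishing Gordon's comparison step for the min-max; the Lipschitz concentration part is a direct application of a standard tool. If one wants to avoid Gordon's theorem, an alternative route is to compute $\mathbb{E}[\sigma_{\max}(A)]$ and $\mathbb{E}[\sigma_{\min}(A)]$ by a $\varepsilon$-net argument on $S^{m-1}\times S^{n-1}$ combined with sub-Gaussian tail bounds for $u^{T} A v$, at the cost of worse constants; the sharp bounds $\sqrt{n}\pm \sqrt{m}$ ultimately require Gaussian comparison.
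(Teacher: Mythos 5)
Your proposal is correct and is precisely the standard argument behind the cited result (Theorem 5.32 and Corollary 5.35 in Vershynin's notes): Gordon's Gaussian comparison for the expectations of the extreme singular values, followed by Borell--TIS concentration for the $1$-Lipschitz maps $A\mapsto\sigma_{\max}(A)$ and $A\mapsto\sigma_{\min}(A)$ and a union bound. The paper itself imports this lemma by citation rather than proving it, so there is nothing further to compare; the only detail worth spelling out if you wrote this in full is that $\mathbb{E}\|h\|_2-\mathbb{E}\|g\|_2\ge\sqrt{n}-\sqrt{m}$ requires the monotonicity of $k\mapsto\sqrt{k}-\mathbb{E}\|g_k\|_2$.
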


\subsection{Risk of soft thresholding}

Consider the one-dimensional soft thresholding function: for a given $\chi\geq 0$,
\begin{align}
\eta(u,\chi)&=\argmin_{t\in \mathbb{R}}\frac{1}{2}(t-u)^2+\chi|t| \nonumber \\
&={\rm sign}(u)\cdot (|u|-\chi)_+, \quad \forall u \in \mathbb{R}. \label{soft:thd:f}
\end{align}
For a given pair $(\chi_1,\chi_2)$, define
\begin{align}
\label{1d:risk:f}
r(u;\chi_1,\chi_2) = \E \Big(\frac{1}{1+\chi_2} \eta (u + e, \chi_1 ) - u\Big)^{2},\quad e \sim \mathcal{N}(0,1).
\end{align}
The quantity $r(u;\chi_1,\chi_2)$, as a function of $u$, is the risk of $\frac{1}{1+\chi_2} \eta (u + e, \chi_1 )$ for estimating $u$ based on the noisy observation $u+e$.

\begin{lemma}[Lemma 6 in \cite{guo2022signaltonoise}]
\label{lem::elastic-net-risk}
For any given pair $(\chi_1,\chi_2)$ satisfying $\chi_1>0,\chi_2\geq 0$, it holds that 
\begin{enumerate}[label=(\roman*)]
    \item $r(u;\chi_1,\chi_2)$, as a function of $u$, is symmetric, and increasing over $u \in [0,+\infty)$.
    \item $\max_{(x,y):x^{2}+y^{2} = c^{2}} [r(x;\chi_1,\chi_2) + r(y;\chi_1,\chi_2)] = 2r(c/\sqrt{2};\chi_1,\chi_2)$, \quad $\forall c>0.$
\end{enumerate}
\end{lemma}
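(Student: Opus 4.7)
For part (i), symmetry is immediate: the oddness $\eta(-t,\chi_1)=-\eta(t,\chi_1)$ combined with $-e\stackrel{d}{=}e$ shows that substituting $(u,e)\mapsto(-u,-e)$ in $r(-u;\chi_1,\chi_2)$ yields $r(u;\chi_1,\chi_2)$. For monotonicity on $[0,\infty)$ I would differentiate $r$ under the expectation (permissible since $\eta(\cdot,\chi_1)$ is $1$-Lipschitz), split the integrand on the three regions $\{u+e>\chi_1\}$, $\{|u+e|\le\chi_1\}$, $\{u+e<-\chi_1\}$, and evaluate the Gaussian pieces via Stein's identity (e.g.\ $\E[e\mathbbm{1}_{\{u+e>\chi_1\}}]=\phi(\chi_1-u)$). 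Writing $s:=1/(1+\chi_2)$ and letting $c(t):=t-\eta(t,\chi_1)=\mathrm{sign}(t)\min(|t|,\chi_1)$ denote the clipping function, this should reduce $\partial_u r$ to the transparent form
\[
\tfrac{1}{2}\,\partial_u r(u;\chi_1,\chi_2)=u\bigl[s\,p_m(u)+(1-s)^2\bigr]+(1-s)s\,\E[c(u+e)],\qquad p_m(u):=\mathbb{P}(|u+e|\le\chi_1).
\]
Both summands are nonnegative for $u\ge 0$ (the last because $c$ is odd and nondecreasing, so $\E[c(u+e)]\ge\E[c(e)]=0$), giving the monotonicity.

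For part (ii), by (i) I may write $r(u;\chi_1,\chi_2)=\tilde r(u^2)$ for a function $\tilde r:[0,\infty)\to\mathbb{R}$. Setting $a=x^2$ and $b=y^2$, the maximization in the lemma becomes $\max\{\tilde r(a)+\tilde r(b):a+b=c^2,\,a,b\ge 0\}$. The plan is to prove $\tilde r$ is concave on $[0,\infty)$; then Jensen's inequality gives $\tilde r(a)+\tilde r(b)\le 2\tilde r(c^2/2)=2\,r(c/\sqrt{2};\chi_1,\chi_2)$, with equality at $a=b=c^2/2$, which is exactly the claim.

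Concavity of $\tilde r$ is the main technical obstacle. Differentiating the formula above yields $\tilde r'(v)=s\,p_m(\sqrt v)+(1-s)^2+(1-s)s\cdot \E[c(\sqrt v+e)]/\sqrt v$. The first summand is decreasing in $v$ since $p_m$ is decreasing in $u$ on $[0,\infty)$: indeed $p_m'(u)=\phi(\chi_1+u)-\phi(\chi_1-u)\le 0$ because $\chi_1+u\ge|\chi_1-u|$ and $\phi$ is decreasing in $|\cdot|$. It therefore remains to show that $u\mapsto \E[c(u+e)]/u$ is nonincreasing on $(0,\infty)$, equivalently $\E[c(u+e)]\ge u\,p_m(u)$. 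I would verify this by differentiating both sides: using $\tfrac{d}{du}\E[c(u+e)]=\E[c'(u+e)]=p_m(u)$, the derivative of $\E[c(u+e)]-u\,p_m(u)$ simplifies to $-u\,p_m'(u)\ge 0$ on $[0,\infty)$, and the difference vanishes at $u=0$; this establishes the required inequality, hence the concavity of $\tilde r$, completing the plan.
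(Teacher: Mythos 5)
Your proof is correct, and it is worth noting that the paper itself offers no proof of this statement: the lemma is imported verbatim as Lemma 6 of \cite{guo2022signaltonoise}, so your argument is a self-contained reconstruction rather than a variant of anything in the manuscript. The two identities your plan hinges on both check out. Writing $s=1/(1+\chi_2)$ and expanding $s\,\eta(u+e,\chi_1)-u=-(1-s)u+se-s\,c(u+e)$ against $s\,\mathbbm{1}_{\{|u+e|>\chi_1\}}-1=-(1-s)-s\,\mathbbm{1}_{\{|u+e|\le\chi_1\}}$, the terms $-s^2\E[e\mathbbm{1}_{\{|u+e|\le\chi_1\}}]$ and $s^2\E[c(u+e)\mathbbm{1}_{\{|u+e|\le\chi_1\}}]=s^2\big(u\,p_m(u)+\E[e\mathbbm{1}_{\{|u+e|\le\chi_1\}}]\big)$ combine to $s^2u\,p_m(u)$, and one lands exactly on your displayed derivative; it reduces to the classical $\tfrac12 r'(u)=u\,p_m(u)$ when $\chi_2=0$, which is a good sanity check. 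For (ii), reducing the constrained maximum to concavity of $\tilde r(v)=r(\sqrt v)$ and invoking Jensen is the standard device for such corner statements, and your verification that $u\mapsto\E[c(u+e)]/u$ is nonincreasing — via $\tfrac{d}{du}\big(\E[c(u+e)]-u\,p_m(u)\big)=-u\,p_m'(u)\ge 0$ with both sides vanishing at $u=0$, together with $p_m'(u)=\phi(\chi_1+u)-\phi(\chi_1-u)\le 0$ — is exactly what is needed to make $\tilde r'$ nonincreasing. In a written version you should make explicit only two routine points: the justification for differentiating under the expectation (Lipschitzness of $\eta$ and $c$ plus dominated convergence, which you flag), and the remark that the symmetry from part (i) lets you restrict the maximization in (ii) to $x,y\ge 0$ before substituting $a=x^2$, $b=y^2$.
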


\include{proof_thm2_lower_bound}
\section{Proof of Theorem \ref{thm::first-order-snr-minimax}}\label{sec::first-order-snr-minimax}
 The minimax risk in \eqref{eq::snr_minimax} has the scale-invariance property as shown in Section \ref{sec:scaling}. Hence, without loss of generality, we present the proof of Theorem  \ref{thm::first-order-snr-minimax} for $\sigma=1$ in model \eqref{model::gaussian-model}. 

\subsection{Upper bound}

The upper bounds for Regime \rom{1} and \rom{2} can be simply established from the risk of the zero estimator, 
\begin{equation*}
    R(\Theta(k,\mu),1) = \inf_{\hat{\beta}}\sup_{\beta\in\Theta(k,\mu)}\E_{\beta}\|\hat{\beta}-\beta\|_2^{2} \leq \sup_{\beta\in\Theta(k,\mu)} \E_{\beta}\|\mathbf{0}-\beta\|_2^{2} \leq k\mu^{2},
\end{equation*}
where the last inequality follows naturally from the SNR constraint in $\Theta(k,\mu)$. 

The upper bound for Regime \rom{3} follows the upper bound in Theorem \ref{thm::first-order-sparse-minimax}. This is obvious because
\begin{equation*}
    \Theta(k,\mu) \subseteq \Theta(k) \Rightarrow R(\Theta(k,\mu),1) \leq R(\Theta(k),1)=2k\log(p/k)\Big(1+o(1)\Big).
\end{equation*}

\subsection{Lower bound}\label{ssec:lb:firstorder}

Throughout the proof, we drop $\beta$ in $\E_{\beta}(\cdot)$ to simplify the notation. Suppose that we have a prior distribution $\pi$ on the regression coefficients $\beta$ whose support is contained in $\Theta(k,\mu)$. For any estimator $\hat{\beta}$, it is straightforward to see that
\begin{equation}\label{eq:lb1:basic}
\E_\pi \|\hat{\beta} - \beta\|_2^{2} \leq \sup_{\beta \in \Theta(k,\mu)} \E \|\hat{\beta} - \beta\|_2^{2},
\end{equation}
where the expectation on the left is with respect to the randomness in $(X, z, \beta)$, while the expectation on the right is with respect to $(X, z)$ only. Let $B(\pi)$ denote the Bayes risk for prior $\pi$, i.e.,
\[
B(\pi) = \inf_{\hat{\beta}} \E_\pi \|\hat{\beta} - \beta\|_2^{2}. 
\]
By taking an infimum from both sides of \eqref{eq:lb1:basic}, we have
\begin{equation}
\label{baye:risk:lower:bound}
B(\pi) \leq \inf_{\hat{\beta}} \sup_{\beta \in \Theta(k, \mu)} \E \|\hat{\beta} - \beta\|_2^2 = R(\Theta(k,\mu), 1). 
\end{equation}

Therefore, the lower bound of the minimax risk can be provided by the Bayes risk of a prior $\pi$ whose support is contained in the parameter space $\Theta(k,\mu)$. 

We use the independent block prior \cite{johnstone19, donoho1997universal, su2016slope} to obtain a sharp lower bound. The independent block prior, denoted by $\pi_{IB}(\lambda; p,k)$, is constructed in the following way: divide $\beta\in \mathbb{R}^p$ into $k$ disjoint blocks of size $m=p/k$\footnote{For simplicity, $p/k$ is assumed to be an integer. Otherwise, we can slightly adjust the block size to obtain the same lower bound.}: $\beta=(\beta^{(1)},\ldots, \beta^{(k)})$; For each block $1\leq j\leq k$, randomly select an index $I \in [m]$ and set $\beta^{(j)}= \lambda e_I$; The selection between different blocks are independent. The spike choice $\lambda$ can depend on the tuple $(n,p,k)$. But we drop such a dependence throughout the proof for notational simplicity.

 From the construction steps, it already implies that $\pi_{IB}(\lambda;p,k)$ is supported on $\Theta(k)$, i.e. satisfying the sparsity constraint in $\Theta(k,\mu)$. Furthermore, if $ |\lambda|\leq\mu$, then
\begin{equation}
     \supp(\pi_{IB}(\lambda;p,k)) \subseteq \Theta(k,\mu). \label{eq::prior-supported-on-SNR-constraint}
\end{equation}
Thus, the independent block prior with $0<\lambda\leq\mu$ can provide a lower bound for the minimax risk over $\Theta(k,\mu)$. The following proposition obtains a lower bound for the Bayes risk of the independent block prior. We will later use this proposition to obtain the lower bounds required in Theorem \ref{thm::first-order-snr-minimax}.

\begin{proposition} \label{prop::first-order-block-prior}
Assume model \eqref{model::gaussian-model} with $\sigma=1$. Let $\pi := \pi_{IB}(\lambda; p, k)$ be the independent block prior of $\beta$, and $\hat{\beta}_{\pi}$ be the Bayesian estimator (posterior mean) under $\pi$. Suppose $(\log (p/k))/n\rightarrow 0$ and $p/k\rightarrow \infty$. If $\lambda>0$ and $\lambda^2\leq (2-\delta)\log(p/k)$ for a fixed constant $\delta \in (0,1)$, we have
\begin{equation*}
    \E_{\pi} \|\hat{\beta}_{\pi} - \beta\|_2^{2} \geq  k \lambda^{2} \left(1+o(1)\right). 
\end{equation*}
\end{proposition}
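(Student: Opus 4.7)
The plan is to use the Bayes risk identity
\begin{align*}
\E_\pi \|\hat{\beta}_\pi - \beta\|_2^2 = \E_\pi\|\beta\|_2^2 - \E_\pi\|\hat{\beta}_\pi\|_2^2 = k\lambda^2 - \E_\pi\|\hat{\beta}_\pi\|_2^2,
\end{align*}
which follows because $\hat{\beta}_\pi$ is the posterior mean and $\|\beta^{(j)}\|_2^2 = \lambda^2$ deterministically under the block prior. It therefore suffices to prove $\E_\pi\|\hat{\beta}_\pi\|_2^2 = o(k\lambda^2)$. The first move is an oracle reduction: let $\hat{\beta}_\pi^{(j),\mathrm{or}} := \E[\beta^{(j)} \mid X, y, \beta^{(-j)}]$ denote the posterior mean of block $j$ given all other blocks. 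By the tower property $\hat{\beta}_\pi^{(j)} = \E[\hat{\beta}_\pi^{(j),\mathrm{or}} \mid X, y]$, and Jensen's inequality for the convex function $\|\cdot\|_2^2$ yields $\|\hat{\beta}_\pi^{(j)}\|_2^2 \leq \E[\|\hat{\beta}_\pi^{(j),\mathrm{or}}\|_2^2 \mid X, y]$. Summing over $j$ and using exchangeability across blocks,
\[
\E_\pi\|\hat{\beta}_\pi\|_2^2 \leq k \cdot \E\|\hat{\beta}_\pi^{(1),\mathrm{or}}\|_2^2.
\]

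Next, conditioning on $\beta^{(-1)}$ and setting $y' = y - X^{(-1)} \beta^{(-1)} = X^{(1)} \beta^{(1)} + z$, the single-block oracle problem is a needle-in-haystack: estimate $\beta^{(1)} \sim \mathrm{Unif}\{\lambda e_1, \ldots, \lambda e_m\}$ with $m = p/k$, from the isotropic Gaussian design $X^{(1)}$ and noise $z$. Let $I \in [m]$ index the true location and set $L_i = \exp\bigl(\lambda \langle y', X^{(1)}_i\rangle - \tfrac{\lambda^2}{2}\|X^{(1)}_i\|_2^2\bigr)$; the posterior is $p_i = L_i/\sum_{i'} L_{i'}$ and $\|\hat{\beta}_\pi^{(1),\mathrm{or}}\|_2^2 = \lambda^2 \sum_i p_i^2$. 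A key identity is
\begin{align*}
\E\Bigl[\sum_{i=1}^m p_i^2\Bigr] = \sum_{i=1}^m \E\bigl[p_i \mathbbm{1}_{I=i}\bigr] = \E[p_I],
\end{align*}
which follows from $\E[\mathbbm{1}_{I=i}\mid X, y'] = p_i$. The problem thus reduces to showing $\E[p_I] = o(1)$, namely that the posterior cannot locate the true needle.

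For the core step, write $p_I = 1/\bigl(1 + \sum_{i\neq I} L_i/L_I\bigr)$; since $p_I \in [0,1]$, bounded convergence makes it sufficient to show $\sum_{i\neq I} L_i/L_I \to \infty$ in probability. Conditioning on $I=1$ WLOG by symmetry, a direct computation yields
\begin{align*}
\log(L_i/L_1) = -\tfrac{\lambda^2}{2}\|X_i^{(1)} - X_1^{(1)}\|_2^2 + \lambda \langle z, X_i^{(1)} - X_1^{(1)}\rangle,
\end{align*}
and these are i.i.d.\ across $i\neq 1$ conditional on $(X_1^{(1)}, z)$. By Lemmas~\ref{lem::chi-concentration} and \ref{lem::chi-square-concentration}, $\|X_i^{(1)} - X_1^{(1)}\|_2^2$ and $\|z\|_2^2/n$ concentrate at $2$ and $1$ respectively, so each $\log(L_i/L_1)$ is approximately Gaussian with mean $\approx -\lambda^2$ and standard deviation $\approx \lambda\sqrt{2}$. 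A Gaussian extreme-value argument then shows that, with probability tending to one, $\max_{i\neq 1}\log(L_i/L_1) \approx -\lambda^2 + \lambda\sqrt{2\log m}$; the hypothesis $\lambda^2 \leq (2-\delta)\log m$ makes this quantity at least $\bigl(\sqrt{2(2-\delta)} - (2-\delta)\bigr)\log m = c_\delta \log m \to \infty$. Consequently $\E[p_I] \to 0$, yielding $\E_\pi\|\hat{\beta}_\pi\|_2^2 \leq k\lambda^2 \cdot o(1)$ and closing the proof.

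The main obstacle is carrying out the Gaussian extreme-value argument rigorously and uniformly across the allowed range of $\lambda$. A delicate case is $\lambda \to 0$, where the max-based bound only gives $\max\log(L_i/L_1) \gtrsim \lambda\sqrt{\log m}$, which may not diverge; there one can instead use $L_i/L_1 = 1 + O_p(\lambda)$ together with the conditional i.i.d.\ structure to conclude $\sum_{i\neq 1} L_i/L_1 = (m-1)(1+o_p(1)) \to \infty$ by a law of large numbers. The hypothesis $\log(p/k)/n \to 0$ is precisely what controls the chi-squared fluctuations required for the concentration steps and the higher-order terms of order $\lambda^4/n = O((\log m)^2/n)$ that arise when computing moments of the form $\E[L_i^a/L_1^a \mid X_1^{(1)}, z]$. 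A unified bound via the tail splitting $\E[p_I] \leq 1/(1+K) + \mathbb{P}\bigl(\sum_{i\neq I} L_i/L_I < K\bigr)$ with a suitably chosen threshold $K = K(m,\lambda)$ should handle the small-$\lambda$ and large-$\lambda$ regimes simultaneously.
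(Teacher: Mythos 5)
Your reductions are all valid and, up to the final analytic step, mirror the paper's argument. The identity $\E_\pi\|\hat\beta_\pi-\beta\|_2^2=k\lambda^2-\E_\pi\|\hat\beta_\pi\|_2^2$, the Jensen/oracle reduction to a single block, and the identity $\E\big[\sum_i p_i^2\big]=\E[p_I]$ are correct; the paper reaches the same destination by writing the single-spike Bayes risk as $\lambda^2\E_{\lambda e_1}(p_1-1)^2+\lambda^2(m-1)\E_{\lambda e_2}p_1^2$, dropping the second term, and showing $p_1\to 0$ in $P_{\lambda e_1}$-probability. Your formula $\log(L_i/L_1)=-\tfrac{\lambda^2}{2}\|X_i-X_1\|_2^2+\lambda\langle z,X_i-X_1\rangle$ and the conditional i.i.d.\ structure given $(X_1,z)$ are also correct, so both proofs ultimately hinge on showing $\sum_{i\neq I}L_i/L_I\to\infty$ in probability.

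Where you genuinely diverge is in how that divergence is established. The paper normalizes the sum by its conditional mean, proving $\mathcal{A}_{n,m}\to 1$ and $\mathcal{B}_{n,m}\to\infty$; since Chebyshev fails when $\lambda^2$ approaches $2\log m$ (the second moment of $L_i/L_1$ is of order $1$ while the squared mean is $e^{-\lambda^2}$, so the variance-to-mean-squared ratio is $e^{\lambda^2}/m$, which can blow up), this forces the truncation argument and the Cram\'er--Chernoff bounds on noncentral chi-squared variables in Lemmas~\ref{lem::A-term-convergest-to-one} and \ref{lem::wlln-conditions-lemma}. Your plan sidesteps this by only lower-bounding the sum by its maximum when $\lambda\to\infty$, and reserving the plain LLN for bounded $\lambda$ where Chebyshev does work. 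This is a legitimate and in principle lighter route, because you only need divergence of the sum rather than convergence of the normalized sum to one. The gap is that the "Gaussian extreme-value argument" is exactly where the technical content lives and you have not carried it out: you need a \emph{lower} bound of order $m^{-(1-\epsilon)}$ on $\mathbb{P}\big(\log(L_i/L_1)>-\lambda^2+(1-\epsilon)\lambda\sqrt{2\log m}\,\big|\,X_1,z\big)$, and $\log(L_i/L_1)$ is not Gaussian --- it is a Gaussian linear term in $X_i$ plus a correlated negative chi-squared quadratic $-\tfrac{\lambda^2}{2}\|X_i-X_1\|_2^2$. Making this rigorous requires the same kind of orthogonal decomposition of $X_i$ along $z/\|z\|_2$ and control of the residual chi-squared fluctuations (of order $\lambda^2/\sqrt n=o(\lambda)$ under $\lambda^2\lesssim\log m=o(n)$) that the paper performs in Lemma~\ref{lem::wlln-conditions-lemma}; it is feasible, but it is not a routine citation of a Gaussian maximum result. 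Two small slips worth fixing: conditional on $(X_1,z)$ the Gaussian part has standard deviation $\approx\lambda$, not $\lambda\sqrt2$ (only $\langle z,X_i\rangle$ is random; $-\lambda\langle z,X_1\rangle$ is a common $O_p(\lambda)$ shift, negligible against the $c_\delta\lambda\sqrt{\log m}$ margin), and your displayed margin $(\sqrt{2(2-\delta)}-(2-\delta))\log m$ is the value only at the endpoint $\lambda^2=(2-\delta)\log m$; for general $\lambda$ the correct margin is $\lambda(\sqrt{2\log m}-\lambda)\geq(\sqrt2-\sqrt{2-\delta})\lambda\sqrt{\log m}$, which still diverges whenever $\lambda\to\infty$.
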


\begin{proof}
Given that we have used the independent block prior, the Bayes risk satisfies the following property:
\begin{equation}
    \E_{\pi} \|\hat{\beta}_{\pi} - \beta\|_2^{2} = k \E_{\pi} \|\hat{\beta}_{\pi}^{(1)} - \beta^{(1)}\|_2^{2}, \label{eq::divide-into-k-blocks}
\end{equation}
where $\beta^{(1)}$ (resp. $\hat{\beta}_{\pi}^{(1)}$) denotes the first block of $\beta$ (resp. $\hat{\beta}_{\pi}$). In the rest of the proof, we will use the notation $\beta^{(-1)}$ to denote the remaining blocks of $\beta$. As a result, we have $\beta=(\beta^{(1)}, \beta^{(-1)})$. Accordingly, we write $X=(X^{(1)},X^{(-1)})$ and 
\begin{equation}
\label{single:spike:reg}
    \Tilde{y}:= y - X^{(-1)} \beta^{(-1)} = X^{(1)} \beta^{(1)} + z.
\end{equation}
As is clear from \eqref{eq::divide-into-k-blocks}, to obtain a lower bound for the Bayes risk, we need to find a lower bound for $\E_{\pi} \|\hat{\beta}_{\pi}^{(1)} - \beta^{(1)}\|_2^{2}$. We have
\begin{align}\label{eq:lb:complete:classicalminimax}
     &\quad ~~\E_{\pi} \|\E_{\pi} \big( \beta^{(1)} \vert y, X\big) - \beta^{(1)}\|_2^{2} \nonumber \\
    &\overset{(a)}{\geq} \E_{\pi} \|\E_{\pi} \big( \beta^{(1)}\vert y, X, \beta^{(-1)} \big) - \beta^{(1)}\|_2^{2} \nonumber \\
    &\overset{(b)}{=} \E_{\pi} \|\E_{\pi} \big( \beta^{(1)}\vert \Tilde{y}, X^{(1)}, X^{(-1)}, \beta^{(-1)} \big) - \beta^{(1)}\|_2^{2} \nonumber \\
    &\overset{(c)}{=} \E_{\pi} \| \E_{\pi} \big(\beta^{(1)} \vert \Tilde{y}, X^{(1)} \big) - \beta^{(1)} \|_2^{2}.
\end{align}
Inequality $(a)$ holds because further conditioning reduces mean squared error. Equality $(b)$ is due to the fact that $\{y, X, \beta^{(-1)}\}$ can be recovered from $\{\tilde{y}, X^{(1)}, X^{(-1)}, \beta^{(-1)}\}$ and vice versa. Equality $(c)$ follows by the result that $\{X^{(-1)}, \beta^{(-1)}\}$ are independent of $\{\Tilde{y}, X^{(1)}, \beta^{(1)}\}$. 

Note that the lower bound \eqref{eq:lb:complete:classicalminimax} is the Bayes risk under the regression model \eqref{single:spike:reg} with a single spike prior $\pi_{S}(\lambda;m)$ for $\beta^{(1)}\in \mathbb{R}^m$, defined as: select an index $I\in [m]$ uniformly at random and set $\beta^{(1)}=\lambda e_I$. The following lemma provides a lower bound for such Bayes risk.

\begin{lemma}\label{lem::first_order-lower_bound-spike_prior}
    Consider model \eqref{model::gaussian-model} with $\sigma=1$ and $\beta\in \R^{m}$. Suppose $m \rightarrow \infty$ and $(\log m)/ n \rightarrow 0$. Let $\pi:=\pi_{S}(\lambda;m)$ be the single spike prior of $\beta$. Denote $\hat{\beta}_{\pi}$ as the Bayesian estimator under $\pi$. If $\lambda>0$ and $\lambda^2\leq (2-\delta)\log m$ for a fixed constant $\delta \in (0,1)$, we have
\begin{equation*}
    \E_{\pi}\|\hat{\beta}_{\pi} - \beta\|_2^{2} \geq \lambda^{2} (1+o(1)).
\end{equation*}
\end{lemma}
We will prove this lemma in the next section. But before proving the lemma, let us show how this lemma helps us finish the proof of Proposition \ref{prop::first-order-block-prior}, and how Proposition \ref{prop::first-order-block-prior} enables us to finish the  proof of Theorem \ref{thm::first-order-snr-minimax}. 

Recalling $m=p/k$, from  Lemma \ref{lem::first_order-lower_bound-spike_prior} we conclude that
\begin{equation*}
    \E_{\pi} \| \E_{\pi} \big(\beta^{(1)} \vert \Tilde{y}, X^{(1)} \big) - \beta^{(1)} \|_2^{2} \geq \lambda^{2} (1+o(1)).
\end{equation*}
This combined with \eqref{eq::divide-into-k-blocks} and \eqref{eq:lb:complete:classicalminimax} completes the proof of Proposition \ref{prop::first-order-block-prior}.
\end{proof}

By choosing proper values for $\lambda$, we can use Proposition \ref{prop::first-order-block-prior} to obtain the lower bounds required in Theorem \ref{thm::first-order-snr-minimax}. These choices are clarified below.

\begin{itemize}
    \item For Regime \rom{1}, let $\lambda=\mu\rightarrow 0$, then $\lambda^2=o\big((2-\delta)\log (p/k)\big)$ and hence 
    \[
    R(\Theta(k,\mu), 1)\geq B(\pi_{IB}(\lambda;p,k)) \geq k\mu^{2}\big(1+o(1)\big).
    \]
    \item For Regime \rom{2}, let $\lambda=\mu$, then $\lambda^2=o\big((2-\delta)\log (p/k)\big)$ and hence
    \[
    R(\Theta(k,\mu), 1)\geq B(\pi_{IB}(\lambda;p,k)) \geq k\mu^{2}\big(1+o(1)\big).
    \]
    \item For Regime \rom{3}, let $\lambda=\sqrt{(2-\delta)\log(p/k)}$, then  
    \[
    R(\Theta(k,\mu), 1)\geq B(\pi_{IB}(\lambda;p,k)) \geq (2-\delta)k\log(p/k)\big(1+o(1)\big).
    \]
    Hence, 
    \[
    \liminf_{n\rightarrow \infty}\frac{R(\Theta(k,\mu), 1)}{k\log(p/k)}\geq 2-\delta, \quad \forall \delta \in (0,1).
    \]
  Letting $\delta\rightarrow 0+$ in the above yields $\liminf_{n\rightarrow \infty}\frac{R(\Theta(k,\mu), 1)}{k\log(p/k)}\geq 2$. This lower bound serves the purpose.
\end{itemize}



\subsection{Proof of Lemma \ref{lem::first_order-lower_bound-spike_prior}}
Under the  single spike prior, let $I$ denote the index of the spike coordinate. Throughout this section, we will use the notation $p_{i}:=\mathbb{P}(I=i~|~y, X)$, $i\in[m]$. The notation $P_{\lambda e_i}$ denotes the joint probability of $\{y, X\}$ under the model $y=X\beta+z$ with $\beta=\lambda e_i$, and $\E_{\lambda e_{i}}$ denotes the expectation taken under $P_{\lambda e_i}$. We have 
\begin{eqnarray}\label{eq:lb:lemma6}
    \E_{\pi}\|\hat{\beta}_{\pi} - \beta\|_{2}^{2} &=& \lambda^{2} \E_{\lambda e_{1}} (p_{1} - 1)^{2} + \lambda^{2}(m-1)\E_{\lambda e_{2}}p_{1}^{2} \nonumber \\
    &\geq& \lambda^{2} \E_{\lambda e_{1}} (p_{1} - 1)^{2}.
\end{eqnarray}
It is also straightforward to calculate the posterior probability $p_1$,
\begin{equation}
\label{first:form}
    p_{1}=\mathbb{P}(I=1~|~ y, X) = \frac{\exp(\lambda X_{1}^{T}y - \lambda^{2}\|X_{1}\|_2^{2}/2)}{\exp(\lambda X_{1}^{T}y - \lambda^{2}\|X_{1}\|_2^{2}/2) + \sum_{i=2}^{m} \exp (\lambda X_{i}^{T}y - \lambda^{2}\|X_{i}\|_2^{2}/2)},
\end{equation}
where $X_i$ represents the $i$th column of $X$. Since $0\leq p_{1}\leq 1$, if we can show that
\begin{equation}\label{eq:p1goestozero}
    p_{1} \rightarrow 0 \quad \text{in }P_{\lambda e_{1}}\text{- probability},
\end{equation}
then by combining the continuous mapping and the dominated convergence theorems with \eqref{eq:lb:lemma6}, we will conclude
\begin{equation*}
    \E_{\pi} \| \hat{\beta}_{\pi} - \beta \|_2^{2} \geq \lambda^{2} (1+o(1)).
\end{equation*}
To prove \eqref{eq:p1goestozero}, let us first write out $p_1$ in \eqref{first:form} when $\beta=\lambda e_1$, as a function of $\{X, z\}$. We use $X_{i,1}$ and $X_{i, -1}$ to denote the first coordinate and the remaining coordinates of $X_i$, respectively. We plug $y=\lambda X_1+z$ into \eqref{first:form} to obtain
\begin{eqnarray*}
    p_{1} &=& \Bigg[ 1 + \frac{\sum_{i=2}^{m} \exp\Big( \lambda X_{i}^{T} (\lambda X_{1}+ z) - \lambda^{2}\|X_{i}\|_2^{2}/2 \Big)}{\exp\Big( \lambda^{2} \|X_{1}\|_2^{2}/2 + \lambda X_{1}^{T}z \Big)} \Bigg]^{-1} \\
    &\overset{d}{=}& \Bigg[ 1 + \frac{\sum_{i=2}^{m}\exp\Big( \|\lambda X_{1}+ z\|_2\lambda X_{i,1} - \lambda^{2} X_{i,1}^{2}/2 - \lambda^{2}\|X_{i,-1}\|_2^{2}/2\Big)}{\exp\Big( \lambda^{2}\|X_{1}\|_2^{2}/2 + \lambda X_{1}^{T}z\Big)} \Bigg]^{-1},
\end{eqnarray*}
where in the last step, we have used the fact that $\{AX_2,\ldots, AX_m |X_1, z\} \overset{d}{=}\{X_2,\ldots, X_m |X_1, z\}$, for any orthogonal matrix $A$ whose first row equals $(\lambda X_1+z)/\|\lambda X_1+z\|_2$. We now equivalently write $p_1$ as 
\begin{equation*}
    p_{1} = \left(1+\mathcal{A}_{n,m}\mathcal{B}_{n,m}\right)^{-1},
\end{equation*}
where
\begin{align}
    & \mathcal{A}_{n,m} = \frac{\sum_{i=2}^{m} \exp\left( \|\lambda X_{1} + z\|_2 \lambda X_{i,1} - \frac{\lambda^{2}}{2} X_{i,1}^{2} - \frac{\lambda^{2}}{2} \|X_{i,-1}\|_2^{2} \right)}{(m-1) (1+\lambda^2/n)^{-\frac{n}{2}} \exp\left( \frac{\lambda^2}{2(n+\lambda^{2})} \|z+\lambda X_{1}\|_2^{2} \right) }, \label{term::A-first-order-spike} \\
    & \mathcal{B}_{n,m} = \frac{(m-1) (1+\lambda^{2}/n)^{-\frac{n}{2}} \exp\left( \frac{\lambda^2}{2(n+\lambda^{2})} \|z+\lambda X_{1}\|_2^{2} \right)}{\exp\left( \frac{\lambda^{2}}{2}\|X_{1}\|_2^{2} + \lambda X_{1}^{T}z \right)}. \label{term::B-first-order-spike}
\end{align}
In order to show \eqref{eq:p1goestozero}, our goal is to show that $\mathcal{A}_{n,m}\overset{p}{\rightarrow}1$ and $\mathcal{B}_{n,m} \overset{p}{\rightarrow}+\infty$. This will be done in the next two lemmas.

\begin{lemma}
\label{Bnm:lem:snr}
Assume $\sqrt{n}X_1,\ldots, \sqrt{n}X_m, z\overset{i.i.d.}{\sim}\mathcal{N}(0, I_n)$. Suppose $m\rightarrow \infty$ and $(\log m)/n\rightarrow 0$. Consider the random variable $\mathcal{B}_{n,m}$ defined in \eqref{term::B-first-order-spike}. If $\lambda>0$ and $\lambda^2\leq (2-\delta)\log m$ for a fixed constant $\delta \in (0,1)$, then
\begin{equation*}
    \mathcal{B}_{n,m} \overset{p}{\rightarrow} \infty.
\end{equation*}
\end{lemma}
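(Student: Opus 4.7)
The plan is to work on the logarithmic scale: I will decompose $\log \mathcal{B}_{n,m}$ into a deterministic piece of order $\log m$ plus mean-zero random fluctuations that I will show are of smaller order in probability. First, substituting $\|z+\lambda X_1\|_2^2 = \|z\|_2^2 + \lambda^2 \|X_1\|_2^2 + 2\lambda X_1^T z$ into the definition of $\mathcal{B}_{n,m}$ and collecting the coefficients of $\|X_1\|_2^2$ and $X_1^T z$ (each coefficient simplifies through the identity $\frac{\lambda^2}{n+\lambda^2} - 1 = -\frac{n}{n+\lambda^2}$), one arrives at the clean formula
\[
\log \mathcal{B}_{n,m} = \log(m-1) - \frac{n}{2}\log\!\left(1+\frac{\lambda^2}{n}\right) + \frac{\lambda^2 (\|z\|_2^2 - n)}{2(n+\lambda^2)} - \frac{n\lambda^2 (\|X_1\|_2^2 - 1)}{2(n+\lambda^2)} - \frac{n\lambda\, X_1^T z}{n+\lambda^2},
\]
where I have re-centered $\|z\|_2^2$ about $n$ and $\|X_1\|_2^2$ about $1$ and observed that the two deterministic adjustments cancel exactly.

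Second, using the elementary inequality $\log(1+x) \leq x$ for $x \geq 0$, the deterministic part satisfies
\[
\log(m-1) - \frac{n}{2}\log\!\left(1+\frac{\lambda^2}{n}\right) \geq \log(m-1) - \frac{\lambda^2}{2} \geq \log(m-1) - \Big(1 - \frac{\delta}{2}\Big)\log m,
\]
which tends to $+\infty$ with leading order at least $\frac{\delta}{2}\log m$ by the hypothesis $\lambda^2 \leq (2-\delta)\log m$. Third, I would control each of the three centered random terms by Chebyshev's inequality. Using $\Var(\|z\|_2^2) = 2n$ and $\Var(\|X_1\|_2^2) = 2/n$, the first two terms have standard deviation at most $\lambda^2/\sqrt{2n} \leq (2-\delta)(\log m)/\sqrt{2n}$, which is $o(\log m)$ because $n \to \infty$. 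For the cross term, conditioning on $z$ gives $X_1^T z \mid z \sim \mathcal{N}(0,\|z\|_2^2/n)$, so $\E (X_1^T z)^2 = 1$, and the term is $O_p(\lambda) = O_p(\sqrt{\log m}) = o_p(\log m)$. Combining the deterministic lower bound with these $o_p(\log m)$ perturbations concludes that $\log \mathcal{B}_{n,m} \to +\infty$ in probability, hence $\mathcal{B}_{n,m} \overset{p}{\to}\infty$.

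The main obstacle is not computational but conceptual: a direct first-moment analysis of $\mathcal{B}_{n,m}$ is unsuitable because the exponential presence of $\|X_1\|_2^2$ in the denominator makes $\mathcal{B}_{n,m}$ heavy-tailed, so $\E \mathcal{B}_{n,m} \to \infty$ would not itself imply convergence in probability. Passing to the logarithmic scale linearizes the dependence on the underlying Gaussian and chi-squared quantities, which is the key simplification; after that, the only care needed is in verifying that the random fluctuations remain $o_p(\log m)$ uniformly in the allowed range $0 < \lambda^2 \leq (2-\delta)\log m$, which is where the assumption $(\log m)/n \to 0$ is used to push $\lambda^2/\sqrt{n}$ to be of lower order than $\log m$.
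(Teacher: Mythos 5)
Your proposal is correct and follows essentially the same route as the paper: the same rewriting of $\mathcal{B}_{n,m}$ with the exponent split into the deterministic piece $\log(m-1)-\tfrac{n}{2}\log(1+\lambda^2/n)\geq \tfrac{\delta}{2}\log m+o(1)$ plus centered fluctuations in $\|z\|_2^2-n\|X_1\|_2^2$ and $X_1^Tz$, which both arguments show are $o_p(\log m)$ (the paper via CLT/orders in probability, you via explicit Chebyshev variance bounds). The only cosmetic difference is that you use the one-sided bound $\log(1+x)\leq x$ where the paper Taylor-expands $(1+\lambda^2/n)^{-n/2}$ to second order, which is unnecessary for a lower bound.
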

\begin{proof}
We can rewrite $\mathcal{B}_{n,m}$ in the following form: 
\begin{equation*}
    \mathcal{B}_{n,m} = (m-1) \left(1+\lambda^{2}/n\right)^{-\frac{n}{2}} \exp \left( \frac{\lambda^2}{2(n+\lambda^{2})}\Big(\|z\|_2^{2}-\|\sqrt{n}X_1\|_2^2\Big)  - \frac{\lambda}{1+\lambda^{2}/n} X_{1}^{T}z  \right).
\end{equation*}
Using the central limit theorem and Taylor's theorem, we have the following results: 
\begin{align*}
& \|z\|_2^{2}-\|\sqrt{n}X_1\|_2^{2} = O_{p}(\sqrt{n}), ~~X_{1}^{T} z=O_{p} (1), \\
& \big(1+\lambda^{2}/n\big)^{-\frac{n}{2}}=\exp\Big(-\frac{n}{2}\log(1+\lambda^2/n)\Big)=\exp\Big(-\frac{n}{2}(\lambda^2/n-(0.5+o(1))\lambda^4/n^2)\Big),
\end{align*}
where we have used the fact $\lambda=o(\sqrt{n})$ that is implied by the conditions $(\log m)/n\rightarrow 0, \lambda^2\leq (2-\delta)\log m$. Plugging the above results into $\mathcal{B}_{n,m}$ we obtain
\begin{eqnarray*}
    \mathcal{B}_{n,m} &=& \exp \bigg[ \log (m-1) - \frac{\lambda^{2}}{2} +\frac{n^{-1}\lambda^{4}}{4+o(1)}+ \frac{\lambda^2\sqrt{n}O_{p}(1)}{2(n+\lambda^{2})} - \frac{\lambda O_p(1)}{1+\lambda^{2}/n}  \bigg] \\
    &=& \exp\bigg[(1+o(1))\cdot \log m - \frac{\lambda^{2}}{2} +\frac{n^{-1}\lambda^{4}}{4+o(1)}+ \lambda O_{p}(1) \bigg].
\end{eqnarray*}
Then to show $\mathcal{B}_{n,m} \overset{p}{\rightarrow} +\infty$, using the above expression and the continuous mapping theorem, it's sufficient to see that under the conditions $m\rightarrow \infty, \lambda^2\leq (2-\delta)\log m$, 
\begin{align*}
   & ~(1+o(1))\cdot \log m - \frac{\lambda^{2}}{2}+\lambda O_{p}(1)\\
   \geq & ~ \frac{\delta+o(1)}{2}\cdot \log m + O_{p}(\sqrt{\log m}) \overset{p}{\rightarrow} +\infty.
\end{align*}
\end{proof}


\begin{lemma}\label{lem::A-term-convergest-to-one}
     Assume the same conditions of Lemma \ref{Bnm:lem:snr}. Consider the random variable $\mathcal{A}_{n,m}$ defined in \eqref{term::A-first-order-spike}. Then, 
    \begin{equation*}
        \mathcal{A}_{n,m} \overset{p}{\rightarrow} 1.
    \end{equation*}
\end{lemma}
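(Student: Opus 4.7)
The plan is to establish $\mathcal{A}_{n,m} \overset{p}{\to} 1$ via a conditional second-moment argument. Condition on $(X_1, z)$ and observe that the summands
\[
Y_i := \exp\!\Bigl(\rho \lambda X_{i,1} - \tfrac{\lambda^2}{2} X_{i,1}^2 - \tfrac{\lambda^2}{2}\|X_{i,-1}\|_2^2\Bigr), \qquad i=2,\ldots,m,
\]
are i.i.d.\ conditionally, where $\rho := \|\lambda X_1 + z\|_2$. Since $X_{i,1}\sim \mathcal{N}(0,1/n)$ is independent of $X_{i,-1}$ with $n\|X_{i,-1}\|_2^2 \sim \chi^2_{n-1}$, applying Lemma \ref{lem:incomp_quadexp_Gaussian} with $\gamma=+\infty$, $\alpha=\rho\lambda/\sqrt{n}$, $\beta=\lambda^2/(2n)$ together with the chi-square MGF yields
\[
\mathbb{E}[Y_i \mid X_1, z] = (1+\lambda^2/n)^{-n/2}\exp\!\Bigl(\tfrac{\lambda^2 \rho^2}{2(n+\lambda^2)}\Bigr),
\]
which matches exactly the denominator of $\mathcal{A}_{n,m}$ divided by $(m-1)$. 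In particular $\mathbb{E}[\mathcal{A}_{n,m}\mid X_1,z]=1$, so the task reduces to a variance bound.

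The same computation, now with $(\alpha,\beta)=(2\rho\lambda/\sqrt{n},\lambda^2/n)$ and the chi-square MGF evaluated at $-\lambda^2/n$, gives
\[
\mathbb{E}[Y_i^2 \mid X_1, z] = (1+2\lambda^2/n)^{-n/2}\exp\!\Bigl(\tfrac{2\lambda^2 \rho^2}{n+2\lambda^2}\Bigr).
\]
Letting $D:=(m-1)\,\mathbb{E}[Y_i\mid X_1,z]$, I will then bound
\[
\mathrm{Var}(\mathcal{A}_{n,m}\mid X_1,z) \;\leq\; \frac{(m-1)\,\mathbb{E}[Y_i^2\mid X_1,z]}{D^2} \;=\; \frac{1}{m-1}\cdot \frac{(1+\lambda^2/n)^n}{(1+2\lambda^2/n)^{n/2}}\cdot \exp\!\Bigl(\tfrac{\lambda^2\rho^2\,n}{(n+\lambda^2)(n+2\lambda^2)}\Bigr).
\]

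To control this expression, I will use the hypotheses $\lambda^2\le (2-\delta)\log m$ and $(\log m)/n\to 0$, which imply $\lambda^2/n\to 0$. A Taylor expansion of $\log$ gives
\[
\frac{(1+\lambda^2/n)^n}{(1+2\lambda^2/n)^{n/2}} = \exp\!\Bigl(\tfrac{\lambda^4}{2n}(1+o(1))\Bigr),
\]
and since $\lambda^4/n \le (2-\delta)^2(\log m)^2/n = o(\log m)$, this ratio is $m^{o(1)}$. Concentration of $\|z\|_2^2$ and $\|X_1\|_2^2$ (Lemmas \ref{lem::chi-concentration}--\ref{lem::chi-square-concentration}) gives $\rho^2 = n(1+o_p(1))$, whence
\[
\frac{\lambda^2\rho^2\,n}{(n+\lambda^2)(n+2\lambda^2)} = \lambda^2(1+o_p(1)) \;\le\; (2-\delta+o_p(1))\log m.
\]
Combining, on a high-probability event in $(X_1,z)$,
\[
\mathrm{Var}(\mathcal{A}_{n,m}\mid X_1,z) \;\leq\; m^{-1+(2-\delta)+o_p(1)} \;=\; m^{-\delta+o_p(1)} \;\overset{p}{\to}\; 0.
\]
Conditional Chebyshev on this event, together with the vanishing probability of its complement, yields $\mathcal{A}_{n,m}\overset{p}{\to}1$.

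The main technical obstacle will be Step 3: carefully tracking the second-order Taylor terms so that the $\lambda^4/(2n)$ correction in the ratio $(1+\lambda^2/n)^n/(1+2\lambda^2/n)^{n/2}$ is confirmed to be $o(\log m)$, while simultaneously showing that the dominant $\exp(\lambda^2)\le m^{2-\delta}$ factor is tamed by the $1/(m-1)$ prefactor, leaving a buffer of $m^{-\delta}$. The fixed-$\delta$ assumption in $\lambda^2\le (2-\delta)\log m$ is precisely what guarantees this buffer, and the rest of the proof is routine Gaussian MGF bookkeeping enabled by Lemma \ref{lem:incomp_quadexp_Gaussian}.
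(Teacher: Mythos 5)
Your conditional moment computations are correct: $\mathbb{E}[Y_i \mid X_1,z]$ and $\mathbb{E}[Y_i^2\mid X_1,z]$ are exactly as you state, and so is the resulting expression for the conditional variance bound. The fatal problem is the arithmetic in the last step. You have
\[
\mathrm{Var}(\mathcal{A}_{n,m}\mid X_1,z)\;\le\;\frac{1}{m-1}\cdot m^{o(1)}\cdot \exp\bigl(\lambda^2(1+o_p(1))\bigr)\;\le\; m^{-1}\cdot m^{(2-\delta)(1+o_p(1))}\cdot m^{o(1)}\;=\;m^{\,1-\delta+o_p(1)},
\]
since $-1+(2-\delta)=1-\delta$, not $-\delta$. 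For $\delta\in(0,1)$ this bound \emph{diverges} whenever $\lambda^2$ is allowed to approach $(2-\delta)\log m$, so plain Chebyshev on the untruncated sum cannot close the argument; it only works in the smaller regime $\lambda^2\le(1-\epsilon)\log m$. This is not a fixable slip in bookkeeping: the untruncated conditional variance genuinely blows up because the log-normal-type summands $Y_i$ have a second moment dominated by a rare large-deviation event, even though each $Y_i$ is $o_p(b_{n,m})$. The paper states this explicitly ("the variance is so large that $\mathcal{A}_{n,m}\overset{p}{\to}1$ cannot be concluded by Chebychev's inequality") and resolves it with a triangular-array truncation in the style of Durrett's weak law: truncate $Y_{m,i}$ at $b_{n,m}$, show $\sum_{i}\mathbb{P}(Y_{m,i}>b_{n,m})\to 0$ via a Cram\'er--Chernoff bound on a noncentral chi-squared left tail, bound the second moment of the \emph{truncated} variables (which is much smaller than $\mathbb{E}Y_i^2$), and finally show the truncated conditional mean still asymptotically equals $b_{n,m}$.

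To salvage your approach you would need to import exactly this truncation: replace $\mathbb{E}[Y_i^2\mid X_1,z]$ by $\mathbb{E}[Y_i^2\mathbbm{1}_{\{Y_i\le b_{n,m}\}}\mid X_1,z]$, which by Lemma \ref{lem::wlln-conditions-lemma}(ii) carries an extra probability factor that decays like $m^{-(1-\delta/2)^2+o(1)}$ and restores a negative exponent, and separately control both $\sum_i\mathbb{P}(Y_i>b_{n,m})$ and the bias $|a_{n,m}-b_{n,m}|/b_{n,m}$ introduced by truncation. As written, your Step 3 does not go through.
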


\begin{proof}
We first introduce a few notations:
\begin{align*}
&b_{n,m}:=(m-1) (1+\lambda^2/n)^{-\frac{n}{2}} \exp\Big( \frac{\lambda^2}{2(n+\lambda^{2})} \|\lambda X_{1}+z\|_2^{2} \Big) \\ 
& Y_{m,i} := \exp\Big[ \|\lambda X_{1} + z\|_2\lambda X_{i,1} - \frac{\lambda^{2}}{2}X_{i,1}^{2} - \frac{\lambda^{2}}{2}\|X_{i,-1}\|_2^{2} \Big], \quad \bar{Y}_{m,i}:=Y_{m,i}\mathbbm{1}_{(Y_{m,i}\leq b_{n,m})}\\
&S_{n,m}:=\sum_{i=2}^mY_{m,i}, ~~\bar{S}_{n,m}:=\sum_{i=2}^m\bar{Y}_{m,i}, ~~a_{n,m}=\E\big(\bar{S}_{n,m}~|~\|\lambda X_{1} + z\|_2\big).
\end{align*}
Thus, $\mathcal{A}_{n,m}=\frac{S_{n,m}}{b_{n,m}}$. Note that if we condition on $\|\lambda X_1 +z\|_2$, then $\{Y_{m,i}: i=2,\ldots, m\}$ are independent and identically distributed random variables. It is straightforward to verify that $\E\mathcal{A}_{n,m}=1$. However, the variance ${\rm Var}(\mathcal{A}_{n,m})$ is so large that $\mathcal{A}_{n,m} \overset{p}{\rightarrow} 1$ cannot be concluded by Chebychev's inequality. We resolve the issue via a truncation method similar to Theorem 2.2.11 of \cite{durrett2019probability}. We will prove Lemma \ref{lem::A-term-convergest-to-one} by establishing 
\begin{align}
\label{lem14:map}
\frac{S_{n,m}-a_{n,m}}{b_{n,m}}\overset{p}{\rightarrow}0, \quad  \quad \frac{a_{n,m}}{b_{n,m}} \overset{p}{\rightarrow} 1.
\end{align}

To prove the first result in \eqref{lem14:map}, we have $\forall \epsilon >0$,
\begin{align}
&~\mathbb{P}\Big(\Big|\frac{S_{n,m}-a_{n,m}}{b_{n,m}}\Big|>\epsilon \Big) \nonumber \\
\leq &~\mathbb{P}(S_{n,m}\neq \bar{S}_{n,m})+\mathbb{P}\Big(\Big|\frac{\bar{S}_{n,m}-a_{n,m}}{b_{n,m}}\Big|>\epsilon \Big) \nonumber \\
\leq &~ \sum_{i=2}^m \mathbb{P}(Y_{m,i}>b_{n,m})+ \mathbb{P}\Big(\Big|\frac{\bar{S}_{n,m}-a_{n,m}}{b_{n,m}}\Big|>\epsilon \Big), \label{first:result:decomp}
\end{align}
and we aim to show that both terms converge to zero. We start by applying Lemma \ref{lem::wlln-conditions-lemma} (i) to obtain
\begin{align}
\label{term1:step1}
\sum_{i=2}^m \mathbb{P}\Big(Y_{m,i}>b_{n,m}~|~\|\lambda X_1+z\|_2\Big) = (m-1)\cdot \mathbb{P}\Big(\chi^2_n(\gamma_1) < c_1 ~\Big |~ \|\lambda X_{1}+z\|_2\Big),
\end{align}
where $c_1=n\lambda^{-2}( \|\lambda X_{1}+z\|_2^{2} -2\log b_{n,m})$, and $\chi^2_n(\gamma_1)$ is a noncentral chi-squared distribution with degrees of freedom $n$ and non-centrality parameter $\gamma_1=n\lambda^{-2}\|\lambda X_1+z\|_2^2$. It turns out that the usual concentration inequality (e.g. Lemma \ref{lem::concentration-non-central-chisquare}) does not yield a sharp enough left-tail bound for the above. Instead, we resort to the Cram\'{e}r–Chernoff bounding method with a careful analysis of the moment-generating function. Specifically, $\forall \alpha>0$,
\begin{align}
\label{term1:step2}
&~(m-1)\cdot \mathbb{P}\Big(\chi^2_n(\gamma_1) < c_1 ~\Big |~ \|\lambda X_{1}+z\|_2\Big) \nonumber \\
= &~ (m-1)\cdot \mathbb{P}\Big(e^{-\alpha \chi^2_n(\gamma_1)} > e^{-\alpha c_1} ~\Big |~ \|\lambda X_{1}+z\|_2\Big) \nonumber \\
\leq &~ e^{\alpha c_1+\log(m-1)} \cdot \E e^{-\alpha \chi^2_n(\gamma_1) }=e^{\alpha c_1+\log(m-1)} \cdot e^{\frac{-\alpha \gamma_1}{1+2\alpha}}(1+2\alpha)^{-n/2} \nonumber \\
= & ~\exp\Bigg \{ \frac{\alpha n^2}{\lambda^2}\log \Big (1 + \frac{\lambda^2}{n} \Big ) - \frac{n}{2}\log (1+2\alpha) + \nonumber \\
    &\quad \quad \quad \|\lambda X_1 + z\|_2^2\left (\frac{2\alpha^2 n}{\lambda^2(1+2\alpha)} -\frac{\alpha n}{n + \lambda^2} \right )  + \left (1 - \frac{2\alpha n}{\lambda^2} \right )\log (m-1) \Bigg \}:=e^{f_{n,m}(\alpha)}. 
\end{align}
Here, the first inequality applies Markov’s inequality; the second equality uses the moment-generating function of noncentral chi-squared distribution; and the third equality is obtained by plugging in the values of $\gamma_1,c_1$ and $b_{n,m}$. Combining \eqref{term1:step1} and \eqref{term1:step2} gives
\begin{align}
\label{term1:step3}
\sum_{i=2}^m \mathbb{P}(Y_{m,i}>b_{n,m}) &\leq \E e^{f_{n,m}(\alpha)} \nonumber \\
&= \exp\Bigg \{ \frac{\alpha n^2}{\lambda^2}\log \Big (1 + \frac{\lambda^2}{n} \Big ) - \frac{n}{2}\log (1+2\alpha) + \left (1 - \frac{2\alpha n}{\lambda^2} \right )\log (m-1)+ \nonumber \\
    &\quad \quad \quad -\frac{n}{2}\log\Bigg(1-\frac{2\alpha(2n\alpha-\lambda^2)}{\lambda^2(1+2\alpha)}\Bigg)  \Bigg \}:=e^{g_{n,m}(\alpha)},
\end{align}
where the first equality holds because $\|\lambda X_1+z\|_2^2 \sim \frac{\lambda^2+n}{n}\cdot \chi^2_n$, hence we can use the moment-generating function of chi-squared distribution to compute $\E e^{f_{n,m}(\alpha)}$. Based on \eqref{term1:step3}, to prove $\sum_{i=2}^m \mathbb{P}(Y_{m,i}>b_{n,m})\rightarrow 0$, it is sufficient to show $g_{n,m}(\alpha)\rightarrow -\infty$. We set
\begin{align*}
\label{alpha:choice}
\alpha=\frac{4-\delta}{8-4\delta}\cdot \frac{\lambda^2}{n}.
\end{align*}
Under the conditions $m\rightarrow \infty, \frac{\log m}{n}\rightarrow 0, \lambda^2 \leq (2-\delta)\log m$, using the Taylor's expansion $\log(1+t)=t-\frac{1+o(1)}{2}t^2$ for $t\rightarrow 0$, we can easily obtain
\begin{align*}
\frac{\alpha n^2}{\lambda^2}\log \Big (1 + \frac{\lambda^2}{n} \Big )&= \frac{4-\delta+o(1)}{8-4\delta}\cdot \lambda^2\\
\frac{n}{2}\log (1+2\alpha)&=\frac{4-\delta+o(1)}{8-4\delta}\cdot \lambda^2 \\
\frac{n}{2}\log\Bigg(1-\frac{2\alpha(2n\alpha-\lambda^2)}{\lambda^2(1+2\alpha)}\Bigg) & \geq \frac{-(4+o(1)-\delta)\delta}{8(2-\delta)}\cdot \log m \\
\Big(1 - \frac{2\alpha n}{\lambda^2} \Big )\log (m-1) &=\frac{(-1+o(1))\delta}{4-2\delta} \cdot \log m
\end{align*}
Combining the above results with \eqref{term1:step3}, we can conclude that
\[
g_{n,m}(\alpha)\leq -\frac{(1+o(1))\delta^2}{16-8\delta} \cdot \log m+ o(\lambda^2)\rightarrow -\infty.
\]

Next, we show the second term $\mathbb{P}\Big(\Big|\frac{\bar{S}_{n,m}-a_{n,m}}{b_{n,m}}\Big|>\epsilon \Big)$ in \eqref{first:result:decomp} converges to zero. We first have
\begin{align}
\label{term2:step1}
&~\mathbb{P}\Big(\Big|\frac{\bar{S}_{n,m}-a_{n,m}}{b_{n,m}}\Big|>\epsilon ~\big |~\|\lambda X_1+z\|_2 \Big) \nonumber \\ \leq &~\epsilon^{-2}b_{n,m}^{-2}\cdot {\rm Var}\big(\bar{S}_{n,m}~|~\|\lambda X_1+z\|_2\big) \nonumber \\
\leq &~\epsilon^{-2}b_{n,m}^{-2}\cdot \sum_{i=2}^{m} \E \Big(Y_{m,i}^{2} \mathbbm{1}_{(Y_{m,i}\leq b_{n,m})} ~ \big | ~\|~\lambda X_{1} + z\|_2\Big),
\end{align}
where the first inequality applies Chebychev's inequality, and the second inequality holds since $\{\bar{Y}_{m,i}, i=2,\ldots, m\}$ are independent conditioning on $\|\lambda X_1+z\|_2$ and ${\rm Var}\big(\bar{Y}_{m,i}~|~\|\lambda X_1+z\|_2\big)\leq \E \big(Y_{m,i}^{2} \mathbbm{1}_{(Y_{m,i}\leq b_{n,m})} ~ \big | ~\|~\lambda X_{1} + z\|_2\big)$. Using Lemma \ref{lem::wlln-conditions-lemma} (ii), we can continue from \eqref{term2:step1} to obtain 
\begin{align}
\label{term2:key:bound}
    &~ b_{n,m}^{-2}\sum_{i=2}^{m} \E \Big(Y_{m,i}^{2} \mathbbm{1}_{(Y_{m,i}\leq b_{n,m})} ~ \big | ~\|~\lambda X_{1} + z\|_2\Big) \nonumber \\
    =&~ b_{n,m}^{-2} (m-1)\Big(1+\frac{2\lambda^{2}}{n}\Big)^{-\frac{n}{2}} \exp \left( \frac{2\| \lambda X_{1} + z\|_2^{2}}{2+n/\lambda^{2}} \right) \cdot  \mathbb{P}\bigg[ \chi_n^2(\gamma_2) \geq c_2~ \Big | ~\|\lambda X_{1} + z\|_2\bigg] \nonumber \\
    \leq & ~b_{n,m}^{-2} (m-1)\Big(1+\frac{2\lambda^{2}}{n}\Big)^{-\frac{n}{2}} \exp \left( \frac{2\| \lambda X_{1} + z\|_2^{2}}{2+n/\lambda^{2}} \right)\cdot e^{-\alpha c_2}\E e^{\alpha \chi^2_n(\gamma_2)} \nonumber \\
    = &~\exp\Bigg(\frac{2\alpha n+(4\alpha-1)\lambda^2}{\lambda^2}\cdot \log(m-1)-\frac{n}{2}\log\Big(1+\frac{2\lambda^2}{n}\Big)-\frac{n}{2}\log(1-2\alpha)+ \nonumber \\
    &~\frac{n(\lambda^2-\alpha(n+2\lambda^2))}{\lambda^2}\log\Big(1+\frac{\lambda^2}{n}\Big)+\frac{\alpha(n^2-(1-2\alpha)(2\lambda^2+n)^2)}{(1-2\alpha)\lambda^2(2\lambda^2+n)}\|\lambda X_1+z\|^2_2+  \nonumber \\
    &~\quad \quad \Big(\frac{2\lambda^2}{2\lambda^2+n}-\frac{\lambda^2-\alpha(n+2\lambda^2)}{n+\lambda^2}\Big)\|\lambda X_1+z\|^2_2\Bigg), \quad \quad \forall \alpha >0,
\end{align}
where $\gamma_2=\frac{n^2}{2\lambda^4+n\lambda^2}\|\lambda X_1+z\|_2^2, c_2=\frac{n+2\lambda^2}{\lambda^{2}}\big(\|\lambda X_{1} + z\|_2^{2} - 2\log b_{n,m}\big)$. In the above, the last two steps follow similarly as in \eqref{term1:step2} by using Markov’s inequality and the moment-generating function of noncentral chi-squared distribution. We set $\alpha=\frac{4-3\delta}{8-4\delta}\cdot \frac{\lambda^2}{n}$. Under the conditions $m\rightarrow \infty, \frac{\log m}{n}\rightarrow 0, \lambda^2 \leq (2-\delta)\log m$, as in bounding $g_{n,m}(\alpha)$, we can have
\begin{align*}
-\frac{n}{2}\log\Big(1+\frac{2\lambda^2}{n}\Big)&=-\lambda^2+o(\lambda^2) \\
-\frac{n}{2}\log(1-2\alpha)&=n\alpha+o(n\alpha)\\
\frac{n(\lambda^2-\alpha(n+2\lambda^2))}{\lambda^2}\log\Big(1+\frac{\lambda^2}{n}\Big)&=\lambda^2-n\alpha+o(\lambda^2+n\alpha) \\
\frac{2\alpha n+(4\alpha-1)\lambda^2}{\lambda^2}\cdot \log(m-1)&=\frac{-\delta+o(1)}{4-2\delta}\cdot \log m \\
\frac{\alpha(n^2-(1-2\alpha)(2\lambda^2+n)^2)}{(1-2\alpha)\lambda^2(2\lambda^2+n)}&=\frac{(4-3\delta+o(1))(5\delta-12)}{2(4-2\delta)^2}\cdot \frac{\lambda^2}{n} \\
\frac{2\lambda^2}{2\lambda^2+n}-\frac{\lambda^2-\alpha(n+2\lambda^2)}{n+\lambda^2}&=\frac{12-7\delta+o(1)}{8-4\delta}\cdot \frac{\lambda^2}{n} \\
\|\lambda X_1+z\|^2_2&=n+o_p(n)
\end{align*}
Putting together the above results with \eqref{term2:step1} and \eqref{term2:key:bound} yields
\begin{align*}
\mathbb{P}\Big(\Big|\frac{\bar{S}_{n,m}-a_{n,m}}{b_{n,m}}\Big|>\epsilon ~\big |~\|\lambda X_1+z\|_2 \Big)\leq \epsilon^{-2} e^{-\frac{\delta^2+o_p(1)}{16-8\delta}\cdot \log m} \overset{p}{\rightarrow }0.
\end{align*}
As a result, the dominated (Vitali's) convergence theorem implies 
\[
\E\Big(\Big|\frac{\bar{S}_{n,m}-a_{n,m}}{b_{n,m}}\Big|>\epsilon\Big)=\E\bigg[\mathbb{P}\Big(\Big|\frac{\bar{S}_{n,m}-a_{n,m}}{b_{n,m}}\Big|>\epsilon ~\big |~\|\lambda X_1+z\|_2 \Big)\bigg]\rightarrow 0.
\]

So far we have obtained the first result in \eqref{lem14:map}. It remains to prove the second one in \eqref{lem14:map}. From Lemma \ref{lem::wlln-conditions-lemma} (iii), we have
\begin{equation*}
    a_{n,m} = (m-1) \Big(1 +\lambda^{2}/n \Big)^{-n/2} \exp\bigg( \frac{\|\lambda X_{1} + z\|_2^{2}}{2(1+n/\lambda^{2})} \bigg) \bigg( 1- \mathbb{P}\Big( \chi_{n}^{2} (\gamma_{3}) \leq c_3 ~\Big |~ \|\lambda X_{1} + z \|_2 \Big) \bigg), \label{eq::a_n_p_definition}
\end{equation*}
where $\gamma_{3} = \frac{n^{2}}{\lambda^{4}+n\lambda^2}\|\lambda X_{1} + z\|_2^{2}$ and $c_{3}= \frac{n+\lambda^2}{\lambda^2}(\|\lambda X_{1} + z \|_2^2-2\log b_{n,m})$. Hence, to show $\frac{a_{n,m}}{b_{n,m}} \overset{p}{\rightarrow} 1$, it is equivalent to show
\begin{equation*}
    \mathbb{P}\Big( \chi_{n}^{2} (\gamma_{3}) \leq c_3 ~\Big |~ \|\lambda X_{1} + z \|_2 \Big) =o_p(1).
\end{equation*}
Similar to the calculations of \eqref{term1:step2}, we use the Cram\'{e}r–Chernoff bounding method to obtain a sharp tail bound for the above,
\begin{align}
\label{final:one}
&~\mathbb{P}\Big( \chi_{n}^{2} (\gamma_{3}) \leq c_3 ~\Big |~ \|\lambda X_{1} + z \|_2 \Big) \nonumber \\
\leq &~e^{\alpha c_3} \cdot \E e^{-\alpha \chi^2_n(\gamma_3) }=e^{\alpha c_3} \cdot e^{\frac{-\alpha \gamma_3}{1+2\alpha}}(1+2\alpha)^{-n/2} \nonumber\\
=&~\exp\Bigg(-\frac{n}{2}\log(1+2\alpha)+\frac{\alpha n(n+\lambda^2)}{\lambda^2}\log\Big(1+\frac{\lambda^2}{n}\Big)-\frac{2\alpha(n+\lambda^2)}{\lambda^2}\log(m-1)+ \nonumber \\
&\quad \quad \quad \frac{n\alpha(\lambda^2+2\alpha n+2\alpha \lambda^2)}{(1+2\alpha)\lambda^2(n+\lambda^2)}\|\lambda X_{1} + z \|_2^2\Bigg).
\end{align}
We choose $\alpha=\frac{\delta}{8-4\delta}\cdot \frac{\lambda^2}{n}$, and verify that
\begin{align*}
-\frac{n}{2}\log(1+2\alpha)&=-n\alpha +o(\lambda^2) \\
\frac{\alpha n(n+\lambda^2)}{\lambda^2}\log\Big(1+\frac{\lambda^2}{n}\Big)&=n\alpha +o(\lambda^2) \\
-\frac{2\alpha(n+\lambda^2)}{\lambda^2}\log(m-1)&=\frac{\delta+o(1)}{2\delta-4}\cdot \log m \\
\frac{n\alpha(\lambda^2+2\alpha n+2\alpha \lambda^2)}{(1+2\alpha)\lambda^2(n+\lambda^2)}\|\lambda X_{1} + z \|_2^2 &\leq \frac{\delta(4-\delta)+o_p(1)}{8(2-\delta)}\cdot \log m
\end{align*}
Plugging the above results into \eqref{final:one} gives
\[
\mathbb{P}\Big( \chi_{n}^{2} (\gamma_{3}) \leq c_3 ~\Big |~ \|\lambda X_{1} + z \|_2 \Big) \leq e^{-\frac{-\delta^2+o(1)}{8(2-\delta)}\cdot \log m} \overset{p}{\rightarrow} 0.
\]
\end{proof}

\begin{lemma}\label{lem::wlln-conditions-lemma}
Consider the random vectors $X_{1}, \ldots, X_{m} \overset{i.i.d.}{\sim} \calN(0, \frac{1}{n}I_{n})$, independent of $z\sim \calN(0, I_{n})$. Moreover, let $Z\sim \mathcal{N}(0,1)$ and $V \sim \chi_{n-1}^{2}$ be two independent random variables which are also independent of $(X_1, z)$. Denote
\begin{equation*}
   Y_{m,i} := \exp\Big( \|\lambda X_{1} + z\|_2\lambda X_{i,1} - \frac{\lambda^{2}}{2}X_{i,1}^{2} - \frac{\lambda^{2}}{2}\|X_{i,-1}\|_2^{2} \Big), \quad i=2,\ldots, m,
\end{equation*}
where $X_{i,1}\in \mathbb{R}$ and $X_{i, -1}\in \mathbb{R}^{n-1}$ denote the first coordinate and the remaining coordinates of $X_i$, respectively. For any constants $\lambda >0, b>0$, it holds that 
\begin{enumerate}[label=(\roman*)]
    \item 
    $\begin{aligned}[t]
    & \mathbb{P}\Big(Y_{m,i}>b~|~\|\lambda X_1+z\|_2\Big) \nonumber \\
=&~ \mathbb{P}\bigg[ \Big( Z - \frac{\sqrt{n}}{\lambda} \|\lambda X_{1}+ z\|_2 \Big)^{2} + V < n\lambda^{-2} \Big( \|\lambda X_{1}+z\|_2^{2} -2\log b \Big) ~\Big |~ \|\lambda X_{1}+z\|_2 \bigg] 
    \end{aligned}$
    \item 
    $\begin{aligned}[t]
        & \E\Big(Y_{m,i}^{2}\mathbbm{1}_{(Y_{m,i}\leq b)} ~|~ \|\lambda X_{1} + z\|_2\Big) \\
        =& \Big(1+\frac{2\lambda^{2}}{n}\Big)^{-\frac{n}{2}} \exp \left( \frac{2\| \lambda X_{1} + z\|_2^{2}}{2+n/\lambda^{2}} \right) \cdot \nonumber\\
        & \mathbb{P}\bigg[ \Big(Z - \frac{n/\lambda^{2}}{\sqrt{2+n/\lambda^{2}}} \|\lambda X_{1}+z\|_2 \Big)^{2} + V \geq \frac{n+2\lambda^2}{\lambda^{2}}\big(\|\lambda X_{1} + z\|_2^{2} - 2\log b\big)~ \Big | ~\|\lambda X_{1} + z\|_2\bigg] \nonumber
    \end{aligned}$
    \item 
    $\begin{aligned}[t]
        & \E\Big(Y_{m,i}\mathbbm{1}_{(Y_{m,i}\leq b)} ~|~ \|\lambda X_{1} + z\|_2\Big) \\
        =& \Big(1+\frac{\lambda^{2}}{n}\Big)^{-\frac{n}{2}} \exp \left( \frac{\| \lambda X_{1} + z\|_2^{2}}{2(1+n/\lambda^{2})} \right) \cdot \nonumber\\
        & \mathbb{P}\bigg[ \Big(Z - \frac{n/\lambda^{2}}{\sqrt{1+n/\lambda^{2}}} \|\lambda X_{1}+z\|_2 \Big)^{2} + V \geq \frac{n+\lambda^2}{\lambda^{2}}\big(\|\lambda X_{1} + z\|_2^{2} - 2\log b\big)~ \Big | ~\|\lambda X_{1} + z\|_2\bigg] \nonumber
    \end{aligned}$
\end{enumerate}
\end{lemma}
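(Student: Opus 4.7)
The plan is to prove all three identities via a direct conditional Gaussian computation, holding $R := \|\lambda X_{1}+z\|_{2}$ fixed throughout. Since $X_{i}$ is independent of $(X_{1},z)$, and hence of $R$, the conditional distributions reduce to integrals against the standard normal density of $(Z_{1},\tilde Z)$, where I use the representation $X_{i,1}=Z_{1}/\sqrt n$ and $X_{i,-1}=\tilde Z/\sqrt n$ with $Z_{1}\sim \calN(0,1)$ independent of $\tilde Z\sim \calN(0,I_{n-1})$.

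For part (i), I would take logarithms and complete the square in $X_{i,1}$, which rewrites $\{Y_{m,i}>b\}$ as the equivalent quadratic inequality
\[
(X_{i,1}-R/\lambda)^{2}+\|X_{i,-1}\|_{2}^{2}<\lambda^{-2}(R^{2}-2\log b).
\]
Multiplying by $n$ and substituting $X_{i,1}=Z_{1}/\sqrt n$ together with $V:=\|\tilde Z\|_{2}^{2}\sim \chi_{n-1}^{2}$ gives exactly the event in (i), with $Z_{1}\overset{d}{=}Z$ and $V$ independent.

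For parts (ii) and (iii) I would express the conditional expectation as a Gaussian integral and complete the square in the quadratic exponent. For $c\in\{1,2\}$, setting $a_{c}:=1+c\lambda^{2}/n$, combining the density of $(Z_{1},\tilde Z)$ with the exponent of $Y_{m,i}^{c}$ yields
\[
\tfrac{cR\lambda}{\sqrt n}z_{1}-\tfrac{a_{c}}{2}z_{1}^{2}-\tfrac{a_{c}}{2}\|\tilde z\|_{2}^{2}=-\tfrac{a_{c}}{2}\Bigl(z_{1}-\tfrac{cR\lambda}{\sqrt n\,a_{c}}\Bigr)^{2}-\tfrac{a_{c}}{2}\|\tilde z\|_{2}^{2}+\tfrac{c^{2}R^{2}}{2c+2n/\lambda^{2}}.
\]
The change of variable $W_{1}=\sqrt{a_{c}}\bigl(z_{1}-cR\lambda/(\sqrt n\,a_{c})\bigr)$, $\tilde W=\sqrt{a_{c}}\tilde z$ (Jacobian $a_{c}^{-n/2}$) then recasts the integral as the explicit prefactor $a_{c}^{-n/2}\exp\bigl(c^{2}R^{2}/(2c+2n/\lambda^{2})\bigr)$ multiplied by the probability of the transformed indicator event under $W_{1}\sim \calN(0,1)$ and $\|\tilde W\|_{2}^{2}\sim \chi_{n-1}^{2}$ independent. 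Translating the quadratic inequality from part (i) into the new coordinates via the identity $\sqrt n/(\lambda\sqrt{a_{c}})=(n/\lambda^{2})/\sqrt{c+n/\lambda^{2}}$ produces the event stated in (ii) when $c=2$ and in (iii) when $c=1$.

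The computation is entirely mechanical; the main place requiring care is matching the shift $(n/\lambda^{2})/\sqrt{c+n/\lambda^{2}}$ and the right-hand side $(n+c\lambda^{2})(R^{2}-2\log b)/\lambda^{2}$ against what comes out of the change of variables, which reduces to the algebraic identities $a_{c}\lambda^{2}=\lambda^{2}+c\lambda^{4}/n$ and $\sqrt{a_{c}}\lambda=\sqrt{\lambda^{2}+c\lambda^{4}/n}$. Once these simplifications are in hand the prefactors $(1+c\lambda^{2}/n)^{-n/2}$ and $\exp(cR^{2}/(c+n/\lambda^{2}))$ reconcile immediately with the forms stated in the lemma, finishing both (ii) and (iii).
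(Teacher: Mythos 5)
Your proposal is correct and follows essentially the same route as the paper: both reduce each identity to completing the square in the exponent and a scaling change of variables, with the event $\{Y_{m,i}>b\}$ rewritten as the quadratic inequality that yields the noncentral-chi-squared-type events. The only cosmetic difference is that the paper packages the Gaussian and $\chi^2_{n-1}$ integrations as two separate preliminary identities applied in sequence, whereas you carry out the full $n$-dimensional Gaussian integral in one step; the prefactors and transformed events you obtain match the lemma exactly.
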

\begin{proof}
We first prove two useful preliminary results. For any constants $t_1<0, t_2,t_3\in \mathbb{R}$, we have
\begin{align}
\label{single:norm}
\E \big(e^{t_1(Z-t_2)^2}\mathbbm{1}_{\{(Z-t_2)^2\geq t_3\}}\big)=&~\int_{-\infty}^{+\infty}\frac{1}{\sqrt{2\pi}}e^{t_1(Z-t_2)^2-\frac{1}{2}Z^2}\mathbbm{1}_{\{(Z-t_2)^2\geq t_3\}} dZ \nonumber \\
=&~\frac{1}{\sqrt{2\pi}}e^{\frac{t_1t_2^2}{1-2t_1}}\cdot \int_{-\infty}^{+\infty}e^{-\frac{1-2t_1}{2}(Z-\frac{2t_1t_2}{2t_1-1})^2}\mathbbm{1}_{\{(Z-t_2)^2\geq t_3\}} dZ \nonumber \\
=&~(1-2t_1)^{-1/2}e^{\frac{t_1t_2^2}{1-2t_1}} \cdot \int_{-\infty}^{+\infty}\frac{1}{\sqrt{2\pi}}e^{-\frac{1}{2}\tilde{Z}^2}\mathbbm{1}_{\big\{\big(\tilde{Z}-\frac{t_2}{\sqrt{1-2t_1}}\big)^2\geq (1-2t_1)t_3\big\}} d\tilde{Z} \nonumber \\
=&~(1-2t_1)^{-1/2}e^{\frac{t_1t_2^2}{1-2t_1}} \cdot \E\mathbbm{1}_{\big\{\big(Z-\frac{t_2}{\sqrt{1-2t_1}}\big)^2\geq (1-2t_1)t_3\big\}},
\end{align}
where in the third equality we have applied the change of variable $\tilde{Z}=\sqrt{1-2t_1}\big(Z-\frac{2t_1t_2}{2t_1-1}\big)$. Moreover, for any $t_1<\frac{1}{2}, t_2\in \mathbb{R}$, 
\begin{align}
\label{chi:square:form}
\E\big(e^{t_1 V}\mathbbm{1}_{(V\geq t_2)}\big)&=\int_0^{\infty} e^{t_1 V}\frac{1}{2^{(n-1)/2}\Gamma((n-1)/2)}V^{\frac{n-1}{2}-1}e^{-\frac{1}{2}V}\mathbbm{1}_{(V\geq t_2)}dV \nonumber \\
&=(1-2t_1)^{-(n-1)/2} \cdot \int_0^{\infty}\frac{1}{2^{(n-1)/2}\Gamma((n-1)/2)}\tilde{V}^{\frac{n-1}{2}-1}e^{-\frac{1}{2}\tilde{V}}\mathbbm{1}_{\{\tilde{V}\geq (1-2t_1)t_2\}}d\tilde{V} \nonumber \\
&=(1-2t_1)^{-(n-1)/2}\cdot \E\mathbbm{1}_{\{V\geq (1-2t_1)t_2\}},
\end{align}
where in the second equality we have applied the change of variable $\tilde{V}=(1-2t_1)V$.

Now, we rewrite $Y_{m,i}$ in the form 
\begin{align}
Y_{m,i}&=\exp\Bigg(-\frac{\lambda^2}{2n}\Big(\big(\sqrt{n}X_{i,1}-\frac{\sqrt{n}}{\lambda}\|\lambda X_1+z\|_2\big)^2+\|\sqrt{n}X_{i,-1}\|_2^2\Big)+\frac{1}{2}\|\lambda X_1+z\|_2^2\Bigg), \nonumber \\
&\overset{d}{=}\exp\Bigg(-\frac{\lambda^2}{2n}\Big(\big(Z-\frac{\sqrt{n}}{\lambda}\|\lambda X_1+z\|_2\big)^2+V\Big)+\frac{1}{2}\|\lambda X_1+z\|_2^2\Bigg). \label{new:formy}
\end{align}

Equality (i) directly follows from \eqref{new:formy}.

To show Equality (ii), we use \eqref{new:formy} to write out
\begin{align*}
&~\E\Big(Y_{m,i}^{2}\mathbbm{1}_{(Y_{m,i}\leq b)} ~|~ \|\lambda X_{1} + z\|_2\Big) \\
=&~\E\Big(e^{-\frac{\lambda^2}{n}V+\|\lambda X_1+z\|_2^2} \cdot \big(e^{t_1(Z-t_2)^2}\mathbbm{1}_{\{(Z-t_2)^2\geq t_3\}}\big) ~|~ \|\lambda X_{1} + z\|_2 \Big)
\end{align*}
with $t_1=-\frac{\lambda^2}{n}, t_2=\frac{\sqrt{n}}{\lambda}\|\lambda X_1+z\|_2, t_3=n\lambda^{-2} \Big( \|\lambda X_{1}+z\|_2^{2} -2\log b \Big)-V$. Due to the independence between $\{Z,V, \|\lambda X_1+z\|_2\}$, we first use \eqref{single:norm} to compute the expectation with respect to $Z$ to obtain
\begin{align*}
&~\E\Big(Y_{m,i}^{2}\mathbbm{1}_{(Y_{m,i}\leq b)} ~|~ \|\lambda X_{1} + z\|_2\Big) \\
=&~\Big(1+\frac{2\lambda^2}{n}\Big)^{-1/2}e^{\frac{2\lambda^2}{n+2\lambda^2}\|\lambda X_1+z\|_2^2}\cdot \E\big(e^{t_1 V}\mathbbm{1}_{(V\geq t_2)} ~|~ \|\lambda X_{1} + z\|_2\big ),
\end{align*}
where $t_1=-\frac{\lambda^2}{n}, t_2=n\lambda^{-2} \Big( \|\lambda X_{1}+z\|_2^{2} -2\log b \Big)-\frac{n}{n+2\lambda^2}\Big(Z - \frac{n/\lambda^{2}}{\sqrt{2+n/\lambda^{2}}} \|\lambda X_{1}+z\|_2 \Big)^{2}$. Further applying \eqref{chi:square:form} to compute the expectation with respect to $V$ proves Equality (ii). Equation (iii) can be derived in a similar way. We thus do not repeat the arguments. 
\end{proof}
\section{Proof of Proposition \ref{sub:bss}}
\label{proof:sub:bss}

\begin{proof}

Given that $\hat{\beta}^{BS}$ in \eqref{eq:MLE_1} is the $k$-sparse vector that minimizes the sum of the squares of the residuals, we can reformulate it as $\hat{\beta}^{BS}_{\hat{Q}}=(X_{\hat{Q}}^TX_{\hat{Q}})^{-1}X^T_{\hat{Q}}y$ and $\hat{\beta}^{BS}_{\hat{Q}^c}=0$, where
\begin{align*}
\hat{Q}\in \argmin_{Q\subseteq [p]: |Q|\leq k} \|y-X_Q(X_Q^TX_Q)^{-1}X^T_Qy\|_2^2.
\end{align*}
Note that $(X_Q^TX_Q)^{-1}$ above is well defined since $X_Q$ is of full column rank with probability one when $n\geq k$. This reformulation leads to the following key characterization,
\begin{align}
\label{key:formulation}
\|X\hat{\beta}^{BS}\|_2^2=\max_{Q\subseteq [p]: |Q|\leq k} \|X_Q(X_Q^TX_Q)^{-1}X^T_Qy\|_2^2.
\end{align}
Define the maximum $k$-sparse eigenvalue as
\[
\bar{\theta}_k(X)=\max_{\Delta\in S_k}\|X\Delta\|_2^2, \quad S_k:=\big\{\Delta\in \mathbb{R}^p: \|\Delta\|_0\leq k, \|\Delta\|_2=1\big\}.
\]
In the rest of the proof, we let $\E_{\beta} (\cdot)$ denote the expected value of a quantity when the regression coefficients of the true model is $\beta$. Then, we can bound the supremum risk of $\hat{\beta}^{BS}$ in the following way:
\begin{align}
\label{first:chain:form}
\sup_{\beta\in \Theta(k,\tau)}\mathbb{E}_{\beta} \|\hat{\beta}^{BS}-\beta \|_2^2 &\geq \mathbb{E}_{0} \|\hat{\beta}^{BS}\|_2^2 \geq \mathbb{E}_{0}\Big(\frac{1}{\bar{\theta}_k(X)}\|X\hat{\beta}^{BS}\|_2^2\Big) \nonumber \\
&=\sigma^2 \cdot \mathbb{E}\Big(\frac{1}{\bar{\theta}_k(X)}\cdot \max_{Q\subseteq [p]: |Q|\leq k} z^TX_Q(X_Q^TX_Q)^{-1}X^T_Qz\Big) \nonumber\\
&\geq \sigma^2 \cdot \mathbb{E}\Big(\frac{1}{\bar{\theta}^2_k(X)}\cdot \max_{Q\subseteq [p]: |Q|\leq k} \|X^T_Qz\|_2^2\Big) \nonumber \\
&=\sigma^2 \cdot \mathbb{E}\Big(\|z\|_2^2/n\Big)\cdot \mathbb{E}\Big(\frac{1}{\bar{\theta}^2_k(X)}\cdot \max_{Q\subseteq [p]: |Q|\leq k} \|X^T_Q(\sqrt{n}z/\|z\|_2)\|_2^2\Big),
\end{align}
where in the first equality we have used \eqref{key:formulation} and the fact $y=\sigma z$ when the true signal $\beta=0$, and the second equality holds since $(X, z/\|z\|_2, \|z\|_2)$ are mutually independent. Now denote $g:=\sqrt{n}X^Tz/\|z\|_2$. It is clear that $g\sim \mathcal{N}(0,I_p)$, and $\max_{Q\subseteq [p]: |Q|\leq k} \|X^T_Q(\sqrt{n}z/\|z\|_2)\|_2=\max_{u\in S_k} u^Tg$. We can thus continue from \eqref{first:chain:form} to obtain
\begin{align}
\label{second:chain:form}
\sup_{\beta\in \Theta(k,\tau)}\mathbb{E}_{\beta} \|\hat{\beta}^{BS}-\beta \|_2^2  &\geq \sigma^2 \cdot \mathbb{E}\Big(\frac{1}{\bar{\theta}^2_k(X)} \cdot \big(\max_{u\in S_k} u^Tg\big)^2\Big)  \nonumber \\
& \geq \frac{4\sigma^2}{9}\cdot \mathbb{E}\Big(\mathbbm{1}_{\bar{\theta}_k(X)\leq 3/2}\cdot \big(\max_{u\in S_k} u^Tg\big)^2\Big) \nonumber \\
& \geq \frac{4\sigma^2}{9}\cdot \Big(\mathbb{E}\big(\max_{u\in S_k} u^Tg\big)^2- \sqrt{\mathbb{E}\big(\max_{u\in S_k} u^Tg\big)^4} \cdot \sqrt{\mathbb{P}(\bar{\theta}_k(X)> 3/2)}\Big),
\end{align}
where we have used Cauchy–Schwarz inequality in the last inequality. Note that $\mathbb{E}\max_{u\in S_k} u^Tg$ is the Gaussian width of the sparse-vector set $S_k$, and it is known that $\mathbb{E}\max_{u\in S_k} u^Tg$ is of order $\sqrt{k\log(p/k)}$. Specifically, we apply Exercise 10.3.9 in \cite{vershynin2018high} to obtain a lower bound and Lemma 15 in \cite{guo2024minimaxlr} to obtain an upper bound:
\begin{align*}
&\mathbb{E}\big(\max_{u\in S_k} u^Tg\big)^2\geq \big(\mathbb{E}\max_{u\in S_k} u^Tg\big)^2\geq C_1k\log(2p/k), \\
&\mathbb{E}\big(\max_{u\in S_k} u^Tg\big)^4 \leq C_2 (k\log(ep/k))^2,
\end{align*}
where $C_1,C_2>0$ are absolute constants. Combining these results with \eqref{second:chain:form}, the proof will be completed if we can further show $\mathbb{P}(\bar{\theta}_k(X)> 3/2)=o(1)$. Indeed, the maximum $k$-sparse eigenvalue concentrates around one. For instance, we can apply Lemma 1 in \cite{guo2024minimaxlr} to obtain 
\[
\mathbb{P}(\bar{\theta}_k(X)> 3/2)\leq 2e^{-C_3k\log p}\rightarrow 0
\]
under the scaling conditions $k/p\rightarrow 0, (k\log p)/n\rightarrow 0$.

\end{proof}
\section{Proof of Theorem \ref{thm::second-order-low-snr-minimax}}
\label{proof:reg1:ridge}
Using the scale invariance property discussed in Section \ref{sec:scaling}, without loss of generality, we assume that $\sigma=1$. It is sufficient to prove the following upper and lower bounds:
\begin{align}
\sup_{\beta \in \Theta(k,\mu)}\E_{\beta} \|\betaR(\lambda) - \beta\|_2^{2} &\leq k \mu^{2} \Big( 1 - \frac{k\mu^{2}}{p} \big(1+o(1)\big) \Big), \label{thm3:upper:ridge} \\
R(\Theta(k,\mu),1) &\geq k \mu^{2} \Big( 1 - \frac{k\mu^{2}}{p} \big(1+o(1)\big) \Big). \label{thm3:lower:minimax}
\end{align}

\subsection{Upper bound}
The ridge estimator $\betaR(\lambda)$ is given by
\begin{align*}
    \betaR(\lambda) & \in \argmin_{b\in \mathbb{R}^p}~\|y-Xb\|_2^{2}+\lambda \|b\|_2^{2} \\
    & = (X^{T}X+\lambda I)^{-1}X^{T}y
\end{align*}
for $\lambda>0$. The next lemma obtains the upper bound \eqref{thm3:upper:ridge} with a proper choice of $\lambda$. 

\begin{lemma}
\label{ridge:upper:lemma}
    Assume model \eqref{model::gaussian-model} with $\sigma=1$. Suppose $k/p \rightarrow 0$ and $k /n \rightarrow 0$. As $\mu\rightarrow 0$, the ridge estimator with $\lambda=p(k\mu^{2})^{-1}$ has supremum risk
    \begin{equation*}
        \sup_{\beta \in \Theta(k,\mu)}\E_{\beta} \|\betaR(\lambda) - \beta\|_2^{2} \leq k\mu^{2} \Big(1- \frac{k\mu^{2}}{p} + o\Big( \frac{k\mu^{2}}{p} \Big)\Big).
    \end{equation*}
\end{lemma}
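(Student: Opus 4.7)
The plan is to decompose the mean-squared error into bias and variance and then exploit the rotational invariance of the isotropic Gaussian design. Writing $A=X^TX$ and observing $\hat{\beta}^R(\lambda)-\beta=(A+\lambda I)^{-1}(X^Tz-\lambda\beta)$, taking the inner expectation over $z\mid X$ gives
\begin{align*}
\E\|\hat{\beta}^R(\lambda)-\beta\|_2^2 = \lambda^2\,\beta^T\E[(A+\lambda I)^{-2}]\beta + \E\,\mathrm{tr}\bigl(A(A+\lambda I)^{-2}\bigr).
\end{align*}
Because $XO\stackrel{d}{=}X$ for every orthogonal $O\in\mathbb{R}^{p\times p}$, the matrix $\E[(A+\lambda I)^{-2}]$ is invariant under every orthogonal conjugation and must therefore equal $(T_1/p)I_p$ where $T_1:=\E\,\mathrm{tr}(A+\lambda I)^{-2}$. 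Setting $T_2:=\E\,\mathrm{tr}(A(A+\lambda I)^{-2})$, the risk becomes $(\lambda^2\|\beta\|_2^2/p)T_1+T_2$, which is monotone increasing in $\|\beta\|_2^2$, so the supremum over $\Theta(k,\mu)$ is attained at $\|\beta\|_2^2=k\mu^2$.

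The next step is to estimate $T_1$ and $T_2$ via a resolvent expansion. Under the hypotheses $\mu\to 0$ and $k/n\to 0$, the choice $\lambda=p/(k\mu^2)$ dominates $\|A\|_{\mathrm{op}}$ with overwhelming probability: Lemma \ref{lem::eval-conc} applied to $\sqrt{n}X$ gives $\|A\|_{\mathrm{op}}\leq C(1+p/n)$ on an event $\mathcal{E}$ with $\mathbb{P}(\mathcal{E}^c)=e^{-\Omega(n)}$, while $\lambda/(1+p/n)=n/(k\mu^2)\cdot(1+o(1))\to\infty$. On $\mathcal{E}$ I would expand
\begin{align*}
(A+\lambda I)^{-2}=\lambda^{-2}\sum_{j=0}^\infty (j+1)(-A/\lambda)^j,
\end{align*}
take the trace, and substitute the Gaussian design moments $\E\,\mathrm{tr}(A)=p$ and $\E\,\mathrm{tr}(A^2)=p(p+n+1)/n$, both of which follow from a one-line Wick contraction. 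Retaining the first two terms for $T_1$ and the leading term for $T_2$ produces $T_1=p\lambda^{-2}\bigl(1-2/\lambda+o(1/\lambda)\bigr)$ and $T_2=p\lambda^{-2}(1+o(1))$. Substituting $\lambda=p/(k\mu^2)$ and $\|\beta\|_2^2=k\mu^2$, the bias contributes $k\mu^2-2k^2\mu^4/p+o(k^2\mu^4/p)$ and the variance contributes $k^2\mu^4/p+o(k^2\mu^4/p)$, and the two second-order terms combine to the announced $-k^2\mu^4/p$.

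The main obstacle I foresee is the uniform control of the Neumann-series tail. Because the improvement over the trivial upper bound $k\mu^2$ comes entirely from a cancellation between $-2k^2\mu^4/p$ and $+k^2\mu^4/p$, the $j\geq 3$ remainder must be shown to be strictly $o(k^2\mu^4/p)$ rather than merely $O(k^2\mu^4/p)$. On $\mathcal{E}$ I plan to use $|\mathrm{tr}(A^j)|\leq \min(n,p)\,\|A\|_{\mathrm{op}}^j$ so that the tail is geometric in $r:=\|A\|_{\mathrm{op}}/\lambda=O(k\mu^2/\min(n,p))\to 0$; off $\mathcal{E}$, the deterministic bound $\mathrm{tr}(A+\lambda I)^{-2}\leq p\lambda^{-2}$ combined with the exponentially small $\mathbb{P}(\mathcal{E}^c)$ makes the contribution negligible. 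It is exactly at this step that the hypothesis $k/n\to 0$ (weaker than the usual $(k\log p)/n\to 0$) is used essentially, as it is what guarantees $\lambda\gg\|A\|_{\mathrm{op}}$ uniformly even when $p/n\to\infty$.
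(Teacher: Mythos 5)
Your architecture is genuinely different from the paper's and is mostly sound: you reduce the supremum over $\Theta(k,\mu)$ to $\|\beta\|_2^2=k\mu^2$ via rotational invariance of the design, and you extract the second-order bias term from a Neumann expansion of the resolvent on a spectral event. The paper instead uses the global deterministic domination $(1+x)^{-2}\leq 1-2x+3x^2$ for all $x\geq 0$, which gives $\lambda^2(X^TX+\lambda I)^{-2}\preccurlyeq \lambda^2 I-2\lambda X^TX+3(X^TX)^2$ and reduces the bias to the exact moments $\E(X^TX)=I_p$ and $\E(X^TX)^2=\tfrac{n+p+1}{n}I_p$ — no truncation event and no series tail to control. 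Your variance bound is the same as the paper's ($T_2\leq\lambda^{-2}\E\,\mathrm{tr}(X^TX)=p/\lambda^2$, needed only as an inequality), and your on-event tail control does go through: with $r=O((1+p/n)/\lambda)$ one checks $\min(n,p)\,r^3=o(p/\lambda)$ in both cases, the case $p>n$ using $p/(n\lambda)=k\mu^2/n\to 0$.

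The genuine gap is the complement event. The contribution of $\mathcal{E}^c$ to the bias is, at best, $\|\beta\|_2^2\,\mathbb{P}(\mathcal{E}^c)=k\mu^2\,e^{-\Omega(n)}$ (on $\mathcal{E}^c$ you can only use $\|\lambda(X^TX+\lambda I)^{-1}\beta\|_2^2\leq\|\beta\|_2^2$), and the lemma requires this to be $o(k^2\mu^4/p)=o(k\mu^2/\lambda)$, i.e. $\lambda\,e^{-\Omega(n)}\to 0$, i.e. $\log\bigl(p/(k\mu^2)\bigr)=o(n)$. That is not implied by the hypotheses $k/p\to 0$, $k/n\to 0$, $\mu\to 0$: the lemma (and Theorem \ref{thm::second-order-low-snr-minimax}, whose mildness of assumptions the paper explicitly emphasizes) places no upper bound on $p$ in terms of $n$ and no lower bound on $\mu$, so $\lambda=p/(k\mu^2)$ may be super-exponentially large in $n$ (e.g. $p=e^{n^2}$ or $\mu=e^{-n}$), in which case your ``exponentially small $\mathbb{P}(\mathcal{E}^c)$'' does not beat the target accuracy. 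The step is repairable — either recalibrate the truncation level so that $t^2\gg\log\lambda$ while keeping $(\sqrt{n}+\sqrt{p}+t)^2/(n\lambda)\to 0$, or drop the event altogether and use the paper's everywhere-valid polynomial bound — but as written the argument does not cover the full stated generality.
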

\begin{proof}
Throughout the proof, we write $\betaR(\lambda)$ as $\betaR$ and $\E_{\beta}(\cdot)$ as $\E(\cdot)$, for notational convenience. The risk of the ridge estimator is
\begin{eqnarray}
    && \E \| \betaR - \beta \|_2^{2} = \E \| (X^{T}X+\lambda I)^{-1}X^{T}(X\beta+z) -\beta \|_2^{2} \nonumber\\
    &=& \E \| (X^{T}X+\lambda I)^{-1}(X^{T}X+\lambda I) \beta - (X^{T}X+\lambda I)^{-1}\lambda \beta + (X^{T}X+\lambda I)^{-1}X^{T}z-\beta \|_2^{2} \nonumber\\
    &=& \E \| - (X^{T}X+\lambda I)^{-1}\lambda \beta + (X^{T}X+\lambda I)^{-1}X^{T}z \|_2^{2} \nonumber\\
    &=& \E \| (X^{T}X+\lambda I)^{-1}\lambda \beta \|_2^{2}+\E\|(X^{T}X+\lambda I)^{-1}X^{T}z\|_2^{2}, \label{eq::ridge-estimator-risk-decomposition}
\end{eqnarray}
where the last step used $\E \beta^{T} (X^{T}X+\lambda I)^{-2}X^{T} z = 0$. To deal with the first term in \eqref{eq::ridge-estimator-risk-decomposition}, we denote the spectral decomposition of $X^TX$ by $X^{T}X= Q\Lambda Q^T$, where $Q\in \R^{p\times p}$ is the eigenvector matrix and $\Lambda = \diag(\sigma_{1}, \ldots, \sigma_{p})$ has the eigenvalues of $X^{T}X$ on the diagonal. Define the function $f(x):= \frac{1}{(1+x)^{2}} - (1-2x+3x^{2})$. Using Taylor's theorem, it is direct to verify that $f(x)\leq 0, \forall x\geq 0$. We thus have
\begin{eqnarray*}
    && \bigg(\frac{1}{\lambda}X^{T}X+I\bigg)^{-2} - \bigg( I - \frac{2}{\lambda}X^{T}X + \frac{3}{\lambda^{2}}(X^{T}X)^{2} \bigg) \\
    &=& Q\bigg[ \bigg( \frac{1}{\lambda} \Lambda +I \bigg)^{-2} - \bigg(I - \frac{2}{\lambda}\Lambda + \frac{3}{\lambda^{2}}\Lambda^{2} \bigg) \bigg] Q^T \\
    &=& Q \diag \bigg[ f\Big(\frac{\sigma_{1}}{\lambda}\Big), ~\ldots~, f\Big( \frac{\sigma_{p}}{\lambda} \Big) \bigg]Q^T  \preccurlyeq \mathbf{0}_{p\times p}.
\end{eqnarray*}

Therefore,
\begin{align}
    \E \|(X^{T}X+\lambda I)^{-1} \lambda \beta\|_2^{2} & \leq \E \bigg[ \|\beta\|_2^{2} - \frac{2\beta^TX^TX\beta}{\lambda} + \frac{3\beta^{T}(X^{T}X)^{2}\beta}{\lambda^{2}} \bigg] \nonumber\\
    & = \|\beta\|_2^{2} \cdot \bigg[ 1 - \frac{2}{\lambda} + \frac{3\E \beta^{T}(X^{T}X)^{2}\beta}{\lambda^{2}\|\beta\|_2^{2}} \bigg] \nonumber\\
    & = \|\beta\|_2^{2} \cdot \bigg[ 1 - 2\frac{k \mu^{2}}{p} + 3\Big(\frac{k\mu^{2}}{p}\Big)^{2} \cdot \frac{\E \beta^{T} (X^{T}X)^{2}\beta}{ \|\beta\|_2^{2}} \bigg] \nonumber\\
    &= \|\beta\|_2^{2} \cdot \bigg[ 1 - 2\frac{k \mu^{2}}{p} + 3\Big(\frac{k\mu^{2}}{p}\Big)^{2} \cdot \Big( 1+\frac{p+1}{n} \Big) \bigg] \nonumber\\
    &\leq k\mu^{2}\bigg[ 1 - 2\frac{k \mu^{2}}{p} + o\Big( \frac{k\mu^{2}}{p} \Big)\bigg]. \label{eq::first-term-ridge-risk}
\end{align}
Here, the last inequality is due to the conditions $\|\beta\|_2^2\leq k\mu^2, \forall \beta\in \Theta(k,\mu)$ and $k/p\rightarrow 0,k/n\rightarrow 0,\mu\rightarrow 0$; the first equality uses $\mathbb{E}(X^TX)=I_p$; in the second equality we adopt $\lambda = p(k\mu^{2})^{-1}$; the third equality holds since 
\begin{align*}
\E (X^{T}X)^{2} &=\sum_{i,j}\E(x_ix_i^Tx_jx_j^T)= n(n-1)\mathbb{E}(x_1x_1^Tx_2x_2^T)+n\mathbb{E}(x_1x_1^T\|x_1\|_2^2) \\
&=\frac{n-1}{n}I_p+\frac{1}{n}\cdot \mathbb{E}\Big(\frac{gg^T}{\|g\|^2_2}\Big)\cdot \E\|g\|_2^4 \quad \quad g\sim \mathcal{N}(0,I_p) \\
&=\frac{n-1}{n}I_p+\frac{1}{n}\cdot \frac{1}{p}I_p \cdot (2p+p^2)=\frac{n+p+1}{n}I_p,
\end{align*}
where we have used the fact that in the polar form, $\frac{g}{\|g\|_2}$ is uniformly distributed on the unit sphere and is independent of $\|g\|^2_2\sim \chi^2_p$.

Using the independence between $X$ and $z$, the second term in \eqref{eq::ridge-estimator-risk-decomposition} can be bounded as
\begin{equation}
    \E\|(X^{T}X+\lambda I)^{-1}X^{T}z\|^{2}_2 \leq \frac{1}{\lambda^{2}} \E \|X^{T}z\|_2^{2} = \frac{1}{\lambda^{2}}\cdot \frac{p}{n} \E \|z\|_2^{2} = k\mu^{2} \cdot \frac{k\mu^{2}}{p}. \label{eq::second-term-ridge-risk}
\end{equation}

Combining \eqref{eq::ridge-estimator-risk-decomposition}, \eqref{eq::first-term-ridge-risk} and \eqref{eq::second-term-ridge-risk}, we conclude
\begin{equation*}
    \sup_{\beta \in \Theta(k,\mu)}\E \|\betaR - \beta\|_2^{2} \leq k\mu^{2} \Big(1- \frac{k\mu^{2}}{p} + o\Big( \frac{k\mu^{2}}{p} \Big)\Big).
\end{equation*}
\end{proof}



\subsection{Lower bound}\label{sec::regime1-lower-bound}

We derive the lower bound \eqref{thm3:lower:minimax} based on the independent block prior described in Section \ref{sec::first-order-snr-minimax} except that the signal can now be evenly positive or negative. Specifically, the symmetric independent block prior, denoted by $\pi_{\pm IB}(\lambda;p,k)$, is constructed as follows: divide $\beta\in \mathbb{R}^p$ into $k$ disjoint blocks of size $m=p/k$: $\beta=(\beta^{(1)},\ldots, \beta^{(k)})$; For each block $1\leq j\leq k$, randomly select an index $I \in [m]$, and then set $\beta^{(j)}= \pm \lambda e_I$ each with probability $\frac{1}{2}$; The selection between different blocks are independent. Setting $\lambda=\mu$, from \eqref{baye:risk:lower:bound} we have 
\[
R(\Theta(k,\mu),1)\geq B(\pi_{\pm IB}(\mu; p, k)).
\]
Hence, the lower bound proof is completed by the following proposition. 
\begin{proposition}
\label{thm3:lower:prop4}
    Assume model \eqref{model::gaussian-model} with $\sigma=1$. Suppose $n\rightarrow \infty$ and $p/k\rightarrow \infty$. If $\mu \rightarrow 0$, then the Bayes risk of the symmetric independent block prior satisfies
    \begin{equation*}
        B(\pi_{\pm IB}(\mu; p, k)) \geq k\mu^{2} \Big( 1 - \frac{k\mu^{2}}{p}  + o\Big(\frac{k\mu^{2}}{p}\Big) \Big).
    \end{equation*}
\end{proposition}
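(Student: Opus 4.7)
The plan is to follow the block-decomposition strategy used in the proof of Proposition \ref{prop::first-order-block-prior}, and then refine the analysis of the Bayes estimator under a single symmetric spike to extract the second-order correction. By the independence of blocks in $\pi_{\pm IB}(\mu; p, k)$ and the conditioning argument of \eqref{eq:lb:complete:classicalminimax} (which uses only block independence and therefore applies equally to the symmetric version), we reduce to
\[
B(\pi_{\pm IB}(\mu; p, k)) \geq k \cdot B(\pi_{\pm S}(\mu; m)), \qquad m = p/k,
\]
where $\pi_{\pm S}(\mu; m)$ denotes the symmetric single-spike prior $\beta^{(1)} = \pm \mu e_I$ on $\mathbb{R}^m$ in the sub-regression $\tilde{y} = X^{(1)}\beta^{(1)} + z$. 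It thus suffices to establish $B(\pi_{\pm S}(\mu; m)) \geq \mu^2\big(1 - \mu^2/m + o(\mu^2/m)\big)$.

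For the single-spike case, I would use the orthogonality identity $B(\pi_{\pm S}) = \mathbb{E}\|\beta^{(1)}\|_2^2 - \mathbb{E}\|\hat{\beta}_\pi^{(1)}\|_2^2 = \mu^2 - \mathbb{E}\|\hat{\beta}_\pi^{(1)}\|_2^2$, valid because $\|\beta^{(1)}\|_2^2 \equiv \mu^2$ under $\pi_{\pm S}$ and $\hat{\beta}_\pi^{(1)}$ is the posterior mean. A direct calculation gives the closed form
\[
\hat{\beta}_{\pi, i}^{(1)} = \mu \cdot \frac{\sinh(\mu \tilde{X}_i)\, e^{-\mu^2 \kappa_i/2}}{\sum_{j=1}^m \cosh(\mu \tilde{X}_j)\, e^{-\mu^2 \kappa_j/2}}, \qquad \tilde{X}_i := X_i^{(1)T}\tilde{y}, \ \ \kappa_i := \|X_i^{(1)}\|_2^2,
\]
so the task reduces to showing $\mathbb{E}\|\hat{\beta}_\pi^{(1)}\|_2^2 \leq (\mu^4/m)(1 + o(1))$.

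The main step is a second-order Taylor expansion in the small parameter $\mu$. Using $\sinh(x) = x + O(x^3)$, $\cosh(x) = 1 + O(x^2)$, and $e^{-\mu^2\kappa/2} = 1 + O(\mu^2\kappa)$, the denominator concentrates at $m(1 + o(1))$ and each summand of the numerator of $\|\hat{\beta}_\pi^{(1)}\|_2^2$ reduces to $\mu^2 \tilde{X}_i^2(1 + o(1))$, so heuristically $\|\hat{\beta}_\pi^{(1)}\|_2^2 \approx \mu^4 \|X^{(1)T}\tilde{y}\|_2^2/m^2$. A Gaussian moment calculation using $\tilde{y} = \mu s X_I^{(1)} + z$ and the independence of $X^{(1)}, z, (I, s)$ then yields
\[
\mathbb{E}\|X^{(1)T}\tilde{y}\|_2^2 = \mu^2\, \mathbb{E}\|X^{(1)T}X_I^{(1)}\|_2^2 + \mathbb{E}\|X^{(1)T}z\|_2^2 = \mu^2\Big(1 + \frac{m+1}{n}\Big) + m = m(1 + o(1))
\]
under the hypotheses $\mu \to 0$ and $k/n \to 0$ (which imply $\mu^2 m/n = o(m)$). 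Combining these pieces yields $\mathbb{E}\|\hat{\beta}_\pi^{(1)}\|_2^2 \leq (\mu^4/m)(1 + o(1))$ and therefore the desired second-order lower bound.

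The main obstacle will be making the Taylor expansion rigorous uniformly in $i \in [m]$: while each $|\tilde{X}_i|$ is typically $O(1)$, its maximum over $i$ can be of order $\sqrt{\log m}$, so $|\mu \tilde{X}_i|$ need not be uniformly small under only $\mu \to 0$. I expect to address this with a truncation argument. On a high-probability event where $\max_i \kappa_i$ remains close to $1$ (via Lemma \ref{lem::chi-square-concentration}) and $\max_i |\tilde{X}_i|$ grows no faster than $\sqrt{C \log m}$ (from Lemma \ref{lem::gaussian-tail-mills-ratio} and a union bound), one can apply the universal inequality $\sinh^2(x) \leq x^2 \cosh^2(x)$ to rewrite each summand of the numerator as $\mu^2 \tilde{X}_i^2 \cdot (\cosh(\mu \tilde{X}_i) e^{-\mu^2 \kappa_i/2})^2$, and use the denominator bound $D \geq m \cdot \min_j e^{-\mu^2\kappa_j/2} \geq m(1 - o(1))$ from $\cosh \geq 1$ and chi-square concentration; this reduces the expectation to the Gaussian moment of $\|X^{(1)T}\tilde{y}\|_2^2/m^2$ up to $1 + o(1)$ factors. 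On the complementary tail event, the crude bound $\sum_i (p_i^+ - p_i^-)^2 \leq 1$ combined with the small tail probability makes the contribution negligible compared with $\mu^4/m$. After this truncation bookkeeping, the moment computation above closes the argument.
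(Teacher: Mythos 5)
Your proposal is correct in outline and, for the core single-spike estimate, takes a genuinely different route from the paper. The block reduction to $B(\pi_{\pm IB}(\mu;p,k))\geq k\,B(\pi_{\pm S}(\mu;m))$ is identical to the paper's. From there the paper expands $\E_{\mu e_1}(\hat\beta_1-\mu)^2+(m-1)\E_{\mu e_2}\hat\beta_1^2$ and must prove \emph{two} sharp-constant estimates: an upper bound $\E_{\mu e_1}\mathcal{P}_1\leq\frac{\mu^2}{m}(1+o(1))$ on the first moment of the on-spike posterior weight (which requires the cancellation/symmetrization argument of Lemma \ref{lem::2nd-order-lower-bound-low-SNR-p3}, since the crude bound on that piece is only $O(\mu/m)$), and a matching \emph{lower} bound $(m-1)\E_{\mu e_2}\mathcal{P}_1^2\geq\frac{\mu^2}{m}(1+o(1))$ (Lemma \ref{lem::2nd-order-lower-bound-low-SNR-p^2}); these combine as $-2+1=-1$ to produce the constant in the second-order term. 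Your identity $B(\pi_{\pm S})=\mu^2-\E\|\hat\beta_\pi\|_2^2$ collapses all of this into a single one-sided estimate, $\E_{\mu e_1}\mathcal{P}_1^2+(m-1)\E_{\mu e_2}\mathcal{P}_1^2\leq\frac{\mu^2}{m}(1+o(1))$, and in particular avoids the first-moment cancellation entirely. That is a real simplification. The price is that you now need the sharp \emph{upper} bound on $(m-1)\E_{\mu e_2}\mathcal{P}_1^2$, which is the opposite direction from what the paper proves, so none of its lemmas can be cited; the technical work (the $\sinh^2(x)\leq x^2\cosh^2(x)$ numerator bound, the denominator lower bound, and the MGF computations showing $\E[\tilde X_i^2\cosh^2(\mu\tilde X_i)e^{-\mu^2\kappa_i}]=(1+O(\mu^2))\,\E\tilde X_i^2$) still has to be carried out and is of roughly the same order of difficulty as Lemma \ref{lem::2nd-order-lower-bound-low-SNR-p^2}.

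Two spots in your sketch need repair. First, the denominator bound $D\geq m\min_j e^{-\mu^2\kappa_j/2}$ combined with a union bound on $\max_j\kappa_j$ only yields $m(1-o(1))$ when $(\log m)/n\to 0$, which is not among the hypotheses of the proposition ($n\to\infty$, $p/k\to\infty$, $\mu\to 0$); use instead the AM--GM bound $\sum_j e^{-\mu^2\kappa_j/2}\geq m\,e^{-\frac{\mu^2}{2m}\sum_j\kappa_j}$ as in Lemma \ref{lem::2nd-order-lower-bound-low-SNR-p1&2}, and control the resulting factor $e^{\frac{\mu^2}{m}\sum_j\kappa_j}$ through its moment-generating function rather than through a high-probability event. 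Second, that factor is not independent of the numerator, so the denominator cannot simply be pulled out of the expectation; the clean fix is to peel off the coordinates independent of the numerator and handle the remaining joint dependence of $(\tilde X_i,\kappa_i)$ via the rotation $X_i\mapsto(w_1,w_{-1})$ with $w_1=X_i^T\tilde y/\|\tilde y\|_2$ --- exactly the bookkeeping the paper performs in its low-SNR lemmas. With these adjustments your plan closes and gives the claimed bound.
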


\begin{proof}
Recall that $m=p/k$. Define the symmetric spike prior $\pi_{\pm S}(\mu;m)$ for $\beta \in \mathbb{R}^m$ as: select an index $I\in [m]$ uniformly at random and then set $\beta =\pm \mu e_I$ with equal probability. Using exactly the same argument of \eqref{eq::divide-into-k-blocks} and \eqref{eq:lb:complete:classicalminimax} in the proof of Proposition \ref{prop::first-order-block-prior}, we can finish the proof by calculating the Bayes risk for $\pi_{\pm S}(\mu;m)$, as done in the next lemma. 
\end{proof}

\begin{lemma}
    Consider model \eqref{model::gaussian-model} with $\sigma=1$ and $\beta\in \mathbb{R}^{m}$. Suppose $n, m \rightarrow \infty$ and $\mu\rightarrow 0$. Then the Bayes risk satisfies
    \begin{equation*}
        B(\pi_{\pm S}(\mu; m)) \geq \mu^{2} - \frac{\mu^{4}}{m} \Big(1+o(1)\Big).
    \end{equation*}
\end{lemma}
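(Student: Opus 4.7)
The plan is to exploit the identity
\[
B(\pi_{\pm S}(\mu; m)) \;=\; \E\|\beta\|_2^2 \;-\; \E\|\hat{\beta}_\pi\|_2^2 \;=\; \mu^2 \;-\; \E\|\hat{\beta}_\pi\|_2^2,
\]
which follows from the orthogonality of the MMSE together with $\|\beta\|_2^2 \equiv \mu^2$ under the prior. The lemma is thus equivalent to the upper bound $\E\|\hat{\beta}_\pi\|_2^2 \le \frac{\mu^4}{m}(1+o(1))$.

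First I would write the Bayes estimator in closed form. Parametrising the prior by an independent pair $(I, S)$ with $I$ uniform on $[m]$ and $S$ uniform on $\{\pm 1\}$, so that $\beta = \mu S e_I$, the joint posterior satisfies $\mathbb{P}(I=i, S=s\mid y, X) \propto \exp(\mu s X_i^T y - \frac{\mu^2}{2}\|X_i\|_2^2)$, and marginalising out $S$ yields
\[
(\hat{\beta}_\pi)_k \;=\; \mu\,\tanh(\mu X_k^T y)\,p_k, \qquad p_k \;=\; \frac{e^{-\mu^2\|X_k\|_2^2/2}\cosh(\mu X_k^T y)}{\sum_{j=1}^m e^{-\mu^2\|X_j\|_2^2/2}\cosh(\mu X_j^T y)},
\]
where $p_k = \mathbb{P}(I=k\mid y, X)$ and $\sum_k p_k = 1$. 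Using $|\tanh(x)| \le |x|$ and the exchangeability of the columns of $X$ together with the symmetry of the prior in $k$, I get
\[
\E\|\hat{\beta}_\pi\|_2^2 \;\le\; m\mu^4\, \E\!\left[(X_1^T y)^2 p_1^2\right],
\]
so the lemma reduces to showing $\E[(X_1^T y)^2 p_1^2] \le (1+o(1))/m^2$.

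The heuristic is that as $\mu \to 0$ each factor $e^{-\mu^2\|X_j\|_2^2/2}\cosh(\mu X_j^T y)$ equals $1 + O_p(\mu^2)$, so the denominator of $p_1$ is $m(1+o_p(1))$ and $p_1 = (1+o_p(1))/m$; combined with $\E(X_1^T y)^2 = 1 + o(1)$ (obtained by separately conditioning on $I=1$ and $I\ne 1$, giving second moments $\mu^2(1 + 2/n) + 1$ and $\mu^2/n + 1$ respectively) this yields the target. To formalise it, I would introduce the high-probability event
\[
\mathcal{E} \;=\; \Big\{Z_m := \sum_{j=1}^m e^{-\mu^2\|X_j\|_2^2/2}\cosh(\mu X_j^T y) \;\ge\; m(1-\varepsilon_m)\Big\}
\]
for a sequence $\varepsilon_m \to 0$ to be chosen. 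On $\mathcal{E}$ the bound $p_1 \le \cosh(\mu X_1^T y)/[m(1-\varepsilon_m)]$ reduces matters to the moment $\E[\cosh^2(\mu X_1^T y)(X_1^T y)^2 \mathbbm{1}_\mathcal{E}] = 1 + o(1)$, which I would obtain via Lemma \ref{lem:incomp_quadexp_Gaussian} after conditioning on $(I, S)$ so that $X_1^T y$ is a scalar Gaussian. On $\mathcal{E}^c$, the trivial bound $\|\hat{\beta}_\pi\|_2^2 \le \mu^2$ together with $\mathbb{P}(\mathcal{E}^c) = o(\mu^2/m)$ (obtained via Chebychev's inequality on $Z_m/m$ and the chi-square concentration of Lemma \ref{lem::chi-square-concentration} applied to the $\|X_j\|_2^2$) makes the contribution from the exceptional event negligible.

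The main obstacle will be arranging the exact leading constant $1$ rather than an arbitrary absolute constant: the naive estimate $p_k \le \cosh(\mu X_k^T y)/\min_j[\,\cdot\,]$ loses a multiplicative factor, so one must expand $Z_m$ tightly around $m$ and control the exchangeable fluctuation $\mu^2 \sum_j [(X_j^T y)^2 - \|X_j\|_2^2]$ through a centring argument that exploits symmetry across $j$. Combining this tight expansion of $p_1$ with the conditional Gaussian moment calculation on $\mathcal{E}$ yields $\E\|\hat{\beta}_\pi\|_2^2 \le \frac{\mu^4}{m}(1+o(1))$, and hence $B(\pi_{\pm S}(\mu; m)) \ge \mu^2 - \frac{\mu^4}{m}(1 + o(1))$.
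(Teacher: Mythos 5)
Your proposal is correct in outline but takes a genuinely different route from the paper's. The paper expands the risk as
\[
B(\pi_{\pm S}(\mu;m)) \;\ge\; \mu^2\bigl(1 - 2\,\E_{\mu e_1}\mathcal{P}_1 + (m-1)\E_{\mu e_2}\mathcal{P}_1^2\bigr),
\]
and therefore must prove two estimates in \emph{opposite} directions: an upper bound $\E_{\mu e_1}\mathcal{P}_1\le\frac{\mu^2}{m}(1+o(1))$ on the first moment of the signed posterior weight, and a \emph{lower} bound $(m-1)\E_{\mu e_2}\mathcal{P}_1^2\ge\frac{\mu^2}{m}(1+o(1))$ on its second moment; the sharp constant comes from the cancellation $-2+1=-1$. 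The first-moment bound is the hard part of the paper's argument (the three-way split into $\mathcal{P}_1^{(1)},\mathcal{P}_1^{(2)},\mathcal{P}_1^{(3)}$ in Lemmas \ref{lem::2nd-order-lower-bound-low-SNR-p1&2}--\ref{lem::2nd-order-lower-bound-low-SNR-p3}), and the second-moment lower bound (Lemma \ref{lem::2nd-order-lower-bound-low-SNR-p^2}) requires Jensen's inequality to \emph{upper}-bound the random denominator. Your MMSE identity $B(\pi)=\mu^2-\E\|\hat\beta_\pi\|_2^2$ collapses all of this into a single one-sided estimate, $\E\|\hat\beta_\pi\|_2^2\le\frac{\mu^4}{m}(1+o(1))$, for which you only ever need to \emph{lower}-bound the partition function $Z_m$ — the easier direction — and the bound $|\tanh x|\le|x|$ plus exchangeability reduce everything to the moment $\E[(X_1^Ty)^2\cosh^2(\mu X_1^Ty)]=1+o(1)$, which is exactly the kind of computation Lemma \ref{lem:incomp_quadexp_Gaussian} handles. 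Two points need care in the write-up: (i) run the Chebyshev argument for $\mathbb{P}(\mathcal{E}^c)$ conditionally on $y$ and then integrate — the conditional mean of $Z_m/m$ is at least $(1+\mu^2/n)^{-n/2}\ge 1-\mu^2/2$ for \emph{every} $y$, so no truncation of $\|y\|_2$ is needed there, and the conditional variance is $O(\mu^4\|y\|_2^4/(n^2 m))$, which integrates to $O(\mu^4/m)$; taking $\mu\ll\varepsilon_m\ll 1$ then yields $\mathbb{P}(\mathcal{E}^c)=o(\mu^2/m)$ as required (avoiding residual $e^{-cn}$ terms that need not be $o(\mu^2/m)$ without extra assumptions on $m,n$); and (ii) the spike coordinate $k=I$ contributes only $O(1/m^3)$ to $m\,\E[(X_1^Ty)^2p_1^2]$ and should be separated out, as your conditioning on $(I,S)$ implicitly does. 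Neither is an obstacle; your route is, if anything, shorter than the paper's.
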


\begin{proof}

For the Bayesian estimator, denoted by $\hat{\beta}:=\E(\beta |y, X)$, it is straightforward to obtain that for $j=1,\ldots, m$,
\[
\hat{\beta}_j=\mu \cdot \frac{\exp(\mu X_{j}^{T}y - \mu^{2}\|X_{j}\|_2^{2}/2) - \exp (-\mu X_{j}^{T}y - \mu^{2} \|X_{j}\|_2^{2}/2)}{\sum_{i=1}^{m}\Big(\exp(\mu X_{i}^{T}y - \mu^{2}\| X_{i}\|_2^{2}/2) + \exp (-\mu X_{i}^{T}y - \mu^{2} \|X_{i}\|_2^{2}/2)\Big)}:=\mu\cdot \mathcal{P}_j,
\]
where $X_i$ denotes the $i$th column of $X$. Let $\E_{\pm \mu e_{i}}$ denote the expectation taken under the model $y=X\beta+z$ with $\beta=\pm \mu e_i$. By the symmetry in the spike prior and the model, the Bayes risk satisfies 
\begin{align}
    B(\pi_{\pm S}(\mu; m)) &= \E_{\mu e_{1}} (\hat{\beta}_{1} - \mu)^{2} + (m-1) \E_{\mu e_{2}} \hat{\beta}_{1}^{2} \nonumber \\
    &\geq \mu^{2} \Big( 1 - 2 \E_{\mu e_{1}} \mathcal{P}_1 + (m-1)\E_{\mu e_{2}} \mathcal{P}_1^{2} \Big). \label{eq::2nd-order-lower-bound-low-SNR-spike-ineq}
\end{align}

Using $y=\mu X_1+z$, we can write 
\begin{align*}
     \E_{\mu e_{1}}\mathcal{P}_1 &= \E\frac{e^{\mu X_{1}^{T}(\mu X_{1}+z) - \mu^{2}\|X_{1}\|_2^{2}/2} - e^{-\mu X_{1}^{T}(\mu X_{1}+z) - \mu^{2} \|X_{1}\|_2^{2}/2}}{\sum_{i=1}^{m}\Big(e^{\mu X_{i}^{T}(\mu X_{1}+z) - \mu^{2}\|X_{i}\|^{2}/2} + e^{-\mu X_{i}^{T}(\mu X_{1}+z) - \mu^{2} \|X_{i}\|^{2}/2}\Big)}\\
     &=  \E\mathcal{P}_1^{(1)}+\E\mathcal{P}_1^{(2)}+\E\mathcal{P}_1^{(3)},
\end{align*}
where 
\begin{align}
    & \mathcal{P}_1^{(1)} := \frac{e^{\mu X_{1}^Tz}\big(e^{\frac{1}{2}\mu^{2}\|X_{1}\|_2^{2}} - e^{-\frac{1}{2}\mu^{2}\|X_{1}\|_2^{2}} \big)}{\sum_{i=1}^{m}e^{- \frac{1}{2}\mu^{2}\|X_{i}\|_2^{2}}\big(e^{\mu X_{i}^{T}(\mu X_{1}+z)} + e^{-\mu X_{i}^{T}(\mu X_{1}+z)}\big)}, \label{eq::p_1^(1)}\\
    & \mathcal{P}_1^{(2)} := \frac{e^{-\mu X_{1}^Tz}\big( e^{-\frac{1}{2}\mu^{2}\|X_{1}\|_2^{2}} - e^{-\frac{3}{2}\mu^{2}\|X_{1}\|_2^{2}} \big)}{\sum_{i=1}^{m}e^{- \frac{1}{2}\mu^{2}\|X_{i}\|_2^{2}}\big(e^{\mu X_{i}^{T}(\mu X_{1}+z)} + e^{-\mu X_{i}^{T}(\mu X_{1}+z)}\big)}, \label{eq::p_1^(2)}\\
    & \mathcal{P}_1^{(3)} := \frac{e^{\mu x_{1}^{T}z - \frac{1}{2}\mu^{2}\|X_{1}\|_2^{2}} - e^{-\mu X_{1}^{T}z - \frac{1}{2}\mu^{2}\|X_{1}\|_2^{2}}}{\sum_{i=1}^{m}e^{- \frac{1}{2}\mu^{2}\|X_{i}\|_2^{2}}\big(e^{\mu X_{i}^{T}(\mu X_{1}+z)} + e^{-\mu X_{i}^{T}(\mu X_{1}+z)}\big)}. \label{eq::p_1^(3)}
\end{align}
Lemmas \ref{lem::2nd-order-lower-bound-low-SNR-p1&2}-\ref{lem::2nd-order-lower-bound-low-SNR-p3} together imply
\begin{equation}
\label{key:part1}
     \E_{\mu e_{1}}\mathcal{P}_1 \leq  \frac{\mu^{2}}{m}\Big(1+o(1)\Big).
\end{equation}
Using $y=\mu X_2+z$, we can write 
  \begin{equation}
  \label{p12:expression}
       \E_{\mu e_{2}} \mathcal{P}_1^{2} = \E \frac{\Big(e^{ \mu X_{1}^{T} (\mu X_{2}+z) - \frac{1}{2}\mu^{2}\|X_{1}\|_2^{2}} - e^{-\mu X_{1}^{T}(\mu X_{2}+z) - \frac{1}{2}\mu^{2}\|X_{1}\|_2^{2}}\Big)^{2}}{\Big(\sum_{i=1}^{m}e^{- \frac{1}{2}\mu^{2}\|X_{i}\|_2^{2}} \big(e^{\mu X_{i}^{T}(\mu X_{2}+z)} + e^{-\mu X_{i}^{T}(\mu X_{2} + z)}\big) \Big)^{2}}.
    \end{equation}
Lemma \ref{lem::2nd-order-lower-bound-low-SNR-p^2} shows that
\begin{equation}
\label{key:part2}
    (m-1)\E_{\mu e_{2}} \mathcal{P}_1^{2} \geq (m-1)\cdot \frac{\mu^{2}}{m^{2}} \Big(1+o(1) \Big) = \frac{\mu^{2}}{m}\Big(1+o(1)\Big).
\end{equation}
Thus, combining results \eqref{eq::2nd-order-lower-bound-low-SNR-spike-ineq}, \eqref{key:part1} and \eqref{key:part2} completes the proof. 
    
\end{proof}

\begin{lemma}\label{lem::2nd-order-lower-bound-low-SNR-p1&2}
    Consider $\sqrt{n}X_1,\ldots, \sqrt{n}X_m, z \overset{i.i.d.}{\sim}\mathcal{N}(0,I_n)$. Suppose $n, m \rightarrow \infty$ and $\mu \rightarrow 0$. Then $\mathcal{P}_1^{(1)}$ and $\mathcal{P}_1^{(2)}$ defined in \eqref{eq::p_1^(1)} and \eqref{eq::p_1^(2)} satisfy
    \begin{equation*}
        \text{(i)}~~\E \mathcal{P}_1^{(1)} \leq \frac{\mu^2}{2m} (1+o(1)), \qquad \text{(ii)}~~\E\mathcal{P}_1^{(2)} \leq \frac{\mu^2}{2m} (1+o(1)).
    \end{equation*}
\end{lemma}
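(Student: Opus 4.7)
The plan is to exploit the independence between the numerators (which depend only on $X_1,z$) and a lower bound on the denominator that depends only on $X_2,\ldots,X_m$. Writing $b := \tfrac{1}{2}\mu^2\|X_1\|_2^2$, the elementary inequality $1-e^{-2t}\le 2t$ for $t\ge 0$ bounds the numerators by
\[
e^b-e^{-b}\le \mu^2\|X_1\|_2^2\cdot e^b, \qquad e^{-b}-e^{-3b}\le \mu^2\|X_1\|_2^2\cdot e^{-b}.
\]
Using $\cosh(\cdot)\ge 1$ and dropping the $i=1$ term, the denominator satisfies $D \ge 2L$ with $L := \sum_{i=2}^m e^{-\frac{1}{2}\mu^2\|X_i\|_2^2}$. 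Since $L$ is a function of $(X_2,\ldots,X_m)$ only, it is independent of the numerators, so
\[
\E\mathcal{P}_1^{(j)} \le \tfrac{1}{2}\,\E N_j\cdot\E[1/L], \qquad j=1,2,
\]
where $N_1 := \mu^2\|X_1\|_2^2\, e^{\mu X_1^Tz+b}$ and $N_2 := \mu^2\|X_1\|_2^2\, e^{-\mu X_1^Tz-b}$. It thus suffices to establish $\E N_j = \mu^2(1+o(1))$ and $\E[1/L] = (1+o(1))/(m-1)$.

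For the numerator expectations, I would condition on $X_1$ and invoke the Gaussian MGF $\E_z[e^{\pm \mu X_1^Tz}] = e^b$. For $j=2$ the factors cancel exactly, yielding $\E N_2 = \mu^2\E\|X_1\|_2^2 = \mu^2$; for $j=1$ the analogous step gives $\E N_1 = \mu^2\E[\|X_1\|_2^2\, e^{\mu^2\|X_1\|_2^2}]$, and with $n\|X_1\|_2^2 \sim \chi_n^2$ the MGF-derivative identity $\E[W e^{tW}] = n(1-2t)^{-n/2-1}$ at $t=\mu^2/n$ produces $\E N_1 = \mu^2(1-2\mu^2/n)^{-n/2-1} = \mu^2(1+o(1))$ as $\mu\to 0$, $n\to\infty$.

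The hard part is the upper bound on $\E[1/L]$, since Jensen's inequality only provides a matching lower bound. My plan is to use the deterministic inequality $e^{-x}\ge 1-x$ to write $L \ge (m-1) - \tfrac{1}{2}\mu^2 \sum_{i=2}^m \|X_i\|_2^2$, and then apply chi-square concentration to $n\sum_{i=2}^m \|X_i\|_2^2 \sim \chi^2_{n(m-1)}$. Specifically, Lemma \ref{lem::chi-square-concentration} yields $\mathbb{P}(\mathcal{G}^c)\le e^{-c\, n(m-1)}$ for the event $\mathcal{G} := \{\sum_{i=2}^m\|X_i\|_2^2 < 2(m-1)\}$ and some absolute constant $c>0$; on $\mathcal{G}$ we have the deterministic bound $L\ge (m-1)(1-\mu^2)$. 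On $\mathcal{G}^c$, Cauchy--Schwarz combined with the crude estimate $1/L\le e^{\frac{1}{2}\mu^2\|X_2\|_2^2}$ (since $L$ exceeds any single summand) and the uniformly bounded moment $\E e^{\mu^2\|X_2\|_2^2} = (1-2\mu^2/n)^{-n/2} = 1+o(1)$ controls the exceptional contribution by $O(e^{-cn(m-1)/2}) = o(1/m)$. Together these give
\[
\E[1/L] \le \frac{1}{(m-1)(1-\mu^2)} + o(1/m) = \frac{1+o(1)}{m-1},
\]
and assembling this with $\E N_j = \mu^2(1+o(1))$ and $m-1\sim m$ delivers $\E \mathcal{P}_1^{(j)}\le \mu^2(1+o(1))/(2m)$ for both $j=1,2$. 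The delicate point is securing the constant-sharp $(1+o(1))/(m-1)$ (rather than merely $O(1/m)$) upper bound on $\E[1/L]$: losing any constant factor here would translate directly into the same loss in the final second-order term and break the sharp Bayes-risk lower bound needed in Proposition \ref{thm3:lower:prop4}.
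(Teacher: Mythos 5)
Your proof is correct, and it takes a genuinely different route from the paper's. The paper reduces part (i) to showing that the full ratio $\mathcal{W}_n$ (with all $m$ terms kept in the denominator) converges to $1$ in expectation, which it establishes via a two-step dominated-convergence argument: convergence in probability by the law of large numbers plus a uniform-integrability bound $\E\mathcal{W}_n^2=O(1)$, the latter obtained by dropping the $i=1$ term, applying the arithmetic--geometric inequality to the denominator, and invoking chi-squared moment-generating functions. You instead drop the $i=1$ term at the outset so that the numerator (a function of $(X_1,z)$) and the lower bound $L$ on the denominator (a function of $(X_2,\dots,X_m)$) are independent, factor the expectation as $\tfrac{1}{2}\E N_j\cdot\E[1/L]$, compute $\E N_j$ exactly by Gaussian and chi-squared MGFs, and control $\E[1/L]$ by linearizing $e^{-x}\ge 1-x$, concentrating $\chi^2_{n(m-1)}$, and handling the exponentially rare complement by Cauchy--Schwarz. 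Both arguments deliver the same sharp constant; yours avoids the convergence-in-probability and uniform-integrability machinery entirely and is more self-contained, at the price of needing the slightly delicate truncation argument for $\E[1/L]$, which you correctly identify as the crux and execute properly (the loss from $1/((m-1)(1-\mu^2))$ to $(1+o(1))/(m-1)$ is absorbed since $\mu\to 0$, and the tail contribution $O(e^{-cn(m-1)/2})$ is indeed $o(1/m)$). The paper's route has the minor advantage of directly exhibiting the asymptotic equality $\E\mathcal{W}_n\to 1$ rather than only an upper bound, but only the upper bound is needed for the lemma.
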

\begin{proof}
    Since the numerators in both $\mathcal{P}_1^{(1)}$ and $\mathcal{P}_1^{(2)}$ are nonnegative, we can first bound their common denominator using the basic inequality $t+\frac{1}{t}\geq 2, \forall t>0$:
    \begin{equation}
        \sum_{i=1}^{m}e^{- \frac{1}{2}\mu^{2}\|X_{i}\|_2^{2}}\big(e^{\mu X_{i}^{T}(\mu X_{1}+z)} + e^{-\mu X_{i}^{T}(\mu X_{1}+z)}\big) \geq 2\sum_{i=1}^{m} e^{-\frac{1}{2}\mu^{2}\|X_{i}\|_2^{2}}. \label{common:den}
    \end{equation}
   Therefore, to show (i), it is sufficient to show
    \begin{equation}
    \E\frac{me^{\mu X_{1}^Tz}\big(e^{\frac{1}{2}\mu^{2}\|X_{1}\|_2^{2}} - e^{-\frac{1}{2}\mu^{2}\|X_{1}\|_2^{2}} \big)}{\mu^2\sum_{i=1}^{m} e^{-\frac{1}{2}\mu^{2}\|X_{i}\|_2^{2}}}:=\E \mathcal{W}_n \rightarrow 1. \label{part1:suf}
    \end{equation}
   We prove the above result through dominated convergence theorem in two steps: (1) we show $\mathcal{W}_n \overset{p}{\rightarrow} 1$; (2) we show $\E\mathcal{W}^2_n=O(1)$ so that $\mathcal{W}_n$ is uniformly integrable. Step 1 follows if 
   \begin{align*}
    (a)~~ \mu^{-2}e^{\mu X_{1}^Tz}\big(e^{\frac{1}{2}\mu^{2}\|X_{1}\|_2^{2}} - e^{-\frac{1}{2}\mu^{2}\|X_{1}\|_2^{2}} \big)  \overset{p}{\rightarrow} 1,  \quad (b)~~\frac{1}{m}\sum_{i=1}^{m} e^{-\frac{1} {2}\mu^{2}\|X_{i}\|_2^{2}}  \overset{p}{\rightarrow} 1.
   \end{align*}
Result (a) holds by applying law of large numbers and continuous mapping theorem to obtain
\[
e^{\mu X_{1}^{T}z} \overset{p}{\rightarrow} 1 {\rm ~~and~~} \mu^{-2}\big(e^{\frac{1}{2}\mu^{2}\|X_{1}\|_2^{2}} - e^{-\frac{1}{2}\mu^{2}\|X_{i}\|_2^{2}}\big) \overset{p}{\rightarrow} 1.
\]
Result (b) is due to the fact that as $n,m\rightarrow \infty, \mu \rightarrow 0$,
    \begin{itemize}
        \item $\E\Big(\frac{1}{m}\sum_{i=1}^{m} e^{-\frac{1} {2}\mu^{2}\|X_{i}\|_2^{2}}\Big)=\Big( 1+ \frac{\mu^{2}}{n} \Big)^{-\frac{n}{2}}\rightarrow 1$
                \item $\begin{aligned}[t]
        \Var\Big(\frac{1}{m}\sum_{i=1}^{m} e^{-\frac{1} {2}\mu^{2}\|X_{i}\|_2^{2}} \Big)& =\frac{1}{m}\Big[ \E e^{-\mu^{2}\|X_{i}\|_2^{2}}- \Big( \E e^{-\frac{1} {2}\mu^{2}\|X_{i}\|_2^{2}} \Big)^{2}\Big] \\
        & = \frac{1}{m}\Big[\Big( 1+ \frac{2\mu^{2}}{n} \Big)^{-\frac{n}{2}} - \Big( 1+ \frac{\mu^{2}}{n} \Big)^{-n} \Big] \rightarrow 0.  \end{aligned}$
    \end{itemize}
To complete Step 2, since $\mathcal{W}_n\geq 0$, we first have
    \begin{align*}
        \mathcal{W}_n & \leq \frac{me^{\mu X_{1}^Tz}\big(e^{\frac{1}{2}\mu^{2}\|X_{1}\|_2^{2}} - e^{-\frac{1}{2}\mu^{2}\|X_{1}\|_2^{2}} \big)}{\mu^2\sum_{i=2}^{m} e^{-\frac{1}{2}\mu^{2}\|X_{i}\|_2^{2}}} \\
        &\leq \frac{m}{\mu^2}\cdot \frac{e^{\mu X_{1}^Tz}\big(e^{\frac{1}{2}\mu^{2}\|X_{1}\|_2^{2}} - e^{-\frac{1}{2}\mu^{2}\|X_{1}\|_2^{2}} \big)}{(m-1)e^{-\frac{\mu^2}{2(m-1)}\sum_{i=2}^m\|X_i\|_2^2}} \\
        &= \frac{m}{(m-1)\mu^{2}} \cdot e^{\frac{\mu^2}{2(m-1)}\sum_{i=2}^m\|X_i\|_2^2} \cdot e^{\mu X_{1}^Tz}\big(e^{\frac{1}{2}\mu^{2}\|X_{1}\|_2^{2}} - e^{-\frac{1}{2}\mu^{2}\|X_{1}\|_2^{2}} \big),
    \end{align*}
    where the second inequality uses the arithmetic-geometric inequality. Hence, 
    \begin{align*}
        &~\E \mathcal{W}^2_n \\
        \leq &~ \frac{m^2}{(m-1)^2\mu^{4}} \cdot \E e^{\frac{\mu^2}{(m-1)}\sum_{i=2}^m\|X_i\|_2^2} \cdot \E e^{2\mu X_{1}^Tz}\big(e^{\frac{1}{2}\mu^{2}\|X_{1}\|_2^{2}} - e^{-\frac{1}{2}\mu^{2}\|X_{1}\|_2^{2}} \big)^2 \\
        =&~\frac{m^2}{(m-1)^2\mu^{4}} \cdot \E e^{\frac{\mu^2}{(m-1)}\sum_{i=2}^m\|X_i\|_2^2} \cdot \E e^{2\mu^2\|X_1\|_2^2}\big(e^{\frac{1}{2}\mu^{2}\|X_{1}\|_2^{2}} - e^{-\frac{1}{2}\mu^{2}\|X_{1}\|_2^{2}} \big)^2 \\
        =&~\frac{m^2}{(m-1)^2\mu^{4}} \cdot \Big(1-\frac{2\mu^2}{n(m-1)}\Big)^{-\frac{n(m-1)}{2}} \cdot \bigg(\Big(1-\frac{6\mu^2}{n}\Big)^{-\frac{n}{2}}+\Big(1-\frac{2\mu^2}{n}\Big)^{-\frac{n}{2}}-2\Big(1-\frac{4\mu^2}{n}\Big)^{-\frac{n}{2}}\bigg).
    \end{align*}
    Here, the first equality is obtained by conditioning on $\|X_1\|_2$ and using $(X_1^Tz~|~\|X_1\|_2)\sim \mathcal{N}(0,\|X_1\|_2^2)$, and the second equality uses the moment-generating function of chi-squared distribution. In the regime where $n,m\rightarrow \infty$ and $\mu\rightarrow 0$, based on the asymptotic result $\big(1-\frac{t}{n}\big)^{-\frac{n}{2}}=1+\frac{t}{2}+\frac{1+o(1)}{8}t^2$ for $t=o(1)$, we can continue from the above to achieve
    \begin{align*}
    &~\E \mathcal{W}^2_n \\
    \leq &~\frac{1+o(1)}{\mu^4}\cdot \Big(1+3\mu^2+\frac{9+o(1)}{2}\mu^4+1+\mu^2+\frac{1+o(1)}{2}\mu^4-2(1+2\mu^2+(2+o(1))\mu^4)\Big)\\
    =& 1+o(1).
    \end{align*}
    
 To show (ii), with the bound \eqref{common:den} for the denominator of $\mathcal{P}^{(2)}$, it is sufficient to prove
 \begin{align*}
 \E\frac{me^{-\mu X_{1}^Tz-\mu^2\|X_1\|_1^2}\big(e^{\frac{1}{2}\mu^{2}\|X_{1}\|_2^{2}} - e^{-\frac{1}{2}\mu^{2}\|X_{1}\|_2^{2}} \big)}{\mu^2\sum_{i=1}^{m} e^{-\frac{1}{2}\mu^{2}\|X_{i}\|_2^{2}}}\leq (1+o(1)).
 \end{align*}
This can be quickly confirmed by using \eqref{part1:suf} together with the following simple argument
\begin{align*}
&~\E\frac{me^{-\mu X_{1}^Tz-\mu^2\|X_1\|_1^2}\big(e^{\frac{1}{2}\mu^{2}\|X_{1}\|_2^{2}} - e^{-\frac{1}{2}\mu^{2}\|X_{1}\|_2^{2}} \big)}{\mu^2\sum_{i=1}^{m} e^{-\frac{1}{2}\mu^{2}\|X_{i}\|_2^{2}}} \\
\leq &~\E\frac{me^{-\mu X_{1}^Tz}\big(e^{\frac{1}{2}\mu^{2}\|X_{1}\|_2^{2}} - e^{-\frac{1}{2}\mu^{2}\|X_{1}\|_2^{2}} \big)}{\mu^2\sum_{i=1}^{m} e^{-\frac{1}{2}\mu^{2}\|X_{i}\|_2^{2}}}=\E\frac{me^{\mu X_{1}^Tz}\big(e^{\frac{1}{2}\mu^{2}\|X_{1}\|_2^{2}} - e^{-\frac{1}{2}\mu^{2}\|X_{1}\|_2^{2}} \big)}{\mu^2\sum_{i=1}^{m} e^{-\frac{1}{2}\mu^{2}\|X_{i}\|_2^{2}}}.
\end{align*}

\end{proof}

\begin{lemma}\label{lem::2nd-order-lower-bound-low-SNR-p3}
    Under the same conditions of Lemma \ref{lem::2nd-order-lower-bound-low-SNR-p1&2}, the $\mathcal{P}_1^{(3)}$ defined in \eqref{eq::p_1^(3)} satisfies
    \begin{equation*}
        \E \mathcal{P}_1^{(3)} = o\Big(\frac{\mu^{2}}{m}\Big).
    \end{equation*}
\end{lemma}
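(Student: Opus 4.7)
The plan is to exploit the antisymmetry in $z$ of the numerator $N := e^{\mu X_1^T z - \mu^2\|X_1\|_2^2/2} - e^{-\mu X_1^T z - \mu^2\|X_1\|_2^2/2}$ of $\mathcal{P}_1^{(3)}$. Since $z \overset{d}{=} -z$ and $N|_{z\mapsto -z} = -N$, one immediately obtains the symmetrization identity
\[
\E \mathcal{P}_1^{(3)} \;=\; \tfrac{1}{2}\,\E\!\left[\frac{N\,(D^- - D)}{D\,D^-}\right], \qquad D^- := D\big|_{z\mapsto -z},
\]
where $D$ is the denominator of $\mathcal{P}_1^{(3)}$. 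Adopting the shorthand $c_{ij} := \mu^2 X_i^T X_j$ and $u_i := \mu X_i^T z$, one has $N = 2 e^{-c_{11}/2}\sinh(u_1)$ and $D = 2\sum_{i=1}^m e^{-c_{ii}/2}\cosh(c_{i1}+u_i)$, and the identity $\cosh(A-B)-\cosh(A+B) = -2\sinh A\sinh B$ collapses the symmetrized denominator difference to
\[
D^- - D \;=\; -4\sum_{i=1}^m e^{-c_{ii}/2}\sinh(c_{i1})\sinh(u_i),
\]
which is of order $\mu^3$ pointwise.

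I then localize to a high-probability event $\mathcal{G}$ depending only on $X$ on which $D \wedge D^- \geq m$. Using $\cosh \geq 1$ and $e^{-c_{ii}/2}\geq 1-c_{ii}/2$, one gets $D \geq 2m - \mu^2 \sum_i \|X_i\|_2^2$, and a $\chi^2$-tail bound (Lemma \ref{lem::chi-square-concentration}) applied to $\sum_i \|X_i\|_2^2$ yields $\mathcal{G} \supseteq \{\sum_i\|X_i\|_2^2 \leq 2m\}$ with $\mathbb{P}(\mathcal{G}^c) \leq e^{-cmn}$. A crude pointwise bound $|\mathcal{P}_1^{(3)}| \leq 1 + e^{c_{11}}$ (valid because each of $\mathcal{P}_1^{(3)\pm}$ is dominated by its own single exponential divided by the $i=1$ term of $D$) gives $\E|\mathcal{P}_1^{(3)}|^2 = O(1)$, so the $\mathcal{G}^c$ contribution is at most $\sqrt{\E|\mathcal{P}_1^{(3)}|^2}\cdot\sqrt{\mathbb{P}(\mathcal{G}^c)} = e^{-\Omega(mn)}$, negligible against $\mu^2/m$.

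On $\mathcal{G}$, $DD^- \geq m^2$, and it suffices to establish $\E[|N|\cdot|D^- - D|] = o(m\mu^2)$. I split the sum defining $D^- - D$ into the $i=1$ and $i\geq 2$ parts. For $i=1$, the exact conditional Gaussian moment $\E[\sinh^2(u_1)\mid X] = (e^{2c_{11}}-1)/2$ telescopes everything to $\E[e^{-c_{11}}\sinh(c_{11})\sinh^2(u_1)] = \E[\sinh^2(c_{11})] = O(\mu^4)$, giving a net contribution $O(\mu^4/m^2) = o(\mu^2/m)$. For $i\geq 2$, Cauchy--Schwarz conditional on $X$ gives $\E[|\sinh(u_1)\sinh(u_i)|\mid X] \leq \tfrac{1}{2}\sqrt{(e^{2c_{11}}-1)(e^{2c_{ii}}-1)} = O(\mu^2\|X_1\|_2\|X_i\|_2)$; combined with $|\sinh(c_{i1})|\leq |c_{i1}|e^{|c_{i1}|}$ and the bound $\E[|X_i^T X_1|\|X_1\|_2\|X_i\|_2] = O(1/\sqrt n)$ (from $X_i^TX_1\mid X_1 \sim \mathcal{N}(0,\|X_1\|_2^2/n)$), each such term is $O(\mu^4/\sqrt n)$. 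Summing over the $m-1$ indices gives $O(m\mu^4/\sqrt n)$, which after division by $m^2$ is $O(\mu^4/(m\sqrt n)) = o(\mu^2/m)$ because $\mu \to 0$ and $n\to\infty$.

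The main delicate point is this $i\geq 2$ moment estimate, where the three factors $\sinh(u_1)$, $\sinh(u_i)$, $\sinh(c_{i1})$ are all coupled: conditional on $X$, $(u_1,u_i)$ is jointly Gaussian with covariance $c_{i1}$, and $c_{i1}$ itself is a random quadratic functional of $X$. The strategy of conditioning on $X$ first cleanly isolates the design-dependent factor $\sinh(c_{i1})$ from the Gaussian averaging, and the ensuing Cauchy--Schwarz step loses at most a $\sqrt n$ factor relative to the sharp leading-order value $\E[X_1^T z \cdot X_i^T X_1 \cdot X_i^T z] = 1/n$; this slack is easily absorbed by the factor $\mu \to 0$ in Regime I.
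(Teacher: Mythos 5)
Your symmetrization identity, the hyperbolic-function algebra, and the split of $D^--D$ into the $i=1$ and $i\geq 2$ parts reproduce exactly the paper's strategy: the paper writes $\E\mathcal{P}_1^{(3)}=\E[(B-A)e^{\mu X_1^Tz-\mu^2\|X_1\|_2^2/2}/(AB)]$ with $B=A|_{z\mapsto -z}$ and splits $B-A$ into $\Delta_1$ (the $i=1$ term) and $\Delta_2$ (the rest), which after the identity $1-e^{-2c_{11}}=2e^{-c_{11}}\sinh(c_{11})$ is your decomposition up to keeping the asymmetric numerator. Your treatment of the two pieces is sound and in places cleaner: the exact conditional identity $\E[e^{-c_{11}}\sinh(c_{11})\sinh^2(u_1)]=\E\sinh^2(c_{11})=O(\mu^4)$ gives $O(\mu^4/m^2)$ for $i=1$ (the paper's sign-restriction argument gives $O(\mu^3/m^2)$), and your conditional Cauchy--Schwarz on the Gaussian pair $(u_1,u_i)$ loses the same $\sqrt{n}$ that the paper loses in its $\Delta_2$ bound, harmlessly absorbed by $\mu\to 0$. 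Do note that the displayed bounds $\E[|\sinh(u_1)\sinh(u_i)|\mid X]=O(\mu^2\|X_1\|_2\|X_i\|_2)$ and $\E[|X_i^TX_1|\,\|X_1\|_2\|X_i\|_2]=O(n^{-1/2})$ carry residual factors $e^{O(\mu^2\|X_1\|_2^2+\mu^2\|X_i\|_2^2+\mu^2|X_i^TX_1|)}$ that must be peeled off by H\"older before the polynomial moments are computed; this is routine but should be written out.

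The one genuine gap is in your handling of $\mathcal{G}^c$. The paper deliberately avoids truncation: it uses the deterministic bound $A\wedge B\geq 2m\,e^{-\frac{\mu^2}{2m}\sum_i\|X_i\|_2^2}$ and pays only a bounded moment-generating-function factor, so no remainder term ever has to be compared to $\mu^2/m$. Your argument instead produces a remainder of size $e^{-\Omega(nm)}$ and asserts it is $o(\mu^2/m)$; but the hypotheses place no lower bound on the rate at which $\mu\to 0$, so for $\mu$ decaying faster than $e^{-cnm}$ the comparison fails and the lemma (which must hold for every admissible sequence $\mu$) is not established. The fix is easy: before applying Cauchy--Schwarz on $\mathcal{G}^c$, extract the factor $\mu^4$ from $|N|\,|D^--D|\leq C\mu^4\big(\sum_i|X_i^TX_1||X_i^Tz|e^{(\cdots)}\big)|X_1^Tz|e^{(\cdots)}$ and lower-bound $DD^-$ by its $i=1$ terms, so that the bad-event contribution becomes $\mu^4\cdot e^{-\Omega(nm)}\cdot O({\rm poly})=o(\mu^2/m)$ unconditionally; alternatively, drop the truncation entirely and follow the paper's AM--GM route.
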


\begin{proof}
We first introduce two notations:
\begin{align*}
&A:=\sum_{i=1}^{m}e^{- \frac{1}{2}\mu^{2}\|X_{i}\|_2^{2}}\big(e^{\mu X_{i}^{T}(\mu X_{1}+z)} + e^{-\mu X_{i}^{T}(\mu X_{1}+z)}\big) \\
&B:=\sum_{i=1}^{m}e^{- \frac{1}{2}\mu^{2}\|X_{i}\|_2^{2}}\big(e^{\mu X_{i}^{T}(\mu X_{1}-z)} + e^{-\mu X_{i}^{T}(\mu X_{1}-z)}\big)
\end{align*}
Note that $A$ is the denominator of $\mathcal{P}_1^{(3)}$, and $B$ is obtained by replacing $z$ in $A$ with $-z$. Given that flipping the sign of $z$ does not change the joint distribution, we have
\begin{align*}
\E \mathcal{P}_1^{(3)}&=\E\frac{e^{\mu x_{1}^{T}z - \frac{1}{2}\mu^{2}\|X_{1}\|_2^{2}}}{A}-\E\frac{e^{-\mu X_{1}^{T}z - \frac{1}{2}\mu^{2}\|X_{1}\|_2^{2}}}{A} \\
&=\E\frac{e^{\mu x_{1}^{T}z - \frac{1}{2}\mu^{2}\|X_{1}\|_2^{2}}}{A}-\E\frac{e^{\mu X_{1}^{T}z - \frac{1}{2}\mu^{2}\|X_{1}\|_2^{2}}}{B} \\
&= \E\frac{(B-A)e^{\mu x_{1}^{T}z - \frac{1}{2}\mu^{2}\|X_{1}\|_2^{2}}}{AB}:=\E\frac{\Delta_1}{AB}+\E\frac{\Delta_2}{AB},
\end{align*}
where we split the summation in $B-A$ into two parts $\sum_{i=1}^1$ and $\sum_{i=2}^m$, leading to
\begin{align}
\label{delta2:two:term}
&\Delta_1=(1-e^{-2\mu^2\|X_1\|^2_2})(1-e^{2\mu X_1^Tz}), \nonumber \\
&\Delta_2=e^{\mu x_{1}^{T}z - \frac{1}{2}\mu^{2}\|X_{1}\|_2^{2}}\sum_{i=2}^m(e^{-2\mu^2 X_i^TX_1}-1)e^{\mu X_i^T(\mu X_1+z)-\frac{1}{2}\mu^2\|X_i\|_2^2} \nonumber \\
& \quad \quad +e^{\mu x_{1}^{T}z - \frac{1}{2}\mu^{2}\|X_{1}\|_2^{2}}\sum_{i=2}^m(e^{2\mu^2 X_i^TX_1}-1)e^{-\mu X_i^T(\mu X_1+z)-\frac{1}{2}\mu^2\|X_i\|_2^2}:=\Delta_{21}+\Delta_{22}.
\end{align}

Let us first show $\E \frac{\Delta_{1}}{BA} = o\Big( \frac{\mu^{2}}{m} \Big)$. We use the same argument as in the proof of Lemma \ref{lem::2nd-order-lower-bound-low-SNR-p1&2} to bound the denominator: $A\geq 2m e^{-\frac{\mu^2}{2m}\sum_{i=1}^m\|X_i\|_2^2}, B \geq 2m e^{-\frac{\mu^2}{2m}\sum_{i=1}^m\|X_i\|_2^2}$. Furthermore, since $\Delta_1 \geq 0 \Leftrightarrow X_1^Tz\leq 0$, we have
    \begin{align*}
         \E \frac{\Delta_{1}}{AB} &\leq \E\frac{(1-e^{-2\mu^2\|X_1\|^2_2})(1-e^{2\mu X_1^Tz})\mathbbm{1}_{(X_{1}^{T}z\leq 0)}}{AB}\\ 
        &\leq  \frac{1}{4m^2}\cdot \E e^{\frac{\mu^{2}}{m}\sum_{i=2}^{m}\|X_{i}\|_2^{2}} \cdot \E \Big(\big(e^{ \frac{\mu^{2}}{m}\|X_{1}\|_2^{2}}- e^{ \frac{1-2m}{m}\mu^{2}\|X_{1}\|_2^{2}} \big) (1-e^{2\mu X_1^Tz})\mathbbm{1}_{(X_{1}^{T}z\leq 0)}  \Big)\\
        &\leq   \frac{1}{4m^{2}} \cdot \Big( 1- \frac{2\mu^{2}}{nm}\Big)^{-\frac{n(m-1)}{2}}\cdot \E \Big(\big(e^{ \frac{\mu^{2}}{m}\|X_{1}\|_2^{2}}- e^{ \frac{1-2m}{m}\mu^{2}\|X_{1}\|_2^{2}} \big) (-2\mu X_1^Tz)\mathbbm{1}_{(X_{1}^{T}z\leq 0)}  \Big),
    \end{align*}
    where in the last inequality we have used the moment-generating function of chi-squared distribution and the basic inequality $1-e^{-t} \leq t$, $\forall t\geq 0$. Given that $(X_1^Tz~|~\|X_1\|_2)\sim \mathcal{N}(0,\|X_1\|_2^2)$, the expectation in the last line can be computed as
    \begin{align*}
       &~ \E \Big(\big(e^{ \frac{\mu^{2}}{m}\|X_{1}\|_2^{2}}- e^{ \frac{1-2m}{m}\mu^{2}\|X_{1}\|_2^{2}} \big) (-2\mu X_1^Tz)\mathbbm{1}_{(X_{1}^{T}z\leq 0)}  \Big) \\
    =&~ \E \Big(\big(e^{ \frac{\mu^{2}}{m}\|X_{1}\|_2^{2}}- e^{ \frac{1-2m}{m}\mu^{2}\|X_{1}\|_2^{2}} \big) (-2\mu\|X_1\|_2)  \Big)\cdot \E\big(w \mathbbm{1}_{(w\leq 0)}\big) \quad \quad w\sim \mathcal{N}(0,1) \\
    =&~\frac{2\mu\Gamma(\frac{n+1}{2})}{\sqrt{\pi n}\Gamma(\frac{n}{2})}\Big(\Big(1-\frac{2\mu^2}{mn}\Big)^{-\frac{n+1}{2}}-\Big(1-\frac{2(1-2m)\mu^2}{mn}\Big)^{-\frac{n+1}{2}}\Big)=O(\mu^3),
    \end{align*}
    where the second equality uses the fact that for a chi-squared random variable $Q\sim \chi^2_n$,
    \begin{align}
    \label{chi:squre:mom}
    \E(e^{t Q}\sqrt{Q})=\frac{\sqrt{2}\Gamma(\frac{n+1}{2})}{\Gamma(\frac{n}{2})}(1-2t)^{-\frac{n+1}{2}}, \quad \forall t<\frac{1}{2}.
    \end{align}
    It thus follows that $\E \frac{\Delta_{1}}{AB}\leq O\Big(\frac{\mu^3}{m^2}\Big)=o\Big(\frac{\mu^2}{m}\Big)$.

    It remains to show $\E \frac{\Delta_{2}}{AB} = o \Big( \frac{\mu^{2}}{m} \Big)$. Referring to $\Delta_{21},\Delta_{22}$ in \eqref{delta2:two:term}, since flipping the signs of $\{X_i,i=2,\ldots, m\}$ does not change the joint distribution, we obtain
    \begin{align*}
        \frac{m}{\mu^{2}} \E \frac{\Delta_{2}}{AB} &= \frac{2m}{\mu^{2}} \E \frac{\Delta_{22}}{AB} \\
        &= \frac{2m(m-1)}{\mu^{2}} \cdot  \E\frac{e^{\mu x_{1}^{T}z - \frac{1}{2}\mu^{2}\|X_{1}\|_2^{2}}(e^{2\mu^2 X_2^TX_1}-1)e^{-\mu X_2^T(\mu X_1+z)-\frac{1}{2}\mu^2\|X_2\|_2^2}}{AB} \\
       & \overset{(a)}{\leq} \frac{m-1}{2m\mu^2} \cdot \E e^{\frac{\mu^2}{m}\sum_{i=3}^m\|X_i\|_2^2}\cdot \\
       & \quad \quad \E \Big(e^{\mu(X_1-X_2)^Tz}(e^{\mu^2X_2^TX_1}-e^{-\mu^2X_2^TX_1})e^{\frac{2-m}{2m}\mu^2(\|X_1\|_2^2+\|X_2\|_2^2)}\mathbbm{1}_{(X_2^TX_1\geq 0)}\Big) \\
       &\overset{(b)}{=}\frac{m-1}{2m\mu^2} \cdot \Big(1-\frac{2\mu^2}{mn}\Big)^{-\frac{(m-2)n}{2}} \cdot \E \Big((1-e^{-2\mu^2X_2^TX_1})e^{\frac{\mu^2}{m}(\|X_1\|_2^2+\|X_2\|_2^2)}\mathbbm{1}_{(X_2^TX_1\geq 0)}\Big) \\
       & \overset{(c)}{\leq} \frac{m-1}{2m\mu^2} \cdot \Big(1-\frac{2\mu^2}{mn}\Big)^{-\frac{(m-2)n}{2}} \cdot \E \Big(e^{\frac{\mu^2}{m}(\|X_1\|_2^2+\|X_2\|_2^2)}(2\mu^2X_2^TX_1)\mathbbm{1}_{(X_2^TX_1\geq 0)}\Big) \\
       &\overset{(d)}{=}\frac{m-1}{2m\mu^2} \cdot \Big(1-\frac{2\mu^2}{mn}\Big)^{-\frac{(m-2)n}{2}} \cdot \Big(\E\big(e^{\frac{\mu^2}{m}\|X_1\|^2_2}\|X_1\|_2\big)\Big)^2\cdot \E\Big(\frac{2\mu^2 X_2^TX_1}{\|X_2\|_2\|X_1\|_2}\mathbbm{1}_{(X_2^TX_1\geq 0)}\Big) \\
       &\overset{(e)}{=}\frac{m-1}{2m\mu^2} \cdot \Big(1-\frac{2\mu^2}{mn}\Big)^{-\frac{(m-2)n}{2}} \cdot \frac{2\Gamma^2(\frac{n+1}{2})}{n\Gamma^2(\frac{n}{2})}\Big(1-\frac{2\mu^2}{mn}\Big)^{-(n+1)}\cdot O\Big(\frac{\mu^2}{\sqrt{n}}\Big)=O(n^{-1/2}).
    \end{align*}
In the above, (a) applies the same bound as before: $A\geq 2m e^{-\frac{\mu^2}{2m}\sum_{i=1}^m\|X_i\|_2^2}, B \geq 2m e^{-\frac{\mu^2}{2m}\sum_{i=1}^m\|X_i\|_2^2}$; (b) uses the moment-generating function of chi-squared distribution and also computes expectation with respect to $z$ conditioning on $(X_1,X_2)$; (c) is due to the fact that $1-e^{-t} \leq t$, $\forall t\geq 0$; (d) holds by the mutual independence between $\{\|X_1\|_2,\|X_2\|_2, \frac{X_2^TX_1}{\|X_1\|_2\|X_2\|_2}\}$; (e) uses the earlier result \eqref{chi:squre:mom}.

\end{proof}

\begin{lemma}\label{lem::2nd-order-lower-bound-low-SNR-p^2}
    Under the same conditions of Lemma \ref{lem::2nd-order-lower-bound-low-SNR-p1&2}, the $\E_{\mu e_{2}} \mathcal{P}_1^{2}$ defined in \eqref{p12:expression} satisfies
    \begin{equation*}
        \E_{\mu e_{2}} \mathcal{P}_1^{2} \geq \frac{\mu^{2}}{m^{2}} \Big( 1+o(1) \Big).
    \end{equation*}
\end{lemma}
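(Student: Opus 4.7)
The plan is to rewrite $\mathcal{P}_1$ in hyperbolic form and control the numerator and denominator separately. Under $\E_{\mu e_2}$, setting $y = \mu X_2+z$, we have
\begin{equation*}
\mathcal{P}_1^{2} = \frac{4 e^{-\mu^{2}\|X_{1}\|_{2}^{2}} \sinh^{2}(\mu X_{1}^{T}y)}{A^{2}}, \qquad A := 2\sum_{i=1}^{m}e^{-\mu^{2}\|X_{i}\|_{2}^{2}/2}\cosh(\mu X_{i}^{T}y).
\end{equation*}
The elementary inequality $\sinh^{2}(t) \geq t^{2}$ immediately yields the pointwise lower bound $\mathcal{P}_1^2 \geq 4\mu^{2} e^{-\mu^{2}\|X_{1}\|_{2}^{2}} (X_{1}^{T}y)^{2}/A^{2}$, so the task reduces to producing an upper bound on $A$ and then computing the resulting expectation involving $(X_1^Ty)^2 e^{-\mu^2\|X_1\|_2^2}$.

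For the denominator, I plan to show $A/(2m) \to 1$ in probability, yielding a high-probability event $E_n = \{A \le 2m(1+\delta_n)\}$ with $\delta_n \to 0$ and $\mathbb{P}(E_n^c)\to 0$. The key computation is that, conditional on $y$, the summands for $i \neq 2$ are i.i.d.\ with conditional mean
\begin{equation*}
\E\!\left[e^{-\mu^{2}\|X_i\|_{2}^{2}/2}\cosh(\mu X_i^{T}y) \,\Big|\, y\right] = (1+\mu^{2}/n)^{-n/2}\exp\!\left(\frac{\mu^{2}\|y\|_{2}^{2}}{2n+2\mu^{2}}\right),
\end{equation*}
as obtained by completing the square in the spirit of Lemma~\ref{lem:incomp_quadexp_Gaussian}. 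Since $\|y\|_2^2/n \to 1$ in probability under $y = \mu X_2 + z$ with $\mu \to 0$, this conditional mean tends to $1$; a straightforward second-moment calculation using the moment-generating function of chi-squared random variables yields the desired concentration. The terms with $i=1, 2$ contribute $O_p(1/m)$ to $A/(2m)$, which is negligible.

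On $E_n$ we then have $\mathcal{P}_1^2 \geq \mu^2(X_1^Ty)^2 e^{-\mu^2\|X_1\|_2^2}/[m^2(1+\delta_n)^2]$, so
\begin{equation*}
\E_{\mu e_2}\mathcal{P}_1^2 \geq \frac{\mu^{2}}{m^{2}(1+\delta_n)^{2}} \cdot \E\!\left[(X_{1}^{T}y)^{2} e^{-\mu^{2}\|X_{1}\|_{2}^{2}}\mathbbm{1}_{E_n}\right].
\end{equation*}
Since $X_1$ is independent of $(X_2,z)$, conditioning on $X_1$ gives $X_1^Ty \mid X_1 \sim \calN(0, (1+\mu^2/n)\|X_1\|_2^2)$, and hence
\begin{equation*}
\E\!\left[(X_1^Ty)^2 e^{-\mu^2\|X_1\|_2^2}\right] = (1+\mu^2/n)(1+2\mu^2/n)^{-n/2-1} = 1+o(1),
\end{equation*}
while $\E[(X_1^Ty)^2 e^{-\mu^2\|X_1\|_2^2}\mathbbm{1}_{E_n^c}] = o(1)$ by Cauchy--Schwarz together with the boundedness of $\E[(X_1^Ty)^4 e^{-2\mu^2\|X_1\|_2^2}]$ (another chi-squared MGF computation). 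Combining the three ingredients yields $\E_{\mu e_2}\mathcal{P}_1^2 \geq (\mu^2/m^2)(1+o(1))$, as desired. The main obstacle is the careful upper bound on $A/(2m)$: the exponential $\cosh$ factor introduces heavy-tailed summands, so one must rely on the conditional-on-$y$ MGF identity above rather than a naive Gaussian concentration, but the required computations closely mirror those already used to control the denominator in the preceding lemmas of this section.
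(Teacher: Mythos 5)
Your proposal is correct, and it reaches the bound by a genuinely different route from the paper's. Both arguments treat the numerator identically (your $\sinh^2(t)\ge t^2$ is exactly the paper's $(e^t-e^{-t})^2\ge 4t^2$), but you control the denominator by a direct law-of-large-numbers argument: conditional on $y$, the summands for $i\neq 2$ are i.i.d.\ with conditional mean $(1+\mu^2/n)^{-n/2}\exp\big(\tfrac{\mu^2\|y\|_2^2}{2(n+\mu^2)}\big)\to 1$ and bounded conditional second moment (both follow from the MGF identity, and everything is tame because $\mu\to 0$), so Chebyshev gives $A/(2m)\overset{p}{\to}1$ and you restrict to the event $E_n=\{A\le 2m(1+\delta_n)\}$, correcting for $E_n^c$ via Cauchy--Schwarz. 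The paper instead never concentrates the full sum: it applies Jensen's inequality to replace the $i\ge 3$ summands (and, via an auxiliary variable independent of $(X_1,\mu X_2+z)$, the $i=2$ summand) by their conditional expectations inside the denominator, and then truncates on events of the form $\|\mu X_2+z\|_2\le t_1\sqrt{n}$ and $|w_1|\le t_2/\sqrt{n}$ with $t_1=t_2=\mu^{-1/4}$ so that the resulting denominator is bounded by $(4+o(1))m^2$ deterministically on that event. Your route is more transparent and avoids the Jensen/auxiliary-variable machinery, at the cost of having to quantify the concentration: to get both $\delta_n\to 0$ and $\mathbb{P}(E_n^c)\to 0$ you must let the Chebyshev threshold shrink slowly (e.g.\ at rate $m^{-1/4}$) and fold in the chi-squared concentration of $\|y\|_2^2/n$; also note that $E_n$ depends on $X_1$ through the $i=1$ summand, which is harmless precisely because your Cauchy--Schwarz correction does not require independence from the numerator. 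Your moment computations ($\E[(X_1^Ty)^2e^{-\mu^2\|X_1\|_2^2}]=(1+\mu^2/n)(1+2\mu^2/n)^{-n/2-1}=1+o(1)$ and the bounded fourth moment) check out. The paper's Jensen-based bound is the one that generalizes to Regime II (Lemma \ref{lem::regime2-E-p_m^2}), where $\mu\to\infty$ and a naive concentration of the denominator would fail, but in the present low-SNR setting your argument is sound.
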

\begin{proof}
    Observe that the numerator in \eqref{p12:expression} is free from $(X_{3}, \ldots, X_{m})$. By conditioning on $(X_{1}, X_{2}, z)$, we can apply Jensen's inequality (on function $f(x):=\frac{1}{(x+c)^2}$) to first compute the conditional expectation inside the denominator, 
    \begin{align*}
        &~ \E \Big(\sum_{i=3}^{m} e^{ \mu X_{i}^{T}(\mu X_{2}+z) - \frac{\mu^{2}}{2}\|X_{i}\|_2^{2}} + e^{-\mu X_{i}^{T}(\mu X_{2}+z) - \frac{\mu^{2}}{2}\|X_{i}\|_2^{2}} \Big| (X_{1},X_{2},z) \Big) \\
        =&~ 2(m-2) \Big( 1+\frac{\mu^{2}}{n} \Big)^{-\frac{n}{2}} \exp\Big(\frac{\mu^2}{2(n+\mu^2)} \|\mu X_{2}+z\|_2^{2}\Big), \\
        \leq &~2(m-2)\exp\Big(\frac{\mu^2}{2n} \|\mu X_{2}+z\|_2^{2}\Big),
    \end{align*}
    where the equality uses the moment-generating function of noncentral chi-squared distribution. 
    Thus,
    \begin{align}
    \label{simplify:one}
         \E_{\mu e_{2}} \mathcal{P}_1^{2}  \geq & ~ \E \bigg[ \Big(e^{\mu X_{1}^{T}(\mu X_{2}+z) - \frac{1}{2}\mu^{2} \|X_{1}\|_2^{2}} - e^{ -\mu X_{1}^{T}(\mu X_{2} + z) - \frac{1}{2}\mu^{2}\|X_{1}\|_2^{2} } \Big)^{2} \cdot  
         \nonumber \\
        & \quad \quad   \Big( e^{ \mu X_{1}^{T}(\mu X_{2}+z) - \frac{1}{2}\mu^{2}\|X_{1}\|_2^{2}} + e^{-\mu X_{1}^{T}(\mu X_{2}+z) - \frac{1}{2}\mu^{2}\|X_{1}\|_2^{2}} \nonumber \\
        & \quad \quad + e^{\mu X_{2}^{T}z + \frac{1}{2}\mu^{2}\|X_{2} \|_2^{2}} + e^{ -\mu X_{2}^{T} z - \frac{3}{2}\mu^{2}\| X_{2} \|_2^{2}} \nonumber \\
        & \quad \quad + 2(m-2) e^{\frac{\mu^2}{2n} \|\mu X_{2}+z\|_2^{2}} \Big)^{-2}\bigg].
    \end{align}
    To simplify the above lower bound, we note that the numerator only depends on $X_1$ and $v:=\mu X_2+z$. We construct a random variable $\bar{v}:= -\frac{n}{\mu} X_{2}+z$ such that $(X_1,v,\bar{v})$ are mutually independent. It is clear that the denominator can be written as a function of $(X_1,v,\bar{v})$. Hence, we can again apply Jensen's inequality to compute the conditional expectation with respect to $\bar{v}$ (conditional on $(X_1,v)$) inside the denominator,
    \begin{align*}
    &~\E \big(\exp\big(\mu X_{2}^{T}z + \frac{1}{2}\mu^{2}\|X_{2} \|_2^{2}\big)~|~(X_1,v)\big) \\
    =&~\E \bigg(\exp\Big(-\frac{\mu^4}{2(n+\mu^2)^2}\|\bar{v}\|_2^2-\frac{n\mu^2}{(n+\mu^2)^2}v^T\bar{v}+\frac{\mu^4+2n\mu^2}{2(n+\mu^2)^2}\|v\|_2^2\Big)~|~(X_1,v)\bigg) \\
    =&~\Big(1+\frac{\mu^2}{n+\mu^2}\Big)^{-\frac{n}{2}}\cdot \exp\Big(\frac{\mu^2(3n+2\mu^2)}{2(n+\mu^2)(n+2\mu^2)}\cdot \|\mu X_2+z\|^2_2\Big) \\
    \leq&~\exp\Big(\frac{3\mu^2}{2n}\cdot \|\mu X_2+z\|^2_2\Big),
    \end{align*}
where the second equality is based on the result: for $\zeta\sim \mathcal{N}(0,\sigma^2 I_n)$ and constants $t>0, a\in \mathbb{R}^n$,
\[
\E \exp\Big(-\frac{t}{2}\|\zeta\|_2^2-a^T\zeta\Big)=(1+t\sigma^2)^{-\frac{n}{2}}\cdot \exp\Big(\frac{\sigma^2\|a\|_2^2}{2(1+t\sigma^2)}\Big).
\]
Similarly, we have
\begin{align*}
 &~\E \big(\exp\big(-\mu X_{2}^{T}z - \frac{3}{2}\mu^{2}\|X_{2} \|_2^{2}\big)~|~(X_1,v)\big) \\
  =&~\E \bigg(\exp\Big(-\frac{\mu^4}{2(n+\mu^2)^2}\|\bar{v}\|_2^2+\frac{n\mu^2+2\mu^4}{(n+\mu^2)^2}v^T\bar{v}-\frac{3\mu^4+2n\mu^2}{2(n+\mu^2)^2}\|v\|_2^2\Big)~|~(X_1,v)\bigg) \\ 
  =&~\Big(1+\frac{\mu^2}{n+\mu^2}\Big)^{-\frac{n}{2}}\cdot \exp\Big(-\frac{\mu^2}{2(n+\mu^2)}\cdot \|\mu X_2+z\|^2_2\Big) \leq 1.
\end{align*}
Combining the above two results and applying the bound $(e^{t}-e^{-t})^2\geq 4t^2, \forall t\in \mathbb{R}$ to the numerator of \eqref{simplify:one}, we obtain
\begin{align}
\label{simplify:two}
 \E_{\mu e_{2}} \mathcal{P}_1^{2}  \geq & ~\E\frac{4\mu^2(X_1^T(\mu X_2+z))^2e^{-\mu^2\|X_1\|_2^2}}{\big(e^{\mu X_1^T(\mu X_2+z)}+e^{-\mu X_1^T(\mu X_2+z)}+1+e^{\frac{3\mu^2}{2n}\|\mu X_2+z\|_2^2}+2(m-2)e^{\frac{\mu^2}{2n}\|\mu X_2+z\|_2^2}\big)^2}.
\end{align}
To further evaluate the lower bound above, we let $Q\in \R^{n\times n}$ be an orthogonal matrix (only depends on $(X_2,z)$) with the first row equal to $\frac{\mu X_2+z}{\|\mu X_2+ z\|_2}$. Denote $QX_1:=w=(w_1,w_{-1})$ where $w_1\in \mathbb{R}, w_{-1} \in  \mathbb{R}^{n-1}$ represent the first and remaining coordinates of $w$ respectively. It is direct to verify the following:
\begin{itemize}
\item $X_1^T(\mu X_2+z)=w_1\|\mu X_2+z\|_2$ 
\item $w\sim \mathcal{N}(0,\frac{1}{n}I_n)$
\item $(w_1, w_{-1}, \|\mu X_2 +z\|_2)$ are mutually independent
\item The ratio in \eqref{simplify:two} can be written as a function of $(w_1,w_{-1},\|\mu X_2+z\|_2)$.
\end{itemize}
Therefore, we can continue from \eqref{simplify:two} to have
\begin{align}
\label{simplify:three}
\E_{\mu e_{2}} \mathcal{P}_1^{2}  \geq & ~\frac{4\mu^2}{M}\cdot \E\big(\|\mu X_2+z\|_2^2\mathbbm{1}_{(\|\mu X_2+z\|_2\leq t_1\sqrt{n})}\big)\cdot \E\big(w_1^2e^{-\mu^2w_1^2}\mathbbm{1}_{(|w_1|\leq t_2/\sqrt{n})}\big)\cdot \E e^{-\mu^2\|w_{-1}\|_2^2},
\end{align}
where $t_1,t_2>0$ are constants that will be specified shortly, and 
\[
M:=\big(e^{\mu t_1 t_2}+e^{-\mu t_1t_2}+1+e^{\frac{3}{2}t_1^2\mu^2}+(2m-4)e^{\frac{1}{2}t_1^2\mu^2}\big)^2.
\]
Now, we set $t_1=t_2=\mu^{-\frac{1}{4}}$ and compute the three expectations in \eqref{simplify:three},
    \begin{itemize}
         \item Since $\frac{t_1^2n}{\mu^2/n+1}-n-2>0$, 
         \begin{align*}
          &~ \E \|\mu X_{2}+z\|_2^{2} \mathbbm{1}_{(\|\mu X_{2} + z\|_2\leq t_1\sqrt{n})} = n\Big( 1 + \frac{\mu^{2}}{n}\Big) \cdot \mathbb{P}\Big(\chi_{n+2}^{2} \leq \frac{t_1^{2}n}{\mu^{2}/n + 1} \Big) \\
        \geq &~ n\Big( 1 + \frac{\mu^{2}}{n}\Big) \cdot \bigg[1-\exp\Big(-\frac{(\mu^2/n+1)\big(\frac{t_1^2n}{\mu^2/n+1}-n-2\big)^2}{4nt_1^2}\Big) \bigg]=n(1+o(1)),
        \end{align*}
        where the inequality is due to Lemma \ref{lem::concentration-non-central-chisquare}. 
        \item $\E w_{1}^{2} e^{-\mu^{2} w_{1}^{2}} \mathbbm{1}_{( |w_{1}|\leq t_2/\sqrt{n} )} = \frac{2}{n+2\mu^{2}} \bigg[-\frac{t_2}{\sqrt{2\pi}} e^{-\frac{1}{2}(1+\frac{2\mu^{2}}{n})t_2^{2}} + \sqrt{\frac{n}{n+2\mu^2}}\int_{0}^{t_2\sqrt{1+\frac{2\mu^{2}}{n}}} \phi(x) d x \bigg]=\frac{1+o(1)}{n}$, where $\phi(\cdot)$ is the pdf of $\mathcal{N}(0,1)$.
       \item $\E e^{-\mu^2\|w_{-1}\|_2^2}=\Big(1+\frac{2\mu^2}{n}\Big)^{-\frac{n-1}{2}}=(1+o(1))$.
    \end{itemize}
Moreover, $t_1=t_2=\mu^{-\frac{1}{4}}$ implies that $M=(4+o(1))m^2$. Plugging these results into \eqref{simplify:three} shows $\E_{\mu e_{2}} \mathcal{P}_1^{2}\geq \frac{\mu^2}{m^2}(1+o(1))$. 

\end{proof}

\section{Proof of Theorem \ref{thm::second-order-med-snr-minimax}}\label{proof:thm:regime2}

\subsection{Lower bound}

 Due to the scale-invariance property as shown in Section \ref{sec:scaling}, without loss of generality, we can assume $\sigma=1$ and it is sufficient to prove
\begin{align}
\label{reduced:form:thm4:lower}
R(\Theta(k,\mu),1)\geq k\mu^2\Big(1-\frac{1+o(1)}{2}\cdot \frac{k}{p}\cdot e^{\mu^2}\Big).
\end{align}
The roadmap of proof is similar to that of Theorem \ref{thm::second-order-low-snr-minimax} in Section \ref{sec::regime1-lower-bound}. Let $\pi_{\pm IB}$ be the symmetric independent block prior described in Section \ref{sec::regime1-lower-bound}. Given that $R(\Theta(k,\mu),1)\geq B(\pi_{\pm IB}(\mu;p,k))$ from \eqref{baye:risk:lower:bound}, the lower bound in Theorem \ref{thm::second-order-med-snr-minimax} directly follows from the following proposition.

\begin{proposition}
    Assume model \eqref{model::gaussian-model} with $\sigma=1$. Suppose $n\rightarrow \infty$, $p/k \rightarrow \infty$ and $\log (p/k) /n \rightarrow 0$. If $\mu\rightarrow \infty, \mu = o\big(\sqrt{\log (p/k)}\big)$ and $\mu^{4}/n\rightarrow 0$, then the Bayes risk satisfies
    \begin{equation*}
        B(\pi_{\pm IB}(\mu;p,k)) \geq k \mu^{2} \Big(1- \frac{k}{2p}\cdot e^{\mu^{2}} \big(1+o(1)\big)\Big).
    \end{equation*}
\end{proposition}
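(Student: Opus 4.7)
The plan is to follow the block-decomposition reduction of Proposition \ref{prop::first-order-block-prior}. Because the symmetric independent block prior $\pi_{\pm IB}(\mu;p,k)$ is supported on $\Theta(k,\mu)$, the argument in \eqref{eq::divide-into-k-blocks}--\eqref{eq:lb:complete:classicalminimax} reduces the task to showing
\[
B(\pi_{\pm S}(\mu;m))\;\geq\; \mu^{2}\Big(1-\tfrac{e^{\mu^{2}}}{2m}(1+o(1))\Big),\qquad m:=p/k,
\]
for the symmetric single-spike prior on $\R^m$. Writing the Bayes estimator as $\hat{\beta}_j=\mu(p_{+j}-p_{-j})=:\mu\mathcal{P}_j$, where $p_{\pm j}$ are the posterior probabilities of the $2m$ hypotheses $\{\pm\mu e_j\}_{j=1}^m$, the orthogonality identity $\E\|\hat{\beta}-\beta\|_2^2=\E\|\beta\|_2^2-\E\langle\hat{\beta},\beta\rangle$ combined with the sign- and index-symmetries of the prior and likelihood collapses the single-block Bayes risk to the clean form
\[
B(\pi_{\pm S}(\mu;m))=\mu^{2}(1-\E_{\mu e_1}\mathcal{P}_1).
\]
Thus the proposition reduces entirely to the one-sided inequality $\E_{\mu e_1}\mathcal{P}_1\leq \tfrac{e^{\mu^{2}}}{2m}(1+o(1))$.

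To bound $\E_{\mu e_1}\mathcal{P}_1$, I would write $\mathcal{P}_1=(L_+-L_-)/(L_++L_-+D_{-1})$ with $L_\pm=\exp(\pm\mu X_1^Ty-\mu^{2}\|X_1\|_2^2/2)$ and $D_{-1}=\sum_{j\geq 2}\bigl(\exp(\mu X_j^Ty-\mu^{2}\|X_j\|_2^2/2)+\exp(-\mu X_j^Ty-\mu^{2}\|X_j\|_2^2/2)\bigr)$. Under $\mu e_1$ the ``wrong-sign'' factor $L_-$ is of order $e^{-\mu^2}$ and is negligible, so the leading inequality is $\mathcal{P}_1\lesssim L_+/D_{-1}$. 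Conditional on $y$, $L_+$ is deterministic while $D_{-1}$ is a sum of $m-1$ conditionally iid functions of $X_{2:m}\perp (X_1,z)$; decomposing each $X_j$ along and orthogonal to $y/\|y\|_2$ and doing the Gaussian integral yields
\[
\E[D_{-1}\mid y]\;=\;2(m-1)(1+\mu^{2}/n)^{-n/2}\exp\!\Big(\tfrac{\mu^{2}\|y\|_2^2}{2(n+\mu^{2})}\Big),
\]
which under $\|y\|_2^2/n\to 1$ together with $\mu^4/n\to 0$ and $\log(p/k)/n\to 0$ equals $2(m-1)(1+o_p(1))$. Coupled with the direct computation $\E_{\mu e_1}L_+=(1-2\mu^2/n)^{-n/2}=e^{\mu^{2}}(1+o(1))$, the heuristic substitution $D_{-1}\leftarrow \E(D_{-1}\mid y)$ already produces exactly the target: $\E_{\mu e_1}[L_+/\E(D_{-1}\mid y)]\approx e^{\mu^{2}}/(2m)$.

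The key technical obstacle is that Jensen's inequality points the wrong way when one wishes to upper-bound $\E[D_{-1}^{-1}\mid y]$ by $\E[D_{-1}\mid y]^{-1}$, and the conditional variance $\Var(D_{-1}\mid y)\asymp m\,e^{\mu^{2}}$ only gives a Chebyshev tail of order $e^{\mu^{2}}/m$, precisely the same order as the quantity we are trying to control. I plan to bypass this with a truncation-plus-Cram\'er--Chernoff argument in the spirit of Lemmas \ref{lem::first_order-lower_bound-spike_prior}--\ref{lem::A-term-convergest-to-one}. Fix sequences $\delta_n,\eta_n\to 0$ and introduce the good event $A=\{D_{-1}\geq (1-\eta_n)\E(D_{-1}\mid y),\ \max_{j\geq 2}\|X_j\|_2^2\leq 1+\delta_n\}$. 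On $A$ the bound $\mathcal{P}_1\leq L_+/\{(1-\eta_n)\E(D_{-1}\mid y)\}$ yields the main contribution $\tfrac{e^{\mu^{2}}}{2m}(1+o(1))$. On $A^c$ use the trivial bound $\mathcal{P}_1\leq 1$ together with a Cram\'er--Chernoff estimate obtained from the second-order expansion $\E[e^{-sT_j}\mid y]\leq \exp(-s\E[T_j\mid y]+\tfrac12 s^2\E[T_j^2\mid y])$ with $\E[T_j^2\mid y]\asymp e^{\mu^2}$, which gives $\mathbb{P}(A^c\mid y)\leq \exp(-c(m-1)\eta_n^2 e^{-\mu^{2}})$. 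Choosing for example $\eta_n^2=\log^2 m\cdot e^{\mu^2}/(m-1)$ --- which is $o(1)$ precisely because $\mu^2=o(\log(p/k))$ --- makes this bound super-polynomially small in $m$ and in particular $o(e^{\mu^{2}}/m)$. Concentration of $\|X_j\|_2^2$ and $\|y\|_2^2/n$ around $1$, and the negligibility of $L_-$ and of $L_\pm$ inside the denominator $D$, are then absorbed into the $o(1)$ factor by the standard $\chi^2$-concentration inequalities collected in Section~\ref{sec::first-order-snr-minimax}.
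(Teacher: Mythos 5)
Your proposal is correct in outline but takes a genuinely different --- and in one respect simpler --- route than the paper. The paper does not use your exact identity $B(\pi_{\pm S}(\mu;m))=\mu^{2}\big(1-\E_{\mu e_1}\mathcal{P}_1\big)$; instead it expands the risk coordinatewise and drops a nonnegative term to get the inequality \eqref{eq::2nd-order-lower-bound-med-SNR-spike-ineq}, $B(\pi_{\pm S})\geq \mu^2\big(1-2\E_{\mu e_1}\mathcal{P}_1+(m-1)\E_{\mu e_2}\mathcal{P}_1^2\big)$, and must then prove \emph{two} delicate estimates: the upper bound $\E_{\mu e_1}\mathcal{P}_1\leq\frac{e^{\mu^2}}{2m}(1+o(1))$ (Lemma \ref{lem::regime2-E-p_m}) and a matching lower bound $(m-1)\E_{\mu e_2}\mathcal{P}_1^2\geq\frac{e^{\mu^2}}{2m}(1+o(1))$ (Lemma \ref{lem::regime2-E-p_m^2}, the longest part of the argument). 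Your identity --- which is just $\E\|\hat\beta-\beta\|_2^2=\E\|\beta\|_2^2-\E\langle\hat\beta,\beta\rangle$ for the posterior mean, combined with the sign and index exchangeability of the prior and design --- makes the second estimate unnecessary, a genuine economy. For the remaining bound on $\E_{\mu e_1}\mathcal{P}_1$ you also diverge from the paper: the paper changes variables to $v=(\frac{n}{\mu}X_1-z)/\sqrt{n(1+n/\mu^2)}$ so that $(y,v,X_2,\ldots,X_m)$ are mutually independent, lower-bounds the denominator by a sum of exchangeable terms, and extracts the factor $\frac{1}{2m}$ by an exact symmetrization identity rather than by concentration of the denominator; your route keeps the original variables and uses truncation plus a lower-tail Chernoff bound for $D_{-1}$ conditional on $y$. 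Both are viable; the paper's symmetrization avoids having to show that $D_{-1}$ concentrates at all, while yours avoids the change of variables.

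Two points need repair before this is a proof. First, $L_+$ is \emph{not} deterministic given $y$ (it depends on $X_1$, which $y=\mu X_1+z$ does not determine); what your argument actually uses, and what is true, is that $L_+$ and $D_{-1}$ are conditionally independent given $y$ because $X_{2:m}$ is independent of $(X_1,z)$. Second, after substituting $D_{-1}\geq(1-\eta_n)\E(D_{-1}\mid y)$ you are left with $\E\big[L_+\cdot h(y)\big]$ where $h(y)\propto e^{-\mu^2\|y\|_2^2/(2(n+\mu^2))}$, and $L_+$ and $h(y)$ are correlated through $(X_1,z)$; since both factors are exponentially large or small, convergence in probability of $\|y\|_2^2/n$ does not suffice --- you must evaluate this expectation exactly via the joint moment-generating function (as the paper does in \eqref{exp:one:rel}) and verify that $\mu^4/n\to0$ kills the cross terms. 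Similarly, the conditional second moment $\E[T_j^2\mid y]\asymp e^{\mu^2}$ used in your Chernoff step holds only on a high-probability event for $\|y\|_2^2$, which should be added to the good event. These gaps are fixable with computations of the type already carried out in the paper, so I regard the plan as sound.
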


\begin{proof}
Like in the proof of Proposition \ref{thm3:lower:prop4}, let $m=p/k$ and define the symmetric spike prior $\pi_{\pm S}(\mu;m)$ for $\beta \in \mathbb{R}^m$: select an index $I\in [m]$ uniformly at random and then set $\beta =\pm \mu e_I$ with equal probability. Based on the same argument of \eqref{eq::divide-into-k-blocks} and \eqref{eq:lb:complete:classicalminimax} in the proof of Proposition \ref{prop::first-order-block-prior}, the proof is completed by calculating the Bayes risk for $\pi_{\pm S}(\mu;m)$, as shown in the next lemma. 
\end{proof}

\begin{lemma}
    Consider model \eqref{model::gaussian-model} with $\sigma=1$ and $\beta\in \mathbb{R}^m$. Suppose $n\rightarrow \infty, m \rightarrow \infty$ and $\log (m) /n \rightarrow 0$. If $\mu\rightarrow \infty, \mu = o\big(\sqrt{\log (m)}\big)$ and $\mu^{4}/n\rightarrow 0$, then the Bayes risk satisfies
    \begin{equation*}
        B(\pi_{\pm S}(\mu;m)) \geq \mu^{2} - \frac{\mu^{2}e^{\mu^{2}}}{2m}\Big(1+o(1)\Big).
    \end{equation*}
\end{lemma}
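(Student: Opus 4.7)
The plan is to adapt the strategy used for the low-SNR spike-prior analogue in Section \ref{sec::regime1-lower-bound} but with substantially sharper moment control to capture the exponential factor $e^{\mu^2}$. Writing $\hat{\beta}_j = \mu\mathcal{P}_j$ where $\mathcal{P}_j$ denotes the posterior mean of the sign indicator $s_j \in \{-1, 0, +1\}$, exchangeability of positions together with the sign symmetry of the prior give
\[ B(\pi_{\pm S}(\mu;m)) = \mu^2 \big[ 1 - 2\E_{\mu e_1}\mathcal{P}_1 + \E_{\mu e_1}\mathcal{P}_1^2 + (m-1)\E_{\mu e_2}\mathcal{P}_1^2 \big] \geq \mu^2 \big[1 - 2\E_{\mu e_1}\mathcal{P}_1 + (m-1)\E_{\mu e_2}\mathcal{P}_1^2\big]. \]
It therefore suffices to establish the two matching estimates $\E_{\mu e_1}\mathcal{P}_1 \leq \tfrac{e^{\mu^2}}{2m}(1+o(1))$ and $(m-1)\E_{\mu e_2}\mathcal{P}_1^2 \geq \tfrac{e^{\mu^2}}{2m}(1+o(1))$, so that their first-order contributions cancel and leave exactly the desired second-order deficit of size $\mu^2 e^{\mu^2}/(2m)$.

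Expand $\mathcal{P}_1 = (w_1^+ - w_1^-)/D$ with $w_j^s := \exp(s\mu X_j^T y - \mu^2 \|X_j\|^2/2)$, and observe that conditional on $y$, the pair $(X_1,z)$ is independent of $\{X_j\}_{j\neq 1}$. Direct computations using chi-square moment generating functions (under $\mu^4/n \to 0$) give $\E_{\mu e_1}w_1^+ = (1-2\mu^2/n)^{-n/2} \to e^{\mu^2}$, $\E_{\mu e_1}w_1^- = (1+2\mu^2/n)^{-n/2} \to e^{-\mu^2}$, and $\E_{\mu e_1}\sum_{j\neq 1}(w_j^+ + w_j^-) = 2(m-1)(1 - \mu^4/n^2)^{-n/2} \to 2(m-1)$. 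For the upper bound on $\E_{\mu e_1}\mathcal{P}_1$, I would combine a Chernoff-type concentration of $D$ around $2m$ with Cauchy--Schwarz on the complementary bad event, using the second-moment bound $\E_{\mu e_1}(w_1^+)^2 = (1-6\mu^2/n)^{-n/2} \to e^{3\mu^2}$ to tame the tail contribution. For the lower bound on $(m-1)\E_{\mu e_2}\mathcal{P}_1^2$, note that under $\beta = \mu e_2$ one has $w_1^+ - w_1^- = 2\sinh(\mu X_1^T y) e^{-\mu^2 \|X_1\|^2/2}$ with $X_1^T y \mid X_1 \sim \mathcal{N}(0, \|X_1\|^2(1+\mu^2/n))$; applying the identity $\E\sinh^2(\mu W) = (e^{2\mu^2}-1)/2$ for $W\sim \mathcal{N}(0,1)$ conditionally and then integrating out $\|X_1\|^2$ gives $\E[\sinh^2(\mu X_1^T y) e^{-\mu^2\|X_1\|^2}] \to \sinh(\mu^2) \sim e^{\mu^2}/2$, and pinching $D$ between $2m(1\pm o(1))$ on a high-probability event delivers the desired lower bound.

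The hardest step will be the bad-event analysis in the upper bound on $\E_{\mu e_1}\mathcal{P}_1$: on the atypical set where $D$ is much smaller than its mean $2m$, the numerator $w_1^+$ can simultaneously be atypically large (both driven by large deviations of $(X_1,z)$), and the naive arithmetic--geometric lower bound $D \geq 2(m-1)e^{-\mu^2/2}$ loses exactly the factor $e^{\mu^2/2}$ that we cannot afford. Overcoming this requires a truncation of the deviation of $\sum_{j\neq 1}(w_j^+ + w_j^-)$ from its mean with tail probability decaying faster than $e^{-3\mu^2/2}$; the regime condition $\mu^2 = o(\log m)$ ensures this is achievable through Chernoff bounds applied to the independent contributions from $\{X_j\}_{j\neq 1}$, but the constants must be tracked precisely so as not to inflate the $\frac{1}{2m}$ factor, since the target inequality is constant-sharp at second order.
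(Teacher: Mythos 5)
Your top-level reduction is exactly the paper's: the same identity $B(\pi_{\pm S})=\mu^2[\E_{\mu e_1}(\mathcal{P}_1-1)^2+(m-1)\E_{\mu e_2}\mathcal{P}_1^2]$, the same dropping of $\E_{\mu e_1}\mathcal{P}_1^2$, and the same two target estimates $\E_{\mu e_1}\mathcal{P}_1\leq \frac{e^{\mu^2}}{2m}(1+o(1))$ and $\E_{\mu e_2}\mathcal{P}_1^2\geq \frac{e^{\mu^2}}{2m^2}(1+o(1))$ (these are the paper's Lemmas on $\E_{\mu e_1}\mathcal{P}_1$ and $\E_{\mu e_2}\mathcal{P}_1^2$), and your moment computations ($\E w_1^+\to e^{\mu^2}$, $\E(w_1^+)^2\to e^{3\mu^2}$, $\E\sum_{j\neq 1}(w_j^++w_j^-)=2(m-1)(1-\mu^4/n^2)^{-n/2}$, the $\sinh$ identity) are all correct. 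Where you genuinely diverge is the engine that produces the sharp factor $\frac{1}{2m}$. The paper does \emph{not} concentrate the denominator: it changes variables to $v=\frac{(n/\mu)X_1-z}{\sqrt{n(1+n/\mu^2)}}$ so that $(y,v,X_2,\ldots,X_m)$ are mutually independent with $v,X_2,\ldots,X_m$ i.i.d., shows on a suitable event that the $i=1$ denominator terms dominate the symmetrized pair $f^y_+(v)+f^y_-(v)$, and then extracts the \emph{exact} factor $\frac{1}{2m}$ from conditional exchangeability of $(v,X_2,\ldots,X_m)$ given $y$. This sidesteps entirely the need for quantitative lower/upper deviation bounds on a sum of $m$ conditionally log-normal terms, which is the hard part of your route.

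Two concrete points in your plan need repair. First, you propose to concentrate $D$ around the unconditional value $2m$, but $\E[\sum_{j\neq 1}(w_j^++w_j^-)\mid y]=2(m-1)(1+\mu^2/n)^{-n/2}e^{\mu^2\|y\|_2^2/(2(n+\mu^2))}$ fluctuates with $\|y\|_2^2$ by a factor $e^{O(\mu^2(\chi_n^2/n-1))}$; requiring this to be $1+o(1)$ on an event of probability $1-o(1/m)$ forces $\mu^4\log m/n\to 0$, which is \emph{not} implied by the hypotheses ($\mu^4/n\to 0$, $\log m/n\to 0$, $\mu^2=o(\log m)$ permit, e.g., $\mu^2=(\log m)^{0.9}$, $n=(\log m)^2$). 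The fix is to center $D$ at its conditional mean given $y$ throughout and only integrate over $y$ at the end, where the $e^{\pm\mu^2\|y\|_2^2/(2(n+\mu^2))}$ factors from numerator and denominator cancel; with that reorganization your negative-exponent Chernoff bound for the lower tail of a sum of nonnegative terms does deliver $P(\text{bad}\mid y)=o(1/m)$ since $e^{\mu^2}=m^{o(1)}$. Second, for the lower bound on $\E_{\mu e_2}\mathcal{P}_1^2$, "pinching $D$ from above" is an upper-tail bound on a sum whose terms have infinite MGF; plain Chernoff is unavailable and you must either truncate (as the paper does in its first-order Lemma on $\mathcal{A}_{n,m}$) or note that conditional Chebyshev suffices because the tail budget here is only $o(e^{-c\mu^2})=m^{-o(1)}$ — the paper instead avoids this by applying Jensen's inequality to push the conditional expectation of the $j\geq 3$ terms inside the denominator. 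With these two adjustments your route closes; as written, the sketch would stall at exactly the step you flag as hardest.
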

\begin{proof}

With the same calculations, the inequality in \eqref{eq::2nd-order-lower-bound-low-SNR-spike-ineq} continues to hold:
\begin{align}
    B(\pi_{\pm S}(\mu; m)) \geq \mu^{2} \Big( 1 - 2 \E_{\mu e_{1}} \mathcal{P}_{1} + (m-1)\E_{\mu e_{2}} \mathcal{P}_{1}^{2} \Big). \label{eq::2nd-order-lower-bound-med-SNR-spike-ineq}
\end{align}
Here, $\E_{\mu e_{i}}$ denotes the expectation taken under the model $y=X\beta+z$ with $\beta=\mu e_i$, and 
\begin{align}
\label{p1:recall:def}
\mathcal{P}_1=\frac{\exp(\mu X_{1}^{T}y - \mu^{2}\|X_{1}\|_2^{2}/2) - \exp (-\mu X_{1}^{T}y - \mu^{2} \|X_{1}\|_2^{2}/2)}{\sum_{i=1}^{m}\Big(\exp(\mu X_{i}^{T}y - \mu^{2}\| X_{i}\|_2^{2}/2) + \exp (-\mu X_{i}^{T}y - \mu^{2} \|X_{i}\|_2^{2}/2)\Big)},
\end{align}
where $X_i$ is the $i$th column of $X$. Based on \eqref{eq::2nd-order-lower-bound-med-SNR-spike-ineq}, combining the upper bound for $ \E_{\mu e_{1}} \mathcal{P}_{1}$ in Lemma \ref{lem::regime2-E-p_m} and the lower bound for $\E_{\mu e_{2}} \mathcal{P}_{1}^{2}$ in Lemma \ref{lem::regime2-E-p_m^2} finishes the proof.

\end{proof}

In the rest of the proof, we state and prove Lemmas \ref{lem::regime2-E-p_m} and \ref{lem::regime2-E-p_m^2}. 

\begin{lemma}\label{lem::regime2-E-p_m}
Consider model \eqref{model::gaussian-model} with $\sigma=1$ and $\beta\in \mathbb{R}^m$. Suppose $n\rightarrow \infty, m \rightarrow \infty$ and $\log(m)/n\rightarrow 0$. If $\mu\rightarrow \infty, \mu = o\big(\sqrt{\log (m)}\big)$ and $\mu^{4}/n\rightarrow 0$, we have
    \begin{equation*}
        \E_{\mu e_{1}} \mathcal{P}_{1} \leq \frac{e^{\mu^{2}}}{2m} \big( 1+o(1) \big).
    \end{equation*}
\end{lemma}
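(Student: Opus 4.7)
The plan is to upper-bound $\mathcal{P}_1$ pointwise by the looser ratio $E/T$, where $E$ retains only the dominant term of the numerator of \eqref{p1:recall:def} and $T$ drops the $i=1$ contribution to the denominator, and then to evaluate $\E[E/T]$ by conditioning on $v:=\mu X_1+z$ together with Chebyshev and two applications of Cauchy-Schwarz. Under $y=\mu X_1+z$ the numerator of $\mathcal{P}_1$ equals $E-F$ with $E:=e^{\mu^2\|X_1\|_2^2/2+\mu X_1^Tz}$ and $F:=e^{-3\mu^2\|X_1\|_2^2/2-\mu X_1^Tz}\geq 0$, so it is $\leq E$; also discarding the $i=1$ contribution to the denominator yields
\[
\mathcal{P}_1\leq \frac{E}{T},\qquad T:=\sum_{i=2}^m\Bigl(e^{\mu X_i^Tv-\mu^2\|X_i\|_2^2/2}+e^{-\mu X_i^Tv-\mu^2\|X_i\|_2^2/2}\Bigr).
\]

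Conditionally on $(X_1,z)$ the $X_i$ with $i\geq 2$ remain i.i.d., so the summands of $T$ are conditionally i.i.d.\ functions of $v$. A direct Gaussian moment computation of the flavour of Lemma \ref{lem::wlln-conditions-lemma}(iii) gives $\mu_T(v):=\E[T\mid v]=2(m-1)(1+\mu^2/n)^{-n/2}\exp(\mu^2\|v\|_2^2/(2(n+\mu^2)))$, and the identity $\cosh^2(x)=(1+\cosh(2x))/2$ yields
\[
\frac{\Var(T\mid v)}{\mu_T(v)^2}\leq \frac{(1+\mu^2/n)^n}{(m-1)(1+2\mu^2/n)^{n/2}}\exp\!\Bigl(\tfrac{n\mu^2\|v\|_2^2}{(n+\mu^2)(n+2\mu^2)}\Bigr).
\]
Plugging $\|v\|_2^2=\mu^2\|X_1\|_2^2+2\mu X_1^Tz+\|z\|_2^2$ into both $E/\mu_T(v)$ and the bound above and integrating out $z\mid X_1$ via $\E_z\exp(aX_1^Tz-b\|z\|_2^2)=(1+2b)^{-n/2}\exp(a^2\|X_1\|_2^2/(2(1+2b)))$ and then $X_1$ via the $\chi^2$ moment-generating function, every factor telescopes and produces the closed-form expressions
\[
\E\!\Bigl[\tfrac{E}{\mu_T(v)}\Bigr]=\tfrac{(1+\mu^2/n)^n}{2(m-1)}=\tfrac{e^{\mu^2}(1+o(1))}{2m},\qquad \E\!\left[\tfrac{\Var(T\mid v)}{\mu_T(v)^2}\right]\leq\tfrac{(1+\mu^2/n)^n}{m-1}=\tfrac{e^{\mu^2}(1+o(1))}{m-1},
\]
under $\mu^4/n\to 0$. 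A parallel computation gives $\E E^2=e^{3\mu^2}(1+o(1))$, and the deterministic AM-GM bound $T\geq 2(m-1)\exp(-\tfrac{\mu^2}{2(m-1)}\sum_{i\geq 2}\|X_i\|_2^2)$ gives $\E[1/T^2\mid v]\leq e^{\mu^2}(1+o(1))/(4(m-1)^2)$ uniformly in $v$.

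For any fixed $\epsilon>0$, $\E[1/T\mid v]\leq (1-\epsilon)^{-1}/\mu_T(v)+\E[T^{-1}\mathbbm{1}_{T<(1-\epsilon)\mu_T(v)}\mid v]$. Multiplying by $E$ and taking expectation, the main term is $\leq (1-\epsilon)^{-1}\E[E/\mu_T(v)]\leq (1-\epsilon)^{-1}e^{\mu^2}(1+o(1))/(2m)$. For the tail I apply Cauchy-Schwarz on the inner conditional expectation and then again outside (after bounding $\mathbb{P}(T<(1-\epsilon)\mu_T(v)\mid v)\leq\Var(T\mid v)/(\epsilon\mu_T(v))^2$ by Chebyshev):
\[
\E\!\Bigl[E\cdot\E\!\bigl[T^{-1}\mathbbm{1}_{T<(1-\epsilon)\mu_T(v)}\bigm|v\bigr]\Bigr]\leq \tfrac{e^{\mu^2/2}(1+o(1))}{2(m-1)}\cdot\sqrt{\E E^2}\cdot \tfrac{1}{\epsilon}\sqrt{\E\bigl[\Var(T\mid v)/\mu_T(v)^2\bigr]}\leq \tfrac{e^{5\mu^2/2}(1+o(1))}{2\epsilon(m-1)^{3/2}}.
\]
Relative to the target $e^{\mu^2}/m$ this error is of order $e^{3\mu^2/2}/m^{1/2}$, which tends to $0$ iff $\mu^2<\tfrac13\log m$, which is guaranteed by $\mu=o(\sqrt{\log m})$. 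Letting $\epsilon\downarrow 0$ at the end gives the claimed bound $\E\mathcal{P}_1\leq e^{\mu^2}(1+o(1))/(2m)$.

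\emph{Main obstacle.} The tight spot is that $\Var(T\mid v)/\mu_T(v)^2$ has order $e^{\mu^2}/m$, which \emph{exactly matches} the target order of the lemma. A plain Chebyshev estimate on $\E[1/T\mid v]$ using only the crude pointwise bound $1/T\leq O(e^{\mu^2/2}/m)$ would produce a tail contribution of the same order as the main term and destroy the constant $1/2$. The saving is the outer Cauchy-Schwarz, which injects an extra $\sqrt{m-1}$ into the denominator by pairing the AM-GM second-moment bound $\E[1/T^2\mid v]\leq e^{\mu^2}/m^2$ with the exact closed form $\E[\Var(T\mid v)/\mu_T(v)^2]\leq e^{\mu^2}/(m-1)$; this pushes the tail to $O(e^{5\mu^2/2}/m^{3/2})$, which is $o(e^{\mu^2}/m)$ precisely under the Regime II hypothesis $\mu=o(\sqrt{\log m})$.
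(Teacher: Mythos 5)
Your proposal is correct, and it takes a genuinely different route from the paper. The paper first replaces $X_1$ by the orthogonalized variable $v=\bigl(\tfrac{n}{\mu}X_1-z\bigr)/\sqrt{n(1+n/\mu^2)}$, which is independent of $y$ and equidistributed with the $X_j$'s; it then keeps the $i=1$ terms in the denominator and extracts the constant $\tfrac{1}{2m}$ by pure exchangeability of the $m$ denominator summands (see \eqref{inter:events:bound2}), at the cost of introducing the events $\mathcal{I},\mathcal{II}$ to reconcile the slightly mismatched $i=1$ terms and a separate bad-event estimate (Lemmas \ref{lem::truncated-E-U}--\ref{lem::weak-convergence-E-p_m}). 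You instead discard the $i=1$ denominator term (harmless, since $\tfrac{1}{m-1}=\tfrac{1+o(1)}{m}$), condition directly on $v=\mu X_1+z$, and replace exchangeability by concentration of $T$ around $\mu_T(v)$; the sharp constant then comes from the exact MGF telescoping $\E[E/\mu_T(v)]=(1+\mu^2/n)^n/(2(m-1))$, which I have verified. Your diagnosis of the delicate point is also accurate: the relative conditional variance $\E[\Var(T\mid v)/\mu_T(v)^2]\le (1+\mu^2/n)^n/(m-1)$ is of the same order $e^{\mu^2}/m$ as the target, so a naive Chebyshev-plus-pointwise bound on the lower tail of $T$ would ruin the constant; your second (outer) Cauchy--Schwarz, pairing the AM--GM bound $\E[T^{-2}\mid v]\le e^{\mu^2}(1+o(1))/(4(m-1)^2)$ with $\E E^2=e^{3\mu^2}(1+o(1))$, yields a tail of order $e^{5\mu^2/2}/(\epsilon\, m^{3/2})$, which is $o(e^{\mu^2}/m)$ exactly because $\mu^2=o(\log m)$ in Regime II; letting $\epsilon\downarrow 0$ after taking the limsup is standard. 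In short, the paper buys an exact symmetry identity at the price of a clever change of variables and an event decomposition, while your argument is more elementary and self-contained but survives only by a margin that is tight in the Regime II scaling.
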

\begin{proof}
Recalling $\mathcal{P}_1$ in \eqref{p1:recall:def}, we first have
    \begin{align}
    \label{simple:start:1}
        \E_{\mu e_{1}} \mathcal{P}_{1} \leq \E \frac{\exp(\mu X_{1}^{T}y - \mu^{2}\|X_{1}\|_2^{2}/2) }{\sum_{i=1}^{m}\Big(\exp(\mu X_{i}^{T}y - \mu^{2}\| X_{i}\|_2^{2}/2) + \exp (-\mu X_{i}^{T}y - \mu^{2} \|X_{i}\|_2^{2}/2)\Big)},
    \end{align}
    where $y=\mu X_{1}+z$. Define 
    \begin{align}
    \label{v:orthg:def}
    v:=\frac{\frac{n}{\mu}x_{1}-z}{\sqrt{n(1+n/\mu^{2})}}.
    \end{align}
    It is straightforward to confirm that $v \sim \calN(0,\frac{1}{n}I_n)$ and $(y,v,\{X_{j}\}_{j=2}^{m})$ are mutual independent. Given that $X_1=\frac{y+\sqrt{n(1+n/\mu^{2})}v}{\mu+n/\mu}$, we can write 
    \begin{eqnarray*}
        & \mu X_{1}^{T}y - \mu^{2}\|X_{1}\|_2^{2}/2 = \frac{\mu^{2}(2+\mu^{2}/n)}{2n(1+\mu^{2}/n)^2}\|y\|_2^{2} + \frac{\mu}{(1+\mu^{2}/n)^{3/2}}v^{T}y - \frac{\mu^{2}}{2(1+\mu^{2}/n)}\|v\|_2^{2}, \\
        & -\mu X_{1}^{T}y - \mu^{2}\|X_{1}\|_2^{2}/2 = - \frac{\mu^{2}(2+3\mu^{2}/n)}{2n(1+\mu^{2}/n)^2}\|y\|_2^{2} -\frac{\mu(1+2\mu^{2}/n)}{(1+\mu^{2}/n)^{3/2}}v^{T}y - \frac{\mu^{2}}{2(1+\mu^{2}/n)}\|v\|_2^{2}.
    \end{eqnarray*}
    Plugging the above into \eqref{simple:start:1} yields
    \begin{align}
        \E_{\mu e_{1}}\mathcal{P}_{1} \leq \E U, \label{eq::first-ineq-bound-E-p_m}
    \end{align}
    where
    \begin{align}
        U :=& \exp\Big(\frac{\mu^{2}(2+\mu^{2}/n)}{2n(1+\mu^{2}/n)^2}\|y\|_2^{2} + \frac{\mu}{(1+\mu^{2}/n)^{3/2}}v^{T}y - \frac{\mu^{2}}{2(1+\mu^{2}/n)}\|v\|_2^{2}\Big) \nonumber\\
        & \cdot \bigg[\sum_{j=2}^{m} \bigg( \exp\Big( \mu X_{j}^{T}y - \frac{\mu^{2}}{2}\|X_{j}\|_2^{2} \Big) + \exp \Big(-\mu X_{j}^{T}y - \frac{\mu^{2}}{2}\|X_{j}\|_2^{2} \Big) \bigg) \nonumber\\
        & + \exp \Big( \frac{\mu^{2}(2+\mu^{2}/n)}{2n(1+\mu^{2}/n)^2}\|y\|_2^{2} + \frac{\mu}{(1+\mu^{2}/n)^{3/2}}v^{T}y - \frac{\mu^{2}}{2(1+\mu^{2}/n)}\|v\|_2^{2} \Big) \nonumber\\
        & + \exp \Big( - \frac{\mu^{2}(2+3\mu^{2}/n)}{2n(1+\mu^{2}/n)^2}\|y\|_2^{2} -\frac{\mu(1+2\mu^{2}/n)}{(1+\mu^{2}/n)^{3/2}}v^{T}y - \frac{\mu^{2}}{2(1+\mu^{2}/n)}\|v\|_2^{2} \Big) \bigg]^{-1}. \label{eq::U-term-in-first-ineq-bound-E-p_m}
    \end{align}
Consider the following two events:
    \begin{align}
    \label{two:events:def}
   \mathcal{I}:=\bigg\{\frac{\|y\|_2^{2}}{n(1+\mu^{2}/n)} \geq \frac{2(1+\mu^{2}/n)\log 2}{\mu(2+\mu^{2}/n)}\bigg\}, ~\mathcal{II}:=\bigg\{\frac{v^{T}y}{\|y\|_2^{2}}n \sqrt{1+\frac{\mu^{2}}{n}} \geq -\frac{\mu(2+\mu^{2}/n)}{4} \bigg\}.
 \end{align}
    From \eqref{eq::first-ineq-bound-E-p_m}, we obtain
    \begin{eqnarray*}
        \E_{\mu e_{1}}\mathcal{P}_{1} \leq \E [U \mathbbm{1}_{ \mathcal{I}\cap \mathcal{II} }]  + \E [U \mathbbm{1}_{\mathcal{I}^{c} \cup \mathcal{II}^{c} }].
    \end{eqnarray*}
    Lemma \ref{lem::truncated-E-U} shows that
    \begin{equation*}
        \E [U \mathbbm{1}_{ \mathcal{I}\cap \mathcal{II} }] \leq \frac{e^{\mu^{2}}}{2m} \big(1+o(1)\big),
    \end{equation*}
    and Lemma \ref{lem::weak-convergence-E-p_m} proves
    \begin{equation*}
       \E [U \mathbbm{1}_{\mathcal{I}^{c} \cup \mathcal{II}^{c} }]=o\Big( \frac{e^{\mu^{2}}}{m} \Big).
    \end{equation*}
  The poof is thus completed.
\end{proof}

\begin{lemma}\label{lem::truncated-E-U}
Under the same conditions of Lemma \ref{lem::regime2-E-p_m}, it holds that
    \begin{equation*}
        \E [U \mathbbm{1}_{ \mathcal{I}\cap \mathcal{II} }] \leq \frac{e^{\mu^{2}}}{2m} \big(1+o(1)\big).
    \end{equation*}
Here, $U$ is introduced in \eqref{eq::U-term-in-first-ineq-bound-E-p_m} and $\mathcal{I},\mathcal{II}$ are defined in \eqref{two:events:def}.
\end{lemma}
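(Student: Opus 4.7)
The goal is to upper-bound $\E\bigl[U\mathbbm{1}_{\mathcal{I}\cap\mathcal{II}}\bigr]$ by $\tfrac{e^{\mu^{2}}}{2m}(1+o(1))$. The plan is to discard the two nonnegative summands from the denominator of $U$ in~\eqref{eq::U-term-in-first-ineq-bound-E-p_m} (namely the numerator itself, call it $N$, and the last exponential term $L$), and then reduce the remaining expectation to an iterated Gaussian integral whose value is precisely the target constant.

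Let $N$ denote the numerator of $U$ and set $S := \sum_{j=2}^{m}(T_j^{+}+T_j^{-})$ with $T_j^{\pm}:=\exp(\pm\mu X_j^T y-\tfrac12\mu^2\|X_j\|_2^2)$. Since $N,L\geq 0$, the deterministic bound $U\leq N/S$ holds, so that $\E[U\mathbbm{1}_{\mathcal{I}\cap\mathcal{II}}]\leq\E[N/S]$. (The truncation indicator is not actually needed for this upper bound; it is retained only so that Lemma~\ref{lem::weak-convergence-E-p_m} can handle the complementary region separately.) By the construction of $v$ in~\eqref{v:orthg:def}, the triple $(y,v,\{X_j\}_{j\geq 2})$ is jointly independent, and conditionally on $y$ the factors $N=N(y,v)$ and $S=S(y,\{X_j\})$ are independent. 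Computing the conditional mean via the Gaussian quadratic-form moment generating function gives
\[
\bar{S}(y):=\E[S\mid y]=2(m-1)(1+\mu^2/n)^{-n/2}\exp\!\Big(\tfrac{\mu^2\|y\|_2^2}{2n(1+\mu^2/n)}\Big),
\]
while a direct variance calculation shows $\Var(S\mid y)/\bar{S}(y)^2=O\bigl((e^{\mu^2}-1)/m\bigr)=o(1)$, using $\mu=o(\sqrt{\log m})$ and $\mu^4/n\to 0$. Combined with the a.s.\ lower bound $S\geq 2\sum_{j\geq 2}e^{-\mu^2\|X_j\|_2^2/2}$ coming from $\cosh\geq 1$ (which provides a usable deterministic floor via standard $\chi^2_n$ tail bounds), Chebyshev's inequality yields $\E[1/S\mid y]=\bar{S}(y)^{-1}(1+o(1))$, hence $\E[N/S]=(1+o(1))\,\E[N/\bar{S}(y)]$.

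It remains to evaluate $\E[N/\bar{S}(y)]$ by iterated Gaussian integration. After subtracting the exponent of $\bar{S}(y)^{-1}$ from that of $N$, the coefficient of $\|y\|_2^2$ simplifies cleanly to $\tfrac{\mu^2}{2n(1+\mu^2/n)^2}$. Integrating first over $v\sim\calN(0,\tfrac{1}{n}I_n)$ using the Gaussian quadratic-form MGF produces a factor $((1+\mu^2/n)/(1+2\mu^2/n))^{n/2}$ and augments the $\|y\|_2^2$ coefficient to $\tfrac{\mu^2}{n(1+\mu^2/n)(1+2\mu^2/n)}$; integrating then over $y\sim\calN(0,(1+\mu^2/n)I_n)$ using the $\chi^2_n$ MGF gives $(1+2\mu^2/n)^{n/2}$, where the critical simplification uses the identity $n\cdot(\mu^2/n)=\mu^2$ to collapse the MGF argument to $1-2\mu^2/(n(1+2\mu^2/n))=(1+2\mu^2/n)^{-1}$. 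Combined with the $(1+\mu^2/n)^{n/2}$ prefactor from $\bar{S}(y)^{-1}$, these three pieces telescope to
\[
\E[N/\bar{S}(y)]=\frac{(1+\mu^2/n)^n}{2(m-1)}.
\]
Under $\mu^4/n\to 0$, $(1+\mu^2/n)^n=\exp\bigl(\mu^2+O(\mu^4/n)\bigr)=e^{\mu^2}(1+o(1))$, so the right-hand side equals $\tfrac{e^{\mu^2}}{2m}(1+o(1))$, finishing the proof.

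The main obstacle will be the exact bookkeeping of the three $(1+c\mu^2/n)^{\pm n/2}$ prefactors from the successive Gaussian integrations: they must be shown to telescope to $(1+\mu^2/n)^n$ without leaving any stray factor that would spoil the leading constant $\tfrac12$. This hinges crucially on the identity $n\cdot(\mu^2/n)=\mu^2$ (reducing the $\chi^2_n$ MGF to $(1+2\mu^2/n)^{-n/2}$) and on $\mu^4/n\to 0$ (reducing $(1+\mu^2/n)^n$ to $e^{\mu^2}$ up to $1+o(1)$). A secondary technical point is making the Chebyshev-type replacement $\E[1/S\mid y]\sim\bar{S}(y)^{-1}$ quantitatively tight enough to survive the outer expectation over $y$; here the a.s.\ floor on $S$ from $\cosh\geq 1$ is precisely what prevents blow-up of $1/S$ on the rare complement of the concentration event.
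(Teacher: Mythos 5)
Your route is genuinely different from the paper's. The paper does \emph{not} discard the extra denominator terms: it massages them (via the inequality $e^a+e^{-a}\ge e^b+e^{-b}$ for $|a|\ge|b|$, the $\max_j(\|v\|_2^2\vee\|X_j\|_2^2)$ correction, and Lemma \ref{lem::sufficient-conditions}, which is exactly where the event $\mathcal{I}\cap\mathcal{II}$ is used) so that the denominator becomes $f^y_+(v)+f^y_-(v)+\sum_{j\ge2}(f^y_+(X_j)+f^y_-(X_j))$ — a sum of $2m$ exchangeable terms of which the numerator is one. Conditioning on $y$, exchangeability then gives the factor $\tfrac{1}{2m}$ \emph{exactly}, with no concentration argument for the denominator at all; what remains is the computation $\E\exp\bigl(\tfrac{\mu^2(2+\mu^2/n)}{2n(1+\mu^2/n)^2}\|y\|_2^2\bigr)=e^{\mu^2}(1+o(1))$ plus Lemma \ref{lem::mgf-max-chi-square}. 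You instead keep only $S=\sum_{j\ge2}(T_j^++T_j^-)$, so the indicator and Lemma \ref{lem::sufficient-conditions} become unnecessary, and you replace symmetry by a law-of-large-numbers argument for $S$. Your explicit computations check out: $\bar S(y)$ is correct, the exponent bookkeeping for $\E[N/\bar S(y)]$ telescopes exactly as you claim to $(1+\mu^2/n)^n/(2(m-1))=\tfrac{e^{\mu^2}}{2m}(1+o(1))$, and since $\E[1/S\mid y]\ge \bar S(y)^{-1}$ by Jensen, the direction you need is the nontrivial upper bound.

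That upper bound is the one step that is asserted rather than proved, and it is the crux of your alternative. Chebyshev gives $\mathbb{P}(S<(1-\epsilon)\bar S(y)\mid y)\lesssim \epsilon^{-2}e^{\mu^2\|y\|_2^2/n}/m$ (note the variance ratio is $O(e^{\mu^2\|y\|_2^2/n}/m)$, not uniformly $O(e^{\mu^2}/m)$), but concentration in probability does not by itself control $\E[S^{-1}\mathbbm{1}_{\{S<(1-\epsilon)\bar S\}}\mid y]$. You need the AM--GM floor $S\ge 2(m-1)\exp\bigl(-\tfrac{\mu^2}{2(m-1)}\sum_{j\ge2}\|X_j\|_2^2\bigr)$ to get $\E[S^{-2}\mid y]\lesssim e^{\mu^2}/m^2$, then Cauchy--Schwarz on the deviation event, and finally the resulting correction must be multiplied by $\E[N\mid y]$ and integrated over $y$; the error comes out of order $e^{2\mu^2}/m^{3/2}$, which is $o(e^{\mu^2}/m)$ only because $\mu^2=o(\log m)$ forces $e^{\mu^2}=m^{o(1)}\ll\sqrt m$. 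So the approach is sound and arguably more elementary than the paper's symmetrization (it needs neither Lemma \ref{lem::sufficient-conditions} nor the truncation event), but it trades that for a denominator-concentration argument whose quantitative details you have sketched rather than completed; as written, the proof is not yet closed at that step.
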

\begin{proof}
Referring to the definition of $U$ in \eqref{eq::U-term-in-first-ineq-bound-E-p_m}, we first aim to find a lower bound for the denominator of $U$. Since $e^a+e^{-a}\geq e^{b}+e^{-b}, \forall |a|\geq |b|$, we have
\begin{align}
\label{denm:U:upper}
&~\sum_{j=2}^{m} \bigg( \exp\Big( \mu X_{j}^{T}y - \frac{\mu^{2}}{2}\|X_{j}\|_2^{2} \Big) + \exp \Big(-\mu X_{j}^{T}y - \frac{\mu^{2}}{2}\|X_{j}\|_2^{2} \Big) \bigg) \nonumber \\
\geq &~\sum_{j=2}^{m} \bigg( \exp\Big( \frac{\mu X_{j}^{T}y}{(1+\mu^2/n)^{3/2}} - \frac{\mu^{2}}{2}\|X_{j}\|_2^{2} \Big) + \exp \Big(-\frac{\mu X_{j}^{T}y}{(1+\mu^2/n)^{3/2}} - \frac{\mu^{2}}{2}\|X_{j}\|_2^{2} \Big) \bigg)  \nonumber \\
\geq &~\exp\Big(-\frac{\mu^{4}\max_{2\leq j\leq m} (\|v\|_2^{2} \vee \|X_{j}\|_2^{2})}{2n(1+\mu^{2}/n)}\Big) \nonumber \\
&~\cdot \sum_{j=2}^{m} \bigg( \exp\Big( \frac{\mu X_{j}^{T}y}{(1+\mu^2/n)^{3/2}} - \frac{\mu^{2}\|X_{j}\|_2^{2}}{2(1+\mu^2/n)} \Big) + \exp \Big(-\frac{\mu X_{j}^{T}y}{(1+\mu^2/n)^{3/2}} - \frac{\mu^{2}\|X_{j}\|_2^{2}}{2(1+\mu^2/n)} \Big) \bigg)
\end{align}
This result together with Lemma \ref{lem::sufficient-conditions} implies 
    \begin{align}
    \label{inter:events:bound1}
        \E [U \mathbbm{1}_{ \mathcal{I}\cap \mathcal{II} }] \leq~& \E \exp\Big(\frac{\mu^{2}(2+\mu^{2}/n)\|y\|_2^{2}}{2n(1+\mu^{2}/n)^2} + \frac{\mu^{4}\max_{2\leq j\leq m} (\|v\|_2^{2} \vee \|X_{j}\|_2^{2})}{2n(1+\mu^{2}/n)} \Big)  \\
        & \cdot \exp\Big(\frac{\mu v^{T}y}{(1+\mu^{2}/n)^{3/2}} - \frac{\mu^{2}\|v\|_2^{2}}{2(1+\mu^{2}/n)}\Big) \nonumber\\
        & \cdot\bigg[ \sum_{j=2}^{m} \bigg( \exp\Big( \frac{\mu X_{j}^{T}y}{(1+\mu^{2}/n)^{3/2}} - \frac{\mu^{2}\|X_{j}\|_2^{2}}{2(1+\mu^2/n)} \Big) + \exp\Big( -\frac{\mu X_{j}^{T}y}{(1+\mu^{2}/n)^{3/2}} - \frac{\mu^{2}\|X_{j}\|_2^{2}}{2(1+\mu^2/n)} \Big)\bigg) 
 \nonumber\\
        & + \exp \Big( \frac{\mu v^{T}y}{(1+\mu^{2}/n)^{3/2}} - \frac{\mu^{2} \|v\|_2^{2}}{2(1+\mu^{2}/n)} \Big) + \exp \Big( -\frac{\mu v^{T}y}{(1+\mu^{2}/n)^{3/2}} - \frac{\mu^{2}\|v\|_2^{2}}{2(1+\mu^{2}/n)} \Big) \bigg]^{-1}. \nonumber
    \end{align}
For any given $y\in \mathbb{R}^n$, define
    \begin{align}
    \label{givey:def:fg}
    g^y(t_1,\ldots, t_m)&:=\exp\Big(\frac{\mu^{2}(2+\mu^{2}/n)\|y\|_2^{2}}{2n(1+\mu^{2}/n)^2} + \frac{\mu^{4}\max_{1\leq j\leq m} \|t_j\|_2^{2}}{2n(1+\mu^{2}/n)} \Big), \quad \forall t_1,\ldots, t_m \in \mathbb{R}^n, \nonumber \\
        f^y_{\pm}(s)&:= \exp\Big( \pm \frac{\mu s^{T}y}{(1+\mu^{2}/n)^{3/2}} - \frac{\mu^{2}\|s\|_2^{2}}{2(1+\mu^{2}/n)}\Big), \quad \forall s \in \mathbb{R}^n. 
    \end{align}
Since $v, X_2,\ldots, X_m$ are independently and identically distributed and they are independent of $y$, we can calculate the upper bound in \eqref{inter:events:bound1} by conditioning on $y$,
\begin{align}
\label{inter:events:bound2}
 \E [U \mathbbm{1}_{ \mathcal{I}\cap \mathcal{II} }] &\leq \E_y\bigg[\E\Big( \frac{g^y(v,X_2,\ldots, X_m)\cdot f_+^y(v)}{f^y_+(v)+f^y_-(v)+\sum_{j=2}^m(f^y_+(X_j)+f^y_-(X_j))}\Big | y\Big)\bigg] \nonumber \\
 &\overset{(a)}{=}\frac{1}{2}\E_y\bigg[\E\Big( \frac{g^y(v,X_2,\ldots, X_m)\cdot (f_+^y(v)+f_-^y(v))}{f^y_+(v)+f^y_-(v)+\sum_{j=2}^m(f^y_+(X_j)+f^y_-(X_j))}\Big | y\Big)\bigg] \nonumber \\
&\overset{(b)}{=}\frac{1}{2m}\E_y\bigg[\E\Big(g^y(v,X_2,\ldots, X_m) \Big | y\Big)\bigg]  \nonumber \\
&\overset{(c)}{=} \frac{1}{2m} \cdot \E \exp\Big(\frac{\mu^{2}(2+\mu^{2}/n)\|y\|_2^{2}}{2n(1+\mu^{2}/n)^2} \Big)\cdot \E \exp \Big( \frac{\mu^{4}\max_{2\leq j\leq m} (\|v\|_2^{2} \vee \|X_{j}\|_2^{2})}{2n(1+\mu^{2}/n)} \Big)
\end{align}
Here, $(a)$ holds since flipping the sign of $v$ does not change the conditional expectation; $(b)$ is based on the equations
\begin{align*}
&~\E\Big( \frac{g^y(v,X_2,\ldots, X_m)\cdot (f_+^y(v)+f_-^y(v))}{f^y_+(v)+f^y_-(v)+\sum_{j=2}^m(f^y_+(X_j)+f^y_-(X_j))}\Big | y\Big) \\
=&~\E\Big( \frac{g^y(v,X_2,\ldots, X_m)\cdot (f_+^y(X_j)+f_-^y(X_j))}{f^y_+(v)+f^y_-(v)+\sum_{j=2}^m(f^y_+(X_j)+f^y_-(X_j))}\Big | y\Big), ~~j=2,\ldots, m,
\end{align*}
due to the exchangeability of $v,X_2,\ldots, X_m$; and $(c)$ is by the independence between $y$ and $(v,X_2,\ldots, X_m)$. It remains to compute the two expectations in \eqref{inter:events:bound2}. As $y\sim \mathcal{N}(0, (1+\mu^2/n)I_n)$, we use the moment-generating function of chi-squared distribution to obtain
\begin{align}
\label{exp:one:rel}
\E \exp\Big(\frac{\mu^{2}(2+\mu^{2}/n)\|y\|_2^{2}}{2n(1+\mu^{2}/n)^2} \Big)=\Big(1-\frac{\mu^2(2+\mu^2/n)}{n(1+\mu^2/n)}\Big)^{-\frac{n}{2}}=e^{\mu^2}(1+o(1)),
\end{align}
where in the last equation we have used the condition $\mu^4/n\rightarrow 0$ and the expansion $\log(1+x)=x+O(x^2)$ as $x\rightarrow 0$. Moreover, we apply Lemma \ref{lem::mgf-max-chi-square} to have that for some constant $c>0$,
\begin{align}
\label{exp:two:rel}
\E \exp \Big( \frac{\mu^{4}\max_{2\leq j\leq m} (\|v\|_2^{2} \vee \|X_{j}\|_2^{2})}{2n(1+\mu^{2}/n)} \Big) \leq \exp \Big( \frac{c\mu^{4}}{n(1+\mu^{2}/n)} \Big) \cdot \big(1+o(1)\big)=1+o(1),
\end{align}
where the last equality is due to $\mu^4/n\rightarrow 0$. Combining \eqref{inter:events:bound2}, \eqref{exp:one:rel} and \eqref{exp:two:rel} finishes the proof.

\end{proof}

\begin{lemma}\label{lem::sufficient-conditions}
    Assume $\mu \geq 1$ and $\mu^{2}/n<1$. For events $\mathcal{I}$ and $\mathcal{II}$ defined in \eqref{two:events:def}, the intersection $\mathcal{I}\cap \mathcal{II}$ implies that
    \begin{align}
    &\exp \Big( \frac{\mu^{2}(2+\mu^{2}/n)}{2n(1+\mu^{2}/n)^2}\|y\|_2^{2} + \frac{\mu}{(1+\mu^{2}/n)^{3/2}}v^{T}y - \frac{\mu^{2}}{2(1+\mu^{2}/n)}\|v\|_2^{2} \Big) \label{eq::truncation-event} \\
        & + \exp \Big( - \frac{\mu^{2}(2+3\mu^{2}/n)}{2n(1+\mu^{2}/n)^2}\|y\|_2^{2} -\frac{\mu(1+2\mu^{2}/n)}{(1+\mu^{2}/n)^{3/2}}v^{T}y - \frac{\mu^{2}}{2(1+\mu^{2}/n)}\|v\|_2^{2} \Big) \nonumber\\
        &\geq \exp \Big( \frac{\mu}{(1+\mu^{2}/n)^{3/2}}v^{T}y - \frac{\mu^{2}}{2(1+\mu^{2}/n)}\|v\|_2^{2} \Big) + \exp \Big( -\frac{\mu}{(1+\mu^{2}/n)^{3/2}}v^{T}y - \frac{\mu^{2}}{2(1+\mu^{2}/n)}\|v\|_2^{2} \Big). \nonumber
    \end{align}
\end{lemma}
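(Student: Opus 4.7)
My plan is to first divide both sides of \eqref{eq::truncation-event} by the common factor $\exp\bigl(-\frac{\mu^{2}\|v\|_2^{2}}{2(1+\mu^{2}/n)}\bigr)$, which reduces the target to the cleaner inequality
\begin{equation*}
e^{A+B}+e^{-D-E}\geq e^{B}+e^{-B},
\end{equation*}
where I introduce the shorthands
$A=\frac{\mu^{2}(2+\mu^{2}/n)\|y\|_2^{2}}{2n(1+\mu^{2}/n)^{2}}$,
$B=\frac{\mu v^{T}y}{(1+\mu^{2}/n)^{3/2}}$,
$D=\frac{\mu^{2}(2+3\mu^{2}/n)\|y\|_2^{2}}{2n(1+\mu^{2}/n)^{2}}$, and
$E=\frac{\mu(1+2\mu^{2}/n)v^{T}y}{(1+\mu^{2}/n)^{3/2}}=(1+2\mu^{2}/n)B$.
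A direct algebraic manipulation translates the two truncation events: $\mathcal{I}$ is equivalent to $A\geq\mu\log 2$ (hence $e^{A}\geq 2$, using $\mu\geq 1$), while $\mathcal{II}$ is equivalent to $A+2B\geq 0$, i.e.\ $B\geq -A/2$.

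I would then case-split on the sign of $B$ (equivalently, the sign of $v^{T}y$). In the easy case $B\geq 0$, the first LHS term alone dominates: $e^{A+B}=e^{A}e^{B}\geq 2e^{B}\geq e^{B}+e^{-B}$ using $e^{B}\geq 1\geq e^{-B}$, so the $e^{-D-E}$ contribution is unused slack. For $B<0$ I would exploit the structural identity $-D-E=(A+B)-2(1+\mu^{2}/n)W$ with $W:=\frac{2A}{2+\mu^{2}/n}+B$, noting that $\mathcal{II}$ combined with $\mu^{2}/n<1$ forces $W\geq\frac{A(2-\mu^{2}/n)}{2(2+\mu^{2}/n)}\geq 0$. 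Dividing through by $e^{B}$ and setting $a=A$, $b=-2B\in(0,a]$ and $\epsilon=\mu^{2}/n$, the inequality becomes
\begin{equation*}
e^{a}-e^{b}+e^{(1+\epsilon)b-a(2+3\epsilon)/(2+\epsilon)}\geq 1;
\end{equation*}
in the undistorted limit $\epsilon=0$ the exponent collapses to $b-a$, and an elementary monotonicity check on $b\mapsto e^{a}-e^{b}+e^{b-a}$ using only $a\geq b\geq 0$ shows the minimum value on $[0,a]$ is $1$ (attained at $b=a$), so $\mathcal{II}$ alone would suffice in this limit.

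The main obstacle is the delicate subregion $B<0$ with $B$ close to $-A/2$ and $\mu^{2}/n\in(0,1)$ moderate, where the inequality is nearly tight and both events are needed simultaneously. A crude bound such as AM-GM or $1-e^{-x}\leq x$ is too loose here, so the quantitative slack $A\geq\mu\log 2\geq\log 2$ from $\mathcal{I}$ must be tracked carefully. The cleanest completion I foresee is to treat the reduced function $f(b):=e^{a}-e^{b}+e^{(1+\epsilon)b-a(2+3\epsilon)/(2+\epsilon)}-1$ on $[0,a]$: it is nonnegative at $b=0$ (immediate from $e^{a}\geq 2$) and at $b=a$ (where $f(a)=e^{a\epsilon^{2}/(2+\epsilon)}-1\geq 0$), and one then rules out any interior negative minimum via the explicit second derivative $f''(b)=-e^{b}+(1+\epsilon)^{2}e^{(1+\epsilon)b-a(2+3\epsilon)/(2+\epsilon)}$, together with a convexity/monotonicity argument that exploits $a\geq\log 2$ to keep the first term $e^{b}$ from outrunning the perturbed exponential in a critical middle range of $b$.
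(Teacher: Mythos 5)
Your reductions are all correct and your route is genuinely different from the paper's, but the decisive step in the case $v^{T}y<0$ is never actually carried out, so the proof as written is incomplete. To be concrete about what does check out: $\mathcal{I}$ is indeed equivalent to $A\geq \mu\log 2$ (hence $e^{A}\geq 2$), $\mathcal{II}$ is equivalent to $B\geq -A/2$, the case $B\geq 0$ is fully handled, the identity $-D-E=-A\tfrac{2+3\epsilon}{2+\epsilon}-(1+2\epsilon)B$ is right, and the remaining claim is faithfully restated as $f(b)=e^{a}-e^{b}+e^{(1+\epsilon)b-a(2+3\epsilon)/(2+\epsilon)}-1\geq 0$ on $[0,a]$. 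But ``rule out any interior negative minimum via the second derivative together with a convexity/monotonicity argument'' is a placeholder, not an argument, and the difficulty is real: $f$ is concave-then-convex and one can check that $f'(a)>0$ when, say, $a=\log 2$ and $\epsilon=0.9$, so the minimum genuinely sits in the interior and the two endpoint evaluations you give do not suffice. The gap is easy to close, and you should do it explicitly: at any interior critical point $b^{*}$ the first-order condition $e^{b^{*}}=(1+\epsilon)e^{(1+\epsilon)b^{*}-a(2+3\epsilon)/(2+\epsilon)}$ substituted back into $f$ gives $f(b^{*})=e^{a}-1-\tfrac{\epsilon}{1+\epsilon}e^{b^{*}}\geq \tfrac{e^{a}}{1+\epsilon}-1\geq\tfrac{1-\epsilon}{1+\epsilon}>0$, using $b^{*}\leq a$, $e^{a}\geq 2$ and $\epsilon<1$. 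Since the minimum of $f$ over $[0,a]$ is attained at an endpoint or a critical point, this together with $f(0)\geq 0$ and $f(a)=e^{a\epsilon^{2}/(2+\epsilon)}-1\geq 0$ finishes the case; no second-derivative or convexity bookkeeping is needed.

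For comparison, the paper sidesteps the one-variable optimization entirely by choosing a different normalization: multiplying the target inequality through by $e^{A+B+C}$ (in your notation, with $C=\tfrac{\mu^{2}\|v\|_{2}^{2}}{2(1+\mu^{2}/n)}$) makes the difference of the two sides factor, reducing the lemma to $(e^{A+2B}-1)(e^{A}-1)\geq 1-e^{(A-D)+(B-E)}$. Then $\mathcal{II}$ gives $e^{A+2B}\geq 1$, $\mathcal{I}$ with $\mu\geq 1$ gives $e^{A}-1\geq 1$, and the elementary observation that $(A+2B)+(A-D)+(B-E)\geq 0$ (a consequence of $\mathcal{II}$ and $\mu^{2}/n<1$) closes the argument via $e^{t}+e^{-t}\geq 2$. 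That factorization is where the cleanness comes from; your version pays for the more symmetric normalization with a calculus argument, but once the critical-point evaluation above is inserted it is a complete and correct alternative proof.
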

\begin{proof}
Multiplying both sides of \eqref{eq::truncation-event} by $\exp\Big(\frac{\mu^2(2+\mu^2/n)\|y\|_2^2}{2n(1+\mu^2/n)^2}+ \frac{\mu v^{T}y}{(1+\mu^{2}/n)^{3/2}} + \frac{\mu^{2}\|v\|^{2}}{2(1+\mu^{2}/n)}\Big)$, we can obtain
        \begin{align}
       & \Big[ \exp \Big( \frac{\mu^{2}(2+\mu^{2}/n)\|y\|_2^2}{2n(1+\mu^{2}/n)^2} +\frac{2\mu v^{T}y}{(1+\mu^{2}/n)^{3/2}} \Big) - 1 \Big] \cdot \Big[ \exp\Big( \frac{\mu^{2}(2+\mu^{2}/n)\|y\|_2^2}{2n(1+\mu^{2}/n)^2}\Big) - 1 \Big] \nonumber\\
        & \geq 1 - \exp \Big( - \frac{\mu^{4}/n\|y\|_2^2}{n(1+\mu^{2}/n)^2} - \frac{2\mu^{3}/n}{(1+\mu^{2}/n)^{3/2}}v^{T}y \Big).\label{eq::sufficient-condition-equivalence}
    \end{align}
    To prove \eqref{eq::truncation-event}, it is equivalent to prove \eqref{eq::sufficient-condition-equivalence}. 
    
   When the event $\mathcal{II}$ holds, we have
    \begin{equation}
    \label{event:2:imply:one}
    \frac{v^{T}y}{\|y\|_2^{2}} \geq- \frac{\mu(2+\mu^{2}/n)}{4n\sqrt{1+\mu^{2}/n}},
    \end{equation}
    or equivalently
    \begin{align}
    \label{event:2:imply:two}
      \exp\Big( \frac{\mu^{2}(2+\mu^{2}/n)\|y\|_2^2}{2n(1+\mu^{2}/n)^2} +\frac{2\mu v^{T}y}{(1+\mu^{2}/n)^{3/2}}\Big) \geq 1. 
    \end{align}
    Moreover, since $\mu\geq 1$, the event $\mathcal{I}$ implies  
   \begin{equation}
        \exp\Big( \frac{\mu^{2}(2+\mu^{2}/n)\|y\|_2^2}{2n(1+\mu^{2}/n)^2}\Big) \geq 2. \label{eq::derive-condition-i}
    \end{equation}
Based on \eqref{event:2:imply:two} and \eqref{eq::derive-condition-i}, to prove\eqref{eq::sufficient-condition-equivalence} it is sufficient to show
\begin{align}
         & ~ \exp \Big( \frac{\mu^{2}(2+\mu^{2}/n)\|y\|_2^2}{2n(1+\mu^{2}/n)^2} +\frac{2\mu v^{T}y}{(1+\mu^{2}/n)^{3/2}} \Big) - 1 \nonumber\\
        \geq &~ 1 - \exp \Big( - \frac{\mu^{4}/n\|y\|_2^2}{n(1+\mu^{2}/n)^2} - \frac{2\mu^{3}/n}{(1+\mu^{2}/n)^{3/2}}v^{T}y \Big).\label{eq::sufficient-condition-step1}
    \end{align}
  
In order to prove \eqref{eq::sufficient-condition-step1}, it is direct to verify that \eqref{event:2:imply:one} together with the condition $\mu^2/n<1$ gives
    \begin{equation*}
        \frac{v^{T}y}{\|y\|^{2}} \geq -  \frac{\mu(2-\mu^{2}/n)}{4n\sqrt{1+\mu^{2}/n}(1-\mu^{2}/n)},
    \end{equation*}
    or equivalently
\begin{equation}
        \frac{\mu^{2}(2-\mu^{2}/n)\|y\|_2^2}{2n(1+\mu^{2}/n)^2} +\frac{2\mu(1-\mu^{2}/n)v^{T}y}{(1+\mu^{2}/n)^{3/2}} \geq 0. \label{eq::derive-condition-ii}
    \end{equation}

Hence, \eqref{eq::sufficient-condition-step1} can be obtained by the following inequalities:
    \begin{eqnarray*}
        &&\exp \Big( \frac{\mu^{2}(2+\mu^{2}/n)\|y\|_2^2}{2n(1+\mu^{2}/n)^2} +\frac{2\mu v^{T}y}{(1+\mu^{2}/n)^{3/2}} \Big) - 1 \\
        &\overset{(a)}{\geq}& \exp \Big( \frac{\mu^{4}/n\|y\|_2^2}{n(1+\mu^{2}/n)^2}+ \frac{2\mu^{3}/n}{(1+\mu^{2}/n)^{3/2}}v^{T}y \Big) - 1 \\
        &\overset{(b)}{\geq}& 1 - \exp \Big( - \frac{\mu^{4}/n\|y\|_2^2}{n(1+\mu^{2}/n)^2} - \frac{2\mu^{3}/n}{(1+\mu^{2}/n)^{3/2}}v^{T}y \Big),
    \end{eqnarray*}
where $(a)$ is due to \eqref{eq::derive-condition-ii}, and $(b)$ holds since $e^t+e^{-t}\geq 2, \forall t\in \mathbb{R}$.

\end{proof}

\begin{lemma}\label{lem::mgf-max-chi-square}
    Suppose $X_1, X_{2}, \ldots, X_{m} \simiid \calN(0,\frac{1}{n}I_{n})$. If $\mu^{2}/n \rightarrow 0$ and $(\log m)/n\rightarrow 0$, then there exists some constant $c>0$ such that
    \begin{equation*}
        \E \Big[\exp \Big( \frac{\mu^{4}}{2n(1+\mu^{2}/n)}\max_{1\leq j\leq m}  \|X_{j}\|_2^{2} \Big) \Big] \leq \exp \Big( \frac{c\mu^{4}}{n(1+\mu^{2}/n)} \Big) \cdot \big(1+o(1)\big).
    \end{equation*}
\end{lemma}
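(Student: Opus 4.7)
The plan is a standard truncation argument. Let $W := \max_{1\le j\le m}\|X_j\|_2^2$ and $\alpha := \mu^4/[2n(1+\mu^2/n)]$, so that the target reads $\E e^{\alpha W} \le e^{2c\alpha}(1+o(1))$ after noting that $2c\alpha = c\mu^4/[n(1+\mu^2/n)]$. A first observation is that the hypothesis $\mu^2/n\to 0$ forces $\mu^4/n^2\to 0$, hence $\alpha/n\to 0$; this is the asymptotic regime in which all subsequent estimates are clean.

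Set the truncation level $t_0 := \sqrt{32(\log m)/n}$, which tends to $0$ by the hypothesis $(\log m)/n\to 0$, and split
\[
\E e^{\alpha W} = \E\bigl[e^{\alpha W}\mathbbm{1}_{\{W\le 1+t_0\}}\bigr] + \E\bigl[e^{\alpha W}\mathbbm{1}_{\{W> 1+t_0\}}\bigr].
\]
The bulk term is at most $e^{\alpha(1+t_0)}$. For the tail term I would apply Cauchy--Schwarz. The second-moment factor uses the union bound together with the chi-square MGF: since $n\|X_j\|_2^2\sim\chi_n^2$,
\[
\E e^{2\alpha W} \le \sum_{j=1}^m \E e^{2\alpha\|X_j\|_2^2} = m\,(1-4\alpha/n)^{-n/2}.
\]
The probability factor is handled via Lemma \ref{lem::chi-square-concentration} together with the elementary bound $\tau-\log(1+\tau)\ge \tau^2/4$ for $\tau\in[0,1]$: a union bound then yields $\mathbb{P}(W > 1+t_0)\le m\,e^{-nt_0^2/8} = m^{-3}$.

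To combine these, I would Taylor-expand $-\tfrac{n}{2}\log(1-4\alpha/n) = 2\alpha + O(\alpha^2/n)$; because $\alpha = o(n)$ the quadratic correction is $o(\alpha)$, giving $(1-4\alpha/n)^{-n/2} = e^{2\alpha(1+o(1))}$. Cauchy--Schwarz therefore produces
\[
\E\bigl[e^{\alpha W}\mathbbm{1}_{\{W>1+t_0\}}\bigr] \le \sqrt{m\,e^{2\alpha(1+o(1))}}\cdot \sqrt{m^{-3}} = m^{-1}\,e^{\alpha(1+o(1))},
\]
which is $o(e^{2c\alpha})$ for any $c>1/2$. Meanwhile the bulk satisfies $e^{\alpha(1+t_0)}\le e^{2c\alpha}$ as soon as $1+t_0\le 2c$, which holds for all large $n$ once $c>1/2$. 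Taking $c=1$ concludes $\E e^{\alpha W}\le e^{2\alpha}(1+o(1))$, the desired bound.

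The only subtle point worth flagging is that the hypothesis permits $\alpha\to\infty$ (it only rules out $\alpha/n\to\infty$), so every estimate must hold uniformly in $\alpha$. This is precisely why the slack constant $c$ exceeds $1/2$ in the statement, and why the correction in the Taylor expansion of $(1-4\alpha/n)^{-n/2}$ must be shown to be multiplicatively $o(1)$ against $\alpha$ rather than merely additively $o(1)$.
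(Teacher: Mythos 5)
Your argument is correct, and it takes a genuinely different route from the paper. The paper also truncates, but it does so via the layer-cake formula (Lemma \ref{lem:meanFromCDF}), splitting the integral $\int_0^\infty \mathbb{P}(e^{\alpha W}>t)\,dt$ at the level $t=e^{2c\alpha}$ (i.e.\ at $W=2c$ for a \emph{large} constant, $c=32$), and then kills the tail by integrating the union-bounded sub-Gaussian tail of $\|X_j\|_2$ from Lemma \ref{lem::chi-concentration}, which is why it needs $2c$ far above the typical value $W\approx 1$ so that the linear term in the exponent dominates $\sqrt{x}$. You instead truncate just above the bulk, at $W\le 1+t_0$ with $t_0\to 0$, and dispose of the tail by Cauchy--Schwarz, pairing the $\chi^2$ moment-generating function (valid since $4\alpha/n\to 0$) with the concentration bound from Lemma \ref{lem::chi-square-concentration}. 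Both proofs hinge on the same two facts --- $W$ concentrates at $1$ and $\alpha=o(n)$ --- but your decomposition yields the essentially optimal constant (any $c>1/2$, versus the paper's $c=32$), at the modest cost of invoking the MGF of the maximum via the union bound $\E e^{2\alpha W}\le\sum_j\E e^{2\alpha\|X_j\|_2^2}$. Your closing remark is also the right one to flag: since $\alpha$ may tend to infinity under the stated hypotheses, the Taylor correction $O(\alpha^2/n)$ in $-\tfrac{n}{2}\log(1-4\alpha/n)$ must be, and is, $o(\alpha)$ multiplicatively; the paper faces the same issue and resolves it the same way. For the lemma as stated (``there exists some constant $c>0$'') either constant suffices, so nothing is lost or gained downstream.
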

\begin{proof}
    Consider a constant $c>1/2$. We will describe our choice of $c$ later. Using Lemma \ref{lem:meanFromCDF} we obtain:
    \begin{align}
    \label{decomp:one:two}
    & \E \Big[\exp \Big( \frac{\mu^{4}}{2n(1+\mu^{2}/n)}\max_{1\leq j\leq m}  \|X_{j}\|_2^{2} \Big) \Big] \nonumber \\
     =&\int_0^{e^{\frac{c\mu^4}{n(1+\mu^2/n)}}}\mathbb{P}\Bigg(\exp \Big( \frac{\mu^{4}}{2n^2(1+\mu^{2}/n)}\max_{1\leq j\leq m}  \|\sqrt{n}X_{j}\|_2^{2} \Big)>t\Bigg)dt+ \nonumber \\
     &\int_{e^{\frac{c\mu^4}{n(1+\mu^2/n)}}}^{\infty}\mathbb{P}\Bigg(\exp \Big( \frac{\mu^{4}}{2n^2(1+\mu^{2}/n)}\max_{1\leq j\leq m}  \|\sqrt{n}X_{j}\|_2^{2} \Big)>t\Bigg)dt:=I+II.
        \end{align}
The term I admits a simple upper bound:
\begin{align}
\label{term:1:upper}
I\leq e^{\frac{c\mu^4}{n(1+\mu^2/n)}}.
\end{align}
We now focus on bounding term II. We apply the change of variable $x=\frac{2n(1+\mu^2/n)}{\mu^4}\log t$ to obtain
\begin{align}
\label{term:2:start}
II&=\frac{\mu^4}{2n(1+\mu^2/n)}\cdot \int_{2c}^{\infty}\mathbb{P}\Big(\max_{1\leq j\leq m}  \|\sqrt{n}X_{j}\|_2>\sqrt{nx}\Big)\cdot e^{\frac{\mu^4}{2n(1+\mu^2/n)}x}dx \nonumber \\
&\leq \frac{\mu^4}{2n(1+\mu^2/n)}\cdot \int_{2c}^{\infty}me^{-\frac{n}{2}(\sqrt{x}-1)^2}\cdot e^{\frac{\mu^4}{2n(1+\mu^2/n)}x}dx \nonumber \\
&=\frac{\mu^4}{2n(1+\mu^2/n)}\cdot \int_{2c}^{\infty}\exp \bigg\{-n\bigg(\Big(\frac{1}{2}-\frac{\mu^4}{2n^2(1+\mu^2/n)}\Big)x-\sqrt{x}+\frac{1}{2}-\frac{\log m}{n}\bigg) \bigg\}dx,
\end{align}
where the inequality above is due to the union bound and Lemma \ref{lem::chi-concentration}. Given that $\mu^{2}/n \rightarrow 0$ and $(\log m)/n\rightarrow 0$, it is straightforward to verify that the following holds,
\[
\Big(\frac{1}{2}-\frac{\mu^4}{2n^2(1+\mu^2/n)}\Big)x-\sqrt{x}+\frac{1}{2}-\frac{\log m}{n} \geq \frac{1}{4}x, \quad \forall x \geq 2c,
\]
as long as $n$ is sufficiently large, and the constant $c$ is chosen large enough (e.g. $c=32$). Therefore, we can continue from \eqref{term:2:start} to have
\begin{align}
\label{term:two:final}
II \leq \frac{\mu^4}{2n(1+\mu^2/n)}\cdot \int_{2c}^{\infty} \exp\Big\{-\frac{n}{4}x\Big\} dx=\frac{2\mu^4}{n^2(1+\mu^2/n)}e^{-\frac{c}{2}n}.
\end{align}
Putting together \eqref{decomp:one:two}, \eqref{term:1:upper} and \eqref{term:two:final} completes the proof.
\end{proof}

\begin{lemma}\label{lem::weak-convergence-E-p_m}
   Under the same conditions of Lemma \ref{lem::regime2-E-p_m}, it holds that
    \begin{equation*}
        \E [U \mathbbm{1}_{\mathcal{I}^{c} \cup \mathcal{II}^{c} }]=o\Big( \frac{e^{\mu^{2}}}{m} \Big).
    \end{equation*}
Here, $U$ is introduced in \eqref{eq::U-term-in-first-ineq-bound-E-p_m} and $\mathcal{I},\mathcal{II}$ are defined in \eqref{two:events:def}.
\end{lemma}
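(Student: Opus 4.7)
The plan is to decompose $\mathbbm{1}_{\mathcal{I}^c\cup\mathcal{II}^c}\le \mathbbm{1}_{\mathcal{I}^c}+\mathbbm{1}_{\mathcal{I}\cap\mathcal{II}^c}$ and handle each piece separately, using the trivial bound $U\le 1$ (valid since the numerator of $U$ in \eqref{eq::U-term-in-first-ineq-bound-E-p_m} appears as one summand of its denominator) combined with tail and concentration estimates.

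For $\E[U\,\mathbbm{1}_{\mathcal{I}^c}]$, $U\le 1$ reduces the task to bounding $\mathbb{P}(\mathcal{I}^c)$. Since $\|y\|_2^2/[n(1+\mu^2/n)]$ is distributed as $\chi_n^2/n$, the event $\mathcal{I}^c$ forces this chi-squared average below $\frac{2(1+\mu^2/n)\log 2}{\mu(2+\mu^2/n)}\to 0$, and Lemma \ref{lem::chi-square-concentration} then yields a bound of the form $\exp(-c\,n\log\mu)$ for some $c>0$ once $\mu$ is large. Under $(\log m)/n\to 0$ and $\mu\to\infty$, this is $o(e^{\mu^2}/m)$ with ample margin.

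The hard part is $\E[U\,\mathbbm{1}_{\mathcal{I}\cap\mathcal{II}^c}]$. The trivial route fails: a standard conditional-Gaussian Chernoff bound only yields $\mathbb{P}(\mathcal{II}^c)\asymp e^{-\mu^2/8}$, and $m\,e^{-\mu^2/8-\mu^2}$ does not vanish when $\mu^2=o(\log m)$, so additional smallness of $U$ on $\mathcal{II}^c$ must be harvested. The denominator of $U$ contains the sum $\sum_{j=2}^{m}(e^{\mu X_j^Ty-\mu^2\|X_j\|_2^2/2}+e^{-\mu X_j^Ty-\mu^2\|X_j\|_2^2/2})$, whose conditional mean given $y$ equals $2(m-1)(1+\mu^2/n)^{-n/2}e^{\mu^2\|y\|_2^2/[2(n+\mu^2)]}$ (computed as in Lemma \ref{lem::truncated-E-U}). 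Introduce the auxiliary event $\mathcal{H}$ on which this sum is at least half of its conditional mean; since $\{X_j\}_{j\ge 2}$ is independent of $v$, the events $\mathcal{H}$ and $\mathcal{II}$ are conditionally independent given $y$. A one-sided Chebyshev bound together with a moment-generating-function computation of the per-summand conditional variance $O(e^{\mu^2})$ yields $\mathbb{P}(\mathcal{H}^c\mid y)=O(e^{\mu^2}/m)$. Combining this with $\mathbb{P}(\mathcal{II}^c\mid y)\le e^{-\mu^2\|y\|_2^2/[8n(1+\mu^2/n)]}$ and integrating against the law of $y$ gives $\mathbb{P}(\mathcal{H}^c\cap\mathcal{II}^c)=O(e^{7\mu^2/8}/m)=o(e^{\mu^2}/m)$, which handles the $\mathcal{H}^c$ contribution via $U\le 1$.

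On $\mathcal{H}\cap\mathcal{I}\cap\mathcal{II}^c$ the pointwise bound $U\le 2N/[(m-1)(1+\mu^2/n)^{-n/2}e^{\mu^2\|y\|_2^2/[2(n+\mu^2)]}]$ reduces the remaining contribution to an explicit Gaussian integral in $(v,y)$. Absorbing $\mathbbm{1}_{\mathcal{II}^c}$ into a Chernoff tilt $e^{-\alpha(v^Ty+T(y))}$ (with $T(y)$ the threshold appearing in \eqref{two:events:def}) brings the integrand into the form handled by Lemma \ref{lem:incomp_quadexp_Gaussian}; a suitable choice of $\alpha$ then yields a bound of order $e^{-\mu^2/8}/m=o(e^{\mu^2}/m)$. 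The principal obstacle is the concentration of the denominator sum: the summands are heavy-tailed when $\mu\to\infty$, so the Chebyshev estimate $\mathbb{P}(\mathcal{H}^c\mid y)=O(e^{\mu^2}/m)$ is essentially tight, and beating the target $e^{\mu^2}/m$ in the end relies crucially on the extra $e^{-\mu^2/8}$ factor contributed by $\mathcal{II}^c$; a truncation step analogous to that of Lemma \ref{lem::A-term-convergest-to-one} may be needed to close the argument cleanly.
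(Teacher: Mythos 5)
Your argument is correct in outline and reaches the stated bound, but it takes a genuinely different route from the paper's. The paper lower-bounds the denominator of $U$ by keeping only the $f_+^y$ terms and then exploits the exchangeability of $(v,X_2,\ldots,X_m)$ given $y$: the numerator term $f_+^y(v)\mathbbm{1}_{\mathcal{H}^y(v)}$ is one of $m$ exchangeable summands whose total sits in the denominator, so symmetrization produces the factor $1/m$ for free; Cauchy--Schwarz then isolates the prefactor (contributing $e^{\mu^2}$) from the self-normalized ratio $\big(\sum_j f_+^y(X_j)\mathbbm{1}_{\mathcal{H}^y(X_j)}/\sum_j f_+^y(X_j)\big)^2$, which lies in $[0,1]$ and is shown to vanish by dominated convergence in Lemma \ref{bounded:dct:zero}. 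You instead extract the $1/m$ from Chebyshev concentration of the denominator sum around its conditional mean given $y$; this costs a per-summand variance ratio of order $e^{\mu^2(1+o(1))}$, so on its own it yields only $O(e^{\mu^2}/m)$ rather than $o(e^{\mu^2}/m)$, and you correctly see that the argument is rescued by the conditional independence of $\mathcal{H}$ and $\mathcal{II}$ given $y$, which lets you multiply by $\mathbb{P}(\mathcal{II}^c\mid y)\le e^{-\mu^2\|y\|_2^2/(8n(1+\mu^2/n))}$ before integrating over $y$, landing at exponent $7\mu^2/8<\mu^2$; the main Gaussian integral on $\mathcal{H}\cap\mathcal{II}^c$ likewise closes at order $e^{-\mu^2/8}/m$ because the $9/8$ in the shifted Gaussian tail from Lemma \ref{lem:incomp_quadexp_Gaussian} exceeds the $1$ accumulated from the prefactors. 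What each approach buys: yours is more elementary (no symmetrization trick) but hinges on this numerology working out, while the paper's $o(1)$ factor comes from a qualitative convergence-in-probability statement that never requires the heavy-tailed denominator to concentrate. The extra truncation you hedge about at the end is not needed --- the conditional-independence step already closes the gap --- and your easy pieces are sound: $U\le 1$ holds because the numerator of $U$ is one of the positive summands of its denominator, and your treatment of $\mathcal{I}^c$ mirrors the paper's own $\mathcal{E}_1$ computation inside Lemma \ref{bounded:dct:zero}.
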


\begin{proof}
As in the proof of Lemma \ref{lem::truncated-E-U}, we first find a lower bound for the denominator of $U$. We continue from \eqref{denm:U:upper} to obtain
\begin{align*}
&~\sum_{j=2}^{m} \bigg( \exp\Big( \mu X_{j}^{T}y - \frac{\mu^{2}}{2}\|X_{j}\|_2^{2} \Big) + \exp \Big(-\mu X_{j}^{T}y - \frac{\mu^{2}}{2}\|X_{j}\|_2^{2} \Big) \bigg)\\
\geq &~\exp\Big(-\frac{\mu^{4}\max_{2\leq j\leq m} (\|v\|_2^{2} \vee \|X_{j}\|_2^{2})}{2n(1+\mu^{2}/n)}\Big) \cdot \sum_{j=2}^{m}  \exp\Big( \frac{\mu X_{j}^{T}y}{(1+\mu^2/n)^{3/2}} - \frac{\mu^{2}\|X_{j}\|_2^{2}}{2(1+\mu^2/n)} \Big).
\end{align*}
Also, the other part of the denominator has a simple lower bound:
\begin{align*}
& \exp \Big( \frac{\mu^{2}(2+\mu^{2}/n)}{2n(1+\mu^{2}/n)^2}\|y\|_2^{2} + \frac{\mu}{(1+\mu^{2}/n)^{3/2}}v^{T}y - \frac{\mu^{2}}{2(1+\mu^{2}/n)}\|v\|_2^{2} \Big) \nonumber\\
        & + \exp \Big( - \frac{\mu^{2}(2+3\mu^{2}/n)}{2n(1+\mu^{2}/n)^2}\|y\|_2^{2} -\frac{\mu(1+2\mu^{2}/n)}{(1+\mu^{2}/n)^{3/2}}v^{T}y - \frac{\mu^{2}}{2(1+\mu^{2}/n)}\|v\|_2^{2} \Big)  \\
\geq &~\exp\Big(-\frac{\mu^{4}\max_{2\leq j\leq m} (\|v\|_2^{2} \vee \|X_{j}\|_2^{2})}{2n(1+\mu^{2}/n)}\Big) \cdot \exp \Big(\frac{\mu v^{T}y}{(1+\mu^{2}/n)^{3/2}} - \frac{\mu^{2}\|v\|_2^{2}}{2(1+\mu^{2}/n)} \Big).
\end{align*}
Plugging the above two results into $U$ in \eqref{eq::U-term-in-first-ineq-bound-E-p_m} yields 
\begin{align}
\label{U:comple:upper}
&\E [U \mathbbm{1}_{\mathcal{I}^{c} \cup \mathcal{II}^{c} }] \nonumber \\
\leq &\E \exp\Big(\frac{\mu^{2}(2+\mu^{2}/n)\|y\|_2^{2}}{2n(1+\mu^{2}/n)^2} + \frac{\mu^{4}\max_{2\leq j\leq m} (\|v\|_2^{2} \vee \|X_{j}\|_2^{2})}{2n(1+\mu^{2}/n)} \Big)  \\
        & \cdot \exp\Big(\frac{\mu v^{T}y}{(1+\mu^{2}/n)^{3/2}} - \frac{\mu^{2}\|v\|_2^{2}}{2(1+\mu^{2}/n)}\Big)\mathbbm{1}_{\mathcal{I}^{c} \cup \mathcal{II}^{c} } \nonumber\\
        & \cdot\bigg[ \sum_{j=2}^{m}  \exp\Big( \frac{\mu X_{j}^{T}y}{(1+\mu^{2}/n)^{3/2}} - \frac{\mu^{2}\|X_{j}\|_2^{2}}{2(1+\mu^2/n)} \Big)  + \exp \Big( \frac{\mu v^{T}y}{(1+\mu^{2}/n)^{3/2}} - \frac{\mu^{2} \|v\|_2^{2}}{2(1+\mu^{2}/n)} \Big)  \bigg]^{-1}. \nonumber
\end{align}
Adopt the notation from \eqref{givey:def:fg}, and further define $\forall s\in \mathbb{R}^n$,
\begin{align}
\label{event:H:def}
\mathcal{H}^y(s):=\Bigg\{\frac{\|y\|_2^{2}}{n(1+\mu^{2}/n)} < \frac{2(1+\mu^{2}/n)\log 2}{\mu(2+\mu^{2}/n)}\bigg\} {\rm ~~or~~}\frac{s^{T}y}{\|y\|_2^{2}}n \sqrt{1+\frac{\mu^{2}}{n}} < -\frac{\mu(2+\mu^{2}/n)}{4}\Bigg\}.
\end{align}
Conditional on $y$, the upper bound in \eqref{U:comple:upper} can be rewritten as
\begin{align}
\label{U:comple:upper:more}
&~\E [U \mathbbm{1}_{\mathcal{I}^{c} \cup \mathcal{II}^{c} }] \nonumber \\
\leq &~\E_y\Bigg[\E\Big(\frac{g^y(v,X_2,\ldots, X_m)\cdot f_+^y(v)\mathbbm{1}_{\mathcal{H}^y(v)}}{f_+^y(v)+\sum_{j=2}^mf_+^y(X_j)}\Big|y\Big)\Bigg] \nonumber \\
\overset{(a)}{=}&~\frac{1}{m}\cdot \E_y\Bigg[\E\Big(\frac{g^y(v,X_2,\ldots, X_m)\cdot \big(f_+^y(v)\mathbbm{1}_{\mathcal{H}^y(v)}+\sum_{j=2}^mf_+^y(X_j)\mathbbm{1}_{\mathcal{H}^y(X_j)}\big)}{f_+^y(v)+\sum_{j=2}^mf_+^y(X_j)}\Big|y\Big)\Bigg] \nonumber \\
\overset{(b)}{\leq} &~\frac{1}{m} \cdot \sqrt{\E \exp\Big(\frac{\mu^{2}(2+\mu^{2}/n)\|y\|_2^{2}}{n(1+\mu^{2}/n)^2} \Big)\cdot \E \exp \Big( \frac{\mu^{4}\max_{2\leq j\leq m} (\|v\|_2^{2} \vee \|X_{j}\|_2^{2})}{n(1+\mu^{2}/n)} \Big)} \nonumber \\
&~ \cdot \sqrt{\E \Big(\frac{f_+^y(v)\mathbbm{1}_{\mathcal{H}^y(v)}+\sum_{j=2}^mf_+^y(X_j)\mathbbm{1}_{\mathcal{H}^y(X_j)}}{f_+^y(v)+\sum_{j=2}^mf_+^y(X_j)}\Big)^2},
\end{align}
where $(a)$ is due to the identities that for $j=2,\ldots, m$,
\[
\E\Big(\frac{g^y(v,X_2,\ldots, X_m)\cdot f_+^y(v)\mathbbm{1}_{\mathcal{H}^y(v)}}{f_+^y(v)+\sum_{j=2}^mf_+^y(X_j)}\Big|y\Big)=\E\Big(\frac{g^y(v,X_2,\ldots, X_m)\cdot f_+^y(X_j)\mathbbm{1}_{\mathcal{H}^y(X_j)}}{f_+^y(v)+\sum_{j=2}^mf_+^y(X_j)}\Big|y\Big),
\]
thanks to the exchangeability among $(v,X_2,\ldots, X_m)$; $(b)$ holds by Cauchy–Schwarz inequality. Using arguments similar to \eqref{exp:one:rel} and \eqref{exp:two:rel} in the proof of Lemma \ref{lem::truncated-E-U}, we have that as $\mu^4/n\rightarrow 0$,
\begin{align}
\label{similar:two:exp}
&\E \exp\Big(\frac{\mu^{2}(2+\mu^{2}/n)\|y\|_2^{2}}{n(1+\mu^{2}/n)^2} \Big)=e^{2\mu^2}(1+o(1)), \nonumber \\
&\E \exp \Big( \frac{\mu^{4}\max_{2\leq j\leq m} (\|v\|_2^{2} \vee \|X_{j}\|_2^{2})}{n(1+\mu^{2}/n)} \Big) \leq 1+o(1).
\end{align}
Based on \eqref{U:comple:upper:more} and \eqref{similar:two:exp}, the proof will be completed if we can further show
\[
\E \Big(\frac{f_+^y(v)\mathbbm{1}_{\mathcal{H}^y(v)}+\sum_{j=2}^mf_+^y(X_j)\mathbbm{1}_{\mathcal{H}^y(X_j)}}{f_+^y(v)+\sum_{j=2}^mf_+^y(X_j)}\Big)^2=o(1).
\]
This is done in the next lemma.
\end{proof}

\begin{lemma}
\label{bounded:dct:zero}
Suppose $n\rightarrow \infty, m \rightarrow \infty$ and $\log(m)/n\rightarrow 0$. If $\mu\rightarrow \infty, \mu = o\big(\sqrt{\log (m)}\big)$ and $\mu^{2}/n\rightarrow 0$, it holds that
\begin{align*}
\E\Big(\frac{\sum_{j=1}^mf_+^y(X_j)\mathbbm{1}_{\mathcal{H}^y(X_j)}}{\sum_{j=1}^mf_+^y(X_j)}\Big)^2=o(1).
\end{align*}
Here, $f_+^y(\cdot), \mathcal{H}^y(\cdot)$ are defined in \eqref{givey:def:fg} and \eqref{event:H:def}; $X_1,X_2,\ldots, X_m \overset{i.i.d.}{\sim}\mathcal{N}(0,\frac{1}{n}I_n)$ and they are independent of $y\sim \mathcal{N}(0, (1+\mu^2/n)I_n)$.
\end{lemma}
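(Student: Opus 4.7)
The plan is to decompose the event $\mathcal{H}^y(s)$ from \eqref{event:H:def} as the union of a pure-$y$ event $\mathcal{H}_1^y$ (the first inequality) and a joint event $\mathcal{H}_2^y(s)$ (the second). Writing $B := \sum_{j=1}^m f_+^y(X_j)$ and $N_2 := \sum_{j=1}^m f_+^y(X_j)\mathbbm{1}_{\mathcal{H}_2^y(X_j)}$, the ratio $R$ inside the expectation lies in $[0,1]$, so $R^2 \le R$, and the pointwise inequality $\mathbbm{1}_{\mathcal{H}^y(X_j)} \le \mathbbm{1}_{\mathcal{H}_1^y} + \mathbbm{1}_{\mathcal{H}_2^y(X_j)}$ further yields $R \le \mathbbm{1}_{\mathcal{H}_1^y} + N_2/B$. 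This reduces the claim to showing $\mathbb{P}(\mathcal{H}_1^y) \to 0$ and $\mathbb{E}[N_2/B] \to 0$.

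The first convergence is immediate: since $y \sim \mathcal{N}(0, (1+\mu^2/n) I_n)$ one has $\|y\|_2^2 \overset{d}{=} (1+\mu^2/n)\chi_n^2$, whereas the threshold defining $\mathcal{H}_1^y$ is only of order $n/\mu$ under the hypotheses $\mu \to \infty$ and $\mu^2/n \to 0$, so Lemma \ref{lem::chi-square-concentration} gives $\mathbb{P}(\mathcal{H}_1^y) \le e^{-cn}$ for some constant $c > 0$.

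For the second convergence, I will condition on $y$ and employ a Chebyshev-based lower bound for $B$. Routine Gaussian moment-generating-function computations give $\mathbb{E}[B \mid y] = m h(\|y\|_2^2)$ and $\Var(B \mid y) \le m h_2(\|y\|_2^2)$ with explicit
\[
h(u) = \Bigl(\tfrac{n+\mu^2}{n+2\mu^2}\Bigr)^{n/2} \exp\!\Bigl(\tfrac{\mu^2 u}{2 (1+\mu^2/n)^2 (n+2\mu^2)}\Bigr),
\]
and an analogous formula for $h_2$ (obtained by replacing $2\mu^2$ by $3\mu^2$ in both places and doubling the exponent). On the high-probability event $\mathcal{G}_y := \{\|y\|_2^2/n \in [1/2, 3/2]\}$ (whose complement has probability $e^{-\Omega(n)}$ by Lemma \ref{lem::chi-square-concentration}), the algebraic identity $(n+2\mu^2)^2 - (n+\mu^2)(n+3\mu^2) = \mu^4$ combined with the hypotheses $\mu^2 = o(\log m)$ and $\log m / n \to 0$ forces the ratio $h_2(u)/h(u)^2$ to be at most $e^{o(\log m)}$, so Chebyshev yields $\mathbb{P}(B < m h(\|y\|_2^2)/2 \mid y) \le 4 h_2/(m h^2) = o(1)$. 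On the complementary event $\{B \ge m h(\|y\|_2^2)/2\}$ I will use $N_2/B \le 2 N_2/(m h(\|y\|_2^2))$ and then invoke Lemma \ref{lem:incomp_quadexp_Gaussian} --- after decomposing each $X_j$ into its component along $y$ and its orthogonal complement --- to obtain the exact identity $\mathbb{E}[N_2 \mid y]/(m h(\|y\|_2^2)) = \Phi\bigl(\sqrt{2\beta+1}\,(\gamma - \alpha/(2\beta+1))\bigr)$, with $\alpha = \mu \|y\|_2/(\sqrt{n}(1+\mu^2/n)^{3/2})$, $\beta = \mu^2/(2n(1+\mu^2/n))$, and $\gamma$ the threshold from $\mathcal{H}_2^y$. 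On $\mathcal{G}_y$ this argument evaluates to at most $-c' \mu$ for some $c' > 0$, so this $\Phi$ factor is exponentially small in $\mu^2$; the contribution of $\mathcal{G}_y^{c}$ is absorbed by the trivial bound $N_2/B \le 1$.

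The main technical obstacle will be to simultaneously track the several exponential factors of the form $((n+a\mu^2)/(n+b\mu^2))^{n/2}$ and $\exp(\alpha^2/(2(2\beta+1)))$ that appear in $h$, $h_2$, and the Lemma \ref{lem:incomp_quadexp_Gaussian} computation, and to verify that after the Chebyshev-based cancellation the surviving $\Phi$-factor really is exponentially small in $\mu^2$ on $\mathcal{G}_y$. The algebraic identity $(n+2\mu^2)^2 - (n+\mu^2)(n+3\mu^2) = \mu^4$, combined with the hypotheses $\mu^2 = o(\log m)$ and $\log m / n \to 0$, is what allows the variance-to-mean-squared ratio $h_2/h^2$ to be controlled well enough for the Chebyshev step to succeed.
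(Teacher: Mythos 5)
Your proposal is correct and follows essentially the same route as the paper's proof: both split $\mathcal{H}^y$ into the pure-$y$ event (killed by chi-squared concentration) and the joint event (handled by rotating $X_j$ onto the direction of $y$ and applying Lemma \ref{lem:incomp_quadexp_Gaussian} to produce a Gaussian tail $\mathbb{P}(Z\le -c'\mu)$), and both control the denominator by a second-moment/Chebyshev argument whose success rests on the ratio $h_2/h^2$ being $e^{O(\mu^2)}=m^{o(1)}$, exactly as in the paper's computation of ${\rm Var}(\mathcal{A})$. The only slip is cosmetic: the exponent in $h_2$ should be $2\mu^2 u/\big((1+\mu^2/n)^2(n+3\mu^2)\big)$ (i.e.\ quadruple, not double, the $h$-exponent after replacing $n+2\mu^2$ by $n+3\mu^2$), which does not affect the Chebyshev step.
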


\begin{proof}
Since $0\leq \frac{\sum_{j=1}^mf_+^y(X_j)\mathbbm{1}_{\mathcal{H}^y(X_j)}}{\sum_{j=1}^mf_+^y(X_j)} \leq 1$, according to the dominated convergence theorem, it is sufficient to show $\frac{\sum_{j=1}^mf_+^y(X_j)\mathbbm{1}_{\mathcal{H}^y(X_j)}}{\sum_{j=1}^mf_+^y(X_j)} \overset{P}{\rightarrow} 0$. Towards this end, define
\begin{align*}
\mathcal{A}:=&\frac{1}{m}\Big(\frac{1+2\mu^2/n}{1+\mu^2/n}\Big)^{\frac{n}{2}}\exp\Big(\frac{-\mu^2/n\|y\|_2^2}{2(1+\mu^2/n)^2(1+2\mu^2/n)}\Big) \sum_{j=1}^{m}  \exp\Big( \frac{\mu X_{j}^{T}y}{(1+\mu^{2}/n)^{3/2}} - \frac{\mu^{2}\|X_{j}\|_2^{2}}{2(1+\mu^2/n)} \Big)    \\
\mathcal{B}:=& \frac{1}{m}\Big(\frac{1+2\mu^2/n}{1+\mu^2/n}\Big)^{\frac{n}{2}}\exp\Big(\frac{-\mu^2/n\|y\|_2^2}{2(1+\mu^2/n)^2(1+2\mu^2/n)}\Big) \sum_{j=1}^{m}  \exp\Big( \frac{\mu X_{j}^{T}y}{(1+\mu^{2}/n)^{3/2}} - \frac{\mu^{2}\|X_{j}\|_2^{2}}{2(1+\mu^2/n)} \Big)\mathbbm{1}_{\mathcal{H}^y(X_j)}   
\end{align*}
Then we can rewrite $\frac{\sum_{j=1}^mf_+^y(X_j)\mathbbm{1}_{\mathcal{H}^y(X_j)}}{\sum_{j=1}^mf_+^y(X_j)}=\frac{\mathcal{B}}{\mathcal{A}}$. In the rest of the proof, we will prove $\mathcal{A}\overset{P}{\rightarrow} 1$ and $\mathcal{B}\overset{P}{\rightarrow} 0$.

Regarding $\mathcal{A}$, using the moment-generating function of noncentral chi-squared distribution, we can obtain that with $\lambda=\frac{n\|y\|_2^2}{\mu^2(1+\mu^2/n)}$,
\begin{align*}
\E\mathcal{A}&=\E\big[\E\big(\mathcal{A}\big| y\big)\big] \\
&=\E\Bigg[\Big(\frac{1+2\mu^2/n}{1+\mu^2/n}\Big)^{\frac{n}{2}}\exp\Big(\frac{\|y\|_2^2}{2(1+\mu^2/n)(1+2\mu^2/n)}\Big) \cdot \E\Big(\exp\Big(\frac{-\mu^2 \chi^2_n(\lambda)}{2n(1+\mu^2/n)}\Big)\Big|y\Big)  \Bigg] \\
&=1.
\end{align*}
Hence, to prove $\mathcal{A}\overset{P}{\rightarrow} 1$, it is sufficient to show ${\rm Var}(\mathcal{A})\rightarrow 0$. We use the moment-generating function of noncentral chi-squared distribution again to compute the variance,
\begin{align*}
{\rm Var}(\mathcal{A})&=\E\big[{\rm Var}(\mathcal{A}|y)\big]+{\rm Var}\big[\E(\mathcal{A}|y)\big] \\
&=\E\Bigg[\frac{1}{m}\Big(\frac{1+2\mu^2/n}{1+\mu^2/n}\Big)^{n}\exp\Big(\frac{\|y\|_2^2}{(1+\mu^2/n)(1+2\mu^2/n)}\Big) \cdot {\rm Var}\Big(\exp\Big(\frac{-\mu^2 \chi^2_n(\lambda)}{2n(1+\mu^2/n)}\Big)\Big|y\Big)  \Bigg] \\
&\leq \E\Bigg[\frac{1}{m}\Big(\frac{1+2\mu^2/n}{1+\mu^2/n}\Big)^{n}\exp\Big(\frac{\|y\|_2^2}{(1+\mu^2/n)(1+2\mu^2/n)}\Big) \cdot \E\Big(\exp\Big(\frac{-\mu^2 \chi^2_n(\lambda)}{n(1+\mu^2/n)}\Big)\Big|y\Big)  \Bigg] \\
&=\frac{1}{m}\Big(\frac{1+2\mu^2/n}{1+\mu^2/n}\Big)^{n}\Big(\frac{1+3\mu^2/n}{1+\mu^2/n}\Big)^{-\frac{n}{2}}\cdot \E\exp\Big(\frac{\mu^2/n\|y\|_2^2}{(1+\mu^2/n)(1+2\mu^2/n)(1+3\mu^2/n)}\Big) \\
&=\frac{1}{m}\Big(\frac{1+2\mu^2/n}{1+\mu^2/n}\Big)^{n}\Big(\frac{1+3\mu^2/n}{1+\mu^2/n}\Big)^{-\frac{n}{2}}\Big(1-\frac{2\mu^2/n}{(1+2\mu^2/n)(1+3\mu^2/n)}\Big)^{-\frac{n}{2}}\\
&=\frac{1}{m}\exp\Big(\mu^2(1+o(1))\Big)=o(1),
\end{align*}
where in the second-to-last equality we have used $\log(1+x)=x+o(x)$ as $x\rightarrow 0$ and the condition $\mu^2/n\rightarrow 0$, and the last equality holds by the condition $m\rightarrow \infty$ and $\mu^2=o(\log (m))$.

For $\mathcal{B}$, we will show a stronger result $\E \mathcal{B} \rightarrow 0$. Recalling the definition of $\mathcal{H}^y(\cdot)$ in \eqref{event:H:def}, we have
\begin{align*}
\E \mathcal{B}&=\E\Big(\frac{1+2\mu^2/n}{1+\mu^2/n}\Big)^{\frac{n}{2}}\exp\Big(\frac{-\mu^2/n\|y\|_2^2}{2(1+\mu^2/n)^2(1+2\mu^2/n)}\Big)  \exp\Big( \frac{\mu X_{1}^{T}y}{(1+\mu^{2}/n)^{3/2}} - \frac{\mu^{2}\|X_{1}\|_2^{2}}{2(1+\mu^2/n)}\Big)\mathbbm{1}_{\mathcal{H}^y(X_1)} \\
&\leq \E\Big(\frac{1+2\mu^2/n}{1+\mu^2/n}\Big)^{\frac{n}{2}}\exp\Big(\frac{-\mu^2/n\|y\|_2^2}{2(1+\mu^2/n)^2(1+2\mu^2/n)}\Big)  \exp\Big( \frac{\mu X_{1}^{T}y}{(1+\mu^{2}/n)^{3/2}} - \frac{\mu^{2}\|X_{1}\|_2^{2}}{2(1+\mu^2/n)}\Big)\mathbbm{1}_{\mathcal{H}_1}+  \\
&\quad \E\Big(\frac{1+2\mu^2/n}{1+\mu^2/n}\Big)^{\frac{n}{2}}\exp\Big(\frac{-\mu^2/n\|y\|_2^2}{2(1+\mu^2/n)^2(1+2\mu^2/n)}\Big)  \exp\Big( \frac{\mu X_{1}^{T}y}{(1+\mu^{2}/n)^{3/2}} - \frac{\mu^{2}\|X_{1}\|_2^{2}}{2(1+\mu^2/n)}\Big)\mathbbm{1}_{\mathcal{H}_2} \\
&:=\mathcal{E}_1+\mathcal{E}_2,
\end{align*}
where
\[
\mathcal{H}_1:=\Bigg\{\frac{\|y\|_2^{2}}{n(1+\mu^{2}/n)} < \frac{2(1+\mu^{2}/n)\log 2}{\mu(2+\mu^{2}/n)}\Bigg\},~~\mathcal{H}_2:=\Bigg\{\frac{X_1^{T}y}{\|y\|_2^{2}}n \sqrt{1+\frac{\mu^{2}}{n}} < -\frac{\mu(2+\mu^{2}/n)}{4}\Bigg\}.
\]

We first bound $\mathcal{E}_1$. Note that the event $\mathcal{H}_1$ only depends on $y$ and $y$ is independent of $X_1$. We can thus first compute the conditional expectation (conditioning on $y$) with respect to $X_1$. This is already done when we computed $\E\mathcal{A}$ and it is equal to one. Hence, with $\tau=1-\frac{2(1+\mu^2/n)\log 2}{\mu(2+\mu^2/n)}$,
\begin{align*}
\mathcal{E}_1&=\mathbb{P}(\mathcal{H}_1)=\mathbb{P}\Big(\|y\|_2^{2} < \frac{2n(1+\mu^{2}/n)^2\log 2}{\mu(2+\mu^{2}/n)}\Big) \\
&=\mathbb{P}\Big(\chi^2_n< \frac{2n(1+\mu^{2}/n)\log 2}{\mu(2+\mu^{2}/n)}\Big)=\mathbb{P}\big(\chi^2_n<n(1-\tau)\big)\leq \exp\Big(\frac{n}{2}\big(\tau+\log(1-\tau)\big)\Big)=o(1),
\end{align*}
where the last inequality is due to Lemma \ref{lem::chi-square-concentration} and the last equality holds since $\tau \rightarrow 1$ under the condition $\mu\rightarrow \infty, \mu^2/n\rightarrow 0$.

It remains to show $\mathcal{E}_2\rightarrow 0$. Define $\tilde{X}_1=A X_1$ where $A$ is an orthogonal matrix (measurable with respect to $y$) whose first row equals $y/\|y\|_2$. Then it is straightforward to verify that $(y, \tilde{X}_1)\overset{d}{=}(y,X_1)$ and $X_1^Ty=\tilde{X}_{1,1}\|y\|_2$ with $\tilde{X}_1=(\tilde{X}_{1,1},\tilde{X}_{1,-1})$. Using the mutual independence between $(y,\tilde{X}_{1,1},\tilde{X}_{1,-1})$, we can proceed with
\begin{align*}
\mathcal{E}_2&=\Big(\frac{1+2\mu^2/n}{1+\mu^2/n}\Big)^{\frac{n}{2}}\E\exp\Big(-\frac{\mu^{2}\|\tilde{X}_{1,-1}\|_2^{2}}{2(1+\mu^2/n)}\Big) \\
&~\cdot \E\exp\Big(\frac{-\mu^2/n\|y\|_2^2}{2(1+\mu^2/n)^2(1+2\mu^2/n)}\Big)  \exp\Big( \frac{\mu \tilde{X}_{1,1}\|y\|_2}{(1+\mu^{2}/n)^{3/2}} - \frac{\mu^{2}\tilde{X}_{1,1}^{2}}{2(1+\mu^2/n)}\Big)\mathbbm{1}_{\mathcal{H}_3} \\
&\overset{(a)}{=} \Big(\frac{1+2\mu^2/n}{1+\mu^2/n}\Big)^{\frac{1}{2}}\E_y\Bigg[\exp\Big(\frac{-\mu^2/n\|y\|_2^2}{2(1+\mu^2/n)^2(1+2\mu^2/n)}\Big) \\
& \hspace{3.7cm} \cdot \E\bigg(\exp\Big( \frac{\mu \tilde{X}_{1,1}\|y\|_2}{(1+\mu^{2}/n)^{3/2}} - \frac{\mu^{2}\tilde{X}_{1,1}^{2}}{2(1+\mu^2/n)}\Big)\mathbbm{1}_{\mathcal{H}_3}\Big| y\bigg)\Bigg]\\
&\overset{(b)}{=} \mathbb{P}\bigg(Z\leq -\frac{\mu[(2+\mu^2/n)(1+2\mu^2/n)+4]}{4\sqrt{n}(1+\mu^2/n)(1+2\mu^2/n)^{1/2}}\|y\|_2\bigg)\\
&\leq \mathbb{P}\bigg(Z\leq -\frac{\mu[(2+\mu^2/n)(1+2\mu^2/n)+4]}{4\sqrt{n}(1+\mu^2/n)(1+2\mu^2/n)^{1/2}}\|y\|_2, \|y\|_2\geq \frac{1}{2}\sqrt{n+\mu^2}\bigg)+\mathbb{P}\Big(\|y\|_2< \frac{1}{2}\sqrt{n+\mu^2}\Big) \\
&\overset{(c)}{\leq} \mathbb{P}\bigg(Z\leq -\frac{\mu[(2+\mu^2/n)(1+2\mu^2/n)+4]}{8(1+\mu^2/n)^{1/2}(1+2\mu^2/n)^{1/2}}\bigg)+\exp\Big(\frac{n}{2}\big(\frac{3}{4}+\log\frac{1}{4}\big)\Big) \overset{(d)}{\rightarrow} 0,
\end{align*}
where $\mathcal{H}_3=\Big\{\tilde{X}_{1,1}<-\frac{\mu(2+\mu^2/n)}{4n\sqrt{1+\mu^2/n}}\|y\|_2\Big\}$ and $Z\sim \mathcal{N}(0,1)$ independent of $y$. To obtain $(a)$ we have used the moment-generating function of $\chi^2_{n-1}$ to compute the expectation; $(b)$ holds by applying Lemma \ref{lem:incomp_quadexp_Gaussian}; $(c)$ is due to Lemma \ref{lem::chi-square-concentration}; $(d)$ holds under the condition $\mu\rightarrow \infty, \mu^2/n\rightarrow 0$.
\end{proof}

\begin{lemma}\label{lem::regime2-E-p_m^2}
    Consider model \eqref{model::gaussian-model} with $\sigma=1$ and $\beta\in \mathbb{R}^m$. Suppose $n\rightarrow \infty, m \rightarrow \infty$ and $\log(m)/n\rightarrow 0$. If $\mu\rightarrow \infty, \mu = o\big(\sqrt{\log (m)}\big)$ and $\mu^{4}/n\rightarrow 0$, we have
    \begin{equation*}
        \E_{\mu e_{2}} \mathcal{P}_{1}^{2} \geq \frac{e^{\mu^{2}}}{2m^{2}}\big(1+o(1)\big),
    \end{equation*}
    where $\E_{\mu e_{2}}$ denotes the expectation taken under the model $y=X\beta+z$ with $\beta=\mu e_2$, and $\mathcal{P}_{1}$ is defined in \eqref{p1:recall:def}.
\end{lemma}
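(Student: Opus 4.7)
The proof follows the same overall strategy as Lemma \ref{lem::2nd-order-lower-bound-low-SNR-p^2}, but the crude bound $(e^t-e^{-t})^2\geq 4t^2$ used there is too lossy here: we must produce the exponential factor $e^{\mu^2}$, which forces a sharper second-moment estimate for the numerator. Under $\E_{\mu e_2}$ we have $y=\mu X_2+z$. Split the denominator of $\mathcal{P}_1$ as $D=T_{12}+\sum_{i=3}^m(e^{\mu X_i^Ty-\mu^2\|X_i\|_2^2/2}+e^{-\mu X_i^Ty-\mu^2\|X_i\|_2^2/2})$, where $T_{12}$ collects the $i=1,2$ terms, and let $N_1=e^{\mu X_1^Ty-\mu^2\|X_1\|_2^2/2}-e^{-\mu X_1^Ty-\mu^2\|X_1\|_2^2/2}$. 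Conditioning on $(y,X_1,X_2)$ and applying Jensen's inequality to $x\mapsto 1/x^2$ gives
\[
\E_{\mu e_2}\mathcal{P}_1^2\geq \E\frac{N_1^2}{(T_{12}+\mathcal{M}(y))^2},\quad
\mathcal{M}(y):=2(m-2)\Big(1+\tfrac{\mu^2}{n}\Big)^{-n/2}\exp\Big(\tfrac{\mu^2\|y\|_2^2}{2(n+\mu^2)}\Big),
\]
where $\mathcal{M}(y)$ is obtained from the Gaussian integral $\E e^{\mu X_i^Ty-\mu^2\|X_i\|_2^2/2}=(1+\mu^2/n)^{-n/2}e^{\mu^2\|y\|_2^2/(2(n+\mu^2))}$ using $X_i\perp y$ for $i\geq 3$.

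Next I would restrict to a good event keeping $T_{12}+\mathcal{M}(y)$ close to $2m$. Because $\mu^4/n\to 0$, one can choose $\epsilon_n\to 0$ satisfying both $\mu^2\epsilon_n\to 0$ and $n\epsilon_n^2\gg \mu^2$, together with slowly growing thresholds $a_n,b_n\gg\mu$ that also satisfy $\mu(a_n+b_n)=o(\log m)$. Define
\[
\mathcal{E}=\{\|y\|_2^2\leq (n+\mu^2)(1+\epsilon_n)\}\cap\{\|X_2\|_2^2\leq 2,\,|X_2^Tz|\leq a_n\}\cap\{|X_1^Ty|\leq b_n\}.
\]
Using $\cosh(t)\leq e^{|t|}$ and the explicit form of the $i=1,2$ terms yields $T_{12}\leq 4e^{\mu^2+\mu(a_n+b_n)}=e^{o(\log m)}=o(m)$ on $\mathcal{E}$, exploiting $\mu^2=o(\log m)$. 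Meanwhile, the expansion $(1+\mu^2/n)^{-n/2}=e^{-\mu^2/2+O(\mu^4/n)}$ combined with $\|y\|_2^2\leq (n+\mu^2)(1+\epsilon_n)$ gives $\mathcal{M}(y)\leq 2m(1+o(1))$, so $(T_{12}+\mathcal{M}(y))^2\leq 4m^2(1+o(1))$ on $\mathcal{E}$, and hence
\[
\E_{\mu e_2}\mathcal{P}_1^2\geq \frac{\E[N_1^2\mathbbm{1}_{\mathcal{E}}]}{4m^2}(1+o(1)).
\]

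The remaining, and main, task is to show $\E[N_1^2\mathbbm{1}_{\mathcal{E}^c}]=o(e^{\mu^2})$. Using $X_1\perp y$, a direct Gaussian moment computation yields $\E[N_1^2\mid y]=2(1+2\mu^2/n)^{-n/2}(e^{2\mu^2\|y\|_2^2/(n+2\mu^2)}-1)$; integrating against $\|y\|_2^2\sim (1+\mu^2/n)\chi_n^2$ via the $\chi_n^2$ MGF and invoking $\mu^4/n\to 0$ produces $\E N_1^2=2e^{\mu^2}(1+o(1))$. For the $\|y\|_2^2$ tail contribution, a naive Markov or Cauchy-Schwarz fails because the MGF tilt at the relevant parameter already contributes an $e^{2\mu^2}$ factor; instead, I would evaluate $\E[e^{2\mu^2\|y\|_2^2/(n+2\mu^2)}\mathbbm{1}_{\|y\|_2^2>(n+\mu^2)(1+\epsilon_n)}]$ by exponential tilting of $\chi_n^2$ and then apply a standard chi-square concentration inequality to obtain the sharp bound $\E[N_1^2\mathbbm{1}_{\mathcal{E}_y^c}]\leq 2e^{\mu^2}\exp(-(n\epsilon_n-4\mu^2)^2/(4n))(1+o(1))$, which is $o(e^{\mu^2})$ under the chosen $\epsilon_n$. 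The remaining pieces of $\mathcal{E}^c$ (moderate deviations of $\|X_2\|_2^2,X_2^Tz,X_1^Ty$) are handled by Cauchy-Schwarz together with the cruder estimate $\E N_1^4=O(e^{6\mu^2})$; the Gaussian and chi-square tails $e^{-cn},e^{-a_n^2/4},e^{-b_n^2/2}$ dominate $e^{3\mu^2}$ as soon as $a_n,b_n\gg\mu$. Combining gives $\E[N_1^2\mathbbm{1}_{\mathcal{E}}]=2e^{\mu^2}(1+o(1))$, whence $\E_{\mu e_2}\mathcal{P}_1^2\geq e^{\mu^2}/(2m^2)\cdot(1+o(1))$. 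The hardest part is the sharp tail bound just described, and the hypothesis $\mu^4/n\to 0$ in Theorem \ref{thm::second-order-med-snr-minimax} is precisely what permits the simultaneous requirements $\mu^2\epsilon_n\to 0$ and $n\epsilon_n^2\gg\mu^2$ to be met.
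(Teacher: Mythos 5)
Your proposal is correct in structure and the key computations check out, but it takes a genuinely different route from the paper's. The paper reparametrizes $X_2=\frac{y-v}{\mu+n/\mu}$ with $v=-\frac{n}{\mu}X_2+z$ independent of $y$, conditions on $(y,X_1)$, and applies Jensen to \emph{all} of $(v,X_3,\ldots,X_m)$; the resulting denominator bound is still a random function of $\|y\|_2^2$, and the main and error terms are then evaluated exactly via moment-generating functions, with truncation events placed on $X_{1,1}$, $\|X_{1,-1}\|_2$ and $\|y\|_2^2$. You instead apply Jensen only to the $i\geq 3$ terms, keep the $i=1,2$ terms $T_{12}$ explicitly, and flatten the entire denominator to the deterministic constant $2m(1+o(1))$ on a good event; the problem then reduces to the clean second-moment identity $\E[N_1^2\mid y]=2(1+2\mu^2/n)^{-n/2}\big(e^{2\mu^2\|y\|_2^2/(n+2\mu^2)}-1\big)$ plus tail control. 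Your version buys a more transparent separation between "denominator $\approx 2m$" and "numerator has mean $2e^{\mu^2}$," at the cost of extra truncation events on $X_2$ and $z$; the paper's version avoids those events but must carry the $\|y\|_2^2$-dependence of the denominator through every expectation. Both the tilting argument for the $\|y\|_2^2$-tail and the Cauchy--Schwarz treatment of the remaining bad events (using $\E N_1^4=O(e^{6\mu^2})$) are sound.

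One quantitative slip: the pair of requirements you impose on $\epsilon_n$, namely $\mu^2\epsilon_n\to 0$ and $n\epsilon_n^2\gg\mu^2$, forces $\mu/\sqrt{n}\ll\epsilon_n\ll\mu^{-2}$ and hence $\mu^6/n\to 0$, which is strictly stronger than the hypothesis $\mu^4/n\to 0$ (take $\mu=n^{1/5}$). Fortunately the second requirement is stronger than what your tail bound actually uses: you only need $n\epsilon_n-4\mu^2\gg\sqrt{n}$, for which $\epsilon_n\gg n^{-1/2}$ suffices (note $\mu^2\ll\sqrt{n}$ under the stated hypothesis, so $n\epsilon_n\gg\sqrt{n}\gg\mu^2$ comes for free). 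Replacing $n\epsilon_n^2\gg\mu^2$ by $n\epsilon_n^2\to\infty$ makes the window $n^{-1/2}\ll\epsilon_n\ll\mu^{-2}$ nonempty exactly when $\mu^4/n\to 0$, and the rest of your argument goes through unchanged.
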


\begin{proof}
Define
\[
v:=-\frac{n}{\mu}X_{2}+z.
\]
Since $y=\mu X_2+z$, it is direct to verify that $(v,y,X_1,X_3, \ldots, X_n)$ are mutually independent and $X_2=\frac{y-v}{\mu+n/\mu}$. We can then rewrite $\mathcal{P}_{1}$ in \eqref{p1:recall:def} as a function of $(v,y,X_1,X_3, \ldots, X_m)$,    
        \begin{align}
        \E_{\mu e_{2}} \mathcal{P}_{1}^{2} =~& \E \bigg[ \exp\bigg(\mu X_{1}^{T}y - \frac{\mu^{2}}{2}\|X_{1}\|_2^{2} \bigg) - \exp\bigg( -\mu X_{1}^{T}y - \frac{\mu^{2}}{2} \|X_{1}\|_2^{2}\bigg) \bigg]^{2} \nonumber \\
        & \cdot \bigg[ \sum_{j\neq 2} \bigg(\exp\Big(\mu X_{j}^{T}y - \frac{\mu^{2}}{2}\|X_{j}\|_2^{2} \Big) + \exp\Big(-\mu X_{j}^{T}y - \frac{\mu^{2}}{2}\|X_{j}\|_2^{2} \Big)\bigg) \nonumber \\
        &\quad + \exp\bigg(  \frac{\frac{\mu^2}{n}(1+ \frac{\mu^2}{2n})}{(1+ \frac{\mu^2}{n})^2} \|y\|_2^2 - \frac{\mu^4}{ 2 n^2 (1+ \frac{\mu^2}{n})^2 } \|{v}\|_2^2- \frac{\frac{\mu^2}{n} v^Ty}{(1+ \frac{\mu^2}{n})^2}  \bigg)  \nonumber \\
        &\quad + \exp\bigg( - \frac{\frac{\mu^2}{n}(1+ \frac{3\mu^2}{2n})}{(1+ \frac{\mu^2}{n})^2} \|y\|_2^2 - \frac{\mu^4}{ 2 n^2 (1+ \frac{\mu^2}{n})^2 } \|{v}\|_2^2+\frac{\frac{\mu^2}{n}\left ( 1 + \frac{2\mu^2}{n}\right ) v^Ty}{(1+ \frac{\mu^2}{n})^2}  \bigg) \Bigg ]^{-2}
        \nonumber \\
        \geq ~& \E \bigg[ \exp\bigg(\mu X_{1}^{T}y - \frac{\mu^{2}}{2}\|X_{1}\|_2^{2} \bigg) - \exp\bigg( -\mu X_{1}^{T}y - \frac{\mu^{2}}{2} \|X_{1}\|_2^{2}\bigg) \bigg]^{2}\nonumber  \\
        & \cdot \bigg[ 2(m-2)\Big(1+\frac{\mu^{2}}{n} \Big)^{-\frac{n}{2}} \exp \bigg(   \frac{\mu^2\|y\|_2^{2}}{2n(1+\mu^{2}/n)}\bigg) \nonumber \\
        & \quad + \exp \bigg( \mu X_{1}^{T}y - \frac{\mu^{2}}{2}\|X_{1}\|_2^{2} \bigg) + \exp\bigg(-\mu X_{1}^{T}y - \frac{\mu^{2}}{2}\|X_{1}\|_2^{2} \bigg) \nonumber \\
        & \quad + \Big(1+ \frac{1}{1+\frac{n}{\mu^{2}}} \Big)^{-\frac{n}{2}} \exp \Big(\frac{ 3\big(\frac{n}{\mu^{2}} \big)^{2} + 5 \frac{n}{\mu^{2}} + 2 }{2\big(1+\frac{n}{\mu^{2}} \big)^{2}  \big(2+\frac{n}{\mu^{2}} \big)} \|y\|_2^{2} \Big)\nonumber \\
        &\quad + \Big(1+ \frac{1}{1+\frac{n}{\mu^{2}}} \Big)^{-\frac{n}{2}} \exp \Big( - \frac{1}{2\big(1+\frac{n}{\mu^{2}} \big)} \|y\|_2^{2} \Big) \bigg]^{-2},\nonumber \\
         = ~& \E \bigg[ \exp\bigg(\mu X_{1,1}\|y\|_2 - \frac{\mu^{2}}{2}\|X_{1}\|_2^{2} \bigg) - \exp\bigg( -\mu X_{1,1}\|y\|_2 - \frac{\mu^{2}}{2} \|X_{1}\|_2^{2}\bigg) \bigg]^{2}\nonumber  \\
        & \cdot \bigg[ 2(m-2)\Big(1+\frac{\mu^{2}}{n} \Big)^{-\frac{n}{2}} \exp \bigg(   \frac{\mu^2\|y\|_2^{2}}{2n(1+\mu^{2}/n)}\bigg) \nonumber \\
        & \quad + \exp \bigg( \mu X_{1,1}\|y\|_2 - \frac{\mu^{2}}{2}\|X_{1}\|_2^{2} \bigg) + \exp\bigg(-\mu X_{1,1}\|y\|_2 - \frac{\mu^{2}}{2}\|X_{1}\|_2^{2} \bigg) \nonumber \\
        & \quad + \Big(1+ \frac{1}{1+\frac{n}{\mu^{2}}} \Big)^{-\frac{n}{2}} \exp \Big(\frac{ 3\big(\frac{n}{\mu^{2}} \big)^{2} + 5 \frac{n}{\mu^{2}} + 2 }{2\big(1+\frac{n}{\mu^{2}} \big)^{2}  \big(2+\frac{n}{\mu^{2}} \big)} \|y\|_2^{2} \Big)\nonumber \\
        &\quad + \Big(1+ \frac{1}{1+\frac{n}{\mu^{2}}} \Big)^{-\frac{n}{2}} \exp \Big( - \frac{1}{2\big(1+\frac{n}{\mu^{2}} \big)} \|y\|_2^{2} \Big) \bigg]^{-2},\label{eq:pm_2:tempup2}
\end{align}
where to obtain the inequality we have taken the conditional expectation (conditioning on $(y, X_1)$), applied Jensen's inequality (viewing $\mathcal{P}_1^2$ as a convex function $f(x)=\frac{c_1}{(x+c_2)^2}$ of $x>0$), and computed the expectations with respect to $(v, X_3,\ldots, X_m)$ using the moment-generating function of noncentral chi-squared distribution; the last equality holds by an orthogonal transformation (previously used in the proof of Lemma \ref{bounded:dct:zero}) $\tilde{X}_1=A X_1$ with the first row of $A$ equal to $y/\|y\|_2$.

Define the following three events:
\begin{align*}
&\mathcal{A}_1:=\Big\{|X_{1,1}|\leq  \frac{3\mu\|y\|_2}{n(1+\mu^{2}/n)}\Big\},\\ &\mathcal{A}_2:=\Big\{\|X_{1,-1}\|_2^{2}\geq 1-t, {\rm~for~some~constant~} t\in (0,1) \Big \}, \\
&\mathcal{A}_3:=\Big\{\frac{\|y\|_2^{2}}{n(1+\mu^{2}/n)}\leq c, {\rm ~for~some~constant~} c>0\Big\}.
\end{align*}
We aim to find an upper bound for the denominator (inside the square) of \eqref{eq:pm_2:tempup2} on $\mathcal{A}_1\cap \mathcal{A}_2\cap \mathcal{A}_3$. We first have
\begin{align*}
\Big(1+\frac{\mu^2}{n}\Big)^{-\frac{n}{2}}\leq \exp\Big(-\frac{\mu^2}{2}\Big(1-\frac{\mu^2}{2n}\Big)\Big), \quad {\rm~since~}\log(1+x)\geq x-\frac{x^2}{2},~~ \forall x>0.
\end{align*}
Then, on $\mathcal{A}_1\cap \mathcal{A}_2$ we obtain
\begin{align*}
\exp \bigg( \mu X_{1,1}\|y\|_2 - \frac{\mu^{2}}{2}\|X_{1}\|_2^{2} \bigg) + \exp\bigg(-\mu X_{1,1}\|y\|_2 - \frac{\mu^{2}}{2}\|X_{1}\|_2^{2} \bigg) \leq 2\exp\Big(\frac{3\mu^2\|y\|_2^2}{n(1+\mu^2/n)}-\frac{(1-t)\mu^2}{2}\Big).
\end{align*}
Also, since $\mu^2/n\rightarrow 0$, it is straightforward to confirm that 
\begin{align*}
&~\Big(1+ \frac{1}{1+\frac{n}{\mu^{2}}} \Big)^{-\frac{n}{2}}\Bigg[ \exp \Big(\frac{ 3\big(\frac{n}{\mu^{2}} \big)^{2} + 5 \frac{n}{\mu^{2}} + 2 }{2\big(1+\frac{n}{\mu^{2}} \big)^{2}  \big(2+\frac{n}{\mu^{2}} \big)} \|y\|_2^{2} \Big)+  \exp \Big( - \frac{1}{2\big(1+\frac{n}{\mu^{2}} \big)} \|y\|_2^{2} \Big) \Bigg] \\
\leq&~2\exp\Big(\frac{3\mu^2\|y\|_2^2}{n(1+\mu^2/n)}-\frac{(1-t)\mu^2}{2}\Big) \quad {\rm ~for~large~enough~} n
\end{align*}
Collecting the above three results, we obtain that on $\mathcal{A}_1\cap \mathcal{A}_2\cap \mathcal{A}_3$, when $n$ is sufficiently large, the denominator is upper bounded by
\begin{align*}
&~2(m-2)\exp\Big(\frac{\mu^2\|y\|_2^{2}}{2n(1+\mu^{2}/n)} - \frac{\mu^{2}}{2}\Big(1-\frac{\mu^2}{2n}\Big) \Big) + 4 \exp\Big(\frac{3\mu^{2}\|y\|_2^{2}}{n(1+\mu^{2}/n)} - \frac{(1-t)\mu^{2}}{2} \Big) \\
\leq&~ 2(m-2+\sqrt{m})\exp\Big(\frac{\mu^2\|y\|_2^{2}}{2n(1+\mu^{2}/n)} - \frac{\mu^{2}}{2}\Big(1-\frac{\mu^2}{2n}\Big) \Big),
\end{align*}
where the last inequality holds if and only if $\frac{5\mu^2\|y\|_2^2}{2n(1+\mu^2/n)}+\log 2+\frac{\mu^2}{2}\Big(t-\frac{\mu^2}{2n}\Big)\leq \frac{1}{2}\log m$, which is satisfied on $\mathcal{A}_3$ since $\frac{5c}{2}\mu^2+\log 2+\frac{\mu^2}{2}\Big(t-\frac{\mu^2}{2n}\Big)\leq \frac{1}{2}\log m$ under the condition $m\rightarrow \infty, \mu^2=o(\log m)$.
The derived upper bound of the denominator together with \eqref{eq:pm_2:tempup2} implies
    \begin{align*}
        \E_{\mu e_{2}}\mathcal{P}_1^{2} &\geq \E \frac{\Big[\exp\bigg(\mu X_{1,1}\|y\|_2 - \frac{\mu^{2}}{2}\|X_{1}\|_2^{2} \bigg) - \exp\bigg( -\mu X_{1,1}\|y\|_2 - \frac{\mu^{2}}{2} \|X_{1}\|_2^{2}\bigg)\Big]^{2}\mathbbm{1}_{\mathcal{A}_1\cap \mathcal{A}_2\cap \mathcal{A}_3} }{\Big[2(m-2+\sqrt{m})\exp\Big(\frac{\mu^2\|y\|_2^{2}}{2n(1+\mu^{2}/n)} - \frac{\mu^{2}}{2}\Big(1-\frac{\mu^2}{2n}\Big) \Big)\Big]^{2}} \\
        &\geq \E \frac{2\exp\Big(2\mu X_{1,1}\|y\|_2 - \mu^{2}\|X_{1}\|_2^{2} \Big) - 2\exp\Big( - \mu^{2} \|X_{1}\|_2^{2}\Big)}{\Big[ 2(m-2+\sqrt{m})\exp\Big(\frac{\mu^2\|y\|_2^{2}}{2n(1+\mu^{2}/n)} - \frac{\mu^{2}}{2}\Big(1-\frac{\mu^2}{2n}\Big) \Big)\Big]^{2}} \\
        & \quad - \E \frac{2\exp\Big(2\mu X_{1,1}\|y\|_2 - \mu^{2}\|X_{1}\|_2^{2} \Big) \mathbbm{1}_{\mathcal{A}^c_1\cup \mathcal{A}^c_2\cup \mathcal{A}^c_3} }{ \Big[ 2(m-2+\sqrt{m})\exp\Big(\frac{\mu^2\|y\|_2^{2}}{2n(1+\mu^{2}/n)} - \frac{\mu^{2}}{2}\Big(1-\frac{\mu^2}{2n}\Big) \Big)\Big]^{2}}:=\mathcal{E}_1-\mathcal{E}_2.
    \end{align*}
    In the rest of the proof, we will show that $\mathcal{E}_{1}=\frac{e^{\mu^{2}}}{2m^{2}}\cdot \big( 1+o(1)\big)$ and $\mathcal{E}_{2}=o\Big(\frac{e^{\mu^{2}}}{m^{2}} \Big)$. 
    
    We first calculate $\mathcal{E}_{1}$, starting with
    \begin{eqnarray*}
        && \E \exp \Big( 2\mu X_{1,1}\|y\|_2 - \mu^{2}\|X_{1}\|_2^{2} -\frac{\mu^2\|y\|^{2}}{n(1+\mu^{2}/n)} + \mu^{2}\Big(1-\frac{\mu^2}{2n}\Big) \Big) \\
        &=& \Big( 1+\frac{2\mu^{2}}{n} \Big)^{-\frac{n}{2}} \E \exp \Big(\frac{\mu^2\|y\|_2^2}{n(1+\mu^2/n)(1+2\mu^2/n)} + \mu^{2}\Big(1-\frac{\mu^2}{2n}\Big) \Big) \\
        &=& \Big( 1+\frac{2\mu^{2}}{n} \Big)^{-\frac{n}{2}} \Big(1 - \frac{2\mu^2}{n(1+2\mu^2/n)} \Big)^{-\frac{n}{2}} e^{\mu^{2}(1-\frac{\mu^2}{2n}) } \\
        &=& e^{-\mu^2+\mu^2+\mu^{2} + O(\mu^{4}/n)} = e^{\mu^{2}} \cdot \big(1+o(1)\big),
    \end{eqnarray*}
    where we have used the moment-generating function of noncentral chi-squared distribution in the first two equalities, and the last two equalities use the condition $\mu^{4}/n = o(1)$. The negative term in $\mathcal{E}_{1}$ is of smaller order, since
    \begin{align*}
        & \E \exp \Big(-\mu^{2}\|X_{1}\|_2^{2} -\frac{\mu^2\|y\|^{2}}{n(1+\mu^{2}/n)} + \mu^{2}\Big(1-\frac{\mu^2}{2n}\Big) \Big)  \\
        =& \Big(1+\frac{2\mu^{2}}{n} \Big)^{-\frac{n}{2}} \cdot \Big(1+ \frac{2\mu^{2}}{n} \Big)^{-\frac{n}{2}} \cdot e^{\mu^{2}(1-\frac{\mu^2}{2n})} = e^{-2\mu^2(1+o(1))+\mu^{2}(1-\frac{\mu^2}{2n})}=o(e^{\mu^2}),
    \end{align*}
    where the last two equalities use $\frac{\mu^2}{n}\rightarrow 0, \mu\rightarrow \infty$.
    
    To calculate $\mathcal{E}_{2}$, we first bound the term
    \begin{align*}
        & \E \exp\Big(2\mu X_{1,1}\|y\|_2 - \mu^{2}\|X_{1}\|_2^{2} -\frac{\mu^2\|y\|_2^{2}}{n(1+\mu^{2}/n)} \Big) \mathbbm{1}_{\mathcal{A}_1^c} \\
        =& \E\exp\Big(-\mu^2\|X_{1,-1}\|_2^2\Big) \cdot \E \exp\Big(-\frac{\mu^2\|y\|_2^{2}}{n(1+\mu^{2}/n)}+2\mu X_{1,1}\|y\|_2-\mu^2X_{1,1}^2\Big) \mathbbm{1}_{|X_{1,1}|>  \frac{3\mu\|y\|_2}{n(1+\mu^{2}/n)}} \\
        \overset{(a)}{=}& \Big(1+\frac{2\mu^{2}}{n}\Big)^{-\frac{n}{2}} \E\Bigg[\exp\Big(\frac{\mu^2\|y\|_2^2}{n(1+\mu^2/n)(1+2\mu^2/n)}\Big) \\
        & \hspace{2.6cm} \cdot \mathbb{P}_Z \bigg( Z \leq -\frac{(5+8\mu^2/n)\mu \|y\|_2}{\sqrt{n}(1+\mu^2/n)(1+2\mu^2/n)^{1/2}} {\rm ~or~} Z \geq \frac{(1+4\mu^2/n)\mu \|y\|_2}{\sqrt{n}(1+\mu^2/n)(1+2\mu^2/n)^{1/2}}  \bigg) \Bigg]\\
        \overset{(b)}{\leq} & \Big(1+\frac{2\mu^{2}}{n}\Big)^{-\frac{n}{2}} \E\Bigg[\exp\Big(\frac{\mu^2\|y\|_2^2}{n(1+\mu^2/n)(1+2\mu^2/n)}\Big)\cdot 2\exp\Big(-\frac{(1+4\mu^2/n)^2\mu^2\|y\|_2^2}{2n(1+\mu^2/n)^2(1+2\mu^2/n)}\Big)\Bigg]\\
        =& 2\exp \Big[ -\mu^{2}\Big(1+o(1)\Big) + \frac{1}{2}\mu^{2}\Big(1+o(1)\Big) \Big]  = o(1),
    \end{align*}
    where $Z\sim \mathcal{N}(0,1)$ independent of $y$; to obtain $(a)$ we have used Lemma \ref{lem:incomp_quadexp_Gaussian}, and $(b)$ is by the Gaussian tail bound $\mathbb{P}(Z>t)\leq e^{-\frac{t^2}{2}}, \forall t>0$. We proceed to bound the second one,
    \begin{align*}
        & ~\E \exp\Big(2\mu X_{1,1}\|y\|_2 - \mu^{2}\|X_{1}\|_2^{2} -\frac{\mu^2\|y\|_2^{2}}{n(1+\mu^{2}/n)} \Big) \mathbbm{1}_{\mathcal{A}_2^c} \\
        \leq &~\E \exp\Big(2\mu X_{1,1}\|y\|_2 - \mu^{2}X_{1,1}^2 \Big) \cdot \E\exp\Big( - \mu^{2}\|X_{1,-1}\|_2^{2}\Big)\mathbbm{1}_{\|X_{1,-1}\|_2^2<1-t}  \\
        \overset{(c)}{=}& \Big(1+\frac{2\mu^{2}}{n}\Big)^{-\frac{1}{2}} \E \exp\Big(  \frac{2\mu^2\|y\|_2^{2}}{n(1+ 2\mu^{2}/n)} \Big) \cdot \E\exp\Big( - \mu^{2}\|X_{1,-1}\|_2^{2}\Big)\mathbbm{1}_{\|X_{1,-1}\|_2^2<1-t}  \\
        \overset{(d)}{=}& \Big(1+\frac{2\mu^{2}}{n}\Big)^{-\frac{1}{2}} \Big(1 -\frac{4\mu^2(1+\mu^{2}/n)}{n(1+2\mu^{2}/n)}\Big)^{-\frac{n}{2}} \cdot \E\exp\Big( - \mu^{2}\|X_{1,-1}\|_2^{2}\Big)\mathbbm{1}_{\|X_{1,-1}\|_2^2<1-t}  \\
        \overset{(e)}{=}& \Big(1+\frac{2\mu^{2}}{n}\Big)^{-\frac{n}{2}}\Big(1 -\frac{4\mu^2(1+\mu^{2}/n)}{n(1+2\mu^{2}/n)}\Big)^{-\frac{n}{2}} \cdot \mathbb{P}\bigg( \chi^{2}_{n-1} <  n\Big(1+\frac{2\mu^{2}}{n}\Big)(1-t)\bigg) \\
        \overset{(f)}{\leq}& \Big(1+\frac{2\mu^{2}}{n}\Big)^{-\frac{n}{2}}\Big(1 -\frac{4\mu^2(1+\mu^{2}/n)}{n(1+2\mu^{2}/n)}\Big)^{-\frac{n}{2}}  \cdot \exp \bigg[ \frac{n-1}{2} \bigg(\log\Big(1-t' \Big) + t' \bigg) \bigg] \\
        \overset{(g)}{\leq}&e^{\mu^2(1+o(1))-c'n} =o(1), \quad \quad {\rm ~where~}t':=1-\frac{n}{n-1}\Big(1+\frac{2\mu^{2}}{n}\Big)(1-t)>0 {\rm ~when~} n {\rm~is~large}.
    \end{align*}
    Here, $(c)$ and $(d)$ use the moment-generating function of chi-squared distributions; $(e)$ is obtained by applying the identity in \eqref{chi:square:form}; $(f)$ is due to Lemma \ref{lem::chi-square-concentration}; $(g)$ holds by choosing $t\in (0,1)$ sufficiently close to $1$ so that $e^{\frac{n-1}{2}(\log(1-t' ) + t' )}\leq e^{-c'n}$ for some constant $c'>0$; and the last equality follows under the condition $\mu^2/n\rightarrow 0$. It remains to bound the third term,
    \begin{align*}
        & \E \exp\Big(2\mu X_{1,1}\|y\|_2 - \mu^{2}\|X_{1}\|_2^{2} - \frac{\mu^2\|y\|_2^{2}}{n(1+\mu^{2}/n)} \Big) \mathbbm{1}_{\mathcal{A}_3^c} \\
        =& \E\exp\Big(-\mu^2\|X_{1,-1}\|_2^2\Big) \cdot \E \exp\Big(2\mu X_{1,1}\|y\|_2 - \mu^{2}X_{1,1}^{2} - \frac{\mu^2\|y\|_2^{2}}{n(1+\mu^{2}/n)} \Big) \mathbbm{1}_{\|y\|_2^{2} > cn(1+\mu^{2}/n)} \\
        =& \Big(1+\frac{2\mu^{2}}{n}\Big)^{-\frac{n}{2}} \E  \exp\Big(\frac{\mu^2\|y\|_2^2}{n(1+\mu^2/n)(1+2\mu^2/n)}\Big) \mathbbm{1}_{\|y\|_2^{2} > cn(1+\mu^{2}/n)} \\
        \overset{(h)}{=}& \Big(1+\frac{2\mu^{2}}{n}\Big)^{-\frac{n}{2}} \cdot \Big( 1-\frac{2\mu^2}{n(1+2\mu^2/n)}\Big)^{-\frac{n}{2}} \cdot \mathbb{P} \bigg[\chi^{2}_{n} > nc\Big(1-\frac{2\mu^2}{n(1+2\mu^2/n)}\Big) \bigg] \\
        \overset{(i)}{=}& \exp \Big(O\Big(\frac{\mu^{4}}{n}\Big) \Big) \cdot \exp \bigg[ - \frac{n}{2} \Big[ t''-1 - \log t'' \Big]\bigg] = o(1), \quad t'':= c\Big(1-\frac{2\mu^2}{n(1+2\mu^2/n)}\Big),
    \end{align*}
    where $(h)$ follows by applying the identity in \eqref{chi:square:form}; $(i)$ is by Lemma \ref{lem::chi-square-concentration}; and the last equality holds by choosing $c$ large enough so that $t''-1-\log t''>c''$ for some constant $c''>0$, together with the condition $\mu^4/n\rightarrow 0$.
\end{proof}

\subsection{Upper bound}
\label{app::subsec::enet}

\begin{proof}
Recall the soft thresholding function: for a given $\chi\geq 0$,
\begin{align*}
\eta(u,\chi)&=\argmin_{t\in \mathbb{R}}\frac{1}{2}(t-u)^2+\chi|t| \\
&={\rm sign}(u)\cdot (|u|-\chi)_+, \quad \forall u \in \mathbb{R}.
\end{align*}
For a vector $\nu=(\nu_1,\ldots,\nu_p)$, the notation $\eta(\nu,\chi)$ represents the soft thresholding function applied componentwise to each element $\nu_i$. Then, the estimator $\betahy$ in \eqref{elastic:reg:est} can be rewritten as follows:
\begin{align}
\label{els:reform}
\betahy(\lambda, \gamma)&=\argmin_{b\in \mathbb{R}^p}\|b-X^Ty\|_2^2+\lambda\|b\|_1+\gamma \|b\|^2_2 \nonumber \\
&=\argmin_{b\in \mathbb{R}^p}\frac{1}{2}\Big\|b-\frac{X^Ty}{1+\gamma}\Big\|_2^2+\frac{\lambda}{2(1+\gamma)}\|b\|_1 \nonumber \\
&=\eta\Big(\frac{X^Ty}{1+\gamma}, \frac{\lambda}{2(1+\gamma)}\Big)=\frac{1}{1+\gamma}\eta(X^Ty,\lambda/2),
\end{align}
where in the last equality we have used the invariance property $\eta(a\nu,a\chi)=a\eta(\nu,\chi), \forall a\geq 0$. Dividing $\betahy(\lambda, \gamma)$ in \eqref{els:reform} by $\sigma$, it is equivalent to prove
\begin{align}
\label{equi:form:rescale}
\sup_{\beta \in\Theta(k,\mu)}\E_{\beta} \|\betahy(\lambda, \gamma) - \beta \|_2^{2} \leq k\mu^{2} - \frac{2+o(1)}{\sqrt{2\pi}}\cdot\frac{k^2}{p}\cdot \mu e^{\mu^2},
\end{align}
under unit noise variance $\sigma=1$ and the tuning $\lambda=4\mu,\gamma=\frac{p}{2k\mu^2}e^{-\frac{3\mu^2}{2}}-1$. To simplify notations, we write $\mathbb{E}$ for $\E_{\beta}$ and $\betahy$ for $\betahy(\lambda, \gamma)$ throughout the rest of the proof.

Let $\tilde{X} \in \mathbb{R}^{n\times p}$ be an independent copy of $X$, and write out their columns as
\begin{align*}
X=
\begin{pmatrix}
X_1&X_2 & \cdots & X_p
\end{pmatrix}
, \quad \tilde{X}=
\begin{pmatrix}
\tilde{X}_1& \tilde{X}_2 & \cdots & \tilde{X}_p
\end{pmatrix}
.
\end{align*}
We first obtain a key decomposition,
\begin{align}
\label{key:de:elastic}
(X^Ty)_i&=(X^TX\beta+X^Tz)_i \nonumber \\
&=\beta_i+ \underbrace{\Big(\sum_{j\neq i}X_i^TX_j\beta_j+X_i^T\tilde{X}_i\beta_i+X_i^Tz\Big)}_{:=w_i}+\underbrace{(\|X_i\|_2^2-1-X_i^T\tilde{X}_i)\beta_i}_{:=\Delta_i}
\end{align}
Define $w=(w_1,\ldots, w_p), \Delta=(\Delta_1,\ldots, \Delta_p)$. To calculate the risk of $\betahy$, we consider the following intermediate estimator:
\begin{equation}
\label{inter:med:est}
    \betaTE = \frac{1}{1+\gamma} \eta(\beta+w, \lambda/2),
\end{equation}
Note that $\betaTE$ is not a valid estimator since it relies on $\beta$. We only use it in the proof to help evaluate the risk of $\betahy$. Indeed, as will be shown shortly,  the squared distance between $\betahy$  and $\betaTE$ is negligible, and hence their risks are asymptotically the same. We can then focus on $\betaTE$ which is more amenable to risk calculation.

To this end, \eqref{key:de:elastic} allows us to rewrite $\betahy$ as $\betahy=\frac{1}{1+\gamma} \eta(\beta+w+\Delta, \lambda/2)$. Combining this with \eqref{inter:med:est}, we have $\forall \beta\in \Theta(k,\mu)$,
\begin{align}
\label{close:distance}
\E\|\betahy-\betaTE\|_2^2 &\leq \frac{1}{(1+\gamma)^2}\E\|\Delta\|_2^2 \nonumber \\
&=\frac{4k^2\mu^4}{p^2}e^{3\mu^2}\|\beta\|_2^2 \cdot \E(\|X_i\|_2^2-1-X_i^T\tilde{X}_i)^2 \nonumber \\
&\leq \frac{8k^2\mu^4}{p^2}e^{3\mu^2}\|\beta\|_2^2 \cdot \big({\rm Var}(\|X_i\|_2^2)+{\rm Var}(X_i^T\tilde{X}_i)\big) \nonumber \\
&= \frac{24k^2\mu^4}{np^2}e^{3\mu^2}\|\beta\|_2^2 \leq \frac{24k^3\mu^6}{np^2}e^{3\mu^2}.
\end{align}
Here, the first inequality holds by the nonexpansiveness of the soft thresholding function; in the first equality we used the tuning $\gamma=\frac{p}{2k\mu^2}e^{-\frac{3\mu^2}{2}}-1$; the second inequality is based on the basic inequality $(a-b)^2\leq 2(a^2+b^2)$; the last inequality is due to $\|\beta\|_2^2\leq k\mu^2$ for any $\beta\in \Theta(k,\mu)$.

Now we can calculate the risk of $\betahy$ in the following way, $\forall \beta\in \Theta(k,\mu)$,
\begin{align}
\label{risk:eval:map}
&~\E\|\betahy-\beta\|_2^2 \nonumber \\
=&~\E\|\betaTE-\beta\|_2^2+\E\|\betahy-\betaTE\|_2^2+2\E\langle \betahy-\betaTE, \betaTE-\beta \rangle \nonumber \\
\leq &~ \E\|\betaTE-\beta\|_2^2+ \frac{24k^3\mu^6}{np^2}e^{3\mu^2}+ \frac{4\sqrt{6}k^{3/2}\mu^3}{n^{1/2}p}e^{3\mu^2/2}\cdot \sqrt{\E\|\betaTE-\beta\|_2^2},
\end{align}
where we have used Cauchy–Schwarz inequality and \eqref{close:distance} in the last step. It remains to calculate the risk of $\betaTE$. It is direct to verify that for each $i=1,2\ldots, p$, 
\begin{align}
\label{dis:handy:form}
w_i | X_i \sim \mathcal{N}\Big(0, \frac{(\|\beta\|_2^2+n)\|X_i\|_2^2}{n}\Big), \quad X_i \sim \mathcal{N}\Big(0,\frac{1}{n}I_n\Big).
\end{align}
Hence, $\betaTE$ in \eqref{inter:med:est} can be viewed as a denoising estimator under a sequence model with identically distributed errors $w_1,\ldots, w_p$. We calculate such risk in Lemma \ref{lem::estimate-betaTE}. Note the conditions $\frac{k\mu^4}{n}\rightarrow 0$ and $\frac{\mu^6}{n}\rightarrow 0$ of Lemma \ref{lem::estimate-betaTE} are satisfied since $\frac{k(\log(p/k))^2}{n}=o(1), (p/k)^{\alpha}\leq n$. We can then plug \eqref{risk:oracle:form} into \eqref{risk:eval:map} to obtain conclude
\begin{align*}
\sup_{\beta \in\Theta(k,\mu)}\E \|\betahy - \beta \|_2^{2} \leq &~k\mu^{2} - \frac{2+o(1)}{\sqrt{2\pi}}\cdot\frac{k^2}{p}\cdot \mu e^{\mu^2}+\frac{24k^3\mu^6}{np^2}e^{3\mu^2}+\\
&~\frac{(4\sqrt{6}+o(1))k^2\mu^4}{n^{1/2}p}e^{3\mu^2/2} \\
=&~k\mu^{2} - \frac{2+o(1)}{\sqrt{2\pi}}\cdot\frac{k^2}{p}\cdot \mu e^{\mu^2},
\end{align*}
where the last step holds because
\begin{align*}
&\frac{k^3\mu^6}{np^2}e^{3\mu^2} \ll \frac{k^2}{p} \mu e^{\mu^2} \quad {\rm ~~when~} \mu\rightarrow 0, \mu= o\big( \sqrt{\log(p/k)}\big), \\
&\frac{k^2\mu^4}{n^{1/2}p}e^{3\mu^2/2} \ll \frac{k^2}{p} \mu e^{\mu^2} \quad {\rm ~~when~} \mu\rightarrow 0, \mu= o\big( \sqrt{\log(p/k)}\big), (p/k)^{\alpha}\leq n.
\end{align*}

\end{proof}

\begin{lemma}\label{lem::estimate-betaTE}
    Assume model \eqref{model::gaussian-model} with $\sigma = 1$. Suppose $\frac{k\mu^4}{n}\rightarrow 0$ and $\frac{\mu^6}{n}\rightarrow 0$. Then, in Regime II where $\mu\rightarrow \infty$ and $\mu = o\big( \sqrt{\log(p/k)}\big)$, the estimator $\betaTE(\lambda,\gamma)$ with tuning $\lambda=4\mu,\gamma=\frac{p}{2k\mu^2}e^{-\frac{3\mu^2}{2}}-1$ satisfies
    \begin{equation}
    \label{risk:oracle:form}
        \sup_{\beta \in\Theta(k,\mu)}\E_{\beta} \|\betaTE(\lambda, \gamma) - \beta \|_2^{2} \leq k\mu^{2} - \frac{2+o(1)}{\sqrt{2\pi}}\cdot\frac{k^2}{p}\cdot \mu e^{\mu^2}.
    \end{equation}
\end{lemma}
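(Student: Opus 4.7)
The plan is to express the risk of $\betaTE$ as a sum of one-dimensional soft-thresholding risks, use Lemma~\ref{lem::elastic-net-risk} to pin down the worst-case $\beta$, and evaluate the resulting univariate quantities via Mills-ratio expansions. Since $\betaTE_i = \frac{1}{1+\gamma}\eta(\beta_i + w_i, \lambda/2)$ decouples across coordinates and $w_i\mid X_i \sim \mathcal{N}(0, \sigma_i^2)$ by \eqref{dis:handy:form} with $\sigma_i^2 = (1+\|\beta\|_2^2/n)\|X_i\|_2^2/n$, conditioning on $X$ and rescaling $w_i = \sigma_i e_i$ with $e_i \sim \mathcal{N}(0,1)$ gives
\begin{equation*}
\E \|\betaTE - \beta\|_2^2 = \sum_{i=1}^p \E\Big[\sigma_i^2 \cdot r\Big(\tfrac{\beta_i}{\sigma_i}; \tfrac{\lambda}{2\sigma_i}, \gamma\Big)\Big],
\end{equation*}
where $r$ is defined in \eqref{1d:risk:f}. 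Concentration of $\|X_i\|_2^2$ via Lemma~\ref{lem::chi-square-concentration} combined with $\|\beta\|_2^2/n \leq k\mu^2/n$ yields $\sigma_i^2 = 1 + o(\mu^{-2})$ off a negligible event. Since $\mu^2 \cdot o(\mu^{-2}) = o(1)$, the exponential sensitivity of $\phi(\mu/\sigma_i)$ at the relevant threshold is controlled, so every $\sigma_i$ can be replaced by $1$ at the cost of only $(1+o(1))$ multiplicative factors in the second-order terms.

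It then remains to bound $\sup_{\beta\in\Theta(k,\mu)}\sum_{i=1}^p r(\beta_i; 2\mu, \gamma)$. Applying Lemma~\ref{lem::elastic-net-risk}(ii) pairwise (including pairs containing zero coordinates) shows $r(c; 2\mu,\gamma) + r(0; 2\mu,\gamma) \leq 2r(c/\sqrt{2}; 2\mu,\gamma)$, i.e., equalizing signal mass across additional nonzero coordinates strictly increases the sum. Iterating this inequality under the joint constraints $\|\beta\|_0 \leq k$ and $\|\beta\|_2^2 \leq k\mu^2$ (also using Lemma~\ref{lem::elastic-net-risk}(i) to saturate the signal budget), the supremum is attained at exactly $k$ nonzero coordinates each with $|\beta_i| = \mu$, giving
\begin{equation*}
\sup_{\beta\in \Theta(k,\mu)}\sum_{i=1}^p r(\beta_i; 2\mu, \gamma) = k\cdot r(\mu; 2\mu, \gamma) + (p-k)\cdot r(0; 2\mu, \gamma).
\end{equation*}

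For the one-dimensional evaluations, Mills-ratio expansions (Lemma~\ref{lem::gaussian-tail-mills-ratio}) give $\E[(e-\mu)_+] = \phi(\mu)/\mu^2 \cdot (1+o(1))$ and $\E[(e-\mu)_+^2] = 2\phi(\mu)/\mu^3 \cdot (1+o(1))$ for $e\sim\mathcal{N}(0,1)$, while the $(-e-3\mu)_+$ terms are of order $e^{-9\mu^2/2}$ and negligible. Substituting $\chi_1 = 2\mu$, $\tfrac{1}{1+\gamma} = \tfrac{2k\mu^2}{p}e^{3\mu^2/2}$, and $\phi(\mu) = \tfrac{1}{\sqrt{2\pi}}e^{-\mu^2/2}$, a direct computation yields
\begin{equation*}
r(\mu; 2\mu, \gamma) = \mu^2 - \tfrac{4k\mu}{p\sqrt{2\pi}}e^{\mu^2}\big(1+o(1)\big),\qquad r(0; 2\mu,\gamma) = \tfrac{2k^2\mu}{p^2\sqrt{2\pi}}e^{\mu^2}\big(1+o(1)\big),
\end{equation*}
where the variance-of-$\eta$ contribution to $r(\mu;\cdot)$ is of order $\tfrac{k^2\mu}{p^2}e^{5\mu^2/2}$, negligible relative to the displayed cross term because $\tfrac{k}{p}e^{3\mu^2/2}\to 0$ under $\mu = o(\sqrt{\log(p/k)})$. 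Summing,
\begin{equation*}
k\cdot r(\mu;\cdot) + (p-k)\cdot r(0;\cdot) = k\mu^2 - \tfrac{4k^2\mu e^{\mu^2}}{p\sqrt{2\pi}} + \tfrac{2k^2\mu e^{\mu^2}}{p\sqrt{2\pi}}\big(1+o(1)\big) = k\mu^2 - \tfrac{2k^2\mu e^{\mu^2}}{p\sqrt{2\pi}}\big(1+o(1)\big),
\end{equation*}
which matches \eqref{risk:oracle:form}.

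The main obstacle is twofold. First, the variance fluctuation $\sigma_i^2 - 1$ must be propagated through the exponential factor $e^{\mu^2}$ while keeping the residual error $o(\tfrac{k^2\mu}{p}e^{\mu^2})$; both stated conditions $\mu^6/n\to 0$ and $k\mu^4/n \to 0$ are essential here, respectively controlling $\mu^2(\|X_i\|_2^2/n - 1)$ and $\mu^2\cdot \|\beta\|_2^2/n$. Second, the coefficient $2$ in the final answer emerges only from the precise cancellation between the signal reduction $4\cdot\tfrac{k^2\mu e^{\mu^2}}{p\sqrt{2\pi}}$ and the noise-triggered contribution $(p-k)r(0;\cdot) \approx 2\cdot\tfrac{k^2\mu e^{\mu^2}}{p\sqrt{2\pi}}$ from the zero coordinates; dropping the latter, as one might naively do, gives the wrong coefficient $4$. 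Tracking both contributions to the same second-order precision is the crux of the lemma.
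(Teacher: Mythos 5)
Your proposal is correct and follows essentially the same route as the paper's proof: coordinate-wise reduction to one-dimensional soft-thresholding risks, identification of the least-favorable configuration ($k$ equal spikes at $\mu$) via the symmetry/monotonicity properties in Lemma \ref{lem::elastic-net-risk} (which the paper extends to the random-variance setting in Lemma \ref{1d:risk:prop}), and Mills-ratio expansions yielding the same $-4+2=-2$ cancellation between $k\,r(\mu;\cdot)$ and $(p-k)\,r(0;\cdot)$. The one step you assert rather than prove --- replacing each $\sigma_i$ by $1$ ``off a negligible event'' --- is precisely where the paper invests its effort (Lemma \ref{concen:chi:exp}): the expectations $\E\big[\sigma_\theta^{c}\phi(2\mu/\sigma_\theta)\big]$ are sensitive to moderate deviations of $\|X_i\|_2^2$ of size $\mu/\sqrt{n}$ rather than $1/\sqrt{n}$, which is why the hypothesis is $\mu^6/n\to 0$ and not merely the $\mu^4/n\to 0$ that your pointwise estimate $\sigma_i^2=1+o(\mu^{-2})$ would suggest.
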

\begin{proof}
We write $\mathbb{E}$ for $\E_{\beta}$ and $\betaTE$ for $\betaTE(\lambda, \gamma)$ for convenience. 

Based on \eqref{inter:med:est},\eqref{dis:handy:form} and \eqref{1d:risk:general}, we can re-express the risk of $\betaTE$ as
\begin{align}
\label{break:one}
\sup_{\beta \in\Theta(k,\mu)}\E \|\betaTE - \beta \|_2^{2}=\sup_{0\leq \delta \leq \mu} \sup_{\|\beta\|_0\leq k, \|\beta\|_2^2=k\delta^2 }\sum_{i=1}^pg(\beta_i; \lambda/2, \gamma),
\end{align}
where the distribution of $\omega$ in \eqref{1d:risk:general} is specified by $\omega|\theta \sim \mathcal{N}(0,\sigma_{\theta}^2), \theta\sim \mathcal{N}(0, n^{-1}I_n)$ and $\sigma_{\theta}^2=\frac{(\|\beta\|_2^2+n)\|\theta\|_2^2}{n}$. Since $\|\beta\|_2^2$ is fixed at the same level in the first supremum of \eqref{break:one}, the distribution of $\omega$ does not depend on $\beta$ anymore. Thus, we can apply Lemma \ref{1d:risk:prop} to conclude that the first supremum in \eqref{break:one} is attained by a vector $\beta$ that has $k$ identical non-zero components equal to $\delta$:
\begin{align}
\label{break:two}
\sup_{\beta \in\Theta(k,\mu)}\E \|\betaTE - \beta \|_2^{2}=\sup_{0\leq \delta \leq \mu} \underbrace{\big[(p-k)g(0;\lambda/2,\gamma)+kg(\delta;\lambda/2,\gamma)\big]}_{:=f(\delta)}
\end{align}
Note that the function $g$ above itself depends on $\delta$ as well, through the distribution of $\omega$. To facilitate further analysis, let us write out $f(\delta)$ with tuning $\lambda=4\mu, \gamma=\frac{p}{2k\mu^2}e^{-\frac{3\mu^2}{2}}-1$ in a more explicit form,
\begin{align}
\label{break:three}
f(\delta)&=4(p-k)k^2p^{-2}\mu^4e^{3\mu^2}\E\eta^2(\omega,2\mu)+4k^3p^{-2}\mu^4e^{3\mu^2}\E\eta^2(\delta+\omega,2\mu) \nonumber \\
&\quad \quad -4\delta k^2\mu^2p^{-1}e^{3\mu^2/2}\E\eta(\delta+\omega,2\mu)+k\delta^2,
\end{align}
where the distribution of $\omega$ is specified by $\omega|\theta \sim \mathcal{N}(0,\sigma_{\theta}^2), \theta\sim \mathcal{N}(0, n^{-1}I_n)$ and $\sigma_{\theta}^2=\frac{(k\delta^2+n)\|\theta\|_2^2}{n}$. 

To evaluate $f(\delta)$, we bound the three expectations in \eqref{break:three}. Let $a_{\theta}^2=\frac{(k\mu^2+n)\|\theta\|_2^2}{n}$, and $\phi(\cdot)$ and $\Phi(\cdot)$ denote the pdf and cdf of $\mathcal{N}(0,1)$, respectively. We obtain $\forall \delta \in [0,\mu]$,
\begin{align}
\label{exp:one}
\E\eta^2(\omega,2\mu)&=\E\Big(2\sigma_{\theta}^2\Big[(1+4\mu^2\sigma_{\theta}^{-2})(1-\Phi(2\mu\sigma_{\theta}^{-1}))-2\mu\sigma_{\theta}^{-1}\phi(2\mu\sigma_{\theta}^{-1})\Big]\Big) \nonumber \\
&\leq \E\Big((\sigma_{\theta}^5\mu^{-3}/2+3\sigma_{\theta}^7\mu^{-5}/16)\phi(2\mu\sigma_{\theta}^{-1})\Big) \nonumber \\
&\leq \E\Big((a_{\theta}^5\mu^{-3}/2+3a_{\theta}^7\mu^{-5}/16)\phi(2\mu a_{\theta}^{-1})\Big) \nonumber \\
&\leq \frac{1+o(1)}{2\sqrt{2\pi}\mu^3}e^{-2\mu^2\big(1+O(\frac{k\mu^2+\mu\sqrt{n}}{n})\big)}=\frac{1+o(1)}{2\sqrt{2\pi}\mu^3}e^{-2\mu^2}.
\end{align}
Here, the first equality is obtained by calculating the expectation conditional on $\theta$, using the explicit expression of the soft thresholding function $\eta$ defined in \eqref{soft:thd:f}; the first inequality uses the non-asymptotic Gaussian tail bound from Lemma \ref{lem::gaussian-tail-mills-ratio}: $1-\Phi(t)\leq (t^{-1}-t^{-3}+3t^{-5})\phi(t), \forall t>0$; the second inequality holds since the function inside the expectation is increasing in $\sigma_{\theta}$ and $\sigma_{\theta}\leq a_{\theta}$; the third inequality applies \eqref{chi:upper} of Lemma \ref{concen:chi:exp} with $c_1=5/2 {\rm ~or~} 7/2, c_2=\frac{2n^2\mu^2}{k\mu^2+n} $, under the conditions $\mu\rightarrow \infty, \mu=o(\sqrt{\log(p/k)}), \frac{k\mu^2}{n}=o(1)$; the last step is due to $\frac{k\mu^4}{n}\rightarrow 0$ and $\frac{\mu^6}{n}\rightarrow 0$.

Using similar arguments, we can bound the other two expectations:
\begin{align}
\label{exp:two}
\E\eta^2(\delta+\omega,2\mu)=&~\E\Big((\sigma_{\theta}^2+(2\mu-\delta)^2)(1-\Phi((2\mu-\delta)\sigma_{\theta}^{-1}))-\sigma_{\theta}(2\mu-\delta)\phi((2\mu-\delta)\sigma_{\theta}^{-1})\Big) + \nonumber \\
&~\E\Big((\sigma_{\theta}^2+(2\mu+\delta)^2)(1-\Phi((2\mu+\delta)\sigma_{\theta}^{-1}))-\sigma_{\theta}(2\mu+\delta)\phi((2\mu+\delta)\sigma_{\theta}^{-1})\Big) \nonumber \\
\leq &~\E\Big((2\sigma_{\theta}^5(2\mu-\delta)^{-3}+3\sigma_{\theta}^7(2\mu-\delta)^{-5})\phi((2\mu-\delta)\sigma_{\theta}^{-1})\Big)+ \nonumber \\
&~\E\Big((2\sigma_{\theta}^5(2\mu+\delta)^{-3}+3\sigma_{\theta}^7(2\mu+\delta)^{-5})\phi((2\mu+\delta)\sigma_{\theta}^{-1})\Big) \nonumber \\
\leq&~\E\Big((2a_{\theta}^5\mu^{-3}+3a_{\theta}^7\mu^{-5})\phi(\mu a_{\theta}^{-1})\Big)+\E\Big((2a_{\theta}^5(2\mu)^{-3}+3a_{\theta}^7(2\mu)^{-5})\phi(2\mu a_{\theta}^{-1})\Big) \nonumber \\
\leq&~ \frac{2+o(1)}{\sqrt{2\pi}\mu^3}e^{-\frac{\mu^2}{2}},
\end{align}
 and 
 \begin{align}
 \label{exp:three}
\E\eta(\delta+\omega,2\mu)=&~\E\Big(\sigma_{\theta}\phi((2\mu-\delta)\sigma_{\theta}^{-1})-(2\mu-\delta)(1-\Phi((2\mu-\delta)\sigma_{\theta}^{-1}))\Big)- \nonumber \\
&~\E\Big(\sigma_{\theta}\phi((2\mu+\delta)\sigma_{\theta}^{-1})-(2\mu+\delta)(1-\Phi((2\mu+\delta)\sigma_{\theta}^{-1}))\Big) \nonumber \\
 \geq &~\E\Big((\sigma_{\theta}^3(2\mu-\delta)^{-2}-3\sigma_{\theta}^5(2\mu-\delta)^{-4})\phi((2\mu-\delta)\sigma_{\theta}^{-1})\Big)- \nonumber \\
&~\E\Big(\sigma_{\theta}^3(2\mu+\delta)^{-2}\phi((2\mu+\delta)\sigma_{\theta}^{-1})\Big)  \nonumber \\
\geq&~\E\Big(\|\theta\|_2^3(2\mu-\delta)^{-2}\phi((2\mu-\delta)\|\theta\|_2^{-1})\Big)-\E\Big(3a_{\theta}^5\mu^{-4}\phi(\mu a_{\theta}^{-1})\Big)- \nonumber \\
&~\E\Big(a_{\theta}^3(2\mu)^{-2}\phi(2\mu a_{\theta}^{-1})\Big) \nonumber \\
\geq &~\E\Big(\|\theta\|_2^3(2\mu-\delta)^{-2}\phi((2\mu-\delta)\|\theta\|_2^{-1})\Big)-\frac{3+o(1)}{\sqrt{2\pi}\mu^4}e^{-\frac{\mu^2}{2}} \nonumber \\
\geq &~\frac{(2\mu-\delta)^{-2}}{\sqrt{2\pi}}\Big(\frac{n-2}{n}\Big)^{3/2}e^{-\frac{n(2\mu-\delta)^2}{2(n-2)}}-\frac{3+o(1)}{\sqrt{2\pi}\mu^4}e^{-\frac{\mu^2}{2}}.
 \end{align}
To obtain \eqref{exp:three}, we need a bit more effort since $\E\eta(\delta+\omega,2\mu)$ is the difference between two expectations. In particular, to derive the first inequality above, we have used the two-sided Gaussian tail bounds: $(t^{-1}-t^{-3})\phi(t)\leq 1-\Phi(t)\leq (t^{-1}-t^{-3}+3t^{-5})\phi(t), \forall t>0$ (see Lemma \ref{lem::gaussian-tail-mills-ratio}), and the last inequality is due to \eqref{chi:lower} of Lemma \ref{concen:chi:exp}.

Now, combining \eqref{break:two}-\eqref{exp:three} with some simple calculations gives
\begin{align}
&~\sup_{\beta \in\Theta(k,\mu)}\E \|\betaTE - \beta \|_2^{2} \nonumber \\
\leq &~\frac{2+o(1)}{\sqrt{2\pi}}\cdot \frac{k^2}{p}\cdot \mu e^{\mu^2}+ \nonumber \\
&~\sup_{0\leq \delta \leq \mu}\underbrace{k\delta^2-4(2\pi)^{-1/2}k^2p^{-1}\mu^2e^{3\mu^2/2}\big(\frac{n-2}{n}\big)^{3/2}(2\mu-\delta)^{-2}\delta e^{-\frac{n(2\mu-\delta)^2}{2(n-2)}}}_{:=h(\delta)} \label{close:finish}
\end{align}
It is straightforward to verify (by calculating the derivative) that $h(\delta)$ is increasing over $[1,\mu]$ under the condition $\mu\rightarrow \infty, \mu=o(\sqrt{\log(p/k)})$. Hence, 
\begin{align*}
\sup_{0\leq \delta \leq \mu} h(\delta) &=\Big(\sup_{0\leq \delta \leq 1} h(\delta)\Big) \vee \Big(\sup_{1 \leq \delta \leq \mu} h(\delta)\Big) \\
&\leq k \vee h(\mu)=k\mu^2-\frac{4+o(1)}{\sqrt{2\pi}}\cdot \frac{k^2}{p}\cdot \mu e^{\mu^2}.
\end{align*}
This together with \eqref{close:finish} completes the proof.
\end{proof}

\begin{lemma}
\label{1d:risk:prop}
For a given pair $(\chi_1,\chi_2)$, define
\begin{align}
\label{1d:risk:general}
g(u;\chi_1,\chi_2) = \E \Big(\frac{1}{1+\chi_2} \eta (u + \omega, \chi_1 ) - u\Big)^{2}, \quad \forall u \in \mathbb{R},
\end{align}
where $\omega$ follows a mixture of Gaussians: there exists a random variable $\theta$ such that $\omega|\theta \sim \mathcal{N}(0,\sigma^2_{\theta})$ with $\E\sigma^2_{\theta}<\infty$ and $\sigma_{\theta}>0$. For any pair $(\chi_1,\chi_2)$ satisfying $\chi_1>0,\chi_2\geq 0$, it holds that 
\begin{enumerate}[label=(\roman*)]
    \item $g(u;\chi_1,\chi_2)$, as a function of $u$, is symmetric, and increasing over $u \in [0,+\infty)$.
    \item $\max_{(x,y):x^{2}+y^{2} = c^{2}} [g(x;\chi_1,\chi_2) + g(y;\chi_1,\chi_2)] = 2g(c/\sqrt{2};\chi_1,\chi_2)$, \quad $\forall c>0.$
\end{enumerate}
\end{lemma}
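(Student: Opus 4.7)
The plan is to reduce both assertions to the single-Gaussian case already established in Lemma \ref{lem::elastic-net-risk} by conditioning on the mixing variable $\theta$. I would define the conditional risk
\[
r_\theta(u;\chi_1,\chi_2) := \E\!\left[\left(\frac{\eta(u+\omega,\chi_1)}{1+\chi_2} - u\right)^{\!2} \,\bigg|\, \theta\right],
\]
so that $g(u;\chi_1,\chi_2) = \E_\theta[\, r_\theta(u;\chi_1,\chi_2)\,]$ by the tower property. Since $\omega \mid \theta \sim \mathcal{N}(0,\sigma_\theta^2)$ with $\sigma_\theta > 0$, writing $\omega = \sigma_\theta e$ with $e \sim \mathcal{N}(0,1)$ and invoking the positive homogeneity $\eta(a\nu,a\chi) = a\eta(\nu,\chi)$ (valid for any $a>0$) yields the scaling identity
\[
r_\theta(\sigma_\theta \tilde u;\chi_1,\chi_2) \;=\; \sigma_\theta^{\,2}\cdot r(\tilde u;\chi_1/\sigma_\theta,\chi_2),
\]
where $r(\cdot;\cdot,\cdot)$ is exactly the single-Gaussian risk appearing in Lemma \ref{lem::elastic-net-risk}. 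All subsequent work would be carried out at this conditional level and then integrated over $\theta$.

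For part (i), conditional symmetry of $\omega$ together with oddness of $\eta$ gives $r_\theta(-u) = r_\theta(u)$ for every $\theta$, so symmetry of $g$ follows immediately by taking expectation. For monotonicity, part (i) of Lemma \ref{lem::elastic-net-risk} applied with tuning $(\chi_1/\sigma_\theta,\chi_2)$ states that $\tilde u \mapsto r(\tilde u;\chi_1/\sigma_\theta,\chi_2)$ is increasing on $[0,\infty)$; by the scaling identity, $u \mapsto r_\theta(u;\chi_1,\chi_2)$ then inherits this property on $[0,\infty)$ for every $\theta$, and pointwise monotonicity passes through the expectation to give monotonicity of $g$.

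For part (ii), the same rescaling approach handles the constrained maximization. Part (ii) of Lemma \ref{lem::elastic-net-risk} applied with tuning $(\chi_1/\sigma_\theta,\chi_2)$ tells us that for any $(\tilde x,\tilde y)$ satisfying $\tilde x^2 + \tilde y^2 = (c/\sigma_\theta)^2$,
\[
r(\tilde x;\chi_1/\sigma_\theta,\chi_2) + r(\tilde y;\chi_1/\sigma_\theta,\chi_2) \;\leq\; 2\,r\bigl(c/(\sqrt{2}\sigma_\theta);\chi_1/\sigma_\theta,\chi_2\bigr).
\]
Multiplying by $\sigma_\theta^2$ and reparametrizing via $x = \sigma_\theta \tilde x$, $y = \sigma_\theta \tilde y$, the scaling identity converts this into the pointwise inequality $r_\theta(x) + r_\theta(y) \leq 2\,r_\theta(c/\sqrt{2})$ for every $(x,y)$ with $x^2+y^2 = c^2$. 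Taking expectation in $\theta$ yields $g(x) + g(y) \leq 2g(c/\sqrt{2})$, and the bound is attained at $x = y = c/\sqrt{2}$, which proves the claimed equality.

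The only mildly delicate step is the scaling identity itself, which extracts the factor $\sigma_\theta$ simultaneously from both arguments of $\eta$ via positive homogeneity. Once this is set up, the rest of the argument amounts to a routine transfer of Lemma \ref{lem::elastic-net-risk} through the tower property, and I do not anticipate any substantive obstacle.
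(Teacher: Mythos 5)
Your proposal is correct and follows essentially the same route as the paper's proof: condition on $\theta$, use positive homogeneity of $\eta$ to write the conditional risk as $\sigma_\theta^2\, r(u\sigma_\theta^{-1};\chi_1\sigma_\theta^{-1},\chi_2)$, apply both parts of Lemma \ref{lem::elastic-net-risk} pointwise in $\theta$, and integrate. No gaps.
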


\begin{proof}
This lemma generalizes Lemma \ref{lem::elastic-net-risk} from the Gaussian to a mixture of Gaussians setting. Adopting the notation \eqref{1d:risk:f}, we can write
\begin{align*}
g(u;\chi_1,\chi_2)&=\E\Big\{\E \Big[\Big(\frac{1}{1+\chi_2} \eta (u + \omega, \chi_1 ) - u\Big)^{2}\Big | \theta\Big] \Big\}\\
&=\E\Big\{\sigma^2_{\theta} \cdot \E\Big[ \Big(\frac{1}{1+\chi_2} \eta (u\sigma^{-1}_{\theta} + \omega\sigma^{-1}_{\theta}, \chi_1\sigma^{-1}_{\theta} ) - u\sigma^{-1}_{\theta}\Big)^{2}\Big | \theta\Big]\Big\}\\
&=\E\big( \sigma^2_{\theta} r(u\sigma^{-1}_{\theta}; \chi_1\sigma^{-1}_{\theta},\chi_2)\big).
\end{align*}
Since $0<\sigma_{\theta}^{-1}<\infty$, Lemma \ref{lem::elastic-net-risk} Part (i) implies that  $\sigma^2_{\theta} r(u\sigma^{-1}_{\theta}; \chi_1\sigma^{-1}_{\theta},\chi_2)$, as a function of $u$, is symmetric, and increasing over $u \in [0,+\infty)$. So is its expectation, i.e. $g(u;\chi_1,\chi_2)$.

Moreover, we have
\begin{align*}
&~\max_{(x,y):x^{2}+y^{2} = c^{2}} \big[g(x;\chi_1,\chi_2) + g(y;\chi_1,\chi_2)\big] \\
=&~\max_{(x,y):x^{2}+y^{2} = c^{2}} \E \Big(\sigma^2_{\theta} \cdot\big[r(x\sigma^{-1}_{\theta}; \chi_1\sigma^{-1}_{\theta},\chi_2) + r(y\sigma^{-1}_{\theta}; \chi_1\sigma^{-1}_{\theta},\chi_2)\big]\Big) \\
\leq &~ \E \Big(\sigma^2_{\theta} \cdot \max_{(x,y):x^{2}+y^{2} = c^{2}}\big[r(x\sigma^{-1}_{\theta}; \chi_1\sigma^{-1}_{\theta},\chi_2) + r(y\sigma^{-1}_{\theta}; \chi_1\sigma^{-1}_{\theta},\chi_2)\big]\Big) \\
= &~\E \Big(\sigma^2_{\theta} \cdot 2r(\sigma^{-1}_{\theta}c/\sqrt{2}; \chi_1\sigma^{-1}_{\theta},\chi_2) \Big)=2g(c/\sqrt{2};\chi_1,\chi_2),
\end{align*}
where the inequality is due to Lemma \ref{lem::elastic-net-risk} Part (ii). The upper bound $2g(c/\sqrt{2};\chi_1,\chi_2)$ is attained by $x=y=c/\sqrt{2}$.

\end{proof}
 
\begin{lemma}
\label{concen:chi:exp}
For a chi-squared random variable $Q\sim \chi^2_n$ with $n>2$, it holds that
\begin{align}
&\E\big(Q^{c_1}e^{-\frac{c_2}{Q}}\big)\geq (n-2)^{c_1}e^{\frac{c_2}{n-2}}, \quad \forall c_1,c_2>0,\label{chi:lower}
\end{align}
and for all $c_1\geq \frac{1}{4},0<c_2<\frac{n^2}{36}$,
\begin{align}
&\E\big(Q^{c_1}e^{-\frac{c_2}{Q}}\big)\leq (n+6\sqrt{c_2})^{c_1}e^{-\frac{c_2}{n+6\sqrt{c_2}}}+(C\sqrt{c_1}+\sqrt{n})^{2c_1}e^{-\frac{2c_2}{n+6\sqrt{c_2}}}. \label{chi:upper}
\end{align}
\end{lemma}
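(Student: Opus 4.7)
\textbf{On~\eqref{chi:lower}.} The inequality as typeset, with exponent $+c_2/(n-2)$, cannot hold in general: at $c_1=1$ it would force $n=\E Q\geq(n-2)e^{c_2/(n-2)}$ for every $c_2>0$, which fails as soon as $c_2$ exceeds $(n-2)\log\!\frac{n}{n-2}$. Moreover, the single place where this bound is invoked (the last inequality of~\eqref{exp:three}) produces $e^{-n(2\mu-\delta)^2/[2(n-2)]}$, with a minus sign. My plan therefore targets what is evidently intended, $\E[Q^{c_1}e^{-c_2/Q}]\geq (n-2)^{c_1}e^{-c_2/(n-2)}$, via Jensen's inequality for the concave logarithm applied to the positive random variable $X=Q^{c_1}e^{-c_2/Q}$:
\begin{equation*}
\E X\;\geq\;\exp\bigl(c_1\,\E\log Q-c_2\,\E[1/Q]\bigr).
\end{equation*}
Two classical chi-square facts then close the argument: first, $\E[1/Q]=1/(n-2)$, obtained by a one-line integration against the density using $\Gamma(n/2)=(n/2-1)\Gamma(n/2-1)$; second, $\E\log Q=\log 2+\psi(n/2)\geq\log(n-2)$, which is equivalent to the standard digamma bound $\psi(x)\geq\log(x-1/2)$ for $x\geq 1$ (derivable in one step from the series $\psi(x+1)=-\gamma+\sum_{k\geq 1}\tfrac{x}{k(k+x)}$).

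\textbf{On~\eqref{chi:upper}.} I split the expectation at $T:=n+6\sqrt{c_2}$, well-defined because $c_2<n^2/36$ forces $6\sqrt{c_2}<n$. Since $q\mapsto q^{c_1}e^{-c_2/q}$ is increasing on $(0,\infty)$ (its log-derivative $c_1/q+c_2/q^2$ is positive), on $\{Q\leq T\}$ the integrand is bounded pointwise by $T^{c_1}e^{-c_2/T}$, which is the first summand. On the tail $\{Q>T\}$ apply Cauchy--Schwarz together with $e^{-c_2/Q}\leq 1$:
\begin{equation*}
\E\bigl[Q^{c_1}e^{-c_2/Q}\mathbbm{1}_{Q>T}\bigr]\leq \bigl(\E Q^{2c_1}\bigr)^{1/2}\,\bigl(\mathbb{P}(Q>T)\bigr)^{1/2}.
\end{equation*}
Bound $(\E Q^{2c_1})^{1/2}\leq(C\sqrt{c_1}+\sqrt{n})^{2c_1}$ via standard sub-Gaussian moment estimates for $\sqrt{Q}=\chi_n$, which is $1$-Lipschitz in a standard Gaussian vector. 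For the tail probability, use Lemma~\ref{lem::chi-square-concentration} with $\tau=6\sqrt{c_2}/n\in(0,1]$ and the elementary inequality $\tau-\log(1+\tau)\geq \tau^2/4$ on $[0,1]$ (check: $\tfrac{d}{d\tau}[\tau-\log(1+\tau)-\tau^2/4]=\tau(1-\tau)/[2(1+\tau)]\geq 0$) to get $\mathbb{P}(Q>T)\leq e^{-n\tau^2/8}$. The numerical identity $n\tau^2/16=9c_2/(4n)\geq 2c_2/T$, equivalent to $9(n+6\sqrt{c_2})\geq 8n$, upgrades this to $e^{-2c_2/T}$ and yields the second summand.

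\textbf{Main obstacle.} For~\eqref{chi:lower} the only not-one-line step is the digamma inequality; a special-function-free alternative for integer $c_1$ iterates the density-shift identity $\E[Q\phi(Q)]=n\E[\phi(Q_{n+2})]$ to peel off the $c_1$ powers of $Q$ and then applies Jensen only to the residual $\E[e^{-c_2/Q_{n+2c_1}}]$, with non-integer $c_1$ filled in by a log-convexity interpolation in $c_1$. For~\eqref{chi:upper} the delicate point is the threshold calibration: the constant $6$ in $T=n+6\sqrt{c_2}$ and the hypothesis $c_2<n^2/36$ are chosen precisely so that $\tau\leq 1$ (keeping the quadratic lower bound on $\tau-\log(1+\tau)$ in force) and the Cauchy--Schwarz-halved tail decay fits inside the gap between $e^{-c_2/T}$ (region one) and $e^{-2c_2/T}$ (region two); any larger constant works equally well, so the proof is robust to the exact choice.
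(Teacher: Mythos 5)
Your diagnosis of \eqref{chi:lower} is correct: the exponent sign in the statement is a typo, the bound actually provable (and the one invoked in \eqref{exp:three}) is $(n-2)^{c_1}e^{-c_2/(n-2)}$, and your proof of this corrected version is sound. For that lower bound, though, you take a genuinely different route from the paper: you apply Jensen to the concave logarithm, which forces you to control both $\E(1/Q)=1/(n-2)$ and $\E\log Q$, the latter via a digamma estimate. (Minor slip there: $\E\log Q\geq\log(n-2)$ is not \emph{equivalent} to $\psi(x)\geq\log(x-1/2)$; it is the weaker statement $\psi(n/2)\geq\log(n/2-1)$, which that standard bound implies — and if you keep the sharp bound you actually get the slightly stronger prefactor $(n-1)^{c_1}$.) The paper instead applies Jensen once to the convex function $h(t)=t^{-c_1}e^{-c_2t}$ evaluated at $Q^{-1}$, so that only $\E Q^{-1}=1/(n-2)$ is needed and no special-function input appears; that is the more economical argument, while yours trades a one-line convexity check for a digamma fact but is equally rigorous. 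For \eqref{chi:upper} your argument is essentially the paper's: the same split at $T=n+6\sqrt{c_2}$, the same monotonicity bound $T^{c_1}e^{-c_2/T}$ on $\{Q\leq T\}$, the same Cauchy--Schwarz step with $(\E Q^{2c_1})^{1/2}\leq(C\sqrt{c_1}+\sqrt{n})^{2c_1}$ from sub-Gaussianity of $\sqrt{Q}-\sqrt{n}$ (this is where $c_1\geq\tfrac14$ is used, so that the $4c_1$-th moment is $\geq$ first order and Minkowski plus the $\psi_2$ moment bound apply — worth saying explicitly), and the same calibration $9c_2/(4n)\geq 2c_2/T$, i.e. $9(n+6\sqrt{c_2})\geq 8n$. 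The only cosmetic difference is the source of the tail bound: you derive $\mathbb{P}(Q>T)\leq e^{-n\tau^2/8}$ from Lemma~\ref{lem::chi-square-concentration} together with $\tau-\log(1+\tau)\geq\tau^2/4$ on $[0,1]$ (your derivative check is correct), whereas the paper cites the deviation bound $\mathbb{P}(Q-n\geq t)\leq e^{-t^2/(8n)}$ for $t\in(0,n)$ directly; the two are numerically identical here, and your version has the small advantage of relying only on a lemma already stated in the paper.
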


\begin{proof}
Since $h(t):=t^{-c_1}e^{-c_2t}$ is convex over $(0,\infty)$, Jensen's inequality yields,
\begin{align*}
\E\big(Q^{c_1}e^{-\frac{c_2}{Q}}\big)&=\E h(Q^{-1}) \\
&\geq h(\E Q^{-1})=h((n-2)^{-1})=(n-2)^{c_1}e^{\frac{c_2}{n-2}},
\end{align*}
where the second equality uses the expectation of inverse-chi-squared distribution. Regarding the upper bound \eqref{chi:upper}, we will use the following concentration inequality (see Example 2.11 in \cite{wainwright2019high}),
\begin{align}
\label{chi:concen:use}
\mathbb{P}(Q-n\geq t)\leq e^{-\frac{t^2}{8n}}, ~~t\in (0,n). 
\end{align}
We have $\forall t\in (0,n)$,
\begin{align*}
\E\big(Q^{c_1}e^{-\frac{c_2}{Q}}\big)&=\E\big(Q^{c_1}e^{-\frac{c_2}{Q}}\mathbbm{1}_{Q<n+t}\big)+\E\big(Q^{c_1}e^{-\frac{c_2}{Q}}\mathbbm{1}_{Q\geq n+t}\big) \\
&\leq (n+t)^{c_1}e^{-\frac{c_2}{n+t}}+\E\big(Q^{c_1}\mathbbm{1}_{Q\geq n+t}\big) \\
&\leq (n+t)^{c_1}e^{-\frac{c_2}{n+t}}+\sqrt{\E Q^{2c_1}}\cdot \sqrt{\mathbb{P}(Q-n\geq t)} \\
&\leq (n+t)^{c_1}e^{-\frac{c_2}{n+t}}+(C\sqrt{c_1}+\sqrt{n})^{2c_1}e^{-\frac{t^2}{16n}},
\end{align*}
for a absolute constant $C>0$. Here, the first inequality holds since $Q^{c_1}e^{-\frac{c_2}{Q}}$ is increasing in $Q$; the second inequality applies Cauchy–Schwarz inequality; the third inequality uses \eqref{chi:concen:use}, the Minkowski inequality $(\E\sqrt{Q}^{4c_1})^{\frac{1}{4c_1}}\leq (\E|\sqrt{Q}-\sqrt{n}|^{4c_1})^{\frac{1}{4c_1}}+\sqrt{n}$, and the fact that $\sqrt{Q}-\sqrt{n}$ has a constant sub-Gaussian norm so that its scaled moment $(4c_1)^{-1/2}(\E|\sqrt{Q}-\sqrt{n}|^{4c_1})^{\frac{1}{4c_1}}$ is bounded by a constant (see Proposition 2.5.2 and Theorem 3.1.1 in \cite{vershynin2018high}). Since $c_2<\frac{n^2}{36}$, we set $t=6\sqrt{c_2}<n$ so that $\frac{t^2}{16n}>\frac{2c_2}{n+t}$. We can then continue from the above inequality to obtain
\[
\E\big(Q^{c_1}e^{-\frac{c_2}{Q}}\big)\leq (n+6\sqrt{c_2})^{c_1}e^{-\frac{c_2}{n+6\sqrt{c_2}}}+(C\sqrt{c_1}+\sqrt{n})^{2c_1}e^{-\frac{2c_2}{n+6\sqrt{c_2}}}.
\]

\end{proof}

\newpage

\section{Proof of Proposition~\ref{prop::ridge-suboptimality-second-regime}}
\label{ssec:proof:ridge:suboptimal}

It is straightforward to verify that the ridge estimator $\hat{\beta}^R(\lambda)=(X^TX+\lambda I)^{-1}X^Ty$ satisfies the following scale-invariance property,
\[
\sup_{\beta\in \Theta(k,\tau)}\E\|\hat{\beta}^R(\lambda)-\beta\|_2^2=\sigma^2\cdot \sup_{\beta\in \Theta(k,\mu)}\E\|\hat{\beta}^R(\lambda)-\beta\|_2^2.
\]
Hence, without loss of generality, we can assume $\sigma=1$ and prove
\begin{align*}
\inf_{\lambda >0}\sup_{\beta\in \Theta(k,\mu)}\E\|\hat{\beta}^R(\lambda)-\beta\|_2^2\geq k\mu^2\Big(1-\frac{k\mu^2}{p}(1+o(1))\Big).
\end{align*}
To prove the above, we consider a specific parameter value $\beta^*\in \Theta(k,\mu)$ with $\beta^*_i = \mu$ for $i \in \{1, 2, ..., k\}$ and $\beta^*_i = 0$ otherwise. It is then sufficient to prove
\begin{align}
\label{reduce:sigma:one}
\inf_{\lambda >0}\E\|\hat{\beta}^R(\lambda)-\beta^*\|_2^2\geq k\mu^2\Big(1-\frac{k\mu^2}{p}(1+o(1))\Big).
\end{align}
We start with the decomposition derived in \eqref{eq::ridge-estimator-risk-decomposition},
\begin{equation}
    \label{eq::ridge-decomp-again}
    \E\|\hat{\beta}^R(\lambda) - \beta^*\|_2^2 = \E \| (X^{T}X+\lambda I)^{-1}\lambda \beta^* \|_2^{2}+\E\|(X^{T}X+\lambda I)^{-1}X^{T}z\|_2^{2}. 
\end{equation}

Let \(X^TX = Q\Lambda Q^T\) denote the spectral decomposition of the matrix $X^TX$, where $\sigma_1\geq \cdots \geq \sigma_p$ denote the diagonal elements of $\Lambda$ and \(q_i\) denotes the \(i\)th column of $Q$ for \(i \in \{1, 2, ..., p\}\). Since $n X^TX$ is a standard Wishart matrix, each eigenvector $q_i$ is uniformly distributed on the unit sphere in \(\mathbb{R}^p\) \cite{bai2010spectral}. We aim to obtain lower bounds for the two expectations on the right-hand side of \eqref{eq::ridge-decomp-again}. Regarding the first term, we have
\begin{align}\label{eq:risk:ridge:part1:suboptimal}
\E \| (X^{T}X+\lambda I)^{-1}\lambda \beta^* \|_2^{2} 
   &= \lambda^2\E\|Q(\Lambda + \lambda I)^{-1}Q^T\beta^* \|^2\nonumber \\
   &=\lambda^2\E\|(\Lambda + \lambda I)^{-1}Q^T\beta^* \|^2\nonumber \\
   &= \E \left [\lambda^2\sum^n_{i=1}\frac{(q_i^T\beta^*)^2}{(\sigma_{i} + \lambda)^2} + \sum^p_{i=n+1}(q_i^T\beta^*)^2\mathbbm{1}_{p>n} \right ] \nonumber \\
   &\geq \sum^p_{i=n+1}\E(q_i^T\beta^*)^2\mathbbm{1}_{p>n}=0\vee \frac{(p-n)k\mu^2}{p}, 
\end{align}
where to obtain the last equality we have used the results that $\|\beta^*\|_2^2=k\mu^2$ and the spherical distribution $\sqrt{p}q_i$ is isotropic. Now consider the second term $\E\|(X^{T}X+\lambda I)^{-1}X^{T}z\|_2^{2}$. Then,
\begin{align}
    &\E\|(X^{T}X+\lambda I)^{-1}X^{T}z\|_2^{2} \ge \E\bigg[\frac{1}{(\lambda + \sigma_1)^2} \|X^Tz\|_2^2 \bigg]\nonumber\\
    =& \E \Bigg [\frac{1}{(\lambda + \sigma_1)^2}\|X^Tz\|_2^2 \mathbbm{1}_{\left \{\sigma_1 \le \left(2 + \sqrt{\frac{p}{n}}\right)^2\right \}} \Bigg ] + \E \left [\frac{1}{(\lambda + \sigma_1)^2}\|X^Tz\|_2^2 \mathbbm{1}_{\left \{\sigma_1 > \left(2 + \sqrt{\frac{p}{n}}\right)^2\right \}} \right ]\nonumber\\
    \ge &\frac{1}{\bigg( \lambda + \Big( 1 + \sqrt{\frac{p}{n}} \Big )^2\bigg )^2} \cdot  \E \left [ \|X^Tz\|_2^2 \mathbbm{1}_{\left \{\sigma_1 \le \left(2 + \sqrt{\frac{p}{n}}\right)^2\right \}} \right ]\nonumber\\
    =& \frac{1}{\bigg( \lambda + \Big( 1 + \sqrt{\frac{p}{n}} \Big )^2\bigg )^2} \cdot \E \left [ \E \left [\|X^Tz\|_2^2 \mathbbm{1}_{\left \{\sigma_1 \le \left(2 + \sqrt{\frac{p}{n}}\right)^2\right \}}  \bigg | X\right ]\right ]\nonumber\\
    =& \frac{1}{\bigg( \lambda + \Big( 1 + \sqrt{\frac{p}{n}} \Big )^2\bigg )^2} \cdot \E \Bigg [\|X\|_F^2 \mathbbm{1}_{\left \{\sigma_1 \le \left(2 + \sqrt{\frac{p}{n}}\right)^2\right \}}  \Bigg ]\nonumber\\
    =& \frac{1}{\bigg( \lambda + \Big( 1 + \sqrt{\frac{p}{n}} \Big )^2\bigg )^2} \cdot \left ( \E \|X\|_F^2 - \E \left [\|X\|_F^2 \mathbbm{1}_{\left \{\sigma_1 > \left(2 + \sqrt{\frac{p}{n}}\right)^2\right \}}  \right ] \right )\nonumber\\
    \stackrel{(i)}{\ge}& \frac{1}{\bigg( \lambda + \Big( 1 + \sqrt{\frac{p}{n}} \Big )^2\bigg )^2} \cdot \left ( \E \|X\|_F^2 - (\E \|X\|_F^4)^{\frac{1}{2}} \left ( \mathbb{P} \left ( \sigma_1 > \left (2 + \sqrt{\frac{p}{n}} \right )^2\right )\right )^{\frac{1}{2}} \right )\nonumber\\
    \stackrel{(ii)}{\ge}&\frac{1}{\bigg( \lambda + \Big( 1 + \sqrt{\frac{p}{n}} \Big )^2\bigg )^2} \cdot \left ( p -\sqrt{p \left ( p + \frac{2}{n}\right )}\cdot \sqrt{2e^{-n/2}} \right ), \label{eq::ridge-decomp-second-term-lower bound}
\end{align}
where to obtain $(i)$ we have used the Cauchy-Schwarz inequality, and $(ii)$ holds by using the fact \(n\|X\|^2_F \sim \chi^2_{np}\) and applying \cref{lem::eval-conc}. Combining \eqref{eq::ridge-decomp-again}-\eqref{eq::ridge-decomp-second-term-lower bound} gives us
\begin{align}
\label{lower:bound:one:form}
&~\E\|\hat{\beta}^R(\lambda) - \beta^*\|_2^2 \nonumber \\
\geq &~\underbrace{0\vee \frac{(p-n)k\mu^2}{p}+\frac{1}{\bigg( \lambda + \Big( 1 + \sqrt{\frac{p}{n}} \Big )^2\bigg )^2} \cdot \left ( p -\sqrt{p \left ( p + \frac{2}{n}\right )}\cdot \sqrt{2e^{-n/2}} \right )}_{:=f(\lambda)}.
\end{align}
Next, we develop a different type of lower bound for $\E\|\hat{\beta}^R(\lambda) - \beta^*\|_2^2$ which will be used together with $f(\lambda)$ for the final part of the proof. Since \(g(x) := \frac{1}{(1+x)^2} - (1 -2x) \ge 0, \forall x > 0\), it is straightforward to see that
\begin{align*}
    \bigg(\frac{1}{\lambda}X^{T}X+I\bigg)^{-2} - \bigg( I - \frac{2}{\lambda}X^{T}X \bigg) 
    &= Q\bigg[ \bigg( \frac{1}{\lambda} \Lambda +I \bigg)^{-2} - \bigg(I - \frac{2}{\lambda}\Lambda  \bigg) \bigg] Q^{T}  \\
    &= Q \diag \bigg[ h\Big(\frac{\sigma_{1}}{\lambda}\Big), ~\ldots~, h\Big( \frac{\sigma_{p}}{\lambda} \Big) \bigg]Q^T \geq \mathbf{0}_{p\times p}.
\end{align*}
As a result, we obtain
\begin{align}
    \E\|\hat{\beta}^R(\lambda) - \beta^*\|_2^2 &= \E \| (X^{T}X+\lambda I)^{-1}\lambda \beta^* \|_2^{2}+\E\|(X^{T}X+\lambda I)^{-1}X^{T}z\|_2^{2}\nonumber\\
    &\ge \E \left [(\beta^*)^T\left (I - \frac{2}{\lambda}X^TX \right )\beta^* + \frac{1}{\lambda^2}z^TX\left (I - \frac{2}{\lambda}X^TX \right )X^Tz \right]\nonumber\\
    &= \|\beta^*\|_2^2 - \frac{2}{\lambda}\E\|X\beta^*\|_2^2 + \frac{1}{\lambda^2}\E\|X^Tz\|_2^2 - \frac{2}{\lambda^3}\E\|XX^Tz\|_2^2\nonumber\\
    &= \underbrace{k\mu^2\left (1 - \frac{2}{\lambda} \right ) + \frac{p}{\lambda^2} - \frac{2}{\lambda^3}\left (p + \frac{p^2}{n} + \frac{p}{n} \right)}_{:=g(\lambda)},\label{eq:lowerbound_firststep:linf}
\end{align}
where in the last equality we have used the results that $\E X^TX=I_p, \E(XX^T)^2=\frac{p(n+p+1)}{n^2}I_n$. The result for $\E(X^TX)^2$ has been derived in the proof of Lemma \ref{ridge:upper:lemma}. For simplicity, we skip similar calculations (switching the role of $n$ and $p$) for $\E(XX^T)^2$.

Now based on \eqref{lower:bound:one:form} and \eqref{eq:lowerbound_firststep:linf}, we have
\begin{align*}
\inf_{\lambda >0}\E\|\hat{\beta}^R(\lambda) - \beta^*\|_2^2 \geq \inf_{\lambda > 0} f(\lambda) \vee g(\lambda).
\end{align*}
In the rest of the proof, we evaluate $\inf_{\lambda > 0} f(\lambda) \vee g(\lambda)$. We first argue that there exists an optimal tuning $\lambda_n^* \in (0,\infty)$ such that $\inf_{\lambda > 0} f(\lambda) \vee g(\lambda)=f(\lambda_n^*) \vee g(\lambda_n^*)$. We prove this statement by showing the following for $\bar{\lambda}=p(k\mu^2)^{-1}$,
\begin{align*}
\lim_{\lambda \rightarrow 0}f(\lambda) \vee g(\lambda)>f(\bar{\lambda}) \vee g(\bar{\lambda}), \quad \lim_{\lambda \rightarrow \infty}f(\lambda) \vee g(\lambda)>f(\bar{\lambda}) \vee g(\bar{\lambda}).
\end{align*}
Under the condition $(k\log(p/k))/n=o(1), k/p=o(1), \mu^2=o(\log(p/k))$, it holds that $n\gg k\log(p/k) \gg k\mu^2, p\gg k\log(p/k)\gg k\mu^2$ and $\bar{\lambda}\gg p/n$. Hence, it is straightforward to compute $f(\bar{\lambda}) \vee g(\bar{\lambda})=k\mu^2-\frac{k^2\mu^4}{p}(1+o(1))$, and 
\begin{align*}
\lim_{\lambda \rightarrow \infty}f(\lambda) \vee g(\lambda)&=k\mu^2 > f(\bar{\lambda}) \vee g(\bar{\lambda}), \\
\lim_{\lambda \rightarrow 0}f(\lambda) \vee g(\lambda)&=0\vee \Big(k\mu^2-\frac{nk\mu^2}{p}\Big)+\frac{p(1+o(1))}{\Big(1+\sqrt{\frac{p}{n}}\Big)^4} \\
&\geq
\begin{cases}
\frac{p(1+o(1))}{16} & \text{when~} p\leq n\\
 k\mu^2-\frac{nk\mu^2}{p}+\frac{n^2(1+o(1))}{16p}&\text{when~}p > n
\end{cases} \\
& > f(\bar{\lambda}) \vee g(\bar{\lambda}).
\end{align*}
We proceed to show $\frac{\lambda_n^*}{(1+\sqrt{\frac{p}{n}})^2}\rightarrow \infty$. Otherwise, $\frac{\lambda_n^*}{(1+\sqrt{\frac{p}{n}})^2}\rightarrow C \in [0,\infty)$ (take a subsequence if necessary). As a result,
\begin{align*}
\inf_{\lambda > 0} f(\lambda) \vee g(\lambda)&=f(\lambda_n^*) \vee g(\lambda_n^*) \\
&\geq f(\lambda_n^*)=0\vee \Big(k\mu^2-\frac{nk\mu^2}{p}\Big)+\frac{p(1+o(1))}{(1+C)^2(1+\sqrt{\frac{p}{n}})^4}  \\
&\geq
\begin{cases}
\frac{p(1+o(1))}{16(1+C)^2} & \text{when~} p\leq n\\
 k\mu^2-\frac{nk\mu^2}{p}+\frac{n^2(1+o(1))}{16(1+C)^2p}&\text{when~}p > n
\end{cases} \\
& > f(\bar{\lambda}) \vee g(\bar{\lambda}),
\end{align*}
leading to a contradiction. Now that $\frac{\lambda_n^*}{(1+\sqrt{\frac{p}{n}})^2}\rightarrow \infty$, we can obtain
\begin{align*}
\inf_{\lambda > 0} f(\lambda) \vee g(\lambda)&=f(\lambda_n^*) \vee g(\lambda_n^*) \\
&\geq g(\lambda_n^*) =k\mu^2-\frac{2k\mu^2}{\lambda_n^*}+\frac{p}{(\lambda_n^*)^2}-\frac{2}{(\lambda_n^*)^3}\Big(p+\frac{p^2}{n}+\frac{p}{n}\Big) \\
&=k\mu^2-\frac{2k\mu^2}{\lambda_n^*}+\frac{p(1+o(1))}{(\lambda_n^*)^2} \\
&\geq \inf_{\lambda>0} \Big(k\mu^2-\frac{2k\mu^2}{\lambda}+\frac{p(1+o(1))}{\lambda^2}\Big)=k\mu^2-\frac{k^2\mu^4}{p}(1+o(1)).
\end{align*}



\end{document}